\newcommand{\ds}{\displaystyle}
\newcommand{\NN}{\mathbb N}
\newcommand{\CC}{\mathbb C}
\newcommand{\RR}{\mathbb R}
\newcommand{\ZZ}{\mathbb Z}
\newcommand{\EE}{\mathcal E}
\newcommand{\DD}{\mathcal D}
\newcommand{\SSS}{\mathcal S}
\theoremstyle{plain}
\newtheorem{theorem}{Theorem}[section]
\newtheorem{proposition}[theorem]{Proposition}
\newtheorem{lemma}[theorem]{Lemma}
\newtheorem{corollary}[theorem]{Corollary}
\theoremstyle{remark}
\newtheorem{remark}[theorem]{Remark}
\theoremstyle{definition}
\newtheorem{definition}[theorem]{Definition}
\newtheorem{example}[theorem]{Example}
\numberwithin{equation}{section}
\newcommand{\beq}{\begin{eqnarray}}
\newcommand{\eeq}{\end{eqnarray}}
\newcommand{\beqs}{\begin{eqnarray*}}
\newcommand{\eeqs}{\end{eqnarray*}}
\begin{document}

\title[Translation-invariant spaces of quasianalytic ultradistributions]{On a class of translation-invariant spaces of quasianalytic ultradistributions}

\author[P. Dimovski]{Pavel Dimovski}
\address{P. Dimovski, Faculty of Technology and Metallurgy, University Ss. Cyril and Methodius, Ruger Boskovic 16, 1000 Skopje, Macedonia}
\email{dimovski.pavel@gmail.com}

\author[B. Prangoski]{Bojan Prangoski}
\address{B. Prangoski, Faculty of Mechanical Engineering, University Ss. Cyril and Methodius,
Karpos II bb, 1000 Skopje, Macedonia}
\email{bprangoski@yahoo.com}

\author[J. Vindas]{Jasson Vindas}
\thanks{J. Vindas gratefully acknowledges support by Ghent University, through the BOF-grant 01N01014.}
\address{J. Vindas, Department of Mathematics, Ghent University, Krijgslaan 281 Gebouw S22, 9000 Gent, Belgium}
\email{jvindas@cage.UGent.be}

\subjclass[2010]{Primary 46F05. Secondary 46H25; 46F10;  46F15; 46E10}
\keywords{Quasianalytic ultradistributions; Convolution of ultradistributions; Translation-invariant Banach space of ultradistributions; Tempered ultradistributions; Beurling algebra; Hyperfunctions}

\maketitle

\begin{center}{\em Dedicated to Professor B. Stankovi\'c on the occasion of his 90th birthday and to Professor J. Vickers on the occasion of his 60th birthday}\end{center}

\begin{abstract}
A class of translation-invariant Banach spaces of quasianalytic ultradistributions is introduced and studied. They are Banach modules over a Beurling algebra. Based on this class of Banach spaces, we define corresponding test function spaces $\mathcal{D}^*_E$ and their strong duals $\mathcal{D}'^*_{E'_{\ast}}$ of quasianalytic type, and study convolution and multiplicative products on $\mathcal{D}'^*_{E'_{\ast}}$. These new spaces generalize previous works about translation-invariant spaces of tempered (non-quasianalytic ultra-) distributions; in particular, our new considerations apply to the settings of Fourier hyperfunctions and ultrahyperfunctions. New weighted $\mathcal{D}'^{\ast}_{L^{p}_{\eta}}$ spaces of quasianalytic ultradistributions are analyzed.
\end{abstract}
\maketitle

\section{Introduction}

Recently, the authors and Pilipovi\'{c} have constructed and studied new classes of distribution and non-quasianalytic ultradistribution spaces in connection with translation-invariant Banach spaces \cite{DPV,DPPV}. Those spaces generalize the concrete instances of weighted $\mathcal{D}'_{L^{p}}$ and $\mathcal{D}'^{*}_{L^{p}}$ spaces \cite{PilipovicK,S} and have shown usefulness in the study of boundary values of holomorphic functions \cite{DPVBV} and the convolution of generalized functions \cite{DPPV}.

The aim of this article is to extend the theory of ultradistribution spaces associated to translation-invariant Banach spaces by considering mixed quasianalytic cases. We have been able here to transfer all results from \cite{DPPV} to this new setting with the aid of
various new important results for quasianalytic ultradistribution spaces of type $\mathcal{S}'^{\ast}_{\dagger}(\mathbb{R}^{d})$ (see Subsection \ref{notation} for the notation) from \cite{ppv} concerning the construction of parametrices and the structure of these spaces. Such technical results will be stated in Section \ref{PPV} without proofs, as details will be treated in \cite{ppv}. Although our results in the present paper are analogous to those from \cite{DPPV}, new arguments and ideas have had to be developed here in order to deal with the quasianalytic case and achieve their proofs.

In Section \ref{TIBU} we study the class of translation-invariant Banach spaces of ultradistributions of class $*-\dagger$. These are translation-invariant Banach spaces satisfying $\mathcal{S}^*_\dagger(\mathbb{R}^{d})\hookrightarrow E \hookrightarrow \mathcal{S}'^*_\dagger(\mathbb{R}^{d})$ and having ultrapolynomially bounded weight function of class $\dagger$. Here $\ast$ and $\dagger$ stand for the Beurling and Roumieu cases of sequences $M_{p}$ and $A_{p}$, respectively. We would like emphasize that our considerations apply to hyperfunctions and ultra-hyperfunctions, which correspond to the symmetric choices $M_{p}=A_p=p!$; but more generally, our weight sequence $M_{p}$, measuring the ultradifferentiability, is allowed to satisfy the mild condition $p!^{\lambda}\subset M_p$ with $\lambda>0$. The growth assumption on $A_{p}$ is just $p!\subset A_p$, which also allows to deal with Banach spaces whose translation groups may have exponential growth.

Section \ref{new spaces} contains our main results. In analogy to \cite{DPPV}, we introduce the test function spaces $\mathcal{D}_E^{(M_p)}$, $\mathcal{D}_E^{\{M_p\}}$, and $\tilde{\mathcal{D}}_E^{\{M_p\}}$. We prove that the following continuous and dense embeddings hold  $\mathcal{S}^*_\dagger(\mathbb{R}^d)\hookrightarrow\mathcal{D}^{*}_E\hookrightarrow E\hookrightarrow \mathcal{S}'^*_\dagger(\mathbb{R}^d)$ and that $\mathcal{D}^{*}_E$ are topological modules over the Beurling algebra
$L^{1}_{\omega}$, where $\omega$ is the weight function of the translation group of $E$. We also prove the dense embedding $\mathcal{D}^*_E\hookrightarrow\mathcal{O}^*_{\dagger,C}(\mathbb{R}^{d})$, where the spaces $\mathcal{O}^*_{\dagger,C}(\mathbb{R}^{d})$ are defined in a similar way as in \cite{DPPV}.
The space $\mathcal D'^*_{E'_*}$ is defined as the strong dual of $\mathcal{D}_{E}$ and various structural and topological properties of $\mathcal{D}'^*_{E'_{\ast}}$ are obtained via the parametrix method (Lemma \ref{parametrix}). We also prove that $\mathcal{D}_E^{\{M_p\}}=\tilde{\mathcal{D}}_E^{\{M_p\}}$, topologically.

As an application of our theory, we extend the theory of $\mathcal{D}'^{\ast}_{L_{\eta}^{p}}$, $\mathcal{B}'^*_{\eta}$, and $\dot{\mathcal{B}}'^*_{\eta}$ spaces not only by considering quasianalytic cases of $\ast$ but also by allowing ultrapolynomially bounded weights $\eta$ which may growth exponentially. We establish relations among them and make a detailed investigation of their topological properties. We would like to point out that applications of such results to the study of the general convolvability in the setting of quasianalytic ultradistributions will appear elsewhere \cite{ppv}. We conclude this section with some results about convolution and multiplicative products on $\mathcal{D}'^*_{E'_{\ast}}$.
\subsection{Notation}
\label{notation}
Let $(M_p)_{p\in\mathbb{N}}$ and $(A_p)_{p\in\mathbb{N}}$ be two sequences of positive numbers such that $M_0=M_1=A_{0}=A_{1}=1$. Throughout the article, we impose the following assumptions over these weight sequences. The sequence $M_p$ satisfies the ensuing three conditions:\\
\indent $(M.1)$ $M_{p}^{2} \leq M_{p-1} M_{p+1}, \; \; p \in\ZZ_+$;\\
\indent $(M.2)$ $\ds M_{p} \leq c_0H^{p} \min_{0\leq q\leq p} \{M_{p-q} M_{q}\}$, $p,q\in \NN$, for some $c_0,H\geq1$;\\
\indent $(M.5)$ there exists $s>0$ such that $M_p^s$ is strongly non-quasianalytic, i.e., there exists $c_0\geq 1$ such that
\beqs
\sum_{q=p+1}^{\infty}\frac{M_{q-1}^s}{M_q^s}\leq c_0 p\frac{M_p^s}{M_{p+1}^s},\,\, \forall p\in\ZZ_+.
\eeqs
It is clear that if $M_p^s$ is strongly non-quasianalytic than for any $s'> s$, $M_p^{s'}$ is also strongly non-quasianalytic. One easily verifies that when $M_p$ satisfies $(M.5)$ there exists $\kappa>0$ such that $p!^{\kappa}\subset M_p$, i.e. there exist $c_0,L_0>0$ such that $p!^{\kappa}\leq c_0 L_0^p M_p$, $p\in\NN$ (cf. \cite[Lemma 4.1]{Komatsu1}). Following Komatsu \cite{Komatsu1}, for $p\in\ZZ_+$, we denote $m_p=M_p/M_{p-1}$ and for $\rho\geq 0$ let $m(\rho)$ be the number of $m_p\leq \rho$. As a consequence of \cite[Proposition 4.4]{Komatsu1}, by a change of variables, one verifies that $M_p$ satisfies $(M.5)$ if and only if
\beqs
\int_{\rho}^{\infty}\frac{m(\lambda)}{\lambda^{s+1}}d\lambda \leq c \frac{m(\rho)}{\rho^s},\,\, \forall \rho\geq m_1.
\eeqs
A sufficient condition for $M_p$ to satisfies $(M.5)$ is if the sequence $m_p/p^{\lambda}$, $p\in\ZZ_+$, to be monotonically increasing for some $\lambda>0$.

We assume that $A_{p}$ satisfies $(M.1)$ and $(M.2)$. Of course, without losing generality, we can assume that the constants $c_0$ and $H$ from the condition $(M.2)$ are the same for $M_p$ and $A_p$. Moreover, we also assume that $A_p$ satisfies the following additional hypothesis:\\
\indent $(M.6)$ $p!\subset A_p$; i.e., there exist $c_0,L_0>0$ such that $p!\leq c_0 L_0^p A_p$, $p\in\NN$.

 Of course, the constants $c_0$ and $L_0$ in $(M.6)$ can be chosen such that $c_0,L_0\geq 1$. Although it is not part of our assumptions, we will be primary interested in the quasianalytic case, i.e., $\ds \sum_{p=1}^{\infty}\frac{M_{p-1}}{M_p}=\infty$.

We denote by $M(\cdot)$ and $A(\cdot)$ the associated functions of $M_p$ and $A_p$, that is, $\ds M(\rho):=\sup_{p\in\NN}\ln_+\frac{\rho^p}{M_p}$ and $\ds A(\rho):=\sup_{p\in\NN}\ln_+\frac{\rho^p}{A_p}$ for $\rho>0$, respectively. They are non-negative continuous increasing functions (cf. \cite{Komatsu1}). We denote by $\mathfrak{R}$ the set of all positive monotonically increasing sequences which tend to infinity. For $(l_p)\in\mathfrak{R}$, denote by $N_{l_p}$ and $B_{l_p}(\cdot)$ the associated functions of the sequences $M_p\prod_{j=1}^pl_j$ and $A_p\prod_{j=1}^pl_j$, respectively.

 For $h>0$ we denote by $\SSS^{M_p,h}_{A_p,h}$ the Banach spaces (in short $(B)$-space from now on) of all $\varphi \in C^{\infty}(\RR^d)$ for which the norm
\beqs
\sigma_h(\varphi)=\sup_{\alpha} \frac{h^{|\alpha|}\left\|e^{A(h|\cdot|)}D^{\alpha}\varphi\right\|_{L^{\infty}(\RR^d)}}{M_{\alpha}}
\eeqs
is finite. One easily verifies that for $h_1<h_2$ the canonical inclusion $\SSS^{M_p,h_2}_{A_p,h_2}\rightarrow\SSS^{M_p,h_1}_{A_p,h_1}$ is compact. As l.c.s., we define $\ds\SSS^{(M_p)}_{(A_p)}(\RR^{d})=\lim_{\substack{\longleftarrow\\ h\rightarrow \infty}} \SSS^{M_p,h}_{A_p,h}$ and $\ds\SSS^{\{M_p\}}_{\{A_p\}}(\RR^{d})=\lim_{\substack{\longrightarrow\\ h\rightarrow 0}} \SSS^{M_p,h}_{A_p,h}$. Since for $h_1<h_2$ the inclusion $\SSS^{M_p,h_2}_{A_p,h_2}\rightarrow\SSS^{M_p,h_1}_{A_p,h_1}$ is compact, $\SSS^{(M_p)}_{(A_p)}(\RR^{d})$ is an $(FS)$-space and $\SSS^{\{M_p\}}_{\{A_p\}}(\RR^{d})$ is a $(DFS)$-space. In particular they are both Montel spaces. 

For each $(r_p)\in\mathfrak{R}$, by $\SSS^{M_p,(r_p)}_{A_p,(r_p)}$ we denote the space of all $\varphi\in C^{\infty}(\RR^d)$ such that
\beqs
\sigma_{(r_p)}(\varphi)=\sup_{\alpha} \frac{\left\|e^{B_{r_p}(|\cdot|)}D^{\alpha}\varphi\right\|_{L^{\infty}(\RR^d)}}{M_{\alpha}\prod_{j=1}^{|\alpha|}r_j}<\infty.
\eeqs
Provided with the norm $\sigma_{(r_p)}$, the space $\SSS^{M_p,(r_p)}_{A_p,(r_p)}$ becomes a $(B)$-space. Similarly as in \cite{PilipovicK,Pilipovic}, one can prove that $\SSS^{\{M_p\}}_{\{A_p\}}(\RR^d)$ is topologically isomorphic to $\ds \lim_{\substack{(r_p)\in\mathfrak{R}\\ \longleftarrow}}\SSS^{M_p,(r_p)}_{A_p,(r_p)}$.

In the future we shall employ $\SSS^*_{\dagger}(\RR^d)$ as a common notation for $\SSS^{(M_p)}_{(A_p)}(\RR^d)$ (Beurling case) and $\SSS^{\{M_p\}}_{\{A_p\}}(\RR^d)$ (Roumieu case). It is clear that for each $h>0$ and $(r_p)\in\mathfrak{R}$, the spaces $\SSS^{M_p,h}_{A_p,h}$ and $\SSS^{M_p,(r_p)}_{A_p,(r_p)}$ are continuously injected into $\SSS(\RR^d)$ (the Schwartz space).

 We will often make use of the following technical result from \cite{BojanL}.

\begin{lemma}[\cite{BojanL}]\label{nwseq}
Let $(k_p)\in\mathfrak{R}$. There exists $(k'_p)\in\mathfrak{R}$ such that $k'_p\leq k_p$ and $\ds\prod_{j=1}^{p+q}k'_j\leq 2^{p+q}\prod_{j=1}^{p}k'_j\cdot\prod_{j=1}^{q}k'_j$, for all $p,q\in\ZZ_+$.
\end{lemma}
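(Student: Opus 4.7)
The plan is to construct $(k'_p)$ by a recursive truncation of $(k_p)$ that caps the consecutive-ratio growth at the critical rate $1+1/p$. Specifically, I would set $k'_1:=k_1$ and define recursively $k'_p:=\min\{k_p,\,(1+1/p)k'_{p-1}\}$ for $p\geq 2$. The rate $1+1/p$ is chosen precisely so that the telescoped product collapses to a binomial coefficient and is then absorbed by the factor $2^{p+q}$ via the classical bound $\binom{p+q}{p}\leq 2^{p+q}$.

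Checking $(k'_p)\in\mathfrak{R}$ with $k'_p\leq k_p$ is routine. By construction $k'_p\leq k_p$. Since $k_p\geq k_{p-1}\geq k'_{p-1}$ (the second inequality by induction) and $(1+1/p)k'_{p-1}\geq k'_{p-1}$, both arguments of the minimum defining $k'_p$ are at least $k'_{p-1}$, so $(k'_p)$ is non-decreasing. To see $k'_p\to\infty$, assume for contradiction that $k'_p\leq L$ for all $p$; since $k_p\to\infty$, for sufficiently large $p$ one has $k_p>2L\geq(1+1/p)k'_{p-1}$, which forces $k'_p=(1+1/p)k'_{p-1}$ from some index $p_0$ onward, whence $k'_p=k'_{p_0}(p+1)/(p_0+1)\to\infty$, a contradiction.

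The main step is the product estimate, which I rephrase equivalently as $\prod_{j=1}^{q}(k'_{p+j}/k'_j)\leq 2^{p+q}$. The recursion gives $k'_p/k'_{p-1}\leq(p+1)/p$, so telescoping yields $k'_{p+j}/k'_j\leq\prod_{i=j+1}^{p+j}(i+1)/i=(p+j+1)/(j+1)$. Multiplying for $j=1,\ldots,q$ and using the elementary inequality $(p+q+1)\leq(p+1)(q+1)$ valid for $p,q\geq 1$,
\[
\prod_{j=1}^{q}\frac{k'_{p+j}}{k'_j} \leq \frac{(p+q+1)!}{(p+1)!(q+1)!} = \frac{p+q+1}{(p+1)(q+1)}\binom{p+q}{p} \leq \binom{p+q}{p} \leq 2^{p+q},
\]
the last inequality being the classical $\binom{n}{k}\leq 2^n$ from the binomial expansion of $(1+1)^n$. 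The only genuinely delicate point in the whole argument is identifying the critical capping rate $1+1/p$: any larger rate (say $1+c/p$ with $c>1$, or a fixed ratio $\geq 1+\varepsilon$) would break the binomial telescoping, while a slower rate would also work but would obscure the clean combinatorial identity and introduce extraneous constants.
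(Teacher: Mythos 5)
Your proof is correct. Note that the paper itself gives no proof of this lemma: it is quoted verbatim from the reference \cite{BojanL}, so there is no in-paper argument to compare against. Your construction $k'_1=k_1$, $k'_p=\min\{k_p,(1+1/p)k'_{p-1}\}$ is a complete, self-contained derivation: the cap $k'_p/k'_{p-1}\leq (p+1)/p$ telescopes to $k'_{p+j}/k'_j\leq (p+j+1)/(j+1)$, the product over $j=1,\dots,q$ equals $(p+q+1)!/\bigl((p+1)!(q+1)!\bigr)$, and since $p+q+1\leq (p+1)(q+1)$ for $p,q\geq 1$ this is at most $\binom{p+q}{p}\leq 2^{p+q}$, which is exactly the rearranged form $\prod_{j=1}^{q}k'_{p+j}/k'_j\leq 2^{p+q}$ of the claimed inequality. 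The auxiliary verifications (that $k'_p\leq k_p$, that $(k'_p)$ is non-decreasing because both arguments of the minimum dominate $k'_{p-1}$, and that boundedness of $(k'_p)$ would force $k'_p=(1+1/p)k'_{p-1}$ eventually and hence linear growth, a contradiction) are all sound; your observation that the telescoping only uses the ratio bound for indices $i\geq 2$, where the recursion is actually defined, is also consistent. Your closing remark correctly identifies why a constant ratio cap would fail (it would give $2^{pq}$ rather than $2^{p+q}$), which is indeed the essential point of the construction.
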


We adopt the following notations. The symbol $``\hookrightarrow"$ stands for a continuous and dense inclusion between topological vector spaces. For $h\in\RR^d$ and $f\in\SSS'^*_{\dagger}(\RR^d)$ we denote as $T_h f $ translation by $h$, i.e., $T_hf=f(\:\cdot\:+h)$. We write $\langle x \rangle=(1+|x|^{2})^{1/2}$, $x\in\mathbb{R}^{d}$.

\section{Some important auxiliary results on the space $\SSS^*_{\dagger}(\RR^d)$}\label{PPV}

We collect in this section some important results on the nuclearity of $\SSS^*_{\dagger}(\RR^d)$, the existence of parametrices as well as a characterisation of bounded sets in $\SSS'^*_{\dagger}(\RR^d)$. These are essential tools in the rest of the article. We refer to \cite{ppv} for the proofs. Unless explicitly stated, we deal with the Beurling and Roumieu cases simultaneously. We follow the ensuing convention. We shall first state assertions for the $(M_p)-(A_p)$ case followed in parenthesis by the corresponding statements for the $\{M_p\}-\{A_p\}$ case.

\begin{proposition}
The space $\SSS^*_{\dagger}(\RR^d)$ is nuclear.
\end{proposition}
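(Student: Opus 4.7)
The plan is to promote the compactness of the defining linking maps, already recorded in the definition of $\SSS^*_{\dagger}(\RR^d)$, to genuine nuclearity, and then invoke the standard fact that an $(FS)$- or $(DFS)$-space whose spectrum has nuclear linking maps is itself nuclear. Concretely, in the Beurling case one has to prove that for every $h_1>0$ there exists $h_2>h_1$ such that the inclusion $\SSS^{M_p,h_2}_{A_p,h_2}\hookrightarrow\SSS^{M_p,h_1}_{A_p,h_1}$ is nuclear, while in the Roumieu case one does the analogous statement for the projective description $\SSS^{\{M_p\}}_{\{A_p\}}(\RR^d)\cong\varprojlim_{(r_p)\in\mathfrak{R}}\SSS^{M_p,(r_p)}_{A_p,(r_p)}$ recalled in the previous subsection.

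The natural device is a nuclear expansion of the inclusion based on the Hermite basis $\{h_n\}_{n\in\NN^d}$ of $L^{2}(\RR^d)$. The first key step would be a Gelfand--Shilov type coefficient estimate: if $\varphi\in\SSS^{M_p,h}_{A_p,h}$ then
\[
|\langle\varphi,h_n\rangle|\leq C\,\sigma_h(\varphi)\,e^{-M(c|n|^{1/2})-A(c|n|^{1/2})}
\]
for a constant $c>0$ proportional to $h$. One obtains it from the classical link between Hermite coefficients and smoothness/decay: write $\langle\varphi,h_n\rangle$ via the harmonic oscillator $-\Delta+|x|^2$, integrate by parts $|\alpha|$ times, bound the resulting integrand by $\sigma_h(\varphi)(M_\alpha/h^{|\alpha|})e^{-A(h|x|)/2}$ times polynomial factors in $n$, and optimise both over $\alpha$ and over the spatial/derivative balance to produce the double associated-function decay. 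Complementarily, one shows $\|h_n\|_{\SSS^{M_p,h_1}_{A_p,h_1}}\leq C'e^{M(c'|n|^{1/2})+A(c'|n|^{1/2})}$ for a $c'$ comparable to $h_1$, by direct estimates on $D^{\alpha}h_n$ using the creation/annihilation recurrences.

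With both inequalities in hand, the inclusion acquires the representation $\varphi\mapsto\sum_n\langle\varphi,h_n\rangle h_n$ as a sum of rank-one operators, and nuclearity reduces to
\[
\sum_{n\in\NN^d}\|\langle\cdot,h_n\rangle\|_{(\SSS^{M_p,h_2}_{A_p,h_2})'}\,\|h_n\|_{\SSS^{M_p,h_1}_{A_p,h_1}}<\infty,
\]
which after the two bounds above is dominated by a series of the form $\sum_n e^{-M(c_2|n|^{1/2})-A(c_2|n|^{1/2})+M(c_1|n|^{1/2})+A(c_1|n|^{1/2})}$; this converges once $c_2$ is taken large enough compared to $c_1$, because $(M.5)$ (whence $p!^{\kappa}\subset M_p$) and $(M.6)$ ($p!\subset A_p$) force both $M(\cdot)$ and $A(\cdot)$ to grow strictly faster than any logarithm. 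In the Roumieu case one replaces $h$ and $A(h|\cdot|)$ by a sequence $(r_p)$ and the associated function $B_{r_p}$, runs the same scheme with the norm $\sigma_{(r_p)}$, and invokes Lemma \ref{nwseq} to split the product $\prod_{j=1}^{p+q}r'_j$ into two pieces, one of which is absorbed on each side of the inclusion.

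The main obstacle, and the genuinely quasianalytic content of the argument, is the coefficient estimate itself: unlike the non-quasianalytic setting of \cite{DPPV}, no compactly supported cut-offs are available, so the decay in $n$ of $\langle\varphi,h_n\rangle$ must be extracted purely from the interplay between the weight $e^{A(h|\cdot|)}$ (or $e^{B_{r_p}(|\cdot|)}$) and the derivative factor $M_\alpha/h^{|\alpha|}$ (or $M_\alpha\prod_{j\leq|\alpha|}r_j$) via careful Hermite bookkeeping. This is precisely the technical step that the authors defer to \cite{ppv}.
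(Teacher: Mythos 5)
The first thing to say is that the paper does not prove this proposition at all: it appears in Section \ref{PPV} among the facts imported from \cite{ppv} (``We refer to \cite{ppv} for the proofs''), so your argument has to stand entirely on its own. It does not, because the Hermite machinery is incompatible with the generality of the weight sequences admitted here. The only lower bound on $M_p$ guaranteed by the standing assumptions is $p!^{\kappa}\subset M_p$ for \emph{some} $\kappa>0$ (a consequence of $(M.5)$; the introduction explicitly advertises that any $\lambda>0$ in $p!^{\lambda}\subset M_p$ is allowed). Take $M_p=p!^{1/4}$ and $A_p=p!$: all hypotheses hold and $\SSS^{*}_{\dagger}(\RR^{d})$ is nontrivial (the Gelfand--Shilov indices satisfy $1+\tfrac14>1$), yet no Hermite function belongs to any $\SSS^{M_p,h}_{A_p,h}$. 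Indeed, already for $d=1$ the creation--annihilation expansion gives $\|D^{\alpha}h_n\|_{L^{2}}\geq 2^{-\alpha/2}\sqrt{(n+\alpha)!/n!}\geq 2^{-\alpha/2}\sqrt{\alpha!}$, and since $e^{-A(h|\cdot|)}\in L^{2}(\RR)$ by $(M.6)$ one gets $\sigma_h(h_n)\geq c_h\sup_{\alpha}(h/\sqrt{2})^{\alpha}\sqrt{\alpha!}/M_{\alpha}=\infty$ as soon as $p!^{1/2}\not\subset M_p$. Your bound $\|h_n\|_{\SSS^{M_p,h_1}_{A_p,h_1}}\leq C'e^{M(c'|n|^{1/2})+A(c'|n|^{1/2})}$ is then simply false (the left-hand side is infinite), and the rank-one operators $\varphi\mapsto\langle\varphi,h_n\rangle h_n$ do not even map into the target space.

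The scheme also fails when the Hermite functions do belong to the space, because the paper deliberately allows $M_p$ and $A_p$ to be very different, whereas Hermite expansions with two-sided coefficient estimates only see the Fourier-invariant (symmetric) part of the class. Take $M_p=p!^{2}$ and $A_p=p!$, which satisfy all the hypotheses; since $p!^{2}$ is non-quasianalytic, $\SSS^{M_p,h}_{A_p,h}$ contains every compactly supported Gevrey function of order $2$. On one side, $\|h_n\|_{\SSS^{M_p,h_1}_{A_p,h_1}}\geq \sup_{x}e^{A(h_1|x|)}|h_n(x)|\geq c\,e^{c'h_1|n|^{1/2}}|n|^{-d}$, because $e^{A(h_1|x|)}\asymp e^{h_1|x|/L_0}$ must be evaluated inside the classically allowed region $|x|\asymp\sqrt{|n|}$, where $h_n$ is only polynomially small. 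Hence the summability $\sum_{n}\|\langle\cdot,h_n\rangle\|_{(\SSS^{M_p,h_2}_{A_p,h_2})'}\|h_n\|_{\SSS^{M_p,h_1}_{A_p,h_1}}<\infty$ would force $|\langle\varphi,h_n\rangle|\leq C\sigma_{h_2}(\varphi)e^{-\varepsilon|n|^{1/2}}$ for some $\varepsilon>0$, and by the classical Hermite characterization of $\mathcal{S}^{1/2}_{1/2}$ every $\varphi\in\SSS^{M_p,h_2}_{A_p,h_2}$ would then be real-analytic with Gaussian decay --- contradicting the presence of compactly supported elements. So for no choice of $h_2$ (or of $(r_p)$ in the Roumieu version) is your series finite: the proposed factorization is not nuclear, and not even bounded. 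The decay $e^{-M(c|n|^{1/2})-A(c|n|^{1/2})}$ you posit is unattainable precisely because the Fourier transform interchanges $\SSS^{M_p}_{A_p}$ and $\SSS^{A_p}_{M_p}$ while fixing the $h_n$ up to phases, so the coefficient decay is capped by the slower of the two associated functions. This is why the nuclearity of these mixed classes must be established by other means (e.g.\ weighted Hilbert--Schmidt factorizations of the linking maps via the Grothendieck--Pietsch criterion), which is the content deferred to \cite{ppv}; it is not merely a ``technical step'' of your outline that is postponed, but a point where your outline is unsalvageable.
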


\begin{proposition}\label{parametrix}
For every $t>0$ there exist $G\in\SSS^{M_p,t}_{A_p,t}$ and an ultradifferential operator $P(D)$ of class $(M_p)$ (for every $(t_p)\in\mathfrak{R}$ there exist $G\in\SSS^{M_p,(t_p)}_{A_p,(t_p)}$ and an ultradifferential operator $P(D)$ of class $\{M_p\}$) such that $P(D)G=\delta$.
\end{proposition}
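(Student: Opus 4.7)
The plan is to work on the Fourier side: applying the Fourier transform reduces $P(D)G=\delta$ to the algebraic equation $P(\xi)\widehat{G}(\xi)=(2\pi)^{-d/2}$, so the task becomes the construction of an ultrapolynomial symbol $P$ of class $(M_p)$ whose reciprocal is a well-behaved test function. Recall that the Fourier transform interchanges the roles of the ultradifferentiability sequence $M_p$ and the weight sequence $A_p$ in the scale $\SSS^{M_p,h}_{A_p,h}$. Consequently, membership $G\in\SSS^{M_p,t}_{A_p,t}$ translates, modulo the pairing adjustments, into the following three requirements on $P$:
\begin{enumerate}
\item[(i)] $P$ is an ultrapolynomial of class $(M_p)$, so that $P(D)$ is an admissible ultradifferential operator;
\item[(ii)] $|P(\xi)|\geq c\,e^{M(k|\xi|)}$ for some $c,k>0$, which yields the ultradifferentiability of $G$ of class $(M_p)$ with a norm controlled by $t$;
\item[(iii)] the derivatives $D^{\beta}(1/P)$ obey bounds of the form $C\,A_{\beta}/h^{|\beta|}\cdot e^{-M(k|\xi|)}$, which after Fourier inversion deliver the $e^{A(t|x|)}$-decay of $G$ and its derivatives.
\end{enumerate}

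Following the classical Komatsu scheme, I would build $P$ as a Weierstrass-type product
\begin{equation*}
P(\xi)=\prod_{n=1}^{\infty}\left(1+\frac{|\xi|^{2}}{H^{2}m_{n}^{2}}\right),
\end{equation*}
with $H$ large enough (depending on $t$, the constants $c_0,H$ from $(M.2)$, and on $L_0$ from $(M.6)$). Condition $(M.1)$ and $(M.2)$ for $M_p$ ensure the convergence of the product and the ultrapolynomial estimate on its Taylor coefficients, proving (i); the standard estimate $\ln P(\xi)\geq M(k|\xi|)-C$ (obtained by comparing the product with the defining supremum of $M(\cdot)$) gives (ii). For (iii) one differentiates $\log P$ term by term, applies the Fa\`a~di~Bruno formula, and uses $(M.6)$ to absorb the combinatorial factorials $\beta!$ into $A_{\beta}$, while the tail of the product supplies the $e^{-M(k|\xi|)}$ factor. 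Setting $G(x)=(2\pi)^{-d}\int e^{ix\xi}/P(\xi)\,d\xi$, the identity $P(D)G=\delta$ follows by direct Fourier inversion, and the required bound on $\sigma_{t}(G)$ is obtained by writing $x^{\beta}D^{\alpha}G=\mathcal{F}^{-1}\bigl(D^{\beta}[\xi^{\alpha}/P(\xi)]\bigr)$, estimating via Leibniz, and finally re-summing the resulting polynomial bounds in $|x|$ into an exponential $e^{-A(t|x|)}$ using the definition of the associated function $A$.

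The main obstacle lies in the quantitative bookkeeping: one must choose the parameter $H$ and the indexing in the infinite product so that the resulting constant in the norm $\sigma_{t}(G)$ is finite with the prescribed $t$, not merely with some unspecified parameter. The factors $H^{|\alpha|}$ coming from repeated use of $(M.2)$ tend to accumulate, and keeping them under control requires that $P$ be built with a large safety margin. The Roumieu case is treated analogously, except that the single parameter $h$ is replaced throughout by a sequence $(t_p)\in\mathfrak{R}$: given $(t_p)$, one first applies Lemma~\ref{nwseq} to replace it by a comparable sequence with sub-multiplicative partial products, then constructs the symbol $P$ as an infinite product whose $n$-th factor is governed by $t_{n}m_{n}$, which automatically turns $P(D)$ into an ultradifferential operator of class $\{M_p\}$ and furnishes the required Roumieu-type estimates on $1/P$ and its derivatives.
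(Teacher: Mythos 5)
The paper itself offers no proof of this proposition: it is one of the auxiliary results of Section \ref{PPV} that are explicitly deferred to the reference \cite{ppv}. So I can only assess your argument on its own merits, not against an in-paper proof.

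Your overall strategy---pass to the Fourier side, build an ultrapolynomial symbol $P$ with a lower bound by $e^{M(k|\cdot|)}$ and controlled derivatives of $1/P$, and recover $G=\mathcal{F}^{-1}(c/P)$---is the right one, and your accounting of what the three conditions (i)--(iii) must deliver is essentially correct. But there is a genuine gap at the very first step of the construction: the Weierstrass product $\prod_{n}\bigl(1+|\xi|^{2}/(H^{2}m_{n}^{2})\bigr)$ converges only when $\sum_{n}m_{n}^{-2}<\infty$, and this is \emph{not} guaranteed by the standing assumptions of the paper. The only hypothesis controlling the growth of $m_{n}$ is $(M.5)$: there exists \emph{some} $s>0$ such that $M_{p}^{s}$ is strongly non-quasianalytic, whence $\sum_{n}m_{n}^{-s}<\infty$ for that particular $s$, which may well exceed $2$. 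Concretely, $M_{p}=p!^{1/4}$ satisfies $(M.1)$, $(M.2)$ and $(M.5)$ (with $s=4$) and is one of the quasianalytic sequences the paper explicitly wants to cover ($p!^{\lambda}\subset M_{p}$ with small $\lambda$), yet $m_{n}\asymp n^{1/4}$ and $\sum_{n}m_{n}^{-2}=\sum_{n}n^{-1/2}=\infty$, so your product diverges. Tellingly, your sketch never invokes $(M.5)$ at all, although it is precisely the hypothesis that makes a quasianalytic parametrix possible; any correct proof must use it in an essential way.

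The natural repair is to take an integer $k$ with $2k\geq s$ and work with $\prod_{n}\bigl(1+|\xi|^{2k}/(Hm_{n})^{2k}\bigr)$, whose convergence follows from $\sum_{n}m_{n}^{-2k}<\infty$ and which still dominates $e^{2kM(|\xi|/H)}$ by the same comparison with the supremum defining $M(\cdot)$. This, however, is not a cosmetic change: verifying that the Taylor coefficients of the modified product satisfy the $(M_p)$-class bounds now requires comparing $M_{2kj}$ with $M_{j}^{2k}$ via iterated applications of $(M.2)$ and absorbing the resulting powers of $H$, and the estimates on $D^{\beta}(1/P)$ must be redone with the new zero configuration. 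The same defect reappears in the Roumieu case: for an arbitrary, possibly very slowly growing $(t_p)\in\mathfrak{R}$ one cannot guarantee $\sum_{n}(t_{n}m_{n})^{-2}<\infty$, so the factor-by-factor replacement of $m_{n}$ by $t_{n}m_{n}$ does not rescue convergence either. Until the symbol $P$ is actually shown to exist under the paper's hypotheses, the remainder of the argument (which I otherwise find plausible, including the use of $(M.6)$ to convert the factorials from the quotient-rule expansion into $A_{\beta}$'s) has nothing to stand on.
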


\begin{lemma}\label{appincl}
Let $r>0$ ($(r_p)\in\mathfrak{R}$).
\begin{itemize}
\item[$i)$] For each $\chi,\varphi\in \mathcal{S}^*_{\dagger}(\mathbb{R}^d)$ and $\psi\in\SSS^{M_p,r}_{A_p,r}$ ($\psi\in\SSS^{M_p,(r_p)}_{A_p,(r_p)}$), one has  $\chi*(\varphi\psi)\in \SSS^*_{\dagger}(\RR^d)$.
\item[$ii)$] Let $\varphi,\chi\in \mathcal{S}^*_{\dagger}(\mathbb{R}^d)$ with $\varphi(0)=1$ and $\int_{\RR^d}\chi(x)dx=1$. For each $n\in\ZZ_+$ define $\chi_n(x)=n^d\chi(nx)$ and $\varphi_n(x)=\varphi(x/n)$. Then there exists $k\geq 2r$ ($(k_p)\in\mathfrak{R}$ with $(k_p)\leq (r_p/2)$) such that the operators $\tilde{Q}_n:\psi\mapsto\chi_n*(\varphi_n\psi)$, are continuous as mappings from $\SSS^{M_p,k}_{A_p,k}$ into $\SSS^{M_p,r}_{A_p,r}$ (from $\SSS^{M_p,(k_p)}_{A_p,(k_p)}$ into $\SSS^{M_p,(r_p)}_{A_p,(r_p)}$), for all $n\in\ZZ_+$. Moreover $\tilde{Q}_n\rightarrow \mathrm{Id}$ in $\mathcal{L}_b\left(\SSS^{M_p,k}_{A_p,k},\SSS^{M_p,r}_{A_p,r}\right)$ ($\mathcal{L}_b\left(\SSS^{M_p,(k_p)}_{A_p,(k_p)},\SSS^{M_p,(r_p)}_{A_p,(r_p)}\right)$).
\end{itemize}
\end{lemma}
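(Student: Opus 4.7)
The plan is to handle the Beurling case in detail; the Roumieu case proceeds along the same lines with the scalars $h,r,k$ replaced by sequences in $\mathfrak R$ and the subadditivity of the associated function $B_{r_p}$ recovered via Lemma~\ref{nwseq} after passing to an auxiliary $(r'_p)\le(r_p)$. Two analytic facts are used throughout: the subadditivity $A(\rho_1+\rho_2)\le A(2\rho_1)+A(2\rho_2)$ (from $(\rho_1+\rho_2)^p\le 2^p\max(\rho_1,\rho_2)^p$ and monotonicity of $A$) and the log-convexity inequality $M_{\beta}M_{\alpha-\beta}\le M_{\alpha}$ (from $(M.1)$ together with $M_0=M_1=1$).

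For part $(i)$, move the derivative onto $\chi$ and split via subadditivity:
\[
e^{A(h|x|)}\bigl|D^{\alpha}(\chi*(\varphi\psi))(x)\bigr|\le\int e^{A(2h|y|)}|D^{\alpha}\chi(y)|\cdot e^{A(2h|x-y|)}|\varphi(x-y)||\psi(x-y)|\,dy.
\]
The first factor is bounded by $\sigma_{2h}(\chi)M_{\alpha}(2h)^{-|\alpha|}$ and pulls out. The remaining $x$-independent integral $\int e^{A(2h|u|)}|\varphi(u)||\psi(u)|\,du$ is finite, for $|\psi(u)|\le\sigma_{r}(\psi)e^{-A(r|u|)}$ and $|\varphi(u)|\le\sigma_{2h}(\varphi)e^{-A(2h|u|)}$ (using $\varphi\in\SSS^*_\dagger$) make the integrand $\lesssim e^{-A(r|u|)}$. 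Multiplying by $h^{|\alpha|}/M_{\alpha}$ produces $2^{-|\alpha|}$, uniformly bounded in $\alpha$; since $h>0$ is arbitrary, $\chi*(\varphi\psi)\in\SSS^*_\dagger(\mathbb R^d)$.

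For the equicontinuity in part $(ii)$, the crucial trick is to distribute derivatives \emph{inside} the product rather than onto $\chi_n$, avoiding the $n^{|\alpha|}$-growth of $\|D^{\alpha}\chi_n\|_{L^{1}}$:
\[
D^{\alpha}\tilde Q_n\psi=\sum_{\beta\le\alpha}\binom{\alpha}{\beta}n^{-|\beta|}\chi_n*\bigl((D^{\beta}\varphi)(\cdot/n)\cdot D^{\alpha-\beta}\psi\bigr).
\]
Using $\|D^{\beta}\varphi\|_\infty\le\sigma_{h'}(\varphi)M_{\beta}(h')^{-|\beta|}$, $|D^{\alpha-\beta}\psi(u)|\le\sigma_k(\psi)M_{\alpha-\beta}k^{-|\alpha-\beta|}e^{-A(k|u|)}$, $M_{\beta}M_{\alpha-\beta}\le M_{\alpha}$, the multinomial formula, $A(r|x|)\le A(2r|y|)+A(2r|x-y|)$ combined with $k\ge 2r$ (so that $e^{A(2r|x-y|)-A(k|x-y|)}\le 1$), and the change of variables $\int n^d|\chi(ny)|e^{A(2r|y|)}dy\le\int|\chi(z)|e^{A(2r|z|)}dz<\infty$, the Leibniz sum collapses to $M_{\alpha}\bigl(1/(nh')+1/k\bigr)^{|\alpha|}\le M_{\alpha}(1/h'+1/k)^{|\alpha|}$ (since $n\ge 1$). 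This yields $\sigma_r(\tilde Q_n\psi)\le C(\varphi,\chi)\sigma_k(\psi)$ uniformly in $n$, provided $k\ge 2r$ and $h'\ge 2r$.

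For the convergence $\tilde Q_n\to\mathrm{Id}$ in $\mathcal L_b$, choose $k=3r$ and introduce the intermediate space $\SSS^{M_p,2r}_{A_p,2r}$: the preceding bound also shows that $\tilde Q_n\colon\SSS^{M_p,2r}_{A_p,2r}\to\SSS^{M_p,r}_{A_p,r}$ equicontinuously, while $\SSS^{M_p,3r}_{A_p,3r}\hookrightarrow\SSS^{M_p,2r}_{A_p,2r}$ is compact. Hence any bounded $B\subset\SSS^{M_p,k}_{A_p,k}$ is precompact in $\SSS^{M_p,2r}_{A_p,2r}$, and equicontinuity together with pointwise convergence on $B$ will give uniform convergence on $B$ in the $\sigma_r$-norm. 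For the pointwise convergence, decompose
\[
\tilde Q_n\psi(x)-\psi(x)=\int\chi_n(y)\varphi_n(x-y)\bigl(\psi(x-y)-\psi(x)\bigr)dy+\psi(x)\bigl((\chi_n*\varphi_n)(x)-1\bigr);
\]
the first summand vanishes in $\sigma_r$ by the mean value theorem, the concentration of $\chi_n$ at $0$, and $M_{|\alpha|+1}\le c_0 H^{|\alpha|+1}M_{|\alpha|}$ from $(M.2)$, while the second tends to $0$ uniformly in $x$ because the change of variables $z=ny$ gives $(\chi_n*\varphi_n)(x)-1=\int\chi(z)(\varphi(x/n-z/n^2)-1)dz\to 0$ by continuity of $\varphi$ at $0$ and dominated convergence, and the decay of $\psi$ absorbs the weight $e^{A(r|x|)}$. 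The main technical obstacle is precisely carrying out this pointwise convergence with bounds uniform in the multi-index $\alpha$: the $O(1/n)$ gain from the concentration of $\chi_n$ must overcome the $H^{|\alpha|}$ loss from the Leibniz expansion, and the slack $k\ge 2r$ provides the needed margin. In the Roumieu case, Lemma~\ref{nwseq} supplies $(r'_p)\le(r_p)$ whose partial products satisfy $\prod_{j=1}^{p+q}r'_j\le 2^{p+q}\prod_{j=1}^{p}r'_j\prod_{j=1}^{q}r'_j$, yielding the subadditivity of $B_{r'_p}$ that replaces that of $A$ throughout.
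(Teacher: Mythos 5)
First, a point of reference: the paper itself contains no proof of Lemma \ref{appincl} --- it is one of the auxiliary results of Section \ref{PPV} for which the authors explicitly defer to \cite{ppv} --- so your argument has to stand on its own. Part $(i)$ is correct, and so is the equicontinuity estimate in part $(ii)$: putting the derivatives on $\varphi_n\psi$ rather than on $\chi_n$, using $M_{\beta}M_{\alpha-\beta}\leq M_{\alpha}$ and collapsing the Leibniz sum to $\left(1/(nh')+1/k\right)^{|\alpha|}$ works exactly as you say, and the overall scheme ``equicontinuity plus compact inclusion plus pointwise convergence implies convergence in $\mathcal{L}_b$'' is sound.

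The gap is in the pointwise convergence. When you differentiate the first summand of your decomposition, the mean value theorem applied to $D^{\alpha-\beta}\psi(x-y)-D^{\alpha-\beta}\psi(x)$ produces $M_{|\alpha-\beta|+1}$, and converting this back to $M_{|\alpha-\beta|}$ via $(M.2)$ costs a factor $c_0H^{|\alpha-\beta|+1}$. After summing the Leibniz series the relevant quantity is $\left(r/(nh')+rH/k'\right)^{|\alpha|}$, where $k'$ is the parameter of the space your fixed $\psi$ lives in. The $O(1/n)$ gain from $\int|\chi_n(y)||y|\,dy$ is a single factor independent of $\alpha$ and cannot ``overcome the $H^{|\alpha|}$ loss''; only geometric slack in the parameters can, and that forces $k'>rH$. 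With your choices (intermediate space $\SSS^{M_p,2r}_{A_p,2r}$ and $k=3r$) the bound is essentially $(H/2)^{|\alpha|}$, which is unbounded in $\alpha$ as soon as $H>2$ --- and $H$ is a fixed constant of the sequence that already equals $4$ for $M_p=p!^{2}$. The repair is harmless for the statement (take the intermediate parameter $\geq 2Hr$ and $k>2Hr$, which is still $\geq 2r$), but as written the argument does not close. Two smaller inaccuracies: $(\chi_n*\varphi_n)(x)-1=\int\chi(z)\left(\varphi(x/n-z/n^{2})-1\right)dz$ does \emph{not} tend to $0$ uniformly in $x$ (take $|x|\sim n$); it tends to $0$ only locally uniformly while remaining globally bounded, and the weighted product with $\psi$ tends to $0$ because in addition $e^{A(r|x|)-A(2r|x|)}\to 0$ as $|x|\to\infty$, so a two-region splitting in $x$ is needed there as well. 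And in the Roumieu case your bound $\|D^{\beta}\varphi\|_{\infty}\leq\sigma_{h'}(\varphi)M_{\beta}(h')^{-|\beta|}$ with $h'\geq 2r$ is not available, since $\varphi$ only lies in some $\SSS^{M_p,h_1}_{A_p,h_1}$ with $h_1$ possibly small; one must instead absorb $h_1^{-|\beta|}$ using the divergence of $\prod_{j=|\alpha|-|\beta|+1}^{|\alpha|}r_j$ or shrink $(k_p)$ further, so ``proceeds along the same lines'' hides a genuine adjustment.
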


In the next proposition, given $t>0$ ($(t_p)\in\mathfrak{R}$), we denote as $\overline{\SSS}^{M_p,t}_{A_p,t}$ (as $\overline{\SSS}^{M_p,(t_p)}_{A_p,(t_p)}$) the closure of $\SSS^{(M_p)}_{(A_p)}(\RR^d)$ in $\SSS^{M_p,t}_{A_p,t}$ (the closure of $\SSS^{\{M_p\}}_{\{A_p\}}(\RR^d)$ in $\SSS^{M_p,(t_p)}_{A_p,(t_p)}$).

\begin{proposition}\label{parametrix1}
Let $B$ be a bounded subset of $\SSS'^*_{\dagger}(\RR^d)$. There exists $k>0$ ($(k_p)\in\mathfrak{R}$) such that each $f\in B$ can be extended to a continuous functional $\tilde{f}$ on $\overline{\SSS}^{M_p,k}_{A_p,k}$ (on $\overline{\SSS}^{M_p,(k_p)}_{A_p,(k_p)}$). Moreover, there exists $l\geq k$ ($(l_p)\in\mathfrak{R}$ with $(l_p)\leq (k_p)$) such that $\SSS^{M_p,l}_{A_p,l}\subseteq\overline{\SSS}^{M_p,k}_{A_p,k}$ ($\SSS^{M_p,(l_p)}_{A_p,(l_p)}\subseteq \overline{\SSS}^{M_p,(k_p)}_{A_p,(k_p)}$) and $*:\SSS^{M_p,l}_{A_p,l}\times\SSS^{M_p,l}_{A_p,l}\rightarrow \overline{\SSS}^{M_p,k}_{A_p,k}$ ($*:\SSS^{M_p,(l_p)}_{A_p,(l_p)}\times\SSS^{M_p,(l_p)}_{A_p,(l_p)}\rightarrow \overline{\SSS}^{M_p,(k_p)}_{A_p,(k_p)}$) is a continuous bilinear mapping. Furthermore, there exist an ultradifferential operator $P(D)$ of class $*$ and $u\in \overline{\SSS}^{M_p,l}_{A_p,l}$ ($u\in\overline{\SSS}^{M_p,(l_p)}_{A_p,(l_p)}$) such that $P(D)u=\delta$ and $f=(P(D)u)*f=P(D)(u*\tilde{f})$ for each $f\in B$, where $u*\tilde{f}$ is the image of $\tilde{f}$ under the transpose of the continuous mapping $\varphi\mapsto\check{u}*\varphi$, $\SSS^{(M_p)}_{(A_p)}(\RR^d)\rightarrow \overline{\SSS}^{M_p,k}_{A_p,k}$ ($\SSS^{\{M_p\}}_{\{A_p\}}(\RR^d)\rightarrow \overline{\SSS}^{M_p,(k_p)}_{A_p,(k_p)}$). For $f\in B$, $u*\tilde{f}\in L^{\infty}_{e^{A(l|\cdot|)}}\cap C(\RR^d)$ ($u*\tilde{f}\in L^{\infty}_{e^{B_{l_p}(|\cdot|)}}\cap C(\RR^d)$) and in fact $u*\tilde{f}(x)=\langle \tilde{f}, u(x-\cdot)\rangle$. The set $\{u*\tilde{f}|\, f\in B\}$ is bounded in $L^{\infty}_{e^{A(l|\cdot|)}}$ (in $L^{\infty}_{e^{B_{l_p}(|\cdot|)}}$).
\end{proposition}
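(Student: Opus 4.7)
The plan is to assemble the statement from three ingredients already at hand: equicontinuity of $B$, the approximation Lemma \ref{appincl}, and the parametrix Proposition \ref{parametrix}. I first extract $\tilde{f}$ from equicontinuity. In the Beurling case, $\SSS^{(M_p)}_{(A_p)}(\RR^d)$ is Fr\'echet, so by uniform boundedness the bounded set $B$ is equicontinuous: there exist $k>0$ and $C>0$ with $|\langle f,\varphi\rangle|\leq C\sigma_k(\varphi)$ for all $f\in B$ and $\varphi\in\SSS^{(M_p)}_{(A_p)}(\RR^d)$, so each $f$ extends continuously to $\tilde f$ on $\overline{\SSS}^{M_p,k}_{A_p,k}$. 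In the Roumieu case, the representation $\SSS^{\{M_p\}}_{\{A_p\}}\cong\varprojlim_{(r_p)\in\mathfrak{R}}\SSS^{M_p,(r_p)}_{A_p,(r_p)}$ gives, via the standard bornological argument, an analogous $(k_p)\in\mathfrak{R}$.

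\emph{Next}, I apply Lemma \ref{appincl}(ii) with $k$ in the role of $r$: this yields $l\geq 2k$ (resp.\ $(l_p)\leq(k_p)/2$) such that $\tilde Q_n\to\mathrm{Id}$ in $\mathcal{L}_b(\SSS^{M_p,l}_{A_p,l},\SSS^{M_p,k}_{A_p,k})$. Since $\tilde Q_n\psi\in\SSS^{*}_{\dagger}$ by part (i), every $\psi\in\SSS^{M_p,l}_{A_p,l}$ is a limit in $\SSS^{M_p,k}_{A_p,k}$ of elements of $\SSS^{*}_{\dagger}(\RR^d)$, which gives $\SSS^{M_p,l}_{A_p,l}\subseteq\overline{\SSS}^{M_p,k}_{A_p,k}$. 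Standard weighted estimates using $(M.1)$--$(M.2)$ for both $M_p$ and $A_p$ together with the subadditivity $A(\rho_1+\rho_2)\leq A(H\rho_1)+A(H\rho_2)+\log c_0$ give (after possibly enlarging $l$) continuity of $*:\SSS^{M_p,l}_{A_p,l}\times\SSS^{M_p,l}_{A_p,l}\to\SSS^{M_p,l}_{A_p,l}$; composed with the inclusion above, this yields the required bilinear map into $\overline{\SSS}^{M_p,k}_{A_p,k}$.

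\emph{Third}, I reapply Lemma \ref{appincl}(ii) with $l$ in the role of $r$ to obtain $l''\geq 2l$ (resp.\ $(l''_p)\leq(l_p)/2$) with $\tilde Q_n\to\mathrm{Id}$ from $\SSS^{M_p,l''}_{A_p,l''}$ into $\SSS^{M_p,l}_{A_p,l}$; Proposition \ref{parametrix} applied at level $l''$ then produces $G\in\SSS^{M_p,l''}_{A_p,l''}$ and $P(D)$ of class $*$ with $P(D)G=\delta$. Since $\tilde Q_nG\in\SSS^{*}_{\dagger}$ converges to $G$ in $\SSS^{M_p,l}_{A_p,l}$, we set $u:=G\in\overline{\SSS}^{M_p,l}_{A_p,l}$. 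I then define $u*\tilde f$ as the transpose image of $\tilde f$ under the continuous map $\varphi\mapsto\check u*\varphi$, $\SSS^{*}_{\dagger}(\RR^d)\to\overline{\SSS}^{M_p,k}_{A_p,k}$ (which is exactly the convolution map constructed in the previous paragraph with the first factor fixed to $\check u$); the pointwise formula $(u*\tilde f)(x)=\langle\tilde f,u(x-\cdot)\rangle$ follows by pairing against a test function and using Fubini, and $P(D)(u*\tilde f)=(P(D)u)*\tilde f=f$ from commutation of ultradifferential operators with convolution. The $L^{\infty}_{e^{A(l|\cdot|)}}$-bound on $\{u*\tilde f\,:\, f\in B\}$ comes from $|\langle\tilde f,u(x-\cdot)\rangle|\leq C\sigma_k(u(x-\cdot))$ combined with the translation estimate $\sigma_k(u(x-\cdot))\leq C'e^{-A(l|x|)}$, itself a consequence of the decay $|D^\alpha u(z)|\leq C''(M_\alpha/(l'')^{|\alpha|})e^{-A(l''|z|)}$ and the subadditivity of $A$.

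The main obstacle is the parameter bookkeeping in the Roumieu case: the scalar relations $l\geq 2k$ and $l''\geq 2l$ become entrywise inequalities between sequences in $\mathfrak{R}$, and since the norm $\sigma_{(r_p)}$ involves the products $\prod_{j=1}^{|\alpha|}r_j$, each refinement must preserve a factorisation property of the form $\prod_{j=1}^{p+q}r_j\leq 2^{p+q}\prod_{j=1}^{p}r_j\cdot\prod_{j=1}^{q}r_j$ in order to control the convolution estimates and the $\tilde Q_n$-approximations simultaneously. Lemma \ref{nwseq} is invoked at each refinement to enforce this; checking that $(k_p)$, $(l_p)$, and $(l''_p)$ can be chosen compatibly with every required inequality is the only genuinely non-mechanical step, the rest being a careful assembly of the three quoted results.
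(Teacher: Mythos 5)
A preliminary remark: the paper does not prove Proposition \ref{parametrix1} at all --- Section \ref{PPV} states it as an imported result, with the proof deferred to the preprint \cite{ppv} --- so there is no in-paper argument to compare yours against. That said, your architecture (equicontinuity of $B$ on a single step $\SSS^{M_p,k}_{A_p,k}$ of the projective spectrum, Lemma \ref{appincl}(ii) to pull $\SSS^{M_p,l}_{A_p,l}$ into the closure $\overline{\SSS}^{M_p,k}_{A_p,k}$, and Proposition \ref{parametrix} to produce $u$ and $P(D)$) is exactly the construction the rest of the paper is built to exploit, and I see no structural flaw in it.

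Three local points need repair or expansion. First, the intermediate claim that $*:\SSS^{M_p,l}_{A_p,l}\times\SSS^{M_p,l}_{A_p,l}\to\SSS^{M_p,l}_{A_p,l}$ is continuous is false as stated: the estimate $e^{A(k|x|)}\leq 2e^{A(2k|x-y|)}e^{A(2k|y|)}$ together with $(M.2)$ forces the target index to drop by a factor of roughly $2H$, and no enlargement of $l$ makes convolution an internal operation on a fixed $\SSS^{M_p,l}_{A_p,l}$. The correct statement, which is all you need, is that $*$ maps $\SSS^{M_p,l}_{A_p,l}\times\SSS^{M_p,l}_{A_p,l}$ continuously into $\SSS^{M_p,l_0}_{A_p,l_0}$ where $l_0$ is the index for which Lemma \ref{appincl} already gives $\SSS^{M_p,l_0}_{A_p,l_0}\subseteq\overline{\SSS}^{M_p,k}_{A_p,k}$, provided $l\geq 2Hl_0$; you should fix the chain of indices in this order. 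Second, the translation estimate should read $\sigma_k(u(x-\cdot))\leq C'e^{A(l|x|)}$, not $e^{-A(l|x|)}$; with the minus sign the set $\{u*\tilde f\}$ would decay, which is neither true nor what the proposition asserts. Third, the identification of the transpose-defined $u*\tilde f$ with the pointwise formula $\langle\tilde f,u(x-\cdot)\rangle$ is not a scalar Fubini argument: $\tilde f$ lives only on $\overline{\SSS}^{M_p,k}_{A_p,k}$, so you must check that $u(x-\cdot)\in\overline{\SSS}^{M_p,k}_{A_p,k}$ for every $x$ (translation costs another factor $H$ in the index, so this constrains $l''$), that $x\mapsto u(x-\cdot)$ is continuous into that space, and that the $\overline{\SSS}^{M_p,k}_{A_p,k}$-valued function $x\mapsto\varphi(x)\,u(x-\cdot)$ integrates to $\check u*\varphi$; this is a Riemann-sum argument of exactly the kind carried out in Lemma \ref{bocintl}, and it is the one step in your outline that is genuinely more than parameter bookkeeping. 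With these repairs, and with the Roumieu-case sequence refinements via Lemma \ref{nwseq} that you already identify, the proof goes through.
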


\begin{lemma}\label{boundedsetS}
Let $B\subseteq\SSS'^*_{\dagger}(\RR^d)$. The following statements are equivalent:
\begin{itemize}
\item[$i)$] $B$ is bounded in $\SSS'^*_{\dagger}(\RR^d)$;
\item[$ii)$] for each $\varphi\in\SSS^*_{\dagger}(\RR^d)$, $\{f*\varphi|\, f\in B\}$ is bounded in $\SSS'^*_{\dagger}(\RR^d)$;
\item[$iii)$] for each $\varphi\in\SSS^*_{\dagger}(\RR^d)$ there exist $t,C>0$ (there exist $(t_p)\in\mathfrak{R}$ and $C>0$) such that $|(f*\varphi)(x)|\leq C e^{A(t|x|)}$ ($|(f*\varphi)(x)|\leq C e^{B_{t_p}(|x|)}$) for all $x\in \RR^d$, $f\in B$;
\item[$iv)$] there exist $C,t>0$ (there exist $(t_p)\in\mathfrak{R}$ and $C>0$) such that
    \beqs
    \left|(f*\varphi)(x)\right|\leq Ce^{A(t|x|)}\sigma_t(\varphi)\,\, \left(\mbox{resp.}\,\,\left|f*\varphi(x)\right|\leq Ce^{B_{t_p}(|x|)}\sigma_{(t_p)}(\varphi)\right)
    \eeqs
    for all $\varphi\in\SSS^*_{\dagger}(\RR^d)$, $x\in\RR^d$, $f\in B$.
\end{itemize}
\end{lemma}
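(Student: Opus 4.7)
The plan is to close the equivalences by establishing (i) $\Leftrightarrow$ (ii) and the cycle (i) $\Rightarrow$ (iv) $\Rightarrow$ (iii) $\Rightarrow$ (i), handling the Beurling and Roumieu cases in parallel through the parametrix results of Section \ref{PPV}. The loop (i) $\Leftrightarrow$ (ii) is almost formal. For (i) $\Rightarrow$ (ii), the map $f\mapsto f*\varphi$ is the transpose of the continuous endomorphism $\psi\mapsto\check\varphi*\psi$ of $\SSS^*_\dagger(\RR^d)$, so it sends bounded sets of $\SSS'^*_\dagger(\RR^d)$ to bounded sets. For the converse, I invoke Proposition \ref{parametrix} to produce $G\in\SSS^*_\dagger(\RR^d)$ and an ultradifferential operator $P(D)$ of class $*$ with $P(D)G=\delta$; then $f=P(D)(f*G)$ yields $\langle f,\psi\rangle=\langle f*G,\check P(D)\psi\rangle$, which is uniformly bounded in $f\in B$ because $\{f*G\}_{f\in B}$ is bounded in $\SSS'^*_\dagger(\RR^d)$ by (ii) and $\check P(D)\psi$ is a fixed test function. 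Since $\SSS^*_\dagger(\RR^d)$ is Montel, weak boundedness of $B$ upgrades to strong boundedness.

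The substantive step is (i) $\Rightarrow$ (iv). I apply Proposition \ref{parametrix1} to $B$, obtaining an ultradifferential operator $P(D)$ of class $*$ and a fixed $u$ for which $f=P(D)(u*\tilde f)$ and $\|u*\tilde f\|_{L^\infty_{e^{A(l|\cdot|)}}}\leq C_B$ uniformly in $f\in B$ (respectively with $B_{l_p}$ replacing $A(l|\cdot|)$). Since $P(D)$ commutes with convolution,
\begin{equation*}
(f*\varphi)(x)=\int_{\RR^d}(u*\tilde f)(y)\,P(D)\varphi(x-y)\,dy.
\end{equation*}
The coefficient bounds $|a_\alpha|\leq C(t/2)^{|\alpha|}/M_{|\alpha|}$ for $P(D)=\sum_\alpha a_\alpha D^\alpha$, combined with the defining estimate behind $\sigma_t(\varphi)$, yield $|P(D)\varphi(z)|\leq C'\sigma_t(\varphi)e^{-A(t|z|)}$ (with the sequential analogue in the Roumieu case, obtained using Lemma \ref{nwseq} to tame the products $\prod_j r_j$). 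The standard subadditivity of the associated function then splits $e^{A(l|y|)}\leq c\,e^{A(H'l|x|)}\,e^{A(H'l|x-y|)}$, and for $t$ (respectively $(t_p)$) chosen sufficiently large the residual exponential integral in $|x-y|$ converges, producing the estimate in (iv).

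The implication (iv) $\Rightarrow$ (iii) is immediate, and (iii) $\Rightarrow$ (i) mimics (ii) $\Rightarrow$ (i): applying (iii) to $\varphi=G$ (from Proposition \ref{parametrix}) gives $|(f*G)(x)|\leq C\,e^{A(t|x|)}$, whence
\begin{equation*}
|\langle f,\psi\rangle|=|\langle f*G,\check P(D)\psi\rangle|\leq C\int_{\RR^d}e^{A(t|x|)}|\check P(D)\psi(x)|\,dx,
\end{equation*}
which is finite uniformly in $f\in B$ because $\check P(D)\psi\in\SSS^*_\dagger(\RR^d)$ has super-exponential decay of the required order (in the Roumieu case, the parameter defining the space to which $\psi$ belongs is chosen to dominate the $(t_p)$ supplied by (iii)); the Montel argument then upgrades the resulting weak boundedness to strong boundedness. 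The main obstacle I anticipate is the bookkeeping in the Roumieu case: the sequences $(l_p),(t_p)\in\mathfrak{R}$ from Propositions \ref{parametrix} and \ref{parametrix1}, the coefficient sequence of $P(D)$, and the decay parameter of $\varphi$ must all be coordinated via Lemma \ref{nwseq} so that the subadditivity property of $B_{r_p}$ and the integrability of the residual exponential hold simultaneously.
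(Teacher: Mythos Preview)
The paper does not include its own proof of this lemma; it is one of the auxiliary results quoted without proof from \cite{ppv} at the beginning of Section~\ref{PPV}. So there is nothing to compare your sketch against, and it has to stand on its own.

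Your arguments for (i)$\Rightarrow$(ii) and for (i)$\Rightarrow$(iv) via Proposition~\ref{parametrix1} are correct, and (iv)$\Rightarrow$(iii) is indeed trivial. The genuine gap is in your closures (ii)$\Rightarrow$(i) and (iii)$\Rightarrow$(i). You write that Proposition~\ref{parametrix} ``produce[s] $G\in\SSS^*_\dagger(\RR^d)$'', but it does not. In the Beurling case it delivers, for each chosen $t>0$, a pair $(G,P)$ with $G\in\SSS^{M_p,t}_{A_p,t}$; in the Roumieu case it gives, for each chosen $(t_p)\in\mathfrak{R}$, a pair with $G\in\SSS^{M_p,(t_p)}_{A_p,(t_p)}$. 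Neither single-level space is contained in $\SSS^*_\dagger(\RR^d)$: the Beurling space is the projective intersection $\bigcap_t\SSS^{M_p,t}_{A_p,t}$, and the Roumieu space equals $\bigcap_{(r_p)\in\mathfrak{R}}\SSS^{M_p,(r_p)}_{A_p,(r_p)}$ by the alternative description recalled in Subsection~\ref{notation}. Consequently hypotheses (ii) and (iii), which speak only of $\varphi\in\SSS^*_\dagger(\RR^d)$, cannot be applied to $\varphi=G$ (or $\check G$), and for a general $f\in\SSS'^*_\dagger(\RR^d)$ the convolution $f*G$ is not even defined a priori.

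This is fixable, but not in one line. One route is to first exploit barrelledness: by (ii) the family $\{\varphi\mapsto f*\varphi:f\in B\}\subset\mathcal{L}(\SSS^*_\dagger,\SSS'^*_\dagger)$ is pointwise bounded, hence equicontinuous, so each $f\in B$ extends uniformly to a fixed level $\overline{\SSS}^{M_p,k}_{A_p,k}$ (resp.\ $\overline{\SSS}^{M_p,(k_p)}_{A_p,(k_p)}$); \emph{then} invoke Proposition~\ref{parametrix} with the parametrix parameter matched to that level, and use the approximation operators of Lemma~\ref{appincl}\,$ii)$ to pass from $G$ to genuine test functions. A second, more minor point: in your (iii)$\Rightarrow$(i), the parenthetical ``the parameter defining the space to which $\psi$ belongs is chosen to dominate the $(t_p)$ supplied by (iii)'' is backwards---$\psi$ is an arbitrary test function and its decay parameter is not at your disposal. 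Fortunately this particular issue is harmless, since $(t_p)\in\mathfrak{R}$ forces $e^{B_{t_p}(\rho)}\leq C_h e^{A(h\rho)}$ for every $h>0$, which is enough for the integral $\int e^{B_{t_p}(|x|)}|P(-D)\psi(x)|\,dx$ to converge; but the first gap above still has to be closed before you can get to that integral.
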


\begin{lemma}\label{cha_s}
Let $f\in \SSS'^{(M_p)}_{(p!)}(\RR^d)$ ($f\in\SSS'^{\{M_p\}}_{\{p!\}}(\RR^d)$). Then $f\in\SSS'^*_{\dagger}(\RR^d)$ if and only if there exists $t>0$ (there exists $(t_p)\in\mathfrak{R}$) such that for every $\varphi\in \SSS^{(M_p)}_{(p!)}(\RR^d)$ (for every $\varphi\in\SSS^{\{M_p\}}_{\{p!\}}(\RR^d)$)
\beqs
\sup_{x\in\RR^d}e^{-A(t|x|)}|(f*\varphi)(x)|<\infty\,\, \left(\sup_{x\in\RR^d}e^{-B_{t_p}(|x|)}|(f*\varphi)(x)|<\infty\right).
\eeqs
\end{lemma}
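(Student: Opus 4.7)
The forward direction is immediate from Lemma \ref{boundedsetS}$(iv)$. Since $\{f\}$ is bounded in $\SSS'^{*}_{\dagger}(\RR^d)$, that lemma provides $C,t>0$ (resp.\ $(t_p)\in\mathfrak R$ and $C>0$) such that $|(f*\varphi)(x)|\leq Ce^{A(t|x|)}\sigma_t(\varphi)$ (resp.\ with $B_{t_p}(|x|)$ and $\sigma_{(t_p)}(\varphi)$) for every $\varphi\in\SSS^{*}_{\dagger}(\RR^d)$. Assumption $(M.6)$ yields a continuous embedding $\SSS^{(M_p)}_{(p!)}\hookrightarrow\SSS^{*}_{\dagger}$ (with the obvious Roumieu analogue), so this bound restricts to test functions in the smaller space, giving the claimed growth condition.

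For the converse, my plan is as follows. Applying the closed graph theorem to the map $\varphi\mapsto f*\varphi\colon \SSS^{(M_p)}_{(p!)}(\RR^d)\to L^\infty_{e^{A(t|\cdot|)}}$ (whose graph is closed because weighted uniform convergence implies pointwise convergence; in the Roumieu case one exploits the projective limit description $\SSS^{\{M_p\}}_{\{p!\}}\cong \lim_{(r_p)\in\mathfrak R,\leftarrow}\SSS^{M_p,(r_p)}_{p!,(r_p)}$) produces $h>0$ and $C>0$ (resp.\ $(h_p)\in\mathfrak R$, $C>0$) such that $\sup_{x}e^{-A(t|x|)}|(f*\varphi)(x)|\leq C\sigma^{p!}_h(\varphi)$ for all $\varphi\in\SSS^{(M_p)}_{(p!)}(\RR^d)$, where $\sigma^{p!}_h$ denotes the norm of $\SSS^{M_p,h}_{p!,h}$. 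I then invoke Proposition \ref{parametrix1} in the $(M_p)$--$(p!)$ setting applied to the bounded singleton $\{f\}$, arranging (by enlarging the parameter $k$ a priori, legitimate since extensions of $\tilde f$ restrict canonically to $\overline{\SSS}^{M_p,k'}_{p!,k'}$ whenever $k'\geq k_0$) that the resulting $l\geq k$ majorizes $h$. This yields $u\in\overline{\SSS}^{M_p,l}_{p!,l}$, an ultradifferential operator $P(D)$ of class $(M_p)$ with $P(D)u=\delta$, and the structural identity $f=P(D)(u*\tilde f)$ with $u*\tilde f\in C(\RR^d)\cap L^\infty_{e^{T(l|\cdot|)}}$, where $T$ is the associated function of $p!$.

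The crux is to upgrade this growth estimate on $u*\tilde f$ from $e^{T(l|\cdot|)}$ to $e^{A(t|\cdot|)}$. I pick $u_n\in\SSS^{(M_p)}_{(p!)}$ with $u_n\to u$ in $\SSS^{M_p,l}_{p!,l}$; since $h\leq l$, the monotonicity $\sigma^{p!}_h\leq \sigma^{p!}_l$ guarantees $\{u_n\}$ bounded in $\sigma^{p!}_h$, hence the estimate from the preceding paragraph gives $|(f*u_n)(x)|\leq C'e^{A(t|x|)}$ uniformly in $n$. By translation-continuity on $\overline{\SSS}^{M_p,k}_{p!,k}$, we have the pointwise limit $(f*u_n)(x)=\langle f,u_n(x-\cdot)\rangle\to\langle\tilde f,u(x-\cdot)\rangle=(u*\tilde f)(x)$, so the uniform bound survives in the limit: $|(u*\tilde f)(x)|\leq C'e^{A(t|x|)}$. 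Iterating $(M.2)$ for $A_p$ produces $H_1>0$ and $c>0$ with $e^{A(t|x|)}(1+|x|)^{d+1}\leq c\,e^{A(H_1t|x|)}$, which implies a continuous embedding $L^\infty_{e^{A(t|\cdot|)}}\hookrightarrow\SSS'^{*}_{\dagger}(\RR^d)$ via the usual pairing. Since ultradifferential operators of class $(M_p)$ act continuously on $\SSS'^{*}_{\dagger}$, we conclude $f=P(D)(u*\tilde f)\in\SSS'^{*}_{\dagger}(\RR^d)$. The Roumieu case proceeds along the same lines with scalars replaced by sequences from $\mathfrak R$, with Lemma \ref{nwseq} invoked to obtain the coherent refinements needed at the parameter-coordination step. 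The main technical obstacle I anticipate is precisely this coordination---ensuring $l\geq h$ (respectively $(l_p)\leq (h_p)$, since in the Roumieu case $\sigma^{p!}_{(r_p)}$ is decreasing in $(r_p)$) through a judicious enlargement of $k$ in Proposition \ref{parametrix1}, which becomes particularly delicate in the Roumieu setting where three sequences $(h_p), (k_p), (l_p)$ must simultaneously satisfy the required comparability relations.
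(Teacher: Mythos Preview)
The paper does not actually prove this lemma: Section~\ref{PPV} explicitly states that all results there, including Lemma~\ref{cha_s}, are quoted without proof from~\cite{ppv}. So there is no proof in the present paper to compare against.

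That said, your approach is reasonable and essentially in the spirit of how such results are typically established. The forward direction via Lemma~\ref{boundedsetS}$(iv)$ is clean. For the converse, your strategy---closed graph to upgrade the pointwise hypothesis to a uniform seminorm bound, then a parametrix representation $f=P(D)(u\ast\tilde f)$ from Proposition~\ref{parametrix1} applied in the $(M_p)$--$(p!)$ setting, followed by passing the $e^{A(t|\cdot|)}$ bound from the approximants $f\ast u_n$ to the limit $u\ast\tilde f$---is sound in outline.

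The parameter-coordination issue you flag is genuine but resolvable. Rather than trying to enlarge the $k$ handed to you by Proposition~\ref{parametrix1} (which, as stated, does not obviously permit this), it is cleaner to invoke Proposition~\ref{parametrix} directly: for \emph{any} prescribed $t>0$ (resp.\ $(t_p)\in\mathfrak R$) one obtains $G\in\SSS^{M_p,t}_{p!,t}$ with $P(D)G=\delta$. Choosing $t$ large enough (resp.\ $(t_p)$ small enough) relative to the $h$ (resp.\ $(h_p)$) from your closed-graph step, and then checking separately that $G\ast\tilde f$ makes sense and that $f=P(D)(G\ast\tilde f)$, sidesteps the need to renegotiate the constants in Proposition~\ref{parametrix1}. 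In the Roumieu case you will also want Lemma~\ref{nwseq} to ensure the sequence governing $G$ is compatible with $(h_p)$ and with the $(M.2)$-type estimates needed for the embedding $L^\infty_{e^{B_{t_p}(|\cdot|)}}\hookrightarrow\SSS'^{\{M_p\}}_{\{A_p\}}$.
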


\section{Translation-invariant Banach spaces of quasianalytic ultradistributions}\label{TIBU}
We extend here the theory of translation-invariant Banach spaces of ultradistributions to the quasianalytic case. We closely follow the approach from \cite{DPV,DPPV}, where the distribution and non-quasianalytic ultradistribution cases were treated.  We mention that some of the arguments below are similar to those from \cite{DPPV}, but for the reader's convenience we include all details about the adaptations in the corresponding proofs.

Let $E$ be a $(B)$-space. We call $E$ a \emph{translation-invariant $(B)$-space of ultradistributions of class $*-\dagger$} if it satisfies the following three axioms:
\begin{itemize}
    \item[(I)] $\mathcal{S}^*_{\dagger}(\mathbb{R}^d)\hookrightarrow E\hookrightarrow \mathcal{S}'^*_{\dagger}(\RR^d)$.
    \item[(II)] $T_{h}(E)\subseteq E$ for each $h\in\RR^d$.
    \item[(III)] There exist $\tau,C>0$ (for every $\tau>0$ there exists $C>0$), such that
    \begin{equation*}
    \|T_hg\|_E\leq C \|g\|_E e^{A(\tau|h|)},\,\,\, \forall h\in\RR^d,\,\, \forall g\in E.
    \end{equation*}
\end{itemize}

Notice that the condition (III) implicitly makes use of the continuity of $T_h$. The next lemma shows that such a continuity is always ensured by the conditions (I) and (II).

\begin{lemma}\label{conoftr}
Let $E$ be a $(B)$-space satisfying (I) and (II). The translation operators $T_h:E\to E$ are bounded for all $h\in\RR^d$.
\end{lemma}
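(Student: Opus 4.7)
The natural approach is the closed graph theorem. Since $E$ is a $(B)$-space and $T_h:E\to E$ is linear, it suffices to show that $T_h$ has closed graph. So suppose $g_n\to g$ in $E$ and $T_hg_n\to f$ in $E$; the goal is to identify $f=T_hg$.

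By axiom (I), the continuous embedding $E\hookrightarrow\SSS'^*_\dagger(\RR^d)$ transfers both convergences to the ultradistribution space: $g_n\to g$ and $T_hg_n\to f$ in $\SSS'^*_\dagger(\RR^d)$. Next I would invoke the (standard) fact that translation is continuous on $\SSS^*_\dagger(\RR^d)$ — this is immediate from the definition of the norms $\sigma_h$ and $\sigma_{(r_p)}$ given in the Notation subsection, together with properties $(M.2)$ for $M_p$ and $A_p$ and condition $(M.6)$ for $A_p$, which together yield $e^{A(h|x+h_0|)}\leq C_{h_0}e^{A(h'|x|)}$ for an appropriate $h'$. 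Dualizing, $T_h:\SSS'^*_\dagger(\RR^d)\to\SSS'^*_\dagger(\RR^d)$ is (sequentially) continuous, so $T_hg_n\to T_hg$ in $\SSS'^*_\dagger(\RR^d)$.

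Since $\SSS'^*_\dagger(\RR^d)$ is Hausdorff, limits are unique and $f=T_hg$. Moreover, $T_hg\in E$ by axiom (II), so the graph of $T_h:E\to E$ is closed. The closed graph theorem then gives that $T_h$ is bounded.

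The main (and essentially only) point requiring attention is the continuity of translation on $\SSS'^*_\dagger(\RR^d)$, which amounts to checking continuity of $T_{-h}$ on the test function space $\SSS^*_\dagger(\RR^d)$. This is where the hypotheses on $M_p$ and $A_p$ (in particular $(M.2)$ for estimating shifted derivatives and $(M.6)$ for absorbing the shift inside the weight $e^{A(h|\cdot|)}$) get used, but the estimate is completely routine.
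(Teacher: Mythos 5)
Your proof is correct and follows essentially the same route as the paper: both arguments observe that $T_h$ is continuous from $E$ into $\SSS'^*_\dagger(\RR^d)$ (via the embedding from axiom (I) and the continuity of translation on $\SSS'^*_\dagger(\RR^d)$), deduce that the graph of $T_h:E\to E$ is closed, and conclude by the closed graph theorem for Banach spaces. Your sequential formulation and the remark on verifying continuity of translation on the test space are just explicit versions of steps the paper leaves implicit.
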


\begin{proof} Observe that $T_h$ is continuous as a mapping from $E$ to $\SSS'^*_{\dagger}(\RR^d)$ since it can be decomposed as $\ds E\xrightarrow{\mathrm{Id}}\SSS'^*_{\dagger}(\RR^d)\xrightarrow{T_h}\SSS'^*_{\dagger}(\RR^d)$ and $T_h:\SSS'^*_{\dagger}(\RR^d)\rightarrow\SSS'^*_{\dagger}(\RR^d)$ is continuous. Thus the graph of $T_h$ is closed in $E\times \SSS'^*_{\dagger}(\RR^d)$ and since its image is in $E$ its graph is also closed in $E\times E$ ($E\times E$ is continuously injected into $E\times \SSS'^*_{\dagger}(\RR^d)$ via the mapping $\mathrm{Id}\times \mathrm{Id}$). As $E$ is a $(B)$-space, the closed graph theorem implies that $T_h$ is continuous.
\end{proof}

\begin{lemma}\label{1111177}
Let $E$ be a translation-invariant $(B)$-space of ultradistributions of class $*-\dagger$. For every $g\in E$, $\ds\lim_{h\to0}\|T_{h}g-g\|_{E}=0$. In particular for each $g\in E$ the mapping $h\mapsto T_h g$, $\RR^d \rightarrow E$, is continuous at $0$ (hence everywhere continuous).
\end{lemma}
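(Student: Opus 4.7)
The overall strategy is a standard density plus uniform bound argument, reducing the claim to the case $g \in \SSS^*_\dagger(\RR^d)$ where we can exploit the explicit description of the Banach semi-norms defining $\SSS^*_\dagger(\RR^d)$. First, axiom (III) already supplies a uniform bound $\|T_h\|_{E\to E}\leq M_0$ for $|h|\leq 1$: in the Beurling case, $M_0=Ce^{A(\tau)}$ with $\tau,C$ from (III), and in the Roumieu case we pick any fixed $\tau>0$ and the corresponding $C$. Once translation continuity at $0$ is established, the second assertion of the lemma follows immediately by writing $T_h g - T_{h_0} g = T_{h_0}(T_{h-h_0}g - g)$ and invoking the bound from (III) again.

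Given $g\in E$ and $\varepsilon>0$, I would use the density $\SSS^*_\dagger(\RR^d)\hookrightarrow E$ from (I) to choose $\varphi\in \SSS^*_\dagger(\RR^d)$ with $\|g-\varphi\|_E<\varepsilon/(1+M_0)$. A triangle inequality then gives, for all $|h|\leq 1$,
\begin{equation*}
\|T_h g-g\|_E \leq \|T_h(g-\varphi)\|_E+\|T_h\varphi-\varphi\|_E+\|\varphi-g\|_E \leq \varepsilon+\|T_h\varphi-\varphi\|_E,
\end{equation*}
so the whole matter reduces to showing $\|T_h\varphi-\varphi\|_E\to 0$ as $h\to 0$ for $\varphi\in \SSS^*_\dagger(\RR^d)$. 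Because the inclusion $\SSS^*_\dagger(\RR^d)\hookrightarrow E$ from (I) is continuous, it is enough to prove that $T_h\varphi\to \varphi$ in $\SSS^*_\dagger(\RR^d)$.

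The core technical step is therefore the translation continuity at $0$ of $\SSS^*_\dagger(\RR^d)$. I would use the fundamental theorem of calculus, writing
\begin{equation*}
D^\alpha (T_k\varphi - \varphi)(x)=\sum_{j=1}^d k_j\int_0^1 D^{\alpha+e_j}\varphi(x+tk)\, dt,
\end{equation*}
and estimate, for any fixed $h>0$, $\sigma_h(T_k\varphi-\varphi)$ in terms of $\sigma_{h'}(\varphi)$ for a larger parameter $h'=2Hh$ (where $H$ is from $(M.2)$). The key auxiliary inequality is the subadditivity-type bound $A(\rho_1+\rho_2)\leq A(2\rho_1)+A(2\rho_2)$ (which is immediate from the definition of the associated function and monotonicity), giving $e^{A(h|x|)}\leq e^{A(2h|k|)}e^{A(h'|x+tk|)}$ for $|x+tk|\geq |x|-|k|$ with the choice $h'\geq 2h$. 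Combining with $(M.2)$ in the form $M_{|\alpha|+1}\leq c_0 H^{|\alpha|+1}M_{|\alpha|}$ and the choice $h'=2Hh$ yields
\begin{equation*}
\sigma_h(T_k\varphi-\varphi)\leq \frac{c_0 d}{2h}\,|k|\,e^{A(2h|k|)}\,\sigma_{h'}(\varphi),
\end{equation*}
which tends to $0$ as $k\to 0$ since $\sigma_{h'}(\varphi)<\infty$ by definition of the projective topology on $\SSS^{(M_p)}_{(A_p)}(\RR^d)$. In the Roumieu case, one runs the same argument using the projective description $\lim_{\longleftarrow}\SSS^{M_p,(r_p)}_{A_p,(r_p)}$ and the associated function $B_{r_p}$, replacing the single parameter $h$ by sequences $(r_p)$ and using Lemma \ref{nwseq} to pass from a chosen $(r_p)$ to a larger $(r_p')\in\mathfrak R$ playing the role of $h'=2Hh$ so that the factor $\prod_{j=1}^{|\alpha|+1}r_j'/\prod_{j=1}^{|\alpha|}r_j$ is controlled.

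The main obstacle is precisely the technical bookkeeping with the two weight sequences $M_p$ and $A_p$ in the Roumieu setting: one must choose the auxiliary sequence $(r_p')$ carefully so that both the factor coming from differentiation of $\varphi$ and the factor coming from the shift of the weight in the exponential decay remain controlled in terms of $\sigma_{(r_p')}(\varphi)$. Everything else (the triangle inequality, the uniform bound from (III), the reduction via density) is formal.
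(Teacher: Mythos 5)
Your proposal is correct. The paper gives no argument for this lemma (it is dismissed as ``straightforward''), and your density-plus-uniform-bound reduction together with the explicit estimate for $\sigma_h(T_k\varphi-\varphi)$ is exactly the standard proof the authors presumably had in mind; the computation with $(M.2)$, the inequality $e^{A(\rho+\mu)}\leq e^{A(2\rho)}e^{A(2\mu)}$ and the parameter shift $h\mapsto 2Hh$ all check out. Two small remarks on the Roumieu case: your phrase ``larger $(r_p')$'' should be ``smaller'' (in the $(r_p)$-parametrization of $\SSS^{\{M_p\}}_{\{A_p\}}(\RR^d)$ the seminorms $\sigma_{(r_p)}$ get \emph{stronger} as the sequence decreases, so the auxiliary sequence playing the role of $h'=2Hh$ is $(r_p')\leq (r_p)$), and the appeal to Lemma \ref{nwseq} is indeed needed there, since for an arbitrary $(r_p)\in\mathfrak{R}$ the quotient $r_{|\alpha|+1}/2^{|\alpha|+1}$ need not be bounded, whereas after the normalization of Lemma \ref{nwseq} it is. Alternatively, the Roumieu case is slightly quicker via the inductive-limit description: a fixed $\varphi$ lies in some $\SSS^{M_p,h_0}_{A_p,h_0}$, and your Beurling computation shows $T_k\varphi\to\varphi$ already in the single Banach space $\SSS^{M_p,h_0/(2H)}_{A_p,h_0/(2H)}$, hence in $\SSS^{\{M_p\}}_{\{A_p\}}(\RR^d)$, avoiding the projective description altogether.
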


\begin{proof} The proof is straightforward and we omit it.
\end{proof}

Summarizing, Lemma \ref{conoftr} and Lemma \ref{1111177} prove that a translation-invariant (B)-space of ultradistributions $E$ of class $*-\dagger$ satisfies the following stronger condition than (II):
\begin{itemize}
\item[$(\widetilde{II})$] for each $h>0$, $T_h:E\rightarrow E$ is continuous and for each $g\in E$ the mapping $h\mapsto T_h g$, $\RR^d\rightarrow E$, is continuous.
\end{itemize}

Clearly $T_0=\mathrm{Id}_E$, $T_{h_1+h_2}=T_{h_1}\circ T_{h_2}=T_{h_2}\circ T_{h_1}$. Next, we define \emph{the weight function $\omega(h)$ of $E$} as
\begin{equation}
\label{ravenstvo1}
\omega(h)={\|T_{-h}\|_{\mathcal{L}(E)}}.
\end{equation}
Obviously the weight function is positive and $\omega(0)=1$. Furthermore, since $\SSS^*_{\dagger}(\RR^d)$ is separable (it is an $(FS)$-space or a $(DFS)$-space, respectively), so is $E$. Thus $\omega(h)=\|T_{-h}\|_{\mathcal{L}(E)}$ is the supremum of $\|T_{-h}g\|_E$ where $g$ belongs to a countable dense subset of the closed unit ball of $E$. Since $h\mapsto \|T_{-h}g\|_E$ is continuous, $\omega$ is measurable. Clearly, the logarithm of $\omega$ is subadditive and there exist $C,\tau>0$ (for every $\tau>0$ there exists $C>0$) such that $\omega(h)\leq C e^{A(\tau|h|)}$.

\begin{remark}
In the Beurling case when $A_p=p!$, the assumption (III) is superfluous. In fact, assuming only (I) and (II), Lemma \ref{conoftr} implies that for each $h\in\RR^d$, $T_h:E\rightarrow E$ is continuous. Additionally, one easily verifies that for each fixed $\varphi\in\SSS^*_{\dagger}(\RR^d)$, one has $T_h\varphi\rightarrow \varphi$ as $h\rightarrow 0$ in $\SSS^*_{\dagger}(\RR^d)$ and consequently in $E$. Hence, employing the same reasoning as above, we obtain that $\omega$ is a measurable positive function with subadditive logarithm. Therefore, there exist $C,h>0$ such that $\omega(h)\leq C e^{k|h|}$, $\forall h\in\RR^d$ (cf. \cite[Sect 7.4]{hilleph}), which is in fact condition (III) in this case.
\end{remark}

We will also give an alternative version of (III) in the Roumieu case which sometimes is easier to work with than (III). For this purpose we need the following technical result from \cite{BojanL}.

\begin{lemma}[\cite{BojanL}]\label{ppp}
Let $g:[0,\infty)\rightarrow[0,\infty)$ be an increasing function that satisfies the following estimate:\\
\indent For every $L>0$ there exists $C>0$ such that $g(\rho)\leq A(L\rho)+ \ln C$.\\
Then there exists a subordinate function $\epsilon(\rho)$ such that $g(\rho)\leq A(\epsilon(\rho))+ \ln C'$, for some constant $C'>1$.
\end{lemma}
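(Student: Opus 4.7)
The plan is to exploit the hypothesis at the discrete scales $L = 1/n$, $n \in \ZZ_+$, which yields constants $h_n > 0$ (taken non-decreasing, without loss of generality, and unbounded since otherwise $g$ is globally bounded and the conclusion is trivial) with $g(\rho) \leq A(\rho/n) + h_n$ for all $\rho \geq 0$. The guiding idea is that on a block of $\rho$'s where $A(\rho/n)$ already dominates $h_n$, we essentially want $\epsilon(\rho) \sim \rho/n$; letting $n$ grow with $\rho$ then forces the subordinate decay $\epsilon(\rho)/\rho \sim 1/n \to 0$.

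Concretely, I would set $\rho_n := n A^{-1}(h_n)$ (with $A^{-1}$ the left-inverse of the associated function $A$), so that $A(\rho/n) \geq h_n$ whenever $\rho \geq \rho_n$; on $[\rho_n, \rho_{n+1})$ the hypothesis then collapses to $g(\rho) \leq 2A(\rho/n)$. A standard consequence of $(M.2)$ for $A_p$ is the inequality $2A(t) \leq A(Ht) + \ln c_0$ for all $t \geq 0$, with $H, c_0$ as in $(M.2)$; applying this gives $g(\rho) \leq A(H\rho/n) + \ln c_0$ on each such block, so I would set $\epsilon_0(\rho) := H\rho/n$ on $[\rho_n, \rho_{n+1})$. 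The finitely many values of $g$ on $[0, \rho_1]$ are handled by enlarging the final constant $C'$ to absorb $2h_1$.

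The main obstacle is that $\epsilon_0$ drops at each transition $\rho_{n+1}$, from $H\rho_{n+1}/n$ to $H\rho_{n+1}/(n+1)$, and is therefore not monotone. The remedy is to pass to the running supremum $\epsilon(\rho) := \sup_{\rho' \leq \rho} \epsilon_0(\rho')$, which is non-decreasing and satisfies $\epsilon \geq \epsilon_0$; the bound $g(\rho) \leq A(\epsilon(\rho)) + \ln C'$ survives because $A$ is non-decreasing. If the definition of subordinate function requires continuity, a piecewise-linear interpolation of $\epsilon$ across its plateaus produces a continuous non-decreasing majorant with the same subordinate behaviour.

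The only genuinely nontrivial verification is $\epsilon(\rho)/\rho \to 0$. On $[\rho_n, \rho_{n+1})$ one has $\epsilon(\rho) = \max(H\rho/n,\ \max_{k < n} H\rho_{k+1}/k)$; the first term contributes $H/n \to 0$. For the second, I would split the maximum at a threshold $N$: the contributions from $k \leq N$ are bounded by $H\rho_{N+1}/\rho_n$, which tends to $0$ as $n \to \infty$, while those with $N < k < n$ are bounded by $H\rho_{k+1}/(k\rho_n) \leq H/N$ because $\rho_{k+1} \leq \rho_n$. Letting first $n \to \infty$ and then $N \to \infty$ yields $\limsup_{\rho \to \infty} \epsilon(\rho)/\rho = 0$. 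Everything else is bookkeeping.
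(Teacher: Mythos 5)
The paper does not actually prove Lemma \ref{ppp}; it is quoted verbatim from \cite{BojanL} and used as a black box, so there is no in-text argument to compare yours against. On its own merits your proof is correct, and it is essentially the standard construction going back to Komatsu's Lemma 3.12 in \cite{Komatsu1}: discretize the hypothesis at $L=1/n$, locate the thresholds $\rho_n=nA^{-1}(h_n)$ past which $A(\rho/n)$ dominates the constant $h_n$, convert $2A(t)\leq A(Ht)+\ln c_0$ (a consequence of $(M.2)$ for $A_p$, cf.\ \cite[Prop.~3.6]{Komatsu1}) into the bound $g(\rho)\leq A(H\rho/n)+\ln c_0$ on each block, and repair monotonicity by a running supremum. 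All the essential points check out: the boundedness of $g$ when the $h_n$ are bounded (because $\inf_n A(\rho/n)=0$), the continuity of the running supremum (only downward jumps occur at block transitions, so no interpolation is really needed except to make $\epsilon$ strictly increasing and to set $\epsilon(0)=0$ on the initial segment), and the two-parameter $\limsup$ argument for $\epsilon(\rho)/\rho\to0$. Only cosmetic points deserve attention: on $[0,\rho_1]$ the function $g$ is bounded by $g(\rho_1)\leq A(\rho_1)+h_1$ because $g$ is increasing (it does not take ``finitely many values''); a few initial indices may have $h_n=0$ and hence $\rho_n=0$, giving empty blocks, which is harmless; and one should use the generalized inverse $A^{-1}(s)=\inf\{t:A(t)\geq s\}$ together with the continuity and unboundedness of the associated function to justify $A(\rho/n)\geq h_n$ for $\rho\geq\rho_n$. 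None of these is a gap.
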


See \cite{Komatsu1} for the definition of subordinate function.

\begin{lemma}\label{pppp}
In the Roumieu case condition (III) is equivalent to the following one:
\begin{itemize}
\item[$(\widetilde{III})$] there exist $(l_p)\in\mathfrak{R}$ and $C>0$ such that $\|T_h g\|_E\leq C \|g\|_E e^{B_{l_p}(|h|)}$, for all $g\in E$, $h\in\RR^d$.
\end{itemize}
\end{lemma}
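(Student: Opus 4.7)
The plan is to handle the easier direction $(\widetilde{III}) \Rightarrow (III)$ first: since $l_p \nearrow \infty$, given any $\tau > 0$ one finds a constant $c_\tau > 0$ with $\prod_{j=1}^p l_j \geq c_\tau \tau^{-p}$ for all $p \in \NN$, which yields the pointwise bound $B_{l_p}(\rho) \leq A(\tau\rho) + |\ln c_\tau|$. Substituting this into the estimate in $(\widetilde{III})$ produces $(III)$ for the given $\tau$.

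For the converse $(III) \Rightarrow (\widetilde{III})$, I would work through the weight $\omega(h) = \|T_{-h}\|_{\mathcal{L}(E)}$. Taking the supremum in $(III)$ over the closed unit ball of $E$ and using $|-h|=|h|$, one obtains $\omega(h) \leq C_\tau e^{A(\tau|h|)}$ for every $\tau > 0$. Then the increasing function $\phi(\rho) := \sup_{|h|\leq \rho} \ln_+ \omega(h)$ satisfies, for every $L > 0$, the bound $\phi(\rho) \leq A(L\rho) + \ln C(L)$, which is precisely the hypothesis of Lemma \ref{ppp}. Applying that lemma delivers a subordinate function $\epsilon$ and a constant $C' > 1$ with $\phi(\rho) \leq A(\epsilon(\rho)) + \ln C'$.

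The crux is the final step: translating $A \circ \epsilon$ into $B_{l_p}$ for some $(l_p) \in \mathfrak{R}$. I would invoke the standard Komatsu-type correspondence between subordinate functions and sequences in $\mathfrak{R}$ (used in related form in \cite{Komatsu1,BojanL}): because $\epsilon(\rho)/\rho \to 0$, one can build $(l_p) \in \mathfrak{R}$ out of the rate at which $\rho/\epsilon(\rho) \to \infty$, calibrated against the counting function of $A_p/A_{p-1}$, so that $A(\epsilon(\rho)) \leq B_{l_p}(\rho) + \ln C''$ uniformly in $\rho$. Combined with the previous paragraph this gives $\omega(h) \leq C''' e^{B_{l_p}(|h|)}$, and then $\|T_h g\|_E \leq \omega(-h)\|g\|_E$ produces $(\widetilde{III})$. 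This last conversion is the main technical obstacle; I would quote it as a known Roumieu-style lemma rather than re-derive it, since its proof is independent of the translation-invariant framework at play here.
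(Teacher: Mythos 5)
Your proposal is correct and follows essentially the same route as the paper, whose proof is simply a citation of the argument for $(c)\Leftrightarrow(\tilde{c})$ in \cite[Theorem 4.2]{DPPV}: that argument likewise deduces $\omega(h)\leq C_\tau e^{A(\tau|h|)}$ for every $\tau>0$, applies Lemma \ref{ppp} to obtain a subordinate function $\epsilon$, and then converts $A\circ\epsilon$ into $B_{l_p}+O(1)$ via the standard correspondence from \cite{BojanL}. The converse direction via $\prod_{j=1}^{p}l_j\geq c_\tau\tau^{-p}$ is also the expected elementary estimate, so nothing is missing.
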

\begin{proof}
The proof is analogous to that of $(c)\Leftrightarrow(\tilde{c})$ in \cite[Theorem 4.2]{DPPV}.
\end{proof}

The next theorem gives a weak criterion to conclude that a $(B)$-space $E$ is a translation-invariant space of ultradistributions of class $*-\dagger$.

\begin{theorem}
Let $E$ be a $(B)$-space satisfying:
\begin{itemize}
\item[(I)$'$] $\mathcal{S}^{(M_p)}_{(p!)}(\mathbb{R}^d)\hookrightarrow E\hookrightarrow \mathcal{S}'^{(M_p)}_{(p!)}(\RR^d)$ ($\mathcal{S}^{\{M_p\}}_{\{p!\}}(\mathbb{R}^d)\hookrightarrow E\hookrightarrow \mathcal{S}'^{\{M_p\}}_{\{p!\}}(\RR^d)$);
\item[(II)] $T_h(E)\subseteq E$, for all $h\in\mathbb{R}^{n}$;
\item[(III)$'$] for any $g\in E$ there exist $C=C_g>0$ and $\tau=\tau_g>0$ (for every $\tau>0$ there exists $C=C_{g,\tau}>0$) such that $\|T_hg\|_E\leq C e^{A(\tau|h|)}$, $\forall h\in\RR^d$.
\end{itemize}
Then $E$ is a translation-invariant $(B)$-space of ultradistributions of class $*-\dagger$.
\end{theorem}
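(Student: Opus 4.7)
The goal is to upgrade (I)$'$ and (III)$'$ to (I) and (III); (II) is unchanged. I proceed in four steps, handling the Beurling and Roumieu cases in parallel.

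\emph{Steps 1 and 2: bounded translations and uniform (III).} Lemma \ref{conoftr} applies verbatim under (I)$'$ and (II): its closed-graph argument only uses the continuous embedding $E\hookrightarrow\SSS'^{(M_p)}_{(p!)}$ and the continuity of $T_h$ there, so each $T_h\in\mathcal{L}(E)$. To upgrade (III)$'$, in the Beurling case the sets $E_n=\{g\in E:\|T_hg\|_E\le ne^{A(n|h|)}\text{ for all }h\in\RR^d\}$ are closed, by lower-semicontinuity of $g\mapsto\sup_h e^{-A(n|h|)}\|T_hg\|_E$, and cover $E$ by (III)$'$; Baire yields a ball inside some $E_n$, and a standard translation-and-scaling argument converts this into $\|T_hg\|_E\le C\|g\|_E e^{A(n|h|)}$, which is (III). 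In the Roumieu case, for each fixed $\tau>0$ the family $\{e^{-A(\tau|\cdot|)}T_h\}_h\subset\mathcal{L}(E)$ is pointwise bounded by (III)$'$, and Banach--Steinhaus turns this into the uniform bound of (III).

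\emph{Step 3: $E\hookrightarrow\SSS'^{\ast}_\dagger$.} For $g\in E$ and $\varphi\in\SSS^{(M_p)}_{(p!)}(\RR^d)$, rewrite $(g*\varphi)(x)=\langle T_{-x}g,\check\varphi\rangle$. The continuity of (I)$'$ bounds this by $C\,q(\check\varphi)\,\|T_{-x}g\|_E$ for some continuous seminorm $q$ on $\SSS^{(M_p)}_{(p!)}$, while the upgraded (III) supplies $\|T_{-x}g\|_E\le C\|g\|_E e^{A(\tau|x|)}$ (with $e^{B_{l_p}(|x|)}$ in the Roumieu case). Lemma \ref{cha_s} then places $g$ in $\SSS'^{\ast}_\dagger(\RR^d)$. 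Continuity of the resulting map $E\to\SSS'^{\ast}_\dagger$ follows from the closed graph theorem: convergence in either space is visible in the Hausdorff $\SSS'^{(M_p)}_{(p!)}$, so the graph is closed, and $\SSS'^{\ast}_\dagger$ is of type $(DFS)$ or $(FS)$, both of which admit the closed graph theorem from a Banach source.

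\emph{Step 4: $\SSS^{\ast}_\dagger\hookrightarrow E$ (main obstacle).} This is the essentially new point and the principal difficulty, because (I)$'$ only controls the $E$-norm through $\SSS^{(M_p)}_{(p!)}$-seminorms, which are strictly finer than those of $\SSS^{\ast}_\dagger$. With Step~2 in hand, the estimate $\omega(h)\le Ce^{A(\tau|h|)}$ endows $E$ with a continuous Banach $L^1_\omega$-module structure $u*g=\int u(y)T_{-y}g\,dy$ satisfying $\|u*g\|_E\le\|u\|_{L^1_\omega}\|g\|_E$. Given $\varphi\in\SSS^{\ast}_\dagger$, I would combine the parametrix of Proposition \ref{parametrix} (yielding $G\in\SSS^{M_p,t}_{A_p,t}\subset L^1_\omega$, with $t$, resp.~$(t_p)$, chosen so that $|G|\omega\in L^1$, and an ultradifferential operator $P(D)$ of class $\ast$ with $P(D)G=\delta$, so that $\varphi=G*P(D)\varphi$) with Lemma \ref{appincl}: taking both generators in $\SSS^{(M_p)}_{(p!)}$ places each $\tilde Q_n\varphi$ in $\SSS^{(M_p)}_{(p!)}\subset E$, and the sequence $\chi_n$ is uniformly bounded in $L^1_\omega$ since $\omega(y/n)\le Ce^{A(\tau|y|)}$. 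The subtle point, and the crux of the proof, is that the (I)$'$-seminorm bounds on $\tilde Q_n\varphi$ are \emph{not} uniform in $n$, because the $p!$-decay of the multipliers $\varphi_{0,n}(x)=\varphi_0(x/n)$ deteriorates as $n\to\infty$; one must reroute the estimate through the $L^1_\omega$-action on $E$ together with the parametrix decomposition, so that the final uniform constant depends only on an $\SSS^{\ast}_\dagger$-seminorm of $\varphi$. Completeness of $E$ and Step~3 then identify the $E$-limit of the approximations with $\varphi$, giving $\varphi\in E$ with the required continuity, while density of $\SSS^{\ast}_\dagger$ in $E$ is automatic from the density supplied by (I)$'$.
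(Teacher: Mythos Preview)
Your Steps 1--3 are correct and essentially coincide with the paper's argument: the closed-graph proof that each $T_h\in\mathcal{L}(E)$, the Baire/Banach--Steinhaus upgrade of (III)$'$ to (III), and the use of Lemma~\ref{cha_s} together with a Pt\'ak closed-graph argument for $E\hookrightarrow\SSS'^{*}_{\dagger}$ are exactly what the paper does.

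Step~4, however, has a genuine gap. Your proposed ``rerouting through the $L^{1}_{\omega}$-action together with the parametrix'' is circular. The module estimate $\|u*g\|_{E}\le\|u\|_{L^{1}_{\omega}}\|g\|_{E}$ can indeed be established under (I)$'$ and (III), but to exploit $\varphi=G*P(D)\varphi$ you would need $P(D)\varphi\in E$, and $P(D)\varphi$ lies only in $\SSS^{*}_{\dagger}$, which is precisely the membership you are trying to prove. Likewise, writing $\tilde Q_{n}\varphi=\chi_{n}*(\varphi_{0,n}\varphi)$ and using the module bound only transfers the problem to $\|\varphi_{0,n}\varphi\|_{E}$, and the sole control you have on this quantity is through (I)$'$, i.e.\ through $\SSS^{(M_p)}_{(p!)}$-seminorms, which blow up with $n$ exactly as you observed. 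Neither the parametrix nor the approximation operators $\tilde Q_{n}$ break this circle, because at no stage do you produce an element of $E$ whose $E$-norm is bounded by an $\SSS^{*}_{\dagger}$-seminorm of $\varphi$ without first assuming $\SSS^{*}_{\dagger}\subset E$.

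The paper avoids this by a different, self-contained device: a lattice partition of unity. One chooses $\psi=\psi_{1}*\psi_{2}$ with $\psi_{1}\in\mathcal{D}(\RR^{d})$ satisfying $\sum_{m\in\ZZ^{d}}\psi_{1}(\cdot-m)=1$ and $\psi_{2}\in\SSS^{(M_p)}_{(p!)}$ (resp.\ $\SSS^{\{M_p\}}_{\{p!\}}$) with $\int\psi_{2}=1$, so that $\psi\in\SSS^{(M_p)}_{(p!)}$ and $\sum_{m}T_{-m}\psi=1$. Each term $\varphi\,T_{-m}\psi=T_{-m}(\psi\,T_{m}\varphi)$ has its $p!$-decay supplied by the \emph{fixed} function $\psi$, so $\psi\,T_{m}\varphi\in\SSS^{(M_p)}_{(p!)}$ and (I)$'$ applies with bounds that are uniform once the $A_p$-decay of $\varphi$ at the lattice point $m$ is factored in; the translate cost $\omega(m)\le Ce^{A(\tau|m|)}$ from (III) is then absorbed by inserting the weight $e^{2A(\tau|m|)}$ into the $\SSS^{(M_p)}_{(p!)}$-estimate of $\psi\,T_{m}\varphi$ (using $p!\subset A_p$ to convert between $e^{A(\cdot)}$ and exponential decay). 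This yields $\sum_{m}\|\varphi\,T_{-m}\psi\|_{E}<\infty$ uniformly over bounded sets in $\SSS^{*}_{\dagger}$, hence the partial sums are Cauchy in $E$ and converge to $\varphi$. The key point is that the $p!$-decay needed to invoke (I)$'$ is carried by a single fixed $\psi$, not by an $n$-dependent cut-off whose decay degenerates.
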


\begin{proof} Employing the same technique as in the proof of Lemma \ref{conoftr}, one easily verifies that conditions (I)$'$ and (II) imply the continuity of $T_h:E\rightarrow E$. The proof of (III) can be obtain by adapting the proof of $(c)$ in \cite[Theorem 4.2]{DPPV}.

 We now address $(I)$. To prove $\SSS^*_{\dagger}(\RR^d)\hookrightarrow E$, by $(I)'$, it is enough to prove that $\SSS^{*}_{\dagger}(\RR^d)$ is continuously injected into $E$. Pick $\psi_1\in\DD(\RR^d)$ such that
\beqs
\sum_{m\in\ZZ^d}\psi_1(x-m)=1,\,\, \forall x\in\RR^d,\,\, \mathrm{supp}\:\psi_1\in[-1,1]^d
\eeqs
and $\psi_1$ is non-negative and even. Next, pick $\psi_2\in \SSS^{(M_p)}_{(p!)}(\RR^d)$ ($\psi_2\in\SSS^{\{M_p\}}_{\{p!\}}(\RR^d)$), such that $\int_{\RR^d}\psi_2(x)dx=1$ and $\psi_2$ is even. Set $\psi=\psi_1*\psi_2$. One readily verifies that $\sum_{m\in\ZZ^d}\psi(x-m)=1$ for all $x\in\RR^d$ and $\psi\in \SSS^{(M_p)}_{(p!)}(\RR^d)$ in the Beurling case and $\psi\in \SSS^{\{M_p\}}_{\{p!\}}(\RR^d)$ in the Roumieu case, respectively. By (III), there exist $C,\tau>0$ (for every $\tau>0$ there exists $C>0$) such that
\beq\label{boundsfor1}
\|\varphi T_{-m}\psi\|_E\leq C e^{-A(\tau|m|)}\|e^{2A(\tau|m|)}\psi T_m \varphi\|_E,\,\, \forall \varphi\in\SSS^*_{\dagger},\, \forall m\in\ZZ^d.
\eeq
For $m\in\ZZ^d$, consider the linear mapping $\rho_{m,\tau}(\varphi)=e^{2A(\tau|m|)}\psi T_m \varphi$, $\SSS^{(M_p)}_{(A_p)}(\RR^d)\rightarrow \SSS^{(M_p)}_{(p!)}(\RR^d)$ ($\SSS^{\{M_p\}}_{\{A_p\}}(\RR^d)\rightarrow \SSS^{\{M_p\}}_{\{p!\}}(\RR^d)$). Clearly, it is well defined. Let $B$ be a bounded subset of $\SSS^*_{\dagger}(\RR^d)$. Then for every $h>0$ (there exists $h>0$) such that
\beq\label{bb119}
\sup_{\varphi\in B}\sup_{\alpha\in\NN^d}\frac{h^{|\alpha|}\left\|e^{A(h|\cdot|)}D^{\alpha}\varphi\right\|_{L^{\infty}}}{M_{\alpha}}<\infty
\eeq
Now, \cite[Lemma 3.6]{Komatsu1} implies
\beq\label{rrss7}
e^{2A(\tau|m|)}\leq 2c_0 e^{A(2H\tau|x+m|)}e^{A(2H\tau|x|)}.
\eeq
In the Beurling case, let $h_1>0$ be arbitrary but fixed. Choose $h>0$ such that $h\geq \max\{2H\tau,2h_1\}$ and $e^{A(2H\tau\lambda)}\leq C' e^{h\lambda}$ for all $\lambda\geq 0$ (such an $h$ exists because $p!\subset A_p$). By (\ref{bb119}) and (\ref{rrss7}) we have
\beq\label{vvkk3}
\frac{h_1^{|\alpha|}\left|D^{\alpha}\left(\psi(x)T_m\varphi(x)\right)\right|e^{h_1|x|}}{M_{\alpha}}\leq C_2e^{-2A(\tau|m|)},
\eeq
for all $x\in\RR^d$, $m\in\ZZ^d$, $\varphi\in B$. Hence $\{\rho_{m,\tau}|\, m\in\ZZ^d\}$ is uniformly bounded on $B$. In the Roumieu case there exist $\tilde{h},\tilde{C}>0$ such that $\tilde{h}^{|\alpha|}\left|D^{\alpha}\psi(x)\right|e^{\tilde{h}|x|}\leq \tilde{C}M_{\alpha}$ for all $x\in\RR^d$, $\alpha\in\NN^d$. For the $h>0$ for which (\ref{bb119}) holds choose $0<\tau \leq h/(2H)$ such that $e^{A(2H\tau\lambda)}\leq C' e^{\tilde{h}\lambda/2}$ for all $\lambda\geq 0$ (such a $\tau$ exists because $p!\subset A_p$). Choose $h_1\leq \min\{h/2,\tilde{h}/2\}$. Then, by using (\ref{bb119}) and (\ref{rrss7}), similarly as in the Beurling case, we obtain (\ref{vvkk3}), i.e., $\{\rho_{m,\tau}|\, m\in\ZZ^d\}$ is uniformly bounded on $B$. Now, (I)$'$ implies that $\|\rho_{m,\tau}(\varphi)\|_E\leq C'_2$ for all $\varphi\in B$, $m\in\ZZ^d$. By using (\ref{boundsfor1}), we obtain that the sequence $\left\{\sum_{|m|\leq N} \varphi T_{-m}\psi\right\}_{N=0}^{\infty}$ is a Cauchy sequence in $E$ for each $\varphi\in B$. Since its limit is $\varphi$ in $\SSS'^{(M_p)}_{(p!)}(\RR^d)$ (in $\SSS'^{\{M_p\}}_{\{p!\}}(\RR^d)$) it converges to $\varphi\in E$. Also $\|\varphi\|_E\leq C$ for all $\varphi\in B$. This implies that $\SSS^*_{\dagger}(\RR^d)\subseteq E$ and the inclusion maps bounded sets into bounded sets. As $\SSS^*_{\dagger}(\RR^d)$ is bornological, the inclusion is continuous. It remains to prove $E\subseteq \SSS'^*_{\dagger}(\RR^d)$. By (I)$'$ for a bounded set $B$ in $\SSS^{(M_p)}_{(p!)}(\RR^d)$ (in $\SSS^{\{M_p\}}_{\{p!\}}(\RR^d)$) there exists $D>0$ such that $|\langle g,\check{\varphi}\rangle|\leq D\|g\|_E$ for all $g\in E$ and $\varphi\in B$. Then (III) implies that there exist $C,\tau>0$ (for every $\tau>0$ there exists $C>0$) such that
\beqs
\left|(g*\varphi)(y)\right|\leq D\|T_y g\|_E\leq CD e^{A(\tau|y|)},\,\, \mbox{for all}\,\, y\in\RR^d,\, \varphi\in B,\, g\in E.
\eeqs
In the Beurling case Lemma \ref{cha_s} implies $E\subseteq \SSS'^{(M_p)}_{(A_p)}(\RR^d)$. In the Roumieu case Lemma \ref{cha_s} together with Lemma \ref{pppp} implies $E\subseteq \SSS'^{\{M_p\}}_{\{A_p\}}(\RR^d)$. Since $E\rightarrow \SSS'^{(M_p)}_{(p!)}(\RR^d)$ is continuous ($E\rightarrow \SSS'^{\{M_p\}}_{\{p!\}}(\RR^d)$ is continuous) it has a closed graph. Thus the inclusion $E\rightarrow \SSS'^*_{\dagger}(\RR^d)$ has a closed graph. As $\SSS'^{(M_p)}_{(A_p)}(\RR^d)$ is a $(DFS)$-space ($\SSS'^{\{M_p\}}_{\{A_p\}}(\RR^d)$ is an $(FS)$-space), it is a Pt\'{a}k space (cf. \cite[Sect. IV. 8, p. 162]{Sch}). Thus the continuity of $E\rightarrow \SSS'^*_{\dagger}(\RR^d)$ follows from the Pt\'{a}k closed graph theorem (cf. \cite[Thm. 8.5, p. 166]{Sch}).
\end{proof}

Throughout the rest of the article we shall \emph{always assume} that $E$ is a translation-invariant $(B)$-spaces of ultradistributions of class $*-\dagger$. Our next concern is the study of convolution structures on $E$. We need three technical lemmas.

\begin{lemma}\label{bocintl}
Let $\varphi\in\SSS^*_{\dagger}(\RR^{2d})$. Then for each $y\in \RR^d$, $\varphi(\cdot,y)\in \SSS^*_{\dagger}(\RR^d)$ and the function $\ds\psi(x)=\int_{\RR^d}\varphi(x,y)dy$ is an element of $\SSS^*_{\dagger}(\RR^d)$. Moreover, the function $\mathbf{f}:\RR^d\rightarrow E$, $y\mapsto \varphi(\cdot,y)$, is Bochner integrable and $\ds\psi=\int_{\RR^d}\mathbf{f}(y)dy$.
\end{lemma}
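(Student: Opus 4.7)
The plan is to reduce everything to a single decay estimate for the seminorms of $\varphi(\cdot,y)$: for every target $h'>0$ (resp.\ every $(r_p')\in\mathfrak{R}$) there exist $h\ge h'$ and $C>0$ (resp.\ $(q_p)\in\mathfrak{R}$ with $(q_p)\le(r_p')$ and $C>0$) such that
\beqs
\sigma_{h'}(\varphi(\cdot,y))\le C\sigma_h(\varphi)e^{-A(h'|y|)}\quad\bigl(\mbox{resp. }\sigma_{(r_p')}(\varphi(\cdot,y))\le C\sigma_{(q_p)}(\varphi)e^{-B_{r_p'}(|y|)}\bigr).
\eeqs
To derive this, I start from the restriction of the $2d$ estimate $h^{|\alpha|}e^{A(h|(x,y)|)}|D_x^\alpha\varphi(x,y)|\le\sigma_h(\varphi)M_\alpha$ to $x$-derivatives alone, and split the weight via the inequality $A(h'|x|)+A(h'|y|)\le A(2Hh'|(x,y)|)+\ln c_0$, a direct consequence of $(M.2)$ for $A_p$. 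Choosing $h=2Hh'$ produces the Beurling estimate. In the Roumieu case, Lemma \ref{nwseq} furnishes $(s_p)\le(r_p')$ such that $A_p\prod_{j=1}^p s_j$ again satisfies $(M.2)$; the same derivation yields an analogous splitting for the associated function $B_{s_p}$, and a rescaling of the form $(q_p)=(s_p)/(4H)$ (which gives $B_{q_p}(\rho)=B_{s_p}(4H\rho)$) produces the desired estimate after using $(s_p)\le(r_p')$ to replace $B_{s_p}$ by $B_{r_p'}$ on the left-hand side.

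Assertion $(1)$ is then immediate. For $(2)$, the decay estimate justifies differentiation under the integral sign by Lebesgue dominated convergence, yielding $D^\alpha\psi(x)=\int D_x^\alpha\varphi(x,y)\,dy$; integrating the estimate in $y$ then gives
\beqs
\sigma_{h'}(\psi)\le C\sigma_h(\varphi)\int_{\RR^d}e^{-A(h'|y|)}\,dy<\infty,
\eeqs
the integral being finite because $(M.6)$ forces $A(\rho)$ to grow at least linearly, producing super-polynomial decay of $e^{-A(h'|y|)}$. The Roumieu analogue is similar, using that $B_{r_p'}(\rho)/\ln\rho\to\infty$ as $\rho\to\infty$.

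For $(3)$, axiom $(I)$ supplies $\|\cdot\|_E\le C''\sigma_{h_0}(\cdot)$ for some $h_0>0$ (resp.\ $\|\cdot\|_E\le C''\sigma_{(r_p^0)}(\cdot)$ for some $(r_p^0)\in\mathfrak{R}$), and hence $\|\varphi(\cdot,y)\|_E\le C'e^{-A(h_0|y|)}$ (resp.\ $\le C'e^{-B_{r_p^0}(|y|)}$) by the decay estimate. Continuity of $y\mapsto\varphi(\cdot,y)$ into $\SSS^*_\dagger(\RR^d)$, hence into $E$, follows from a mean-value estimate combined with the decay estimate applied to $\partial_{y_j}\varphi$; together with the integrable norm bound this yields Bochner integrability of $\mathbf{f}$. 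For $(4)$, set $\Psi:=\int\mathbf{f}(y)\,dy\in E$. The continuous embedding $E\hookrightarrow\SSS'^*_\dagger(\RR^d)$ makes $g\mapsto\langle g,\chi\rangle$ continuous on $E$ for every $\chi\in\SSS^*_\dagger(\RR^d)$; commuting it with the Bochner integral and applying Fubini give
\beqs
\langle\Psi,\chi\rangle=\int_{\RR^d}\langle\varphi(\cdot,y),\chi\rangle\,dy=\int_{\RR^d}\int_{\RR^d}\varphi(x,y)\chi(x)\,dx\,dy=\langle\psi,\chi\rangle,
\eeqs
so $\Psi=\psi$ in $\SSS'^*_\dagger(\RR^d)$, hence also in $E$ by injectivity of the embedding.

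The main obstacle is the decay estimate, particularly in the Roumieu case, where one has to carefully manipulate sequences in $\mathfrak{R}$ through Lemma \ref{nwseq} to obtain an $(M.2)$-type splitting for $B_{s_p}$ and match it against the target $(r_p')$. The remaining steps are then routine applications of dominated convergence, Bochner integration, and Fubini.
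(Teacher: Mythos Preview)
Your proof is correct and cleaner than the paper's in the decisive step. The paper handles the first three points essentially as you do (it calls the first two ``trivial'' and proves Bochner integrability via the same decay estimate $\|\mathbf{f}(y)\|_E\le C\sigma_{mH}(\varphi)e^{-A(m|y|)}$ coming from axiom (I)). The genuine divergence is in identifying $\psi$ with the Bochner integral. The paper builds explicit Riemann-type sums $L_n=\sum_{t\in D_n}\varphi(\cdot,t)l(n)^{-d}$, proves by hand (with several pages of estimates involving Taylor expansion and $(M.2)$) that $L_n\to\psi$ in $\SSS^*_\dagger(\RR^d)$, and simultaneously interprets $L_n$ as $\int\mathbf{L}_n$ for simple functions $\mathbf{L}_n$ converging to $\mathbf{f}$ in the Bochner sense. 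Your route---test the Bochner integral against $\chi\in\SSS^*_\dagger$ using the continuous embedding $E\hookrightarrow\SSS'^*_\dagger$, commute the functional with the integral, and apply Fubini---is shorter and avoids all of that. What the paper's argument buys is the intermediate fact that the Riemann sums $L_n$ converge to $\psi$ in $\SSS^*_\dagger(\RR^d)$; this is reused later in the proof of Proposition~\ref{com}, so the extra work is not wasted in context. Your argument does not produce that byproduct, but for the lemma as stated it is entirely sufficient.
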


\begin{proof} The fact that $\varphi(\cdot,y)\in \SSS^*_{\dagger}(\RR^d)$ for each $y\in\RR^d$ and that $\psi\in\SSS^*_{\dagger}(\RR^d)$ is trivial. Thus $\mathbf{f}$ is well defined on $\RR^d$ with values in $E$ (in fact its values are in $\SSS^*_{\dagger}(\RR^d)$). One easily verifies that $\mathbf{f}$ is continuous, hence strongly measurable. To prove that it is Bochner integrable it remains to prove that $y\mapsto \|\mathbf{f}(y)\|_E$ is in $L^1(\RR^d)$. The condition (I) implies
\beqs
\|\mathbf{f}(y)\|_E\leq C_1\sup_{\alpha}\frac{m^{|\alpha|} \left\|e^{A(m|\cdot|)}D^{\alpha}_x\varphi(\cdot,y)\right\|_{L^{\infty}(\RR^d_x)}}{M_{\alpha}}\leq C_2\sigma_{mH}(\varphi)e^{-A(m|y|)}.
\eeqs
Thus $\mathbf{f}$ is Bochner integrable. Now, for $n\in\ZZ_+$, denote $K_n=[-n,n]^d$. Since $K_n$ is compact and $\mathbf{f}$ is continuous there exists $l(n)\in\ZZ_+$ such that $l(n)\geq n$ and $\|\mathbf{f}(y)-\mathbf{f}(y')\|_E\leq 2^{-n}$ when $y,y'\in K_n$ and $|y_j-y'_j|\leq 1/l(n)$, $j=1,...,d$. Of course we can take $l(n+1)> l(n)$, for all $n\in\ZZ_+$. Set $D_n=\{y\in K_n|\, y=(k_1/l(n),...,k_d/l(n)), k_j\in \ZZ, -nl(n)\leq k_j\leq nl(n)-1, j=1,...,d\}$ and let
\beqs
L_n(x)=\sum_{t\in D_n}\varphi(x,t)l(n)^{-d}.
\eeqs
Clearly $L_n\in\SSS^*_{\dagger}(\RR^d)\subseteq E$. We prove that $L_n\rightarrow \psi$ when $n\rightarrow\infty$, in $\SSS^*_{\dagger}(\RR^d)$. We give the proof for the Roumieu case, the Beurling case being similar. There exists $m>0$ such that $\varphi\in \SSS^{M_p,m}_{A_p,m}(\RR^d)$. Pick $m'>0$ such that $m'\leq m/(2H^2)$. For each $t=(t_1,...,t_d)\in D_n$ denote $K_{n,t}=[t_1,t_1+1/l(n))\times...\times[t_d,t_d+1/l(n))$. Observe that\\
\\
$\left|D^{\alpha}\psi(x)-D^{\alpha}L_n(x)\right|$
\beqs
\leq \int_{\RR^d\backslash K_n}\left|D^{\alpha}_x\varphi(x,y)\right|dy+ \sum_{t\in D_n} \int_{K_{n,t}}\left|D^{\alpha}_x\varphi(x,y)-D^{\alpha}_x\varphi(x,t)\right|dy=S_1(x)+S_2(x).
\eeqs
For $y\in K_{n,t}$, by Taylor expanding $D^{\alpha}_x\varphi(x,y)$ at $(x,t)$, we have\\
\\
$\left|D^{\alpha}_x\varphi(x,y)-D^{\alpha}_x\varphi(x,t)\right|$
\beqs
&\leq& \sum_{|\beta|=1}\left|(y-t)^{\beta}\right|\int_0^1 \left|D^{\alpha}_xD^{\beta}_y\varphi(x,t+s(y-t))\right|ds\\
&\leq& \frac{d\sigma_m(\varphi)}{n}\cdot m^{-|\alpha|-1}M_{|\alpha|+1}\int_0^1 e^{-A(m|(x,t+s(y-t))|)}ds.
\eeqs
By \cite[Proposition 3.6]{Komatsu1} and the fact $e^{A(\rho+\mu)}\leq 2e^{A(2\rho)}e^{A(2\mu)}$, for $\rho,\mu>0$ (which can be easily verified), we have
\beqs
e^{A(m'|x|)}e^{A(m'|y|)}&\leq& 2e^{A(m'|x|)}e^{A(2m'|t+s(y-t)|)}e^{A(2m'|(1-s)(y-t)|)}\\
&\leq& c_1e^{A(2m'H|(x,t+s(y-t))|)}.
\eeqs
Hence
\beqs
\left|D^{\alpha}_x\varphi(x,y)-D^{\alpha}_x\varphi(x,t)\right|\leq \frac{C_1}{n}\cdot \frac{H^{|\alpha|}M_{\alpha}}{m^{|\alpha|}e^{A(m'|x|)}e^{A(m'|y|)}}\leq \frac{C_1M_{\alpha}}{nm'^{|\alpha|}e^{A(m'|x|)}e^{A(m'|y|)}}.
\eeqs
Thus, for $S_2(x)$ we have the following estimate
\beq\label{fors2}
S_2(x)\leq \frac{C_1M_{\alpha}}{nm'^{|\alpha|}e^{A(m'|x|)}}\int_{\RR^d}e^{-A(m'|y|)}dy\leq \frac{C_2M_{\alpha}}{nm'^{|\alpha|}e^{A(m'|x|)}}.
\eeq
To estimate $S_1$, we proceed as follows
\beqs
S_1(x)\leq \frac{\sigma_m(\varphi)M_{\alpha}}{m^{|\alpha|}}\int_{\RR^d\backslash K_n}e^{-A(m|(x,y)|)}dy.
\eeqs
For $y\in \RR^d\backslash K_n$, by \cite[Proposition 3.6]{Komatsu1}, we have
\beqs
e^{A(m'n)}e^{A(m'|x|)}e^{A(m'|y|)}\leq c_0e^{A(m'|x|)}e^{A(m'H|y|)}\leq c_0^2 e^{A(m'H^2|(x,y)|)}.
\eeqs
Hence
\beq
S_1(x)&\leq& \frac{C_3M_{\alpha}}{m^{|\alpha|}e^{A(m'|x|)}e^{A(m'n)}}\int_{\RR^d} e^{-A(m'|y|)}dy\\
&\leq& \frac{C_4M_{\alpha}}{m'^{|\alpha|}e^{A(m'|x|)}e^{A(m'n)}}.\label{fors1}
\eeq
Now, (\ref{fors2}) and (\ref{fors1}) imply that $L_n\rightarrow \psi$ in $\SSS^{M_p,m'}_{A_p,m'}(\RR^d)$ and hence also in $\SSS^{\{M_p\}}_{\{A_p\}}(\RR^d)$. As we noted, the Beurling case is completely analogous. By (I) this also implies $L_n\rightarrow \psi$ in $E$. Denote by $\chi_{n,t}$ the characteristic function of $K_{n,t}$ and define
\beqs
\mathbf{L}_n(y)=\sum_{t\in D_n}\mathbf{f}(t)\chi_{n,t}(y),\,\, y\in\RR^d.
\eeqs
Then $\mathbf{L}_n$ is a simple function on $\RR^d$ with values in $E$ and $\ds\int_{\RR^d}\mathbf{L}_n(y)dy=L_n$. By using the continuity of $\mathbf{f}$ one easily verifies that $\mathbf{L}_n$ converges pointwisely to $\mathbf{f}$. Moreover, by the definition of $K_{n,t}$ we have $\|\mathbf{L}_n(y)\|_E\leq \|\mathbf{f}(y)\|_E +2^{-n}$, for $y\in K_n$ and for $y\not \in K_n$, $\mathbf{L}_n(y)=0$. Thus, by defining $g(y)=1/2$ for $y\in K_1$ and $g(y)=2^{-n}$ when $y\in K_n\backslash K_{n-1}$ for $n\in\ZZ_+$, $n\geq 2$, we obtain $\|\mathbf{L}_n(y)\|_E\leq \|\mathbf{f}(y)\|_E+g(y)$ for all $y\in\RR^d$. Since $g\in L^1(\RR^d)$ and $\mathbf{f}$ is Bochner integrable, dominated convergence implies
\beqs
\lim_{n\rightarrow\infty}L_n=\lim_{n\rightarrow\infty}\int_{\RR^d}\mathbf{L}_n(y)dy=\int_{\RR^d}\mathbf{f}(y)dy,
\eeqs
which completes the proof.
\end{proof}

\begin{lemma}\label{lemma:ime2}
The convolution mapping $(\varphi,\psi)\in \mathcal{S}^*_{\dagger}(\mathbb{R}^d)\times \mathcal{S}^*_{\dagger}(\mathbb{R}^d)\rightarrow
\varphi\ast\psi \in \mathcal{S}^*_{\dagger}(\mathbb{R}^d) $ extends to a continuous bilinear mapping $\mathcal{S}^*_{\dagger}(\mathbb{R}^d)\times E\rightarrow E$. Furthermore, the following estimate holds
\beq\label{1}
\|\varphi\ast g\|_E\leq \|g\|_E\int_{\mathbb{R}^d}|\varphi(x)|\:\omega(x)dx.
\eeq
\end{lemma}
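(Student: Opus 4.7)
The plan is to define the extended convolution directly as an $E$-valued Bochner integral,
\[
\varphi * g := \int_{\RR^d} \varphi(y) \, T_{-y} g \, dy, \qquad g \in E, \ \varphi \in \SSS^*_{\dagger}(\RR^d),
\]
and then verify that (a) this integral converges, (b) the estimate \eqref{1} holds, (c) the new definition agrees with the classical convolution on $\SSS^*_\dagger \times \SSS^*_\dagger$, and (d) the resulting bilinear map is continuous. For (a), the integrand is continuous from $\RR^d$ into $E$ by Lemma \ref{1111177} (so it is strongly measurable), and the defining property $\|T_{-y}\|_{\mathcal{L}(E)} = \omega(y)$ yields the scalar majorant $|\varphi(y)| \omega(y) \|g\|_E$. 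Thus the Bochner integrability will follow once $y \mapsto |\varphi(y)|\omega(y)$ is shown to lie in $L^1(\RR^d)$; recall that $\omega$ has already been established to be measurable.

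To control $\int |\varphi|\omega$, I distinguish the two cases. In the Beurling case, condition (III) gives $\tau, C > 0$ with $\omega(y) \leq C e^{A(\tau|y|)}$, so choosing $h$ large enough relative to $\tau$ gives $\int e^{-A(h|y|)+A(\tau|y|)}\, dy < \infty$, and consequently $\int |\varphi(y)|\omega(y)\,dy \leq C' \sigma_h(\varphi)$. In the Roumieu case, I replace (III) by the equivalent $(\widetilde{III})$ from Lemma \ref{pppp}, yielding $(l_p) \in \mathfrak{R}$ with $\omega(y) \leq C e^{B_{l_p}(|y|)}$; exploiting the projective characterisation $\SSS^{\{M_p\}}_{\{A_p\}} = \lim_{(r_p)} \SSS^{M_p,(r_p)}_{A_p,(r_p)}$, I select $(r_p)$ growing fast enough compared to $(l_p)$ to make $\int e^{B_{l_p}(|y|) - B_{r_p}(|y|)}\,dy$ finite, obtaining $\int |\varphi|\omega \leq C' \sigma_{(r_p)}(\varphi)$. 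In either case $\varphi \mapsto \int |\varphi|\omega$ is a continuous seminorm on $\SSS^*_{\dagger}(\RR^d)$, and the Bochner integral bound immediately gives
\[
\|\varphi * g\|_E \leq \int_{\RR^d} |\varphi(y)| \|T_{-y} g\|_E\, dy \leq \|g\|_E \int_{\RR^d} |\varphi(y)|\omega(y)\,dy,
\]
which is precisely \eqref{1}.

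For consistency with the classical convolution when $g = \psi \in \SSS^*_\dagger(\RR^d)$, set $\Phi(x, y) := \varphi(y)\psi(x-y)$. Since $\varphi \otimes \psi \in \SSS^*_\dagger(\RR^{2d})$ and the linear substitution $(x,y) \mapsto (y, x-y)$ preserves $\SSS^*_\dagger(\RR^{2d})$, we have $\Phi \in \SSS^*_\dagger(\RR^{2d})$. Lemma \ref{bocintl} then identifies the $E$-valued Bochner integral of $y \mapsto \Phi(\cdot, y) = \varphi(y) T_{-y} \psi$ with the pointwise integral $x \mapsto \int \Phi(x, y)\,dy = (\varphi * \psi)(x)$, so the Bochner-integral definition coincides with the ordinary convolution on $\SSS^*_\dagger \times \SSS^*_\dagger$. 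Joint continuity of the extended bilinear map $(\varphi, g) \mapsto \varphi * g$, $\SSS^*_\dagger(\RR^d) \times E \to E$, then follows from \eqref{1} combined with continuity of $\varphi \mapsto \int |\varphi|\omega$: in the Beurling case $\SSS^*_\dagger$ is Fréchet so separate continuity suffices, while in the Roumieu case the space is a bornological $(DFS)$-space, so boundedness on products of bounded sets — which is exactly what the estimate provides — yields continuity. I expect the main technical hurdle to be the Roumieu bound, specifically the coordinated choice of $(r_p)$ against $(l_p)$ (possibly invoking Lemma \ref{nwseq}) so that $\int e^{B_{l_p} - B_{r_p}}\,dy$ converges; everything else reduces to standard Bochner-integral bookkeeping.
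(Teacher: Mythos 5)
Your proof is correct and follows essentially the same route as the paper's: the convolution is realized as the Bochner integral of $y\mapsto\varphi(y)T_{-y}\psi$ via Lemma \ref{bocintl}, bounded by $|\varphi(y)|\,\omega(y)\,\|\psi\|_E$, and joint continuity in the Roumieu case is obtained from the barrelled $(DF)$ property of the two factors. The only (harmless) difference is that you define the extension directly as an $E$-valued Bochner integral for arbitrary $g\in E$ and then verify consistency on $\SSS^*_{\dagger}(\RR^d)\times\SSS^*_{\dagger}(\RR^d)$, whereas the paper proves the estimate (\ref{1}) on $\SSS^*_{\dagger}(\RR^d)\times\SSS^*_{\dagger}(\RR^d)$ and passes to general $g\in E$ by density.
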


\begin{proof} Let $\varphi,\psi\in\SSS^*_{\dagger}(\mathbb{R}^{d})$. One easily verifies that the function $f(x,y)=\varphi(y)\psi(x-y)$ is an element of $\SSS^*_{\dagger}(\RR^{2d})$. Define $\mathbf{f}:\RR^d\rightarrow E$, $\mathbf{f}(y)=f(\cdot, y)=\varphi(y)T_{-y}\psi$. Then, by Lemma \ref{bocintl}, $\mathbf{f}$ is Bochner integrable and
\beqs
\varphi*\psi=\int_{\RR^d}\mathbf{f}(y)dy.
\eeqs
Observe that $\|\mathbf{f}(y)\|_E\leq |\varphi(y)|\omega(y)\|\psi\|_E$. Thus, we have
\beqs
\|\varphi*\psi\|_E\leq \int_{\RR^d}\|\mathbf{f}(y)\|_Edy\leq \|\psi\|_E\int_{\RR^d}|\varphi(y)|\omega(y)dy,
\eeqs
which proves (\ref{1}) for $g\in\SSS^*_{\dagger}(\RR^d)$. For general $g\in E$, (\ref{1}) follows from a standard density argument. The continuity of the convolution as a bilinear mapping $\mathcal{S}^*_{\dagger}(\mathbb{R}^d)\times E\rightarrow E$ in the Beurling case is an easy consequence of (\ref{1}). In the Roumieu case, from (\ref{1}) we can conclude separate continuity, but $\SSS^{\{M_p\}}_{\{A_p\}}(\RR^d)$ and $E$ are barreled $(DF)$-spaces, hence the separate continuity implies the continuity of the convolution.
\end{proof}

\begin{lemma}\label{densitySL}
$\SSS^*_{\dagger}(\RR^d)$ is dense in $L^1_{\omega}$.
\end{lemma}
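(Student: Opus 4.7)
The plan is to use a two-step approximation: first reduce to compactly supported continuous functions, then mollify with a suitably chosen element of $\SSS^*_{\dagger}(\RR^d)$.

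First I would verify that $L^1_{\omega}$ is a Banach algebra (Beurling algebra) on which translations act continuously. Using the bound $\omega(h)\leq Ce^{A(\tau|h|)}$ from (III) in the Beurling case (respectively $\omega(h)\leq Ce^{B_{l_p}(|h|)}$ from $(\widetilde{III})$ in the Roumieu case) together with the subadditivity of $\ln\omega$, one gets $\|T_y f\|_{L^1_{\omega}}\leq \omega(y)\|f\|_{L^1_{\omega}}$. Continuity of $y\mapsto T_y f$ at the origin for $f\in C_c(\RR^d)$ follows from dominated convergence and the local boundedness of $\omega$; a standard $3\varepsilon$-argument then extends this continuity to all of $L^1_{\omega}$. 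It is also standard that $C_c(\RR^d)$ is dense in $L^1_{\omega}$: truncate by indicator functions of balls (dominated convergence, since $\omega\in L^1_{\mathrm{loc}}$) and approximate each compactly supported $L^1$-function by continuous compactly supported ones on the relevant ball.

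Next, given $f\in C_c(\RR^d)$, I would fix $\psi\in \SSS^*_{\dagger}(\RR^d)$ with $\int_{\RR^d}\psi(x)dx=1$ and set $\psi_{\varepsilon}(x)=\varepsilon^{-d}\psi(x/\varepsilon)$. Then $f_{\varepsilon}:=f*\psi_{\varepsilon}$ belongs to $\SSS^*_{\dagger}(\RR^d)$: indeed, since $f$ has compact support in, say, $B_R$, one differentiates under the integral to obtain the $C^{\infty}$ smoothness, and the decay/derivative bounds of $\psi$ transfer directly to $f_\varepsilon$ (with weights controlled uniformly in bounded translates of the support of $f$). The standard identity
\begin{equation*}
f_{\varepsilon}(x)-f(x)=\int_{\RR^d}(f(x-y)-f(x))\psi_{\varepsilon}(y)\,dy
\end{equation*}
together with Fubini gives
\begin{equation*}
\|f_{\varepsilon}-f\|_{L^1_{\omega}}\leq \int_{\RR^d}|\psi_{\varepsilon}(y)|\,\|T_{-y}f-f\|_{L^1_{\omega}}\,dy.
\end{equation*}

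The heart of the argument, and the main obstacle, is that in the quasianalytic setting $\psi$ cannot be compactly supported, so one must control the tails of $\psi_{\varepsilon}$ against the weight $\omega$. I would split the integral at $|y|=\delta$. For $|y|<\delta$, the continuity of translation on $L^1_{\omega}$ gives $\|T_{-y}f-f\|_{L^1_{\omega}}<\eta$, and the contribution is bounded by $\eta\int_{\RR^d}|\psi_{\varepsilon}(y)|\omega(y)\,dy$, which after the substitution $z=y/\varepsilon$ becomes $\eta\int|\psi(z)|\omega(\varepsilon z)\,dz$. For $\varepsilon\leq 1$, the bound $\omega(\varepsilon z)\leq Ce^{A(\tau|z|)}$ (resp.\ $Ce^{B_{l_p}(|z|)}$) holds, and since $\psi\in\SSS^*_{\dagger}(\RR^d)$ one may choose $\psi$ so that $|\psi(z)|e^{A(\tau|z|)}$ (resp.\ $|\psi(z)|e^{B_{l_p}(|z|)}$) is integrable---in the Roumieu case, the Lemma \ref{nwseq} reduction together with $(\widetilde{III})$ allows selecting $(l_p)$ dominated by the decay sequence of $\psi$. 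Thus this term is $O(\eta)$ uniformly in $\varepsilon\in(0,1]$. For $|y|\geq\delta$, crudely $\|T_{-y}f-f\|_{L^1_{\omega}}\leq(\omega(-y)+1)\|f\|_{L^1_{\omega}}$, and after the same substitution the integral becomes
\begin{equation*}
\|f\|_{L^1_{\omega}}\int_{|z|\geq \delta/\varepsilon}|\psi(z)|(\omega(\varepsilon z)+1)\,dz,
\end{equation*}
which tends to $0$ as $\varepsilon\to 0^+$ by dominated convergence, since the integrand is dominated by an integrable function independent of $\varepsilon$ while the domain of integration shrinks to the empty set. Letting first $\varepsilon\to 0$ and then $\eta\to 0$ yields $\|f_{\varepsilon}-f\|_{L^1_{\omega}}\to 0$, completing the density argument.
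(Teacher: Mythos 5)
Your proposal is correct and follows essentially the same route as the paper: reduce to $C_c(\RR^d)$, mollify with a scaled element of $\SSS^*_{\dagger}(\RR^d)$, and handle the non-compactly-supported tails of the mollifier by playing its decay $e^{-B_{r_p}(|\cdot|)}$ (resp.\ $e^{-A(h|\cdot|)}$) against the bound $\omega\leq Ce^{B_{l_p}(|\cdot|)}$ (resp.\ $Ce^{A(\tau|\cdot|)}$) via Lemma \ref{nwseq} and $(\widetilde{III})$, which is exactly the crux of the paper's argument. The only (immaterial) difference is that you conclude by splitting the $y$-integral and invoking $L^1_{\omega}$-continuity of translations on $C_c(\RR^d)$, whereas the paper applies dominated convergence jointly in $(x,y)\in\RR^{2d}$.
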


\begin{proof} Observe that $C_c(\RR^d)$ (the space of continuous functions with compact support) is dense in $L^1_{\omega}$. Thus it is enough to prove that each $\psi\in C_c(\RR^d)$ can be approximated by elements of $\SSS^*_{\dagger}(\RR^d)$ in $L^1_{\omega}$. Let $\psi\in L^1_{\omega}$. Select a nonzero $\varphi\in\SSS^*_{\dagger}(\RR^d)$ such that $\ds\int_{\RR^d}\varphi(x)dx=1$. For $n\in\ZZ_+$, set $\varphi_n(x)=n^d \varphi(nx)$. One easily verifies that $\varphi_n*\psi\in\SSS^*_{\dagger}(\RR^d)$. We prove $\varphi_n*\psi\rightarrow \psi$ in $L^1_{\omega}(\RR^d)$. We consider the Roumieu case, as the Beurling case is analogous. By $(\widetilde{III})$ there exist $(l_p)\in\mathfrak{R}$ and $C'>0$ such that $\omega(x)\leq C' e^{B_{l_p}(|x|)}$. By Lemma \ref{nwseq} we can assume that $(l_p)$ satisfies $\prod_{j=1}^{p+q}l_j\leq 2^{p+q}\prod_{j=1}^{p}l_j\cdot\prod_{j=1}^{q}l_j$, for all $p,q\in\ZZ_+$. Let $r_p=l_p/4H$, $p\in\ZZ_+$. Since $\varphi\in\SSS^{\{M_p\}}_{\{A_p\}}(\RR^d)$, $|\varphi(x)|\leq C'' e^{-B_{r_p}(|x|)}$. Observe that
\beqs
\omega(x)\left|(\varphi_n*\psi)(x)-\psi(x)\right|&\leq& \omega(x)\int_{\RR^d}|\varphi(y)|\left|\psi(x-y/n)-\psi(x)\right|dy\\
&\leq& Ce^{B_{l_p}(|x|)}\int_{\RR^d}e^{-B_{r_p}(|y|)}\left|\psi(x-y/n)-\psi(x)\right|dy.
\eeqs
Since $\psi$ has compact support $e^{B_{l_p}(|x|)}e^{-B_{r_p}(|y|)}|\psi(x)|\in L^1(\RR^{2d}_{x,y})$ and
\beqs
e^{B_{l_p}(2|x|)}|\psi(x)|\leq C_1 \langle x\rangle^{-d-1},\,\, \forall x\in\RR^d.
\eeqs
This inequality, together with $e^{B_{l_p}(\rho+\mu)}\leq 2e^{B_{l_p}(2\rho)}e^{B_{l_p}(2\mu)}$, $\rho,\mu>0$, implies
\beqs
e^{B_{l_p}(|x|)}|\psi(x-y/n)|&\leq& 2e^{B_{l_p}(2|x-y/n|)}e^{B_{l_p}(2|y/n|)}|\psi(x-y/n)|\\
&\leq& C_1\langle x-y/n\rangle^{-d-1}e^{B_{l_p}(2|y|)}\leq C_2\langle x\rangle^{-d-1}\langle y\rangle^{d+1}e^{B_{l_p}(2|y|)}\\
&\leq& C_3\langle x\rangle^{-d-1}\langle y\rangle^{-d-1}e^{2B_{l_p}(2|y|)}.
\eeqs
Since the sequence $A_p\prod_{j=1}^p l_j$ satisfies $(M.2)$ with the constant $2H$ instead of $H$, \cite[Proposition 3.6]{Komatsu1} implies $e^{2B_{l_p}(2|y|)}\leq c'e^{B_{r_p}(|y|)}$ (by definition of $(r_p)$). Thus
\beqs
e^{B_{l_p}(|x|)}e^{-B_{r_p}(|y|)}\left|\psi(x-y/n)\right|\leq C_4 \langle x\rangle^{-d-1}\langle y\rangle^{-d-1}\in L^1(\RR^{2d}_{x,y}).
\eeqs
Since $\psi$ is continuous, $e^{B_{l_p}(|x|)}e^{-B_{r_p}(|y|)}\left|\psi(x-y/n)-\psi(x)\right|\rightarrow 0$ as $n\rightarrow\infty$ pointwise. Hence, dominated convergence implies $\varphi_n*\psi-\psi\rightarrow0$ as $n\rightarrow\infty$ in $L^1_{\omega}$.
\end{proof}

Combining Lemmas \ref{lemma:ime2} and \ref{densitySL}, we immediately obtain the ensuing important proposition.

\begin{proposition}\label{cor:Beurling Algebra}
The convolution extends as a mapping  $L^{1}_{\omega}\times E\rightarrow E$ and $E$ becomes a Banach module over the Beurling
algebra $L^{1}_{\omega}$, i.e., $ \|u\ast g\|_{E}\leq \|u\|_{1,\omega}\|g\|_{E}$.
\end{proposition}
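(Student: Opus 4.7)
The plan is to obtain the proposition as a direct consequence of the two preceding lemmas via extension by continuity. First, fix $g\in E$ and consider the linear map $T_g\colon \SSS^*_{\dagger}(\RR^d)\to E$ defined by $T_g(\varphi)=\varphi\ast g$. By the estimate (\ref{1}) of Lemma \ref{lemma:ime2},
\begin{equation*}
\|T_g(\varphi)\|_E\leq \|g\|_E\int_{\RR^d}|\varphi(x)|\omega(x)\,dx=\|g\|_E\,\|\varphi\|_{1,\omega},
\end{equation*}
so that $T_g$ is continuous when $\SSS^*_{\dagger}(\RR^d)$ is equipped with the norm topology inherited from $L^1_{\omega}$, with operator norm at most $\|g\|_E$.

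Next, I invoke Lemma \ref{densitySL}, which gives that $\SSS^*_{\dagger}(\RR^d)$ is dense in $L^1_{\omega}$. Since $E$ is a $(B)$-space, the bounded operator $T_g$ admits a unique continuous linear extension $\widetilde{T}_g\colon L^1_{\omega}\to E$ preserving the norm bound. Define $u\ast g := \widetilde{T}_g(u)$ for $u\in L^1_{\omega}$. By construction the estimate $\|u\ast g\|_E\leq \|u\|_{1,\omega}\|g\|_E$ is immediate, and $u\ast g$ is linear in $u$; linearity in $g$ follows because it holds on the dense subset and all operations are continuous.

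Finally, to confirm the Banach-module structure over the Beurling algebra $L^1_{\omega}$, I need to verify the associativity $(u_1\ast u_2)\ast g=u_1\ast(u_2\ast g)$ for $u_1,u_2\in L^1_{\omega}$ and $g\in E$. This identity holds whenever $u_1,u_2\in \SSS^*_{\dagger}(\RR^d)$: indeed, both sides equal $\varphi\ast g$ with $\varphi=u_1\ast u_2\in\SSS^*_{\dagger}(\RR^d)$, which can be checked by unwinding the Bochner integral representation used in the proof of Lemma \ref{lemma:ime2} (this is just Fubini). Given general $u_1,u_2\in L^1_{\omega}$, approximate each by sequences in $\SSS^*_{\dagger}(\RR^d)$ converging in $L^1_{\omega}$ (which is possible by Lemma \ref{densitySL}); the standard fact that $L^1_{\omega}$ is a Banach algebra under convolution ensures $u_1\ast u_2$ is also approximated in $L^1_{\omega}$, and the continuity bound just established allows passage to the limit on both sides.

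The argument has no genuine obstacle; the only point to monitor is that the $L^1_{\omega}$-norm controls the operator $\varphi\mapsto\varphi\ast g$ uniformly in $g$ on bounded sets of $E$, which is exactly the content of (\ref{1}). The associativity verification is the most delicate bookkeeping step, but it reduces to the Banach-algebra property of $L^1_{\omega}$ combined with the extension's continuity.
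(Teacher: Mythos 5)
Your proof is correct and follows exactly the route the paper intends: the paper gives no written proof beyond the remark that the proposition follows immediately by combining Lemma \ref{lemma:ime2} (the estimate (\ref{1})) with Lemma \ref{densitySL} (density of $\SSS^*_{\dagger}(\RR^d)$ in $L^1_{\omega}$), which is precisely your extension-by-continuity argument. Your additional verification of the associativity $(u_1\ast u_2)\ast g=u_1\ast(u_2\ast g)$ is a sound and welcome piece of bookkeeping that the paper leaves implicit.
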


\begin{corollary}\label{molifaer}
    Let $g\in E$ and $\varphi\in \mathcal{S}^*_{\dagger}(\mathbb{R}^d)$. Set $\ds \varphi_{\varepsilon}(x)=\varepsilon^{-d}\varphi
    \left(x/ \varepsilon \right)$. Then, $$ \lim_{\varepsilon\to 0^+} \|cg-\varphi_\varepsilon*g\|_E=0,$$ where
    $\ds c=\int_{\mathbb{R}^d}\varphi(x)dx$.
\end{corollary}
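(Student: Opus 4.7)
The plan is to realize $cg-\varphi_\varepsilon*g$ as a Bochner integral in $E$ and then apply the dominated convergence theorem. The starting observation is that $c=\int_{\RR^d}\varphi_\varepsilon(y)\,dy$ for every $\varepsilon>0$, so formally
\[
cg-\varphi_\varepsilon*g=\int_{\RR^d}\varphi_\varepsilon(y)(g-T_{-y}g)\,dy=\int_{\RR^d}\varphi(z)(g-T_{-\varepsilon z}g)\,dz,
\]
where the last equality comes from the substitution $y=\varepsilon z$.

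To interpret the right-hand side rigorously as a Bochner integral with values in $E$, I would verify that $\mathbf{f}_\varepsilon:\RR^d\to E$, $z\mapsto \varphi(z)(g-T_{-\varepsilon z}g)$, is continuous (by Lemma \ref{1111177}) and admits an $\varepsilon$-uniform integrable norm majorant on $\varepsilon\in(0,1]$. From \eqref{ravenstvo1} one has $\|\mathbf{f}_\varepsilon(z)\|_E\leq |\varphi(z)|(1+\omega(\varepsilon z))\|g\|_E$, while the paragraph following Lemma \ref{1111177} (in the Beurling case) and Lemma \ref{pppp} (in the Roumieu case) yield $\omega(\varepsilon z)\leq Ce^{A(\tau|z|)}$, respectively $\omega(\varepsilon z)\leq Ce^{B_{l_p}(|z|)}$, uniformly in $\varepsilon\leq 1$. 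Since $\varphi\in\SSS^*_\dagger(\RR^d)$ decays faster than any such weight --- by taking $h>\tau$ in the Beurling case, and by choosing $(r_p)$ to dominate a suitable multiple of $(l_p)$ in the Roumieu case (invoking Lemma \ref{nwseq} to ensure submultiplicativity of the associated function) --- an $\varepsilon$-uniform integrable majorant is produced. The identity $cg-\varphi_\varepsilon*g=\int_{\RR^d}\mathbf{f}_\varepsilon(z)\,dz$ in $E$ is then checked by pairing both sides with arbitrary $\phi\in\SSS^*_\dagger(\RR^d)$ through the continuous injective embedding $E\hookrightarrow\SSS'^*_\dagger(\RR^d)$: the Bochner integral commutes with continuous linear functionals, both sides agree in $\SSS'^*_\dagger(\RR^d)$, and injectivity of the embedding transports the identity to $E$.

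With these ingredients in place, the conclusion is immediate. Lemma \ref{1111177} gives $\|g-T_{-\varepsilon z}g\|_E\to 0$ as $\varepsilon\to 0^+$ for each fixed $z$, and the $\varepsilon$-uniform integrable majorant legitimises the passage to the limit, so that
\[
\|cg-\varphi_\varepsilon*g\|_E\leq \int_{\RR^d}|\varphi(z)|\,\|g-T_{-\varepsilon z}g\|_E\,dz\longrightarrow 0.
\]
The most delicate step is producing the integrable majorant in the Roumieu case, where the defining sequence $(r_p)$ controlling the decay of $\varphi$ must be carefully calibrated against the sequence $(l_p)$ coming from $(\widetilde{III})$.
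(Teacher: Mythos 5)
Your argument is correct and rests on the same two pillars as the paper's proof: the representation of $cg-\varphi_\varepsilon\ast g$ as a Bochner integral of $z\mapsto\varphi(z)(g-T_{-\varepsilon z}g)$ and dominated convergence driven by Lemma \ref{1111177}, with the same $\varepsilon$-uniform majorant $|\varphi(z)|(1+\omega(\varepsilon z))\|g\|_E$. The one structural difference is how the integral representation is justified. The paper first restricts to $g\in\SSS^*_{\dagger}(\RR^d)$, where $(g(x)-g(x-\varepsilon y))\varphi(y)$ belongs to $\SSS^*_{\dagger}(\RR^{2d})$ so that Lemma \ref{bocintl} applies verbatim, and then passes to general $g\in E$ by density of $\SSS^*_{\dagger}(\RR^d)$ in $E$ together with the estimate (\ref{1}); you instead work with arbitrary $g\in E$ from the outset, obtaining Bochner integrability from the continuity of $z\mapsto T_{-\varepsilon z}g$ (Lemma \ref{1111177}) plus the norm majorant, and then identifying the value of the integral by pairing with test functions through the injection $E\hookrightarrow\SSS'^*_{\dagger}(\RR^d)$. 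Both routes are sound; yours trades the density step for the duality identification (which still implicitly needs an interchange of the functional $g$ with a vector-valued integral in $\SSS^*_{\dagger}(\RR^d)$, i.e.\ a weak form of Lemma \ref{bocintl}), while the paper's version confines all the measure-theoretic work to the nice space $\SSS^*_{\dagger}(\RR^{2d})$ at the cost of the extra approximation step. Your handling of the majorant in the two cases is also correct, though in the Roumieu case the appeal to Lemma \ref{nwseq} is not really needed: it suffices to note that $e^{B_{l_p}(|z|)}\leq C_\epsilon e^{A(\epsilon|z|)}$ for every $\epsilon>0$ and to pick $\epsilon$ small against the fixed decay parameter of $\varphi$.
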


\begin{proof} Let $0<\varepsilon<1$. We first consider the case when $\varphi,g\in \SSS^*_{\dagger}(\RR^d)$. Observe that
\beqs
cg(x)-(\varphi_\varepsilon*g)(x)=\int_{\RR^d}\left(g(x)-g(x-\varepsilon y)\right)\varphi(y)dy.
\eeqs
One easily verifies that the function $f_{\varepsilon}(x,y)=\left(g(x)-g(x-\varepsilon y)\right)\varphi(y)$ is in $\SSS^*_{\dagger}(\RR^{2d})$. Define $\mathbf{f}_{\varepsilon}(y)=f_{\varepsilon}(\cdot,y)=\left(g-T_{-\varepsilon y}g\right)\varphi(y)$, $\RR^d\rightarrow E$. Lemma \ref{bocintl} implies that $\mathbf{f}_{\varepsilon}$ is Bochner integrable and
\beq\label{inq11}
\|cg-\varphi_{\varepsilon}*g\|_E=\left\|\int_{\RR^d}\mathbf{f}_{\varepsilon}(y)dy\right\|_E\leq\int_{\RR^d} \left\|g-T_{-\varepsilon y}g\right\|_E|\varphi(y)|dy.
\eeq
Clearly $\left\|g-T_{-\varepsilon y}g\right\|_E|\varphi(y)|\leq \|g\|_E\left(1+Ce^{A(m|y|)}\right)|\varphi(y)|$ for some $C,m>0$ (for each $m>0$ and a corresponding $C=C_m$). Since the left hand side is in $L^1(\RR^d)$ and, by Lemma \ref{1111177}, $\left\|g-T_{-\varepsilon y}g\right\|_E|\varphi(y)|\rightarrow 0$ for each fixed $y\in\RR^d$ as $\varepsilon\rightarrow 0^+$, dominated convergence together with (\ref{inq11}) proves the corollary. Due to the density of $\mathcal{S}^*_{\dagger}(\mathbb{R}^{d})\hookrightarrow E$, the conclusion in the lemma for $g\in E$ and $\varphi\in\SSS^*_{\dagger}(\RR^d)$ follows by using the estimate (\ref{1}).
\end{proof}

\begin{proposition}\label{prop3.2}
The space $E'$ satisfies
 \begin{itemize}
        \item [$a')$] $\mathcal{S}^*_{\dagger}(\mathbb{R}^d)\to E'\hookrightarrow \mathcal{S}'^*_{\dagger}(\mathbb{R}^d)$, with continuous embeddings.
        \item [$b')$] For each $h$, $T_h: E'\rightarrow E'$ is a bounded operator. The mappings $\mathbb{R}^d\to E'$, given by $h\mapsto T_hf$, are continuous for the weak$^{\ast}$ topology.
    \end{itemize}
Moreover, the property (III) holds true when $E$ is replaced by $E'$.
\end{proposition}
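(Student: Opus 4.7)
The plan is to derive everything from duality, using the topological properties of $\SSS^*_{\dagger}(\RR^d)$ (it is an $(FS)$ or $(DFS)$ space, hence Montel and in particular reflexive) together with Lemmas \ref{conoftr} and \ref{1111177}.

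For $(a')$, I would start from the continuous dense inclusions $\SSS^*_{\dagger}(\RR^d)\hookrightarrow E \hookrightarrow \SSS'^*_{\dagger}(\RR^d)$ and pass to transposes. Reflexivity identifies $(\SSS^*_{\dagger}(\RR^d))'=\SSS'^*_{\dagger}(\RR^d)$ and $(\SSS'^*_{\dagger}(\RR^d))'=\SSS^*_{\dagger}(\RR^d)$, so the transposes yield continuous linear maps $\SSS^*_{\dagger}(\RR^d)\to E'$ and $E'\to \SSS'^*_{\dagger}(\RR^d)$. The density of $\SSS^*_{\dagger}(\RR^d)$ in $E$ forces the second map to be injective. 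To get density of $E'$ in $\SSS'^*_{\dagger}(\RR^d)$, I would apply the Hahn--Banach theorem: a continuous functional on $\SSS'^*_{\dagger}(\RR^d)$ is an element $\varphi\in\SSS^*_{\dagger}(\RR^d)$ (reflexivity); if it annihilates every $f\in E'$, then viewing $\varphi$ as an element of $E$ via the first inclusion, the pairing $\langle f,\varphi\rangle_{E',E}$ vanishes for all $f\in E'$, so $\varphi=0$ in $E$ and hence in $\SSS^*_{\dagger}(\RR^d)$.

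For $(b')$ and the analogue of (III), the key observation is that the action of $T_h$ on $E'$ is the Banach-space adjoint of $T_{-h}$ on $E$: for $f\in E'$, $\varphi\in E$,
\begin{equation*}
\langle T_h f,\varphi\rangle=\langle f, T_{-h}\varphi\rangle.
\end{equation*}
By Lemma \ref{conoftr}, $T_{-h}:E\to E$ is bounded, so its adjoint $T_h:E'\to E'$ is bounded with $\|T_h\|_{\mathcal{L}(E')}=\|T_{-h}\|_{\mathcal{L}(E)}=\omega(h)$. Substituting the bound $\omega(h)\leq Ce^{A(\tau |h|)}$ (with the quantifier structure appropriate to the Beurling or Roumieu case) yields the condition (III) for $E'$. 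Weak$^*$-continuity of $h\mapsto T_h f$ follows from the same identity: fixing $\varphi\in E$, Lemma \ref{1111177} gives $T_{-h}\varphi\to\varphi$ in $E$ as $h\to 0$, and hence $\langle T_h f,\varphi\rangle\to \langle f,\varphi\rangle$, which establishes continuity at $0$; the subadditivity of the weight and the group property $T_{h_1+h_2}=T_{h_1}T_{h_2}$ then promote this to continuity at every point.

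No step should be a serious obstacle; the only mildly delicate point is the density assertion in $(a')$, which requires invoking reflexivity to realize the dual of $\SSS'^*_{\dagger}(\RR^d)$ as $\SSS^*_{\dagger}(\RR^d)$ so that the Hahn--Banach separation can be carried out through the already-established embedding $\SSS^*_{\dagger}(\RR^d)\hookrightarrow E$. Once this duality bookkeeping is in place, the rest is a direct transcription of the properties of $E$ to $E'$ via the adjoint.
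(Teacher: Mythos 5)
Your argument is correct and is essentially the proof the paper has in mind: the paper simply delegates to \cite[Proposition 2]{DPV}, whose content is exactly this transposition argument (dualize the inclusions in (I) using reflexivity of the Montel space $\mathcal{S}'^*_{\dagger}(\mathbb{R}^d)$, get density of $E'$ via Hahn--Banach, and read off $(b')$ and (III) from $T_h|_{E'}=(T_{-h}|_{E})^{\top}$ together with Lemmas \ref{conoftr} and \ref{1111177}). Note only that in $a')$ the map $\mathcal{S}^*_{\dagger}(\mathbb{R}^d)\to E'$ is claimed to be merely a continuous (injective, by density of $E$ in $\mathcal{S}'^*_{\dagger}(\mathbb{R}^d)$) embedding and not dense, consistent with your construction and with the paper's subsequent remark that density may fail, e.g.\ for $E=L^1$.
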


\begin{proof}
The proof is similar to that of \cite[Proposition 2]{DPV}.
\end{proof}

We can now associate a Beurling algebra to $E'$. Set
$$\check{\omega}(h):=\|T_{-h}\|_{\mathcal{L}(E')}=\|T^{\top}_{h}\|_{\mathcal{L}(E')}=\omega(-h).$$
The very last equality follows from the well-known property $\|T^{\top}_{h}\|_{\mathcal{L}(E')}=\|T_{h}\|_{\mathcal{L}(E)}$, which is of course a consequence of the bipolar theorem (cf. \cite[p. 160]{Sch}). The associated Beurling algebra to the dual space $E'$ is $L^{1}_{\check{\omega}}$. We define the convolution $u\ast f=f\ast u$ of $f\in E'$ and $u\in L^{1}_{\check{\omega}}$ via transposition:
\begin{equation}
\label{eq:convolution}
\left\langle u\ast f,g \right\rangle:= \left\langle f,\check {u}\ast g\right\rangle, \ \ \ g\in E.
\end{equation}
In view of Proposition \ref{cor:Beurling Algebra}, this convolution is well-defined because $\check {u}\in L^{1}_{\omega}$.

\begin{corollary}\label{cor3.3}
We have $\|u\ast f\|_{E'}\leq \|u\|_{1,\check{\omega}}\|f\|_{E'}$ and thus $E'$ is a Banach module
over the Beurling algebra $L^{1}_{\check{\omega}}$. In addition, if $\varphi_{\varepsilon}$ and $c$ are as in Corollary
\ref{molifaer}, then $\varphi_{\varepsilon}\ast f \to c f$ as $\varepsilon \to 0^+$ weakly$^{\ast}$ in $E'$ for each fixed
$f\in E'$.
\end{corollary}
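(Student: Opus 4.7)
The plan is to use the definition \eqref{eq:convolution} together with the already established norm inequality of Proposition \ref{cor:Beurling Algebra} on $E$, essentially transposing everything. The key elementary observation is that $\|\check u\|_{1,\omega}=\|u\|_{1,\check\omega}$, which follows at once from the change of variable $x\mapsto-x$ and the identity $\check\omega(h)=\omega(-h)$ noted just before the corollary.

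For the norm bound, I would fix $u\in L^1_{\check\omega}$, $f\in E'$ and $g\in E$, and just chain the estimates: by the definition \eqref{eq:convolution},
\[
|\langle u\ast f,g\rangle|=|\langle f,\check u\ast g\rangle|\leq \|f\|_{E'}\,\|\check u\ast g\|_E,
\]
and then by Proposition \ref{cor:Beurling Algebra} applied with $\check u\in L^1_\omega$,
\[
\|\check u\ast g\|_E\leq \|\check u\|_{1,\omega}\,\|g\|_E=\|u\|_{1,\check\omega}\,\|g\|_E.
\]
Taking the supremum over $g$ in the closed unit ball of $E$ yields $\|u\ast f\|_{E'}\leq\|u\|_{1,\check\omega}\|f\|_{E'}$. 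Bilinearity and associativity of the module action transfer automatically from the corresponding properties on $E$ by the defining formula \eqref{eq:convolution}, so $E'$ becomes a Banach module over $L^1_{\check\omega}$.

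For the weak$^{\ast}$ convergence, I would again just unwind the definition and appeal to Corollary \ref{molifaer}. Observe that $\check{\varphi_\varepsilon}=(\check\varphi)_\varepsilon$ and $\int_{\RR^d}\check\varphi(x)\,dx=c$, so by Corollary \ref{molifaer} applied with $\check\varphi$ in place of $\varphi$ we have $\check{\varphi_\varepsilon}\ast g\to cg$ in $E$ as $\varepsilon\to 0^+$ for every fixed $g\in E$. Consequently, for each $f\in E'$,
\[
\langle\varphi_\varepsilon\ast f,g\rangle=\langle f,\check{\varphi_\varepsilon}\ast g\rangle\longrightarrow \langle f,cg\rangle=\langle cf,g\rangle,
\]
which is exactly weak$^{\ast}$ convergence $\varphi_\varepsilon\ast f\to cf$ in $E'$.

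There is no real obstacle here; the statement is a routine dualization of the preceding results. The only small points to verify are the identity $\|\check u\|_{1,\omega}=\|u\|_{1,\check\omega}$ (a one-line change of variables) and the equality $\check{\varphi_\varepsilon}=(\check\varphi)_\varepsilon$, both of which are immediate. The substantive content is entirely contained in Proposition \ref{cor:Beurling Algebra} and Corollary \ref{molifaer}, whose proofs have already been carried out.
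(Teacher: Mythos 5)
Your proof is correct and follows essentially the same route as the paper, which simply records the transposition identity $\langle\varphi_{\varepsilon}\ast f-cf,g\rangle=\langle f,\check{\varphi_{\varepsilon}}\ast g-cg\rangle$ and leaves the rest (the norm estimate via Proposition \ref{cor:Beurling Algebra} and the application of Corollary \ref{molifaer} to $\check{\varphi}$) implicit. Your write-up just makes explicit the small bookkeeping points ($\|\check u\|_{1,\omega}=\|u\|_{1,\check\omega}$ and $\check{\varphi_\varepsilon}=(\check\varphi)_\varepsilon$) that the paper takes for granted.
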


\begin{proof}
For $g\in E$ fixed we have $\langle\varphi_{\varepsilon}*f-cf,g\rangle=\langle f,\check{\varphi_{\varepsilon}}*g-cg\rangle$.
\end{proof}

In general the embedding $\mathcal{S}^*_{\dagger}(\mathbb{R}^{d})\to E'$ is not dense (consider for instance $E= L^{1}$). However, $E'$ inheres the three properties (I), (II), and (III) whenever $E$ is reflexive. The following result is a direct consequence of Proposition \ref{prop3.2} and the Hahn-Banach theorem.

\begin{proposition}\label{prop3.3}
If $E$ is reflexive, then its dual space $E'$ is also a translation-invariant $(B)$-space of ultradistributions of class $*-\dagger$.
\end{proposition}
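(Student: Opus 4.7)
The plan is to deduce the result directly from Proposition \ref{prop3.2} together with a standard Hahn--Banach argument that exploits reflexivity. Going through the three defining axioms of a translation-invariant $(B)$-space of ultradistributions of class $*-\dagger$, one sees that axiom (II) for $E'$ is already contained in Proposition \ref{prop3.2}(b'), axiom (III) for $E'$ is the last assertion of Proposition \ref{prop3.2}, and the continuous embeddings $\mathcal{S}^*_\dagger(\mathbb{R}^d)\to E'\to\mathcal{S}'^*_\dagger(\mathbb{R}^d)$ are provided by Proposition \ref{prop3.2}(a'), where moreover the second embedding $E'\hookrightarrow\mathcal{S}'^*_\dagger(\mathbb{R}^d)$ is already dense. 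Hence the only missing ingredient is the \emph{density} of $\mathcal{S}^*_\dagger(\mathbb{R}^d)$ inside $E'$.

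To obtain this density I would argue by contradiction. Suppose $\mathcal{S}^*_\dagger(\mathbb{R}^d)$ is not dense in $E'$; then the Hahn--Banach theorem supplies a nonzero continuous functional $\Lambda\in (E')'$ that vanishes on $\mathcal{S}^*_\dagger(\mathbb{R}^d)$. Since $E$ is reflexive, $\Lambda$ is represented by a nonzero element $g\in E$, so that $\langle\varphi,g\rangle_{E',E}=0$ for every $\varphi\in\mathcal{S}^*_\dagger(\mathbb{R}^d)$. The canonical inclusion $\mathcal{S}^*_\dagger(\mathbb{R}^d)\to E'$ from Proposition \ref{prop3.2} is set up so that this pairing agrees with the standard ultradistribution--test function pairing, namely $\langle\varphi,g\rangle_{E',E}=\langle g,\varphi\rangle_{\mathcal{S}'^*_\dagger,\mathcal{S}^*_\dagger}$, where $g$ is viewed as an element of $\mathcal{S}'^*_\dagger(\mathbb{R}^d)$ through the embedding $E\hookrightarrow\mathcal{S}'^*_\dagger(\mathbb{R}^d)$. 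Consequently $g$ annihilates every test function, so $g=0$ in $\mathcal{S}'^*_\dagger(\mathbb{R}^d)$, and the injectivity of $E\hookrightarrow\mathcal{S}'^*_\dagger(\mathbb{R}^d)$ forces $g=0$ in $E$, contradicting $\Lambda\neq 0$.

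No substantive obstacle arises: this is the textbook Hahn--Banach density argument in which reflexivity is used precisely to identify $(E')'$ with $E$. The only point needing a moment of care is the compatibility of pairings, which is automatic from the way the embedding $\mathcal{S}^*_\dagger(\mathbb{R}^d)\to E'$ is defined in Section \ref{TIBU}.
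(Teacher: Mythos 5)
Your argument is correct and is precisely the one the paper intends: the statement is presented there as ``a direct consequence of Proposition \ref{prop3.2} and the Hahn--Banach theorem,'' and you have filled in exactly those details --- reading off (II), (III) and the continuous embeddings from Proposition \ref{prop3.2}, and using reflexivity to identify $(E')'$ with $E$ so that the Hahn--Banach separation argument, combined with the injectivity of $E\hookrightarrow\mathcal{S}'^*_\dagger(\mathbb{R}^d)$, yields the density of $\mathcal{S}^*_\dagger(\mathbb{R}^d)$ in $E'$. Nothing further is needed.
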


The fact that the mappings $h\mapsto T_h f$, $\RR^d\rightarrow E'$ do not have to be necessarily continuous in the non-reflexive case ($E=L^{1}(\mathbb{R}^{d})$ is an example) causes various difficulties when dealing with this space. As in the non-quasianalytic case \cite{DPV,DPPV}, we will often work with the closed subspace $E'_{\ast}$ of $E'$ from the following definition rather than with $E'$ itself.

\begin{definition}\label{goodspace}
The $(B)$-space $E'_{\ast}$ stands for $L^{1}_{\check{\omega}}\ast E'$.
\end{definition}

Note that $E'_{\ast}$ is a closed linear subspace of $E'$, due to the Cohen-Hewitt factorization theorem \cite{kisynski} and the fact that $L^{1}_{\check{\omega}}$ possesses bounded approximation unities.

\begin{remark}\label{r11}
Observe that $\SSS^*_{\dagger}(\RR^d)$ is a subset of the closure of $\operatorname*{span} (\SSS^*_{\dagger}(\RR^d) * \SSS^*_{\dagger}(\RR^d))$ in $E'$, where $\mathrm{span}(A)$ denotes the linear span of a set. To see this, let $\varphi\in\SSS^*_{\dagger}(\RR^d)$. Then, if $\chi_n$, $n\in\ZZ_+$, is a $\delta$-sequence from $\SSS^*_{\dagger}(\RR^d)$, $\chi_n*\varphi\rightarrow \varphi$ in $\SSS^*_{\dagger}(\RR^d)$ hence also in $E'$ (by $a')$ of Proposition \ref{prop3.2}). Whence we also obtain that $\SSS^*_{\dagger}(\RR^d)\subseteq E'_{\ast}$.
\end{remark}

The space $E'_{\ast}$ will be of crucial importance throughout the rest of this work. It possesses richer properties than $E'$
with respect to the translation group, as stated in the next theorem.

\begin{theorem}
\label{thgoodspace} The space $E'_{\ast}$ has the properties $a')$, $(\widetilde{II})$ and (III). It is a Banach module over the Beurling algebra $L^{1}_{\check{\omega}}$. If $\varphi_{\varepsilon}$ and $c$ are as in Corollary \ref{molifaer}, then, for each $f\in E'_{\ast}$,
\begin{equation}
\label{eqapprox}
\lim_{\varepsilon\to0^{+}}\|cf-\varphi_{\varepsilon}\ast f\|_{E'}=0.
\end{equation}
Furthermore, if $E$ is reflexive, then $E'_{\ast}=E'$.
\end{theorem}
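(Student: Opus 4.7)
The proof plan is organized around the factorization $E'_{\ast}=L^{1}_{\check\omega}\ast E'$ together with the strong form of the Cohen--Hewitt factorization theorem, which guarantees that every $f\in E'_{\ast}$ may be written as $f=u\ast g$ with $u\in L^{1}_{\check\omega}$ and $g\in E'$. Properties $a')$ and $(III)$ for $E'_{\ast}$ are immediate: the space is a closed subspace of $E'$ carrying the induced norm, Remark \ref{r11} places $\SSS^{*}_{\dagger}(\RR^{d})$ inside $E'_{\ast}$, so the continuous embeddings of Proposition \ref{prop3.2} restrict to $\SSS^{*}_{\dagger}(\RR^{d})\to E'_{\ast}\hookrightarrow\SSS'^{*}_{\dagger}(\RR^{d})$, and the bound for $\|T_{h}\|_{\mathcal L(E')}$ supplied by Proposition \ref{prop3.2} passes verbatim to $E'_{\ast}$.

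For $(\widetilde{II})$, invariance and boundedness of $T_{h}:E'_{\ast}\to E'_{\ast}$ follow from the identity $T_{h}(u\ast g)=(T_{h}u)\ast g\in L^{1}_{\check\omega}\ast E'$. The strong continuity $h\mapsto T_{h}f$---which genuinely fails for $E'$ in general---is proved by estimating, through the module inequality of Corollary \ref{cor3.3},
\[
\|T_{h}f-T_{h_{0}}f\|_{E'}=\|(T_{h}u-T_{h_{0}}u)\ast g\|_{E'}\le \|T_{h}u-T_{h_{0}}u\|_{1,\check\omega}\|g\|_{E'},
\]
which reduces the matter to the standard strong continuity of translation in the Beurling algebra $L^{1}_{\check\omega}$; this in turn is obtained by approximating $u$ by compactly supported continuous functions (arguing as in Lemma \ref{densitySL}) and using the uniform bound $\check\omega(h)\le C e^{A(\tau|h|)}$. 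The $L^{1}_{\check\omega}$-Banach module structure on $E'_{\ast}$ with the claimed norm estimate is then an immediate consequence of associativity of convolution and Corollary \ref{cor3.3}.

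To prove the approximation \eqref{eqapprox}, I factor $f=u\ast g$ once more and write $\varphi_{\varepsilon}\ast f=(\varphi_{\varepsilon}\ast u)\ast g$; Corollary \ref{cor3.3} then gives $\|\varphi_{\varepsilon}\ast f-cf\|_{E'}\le \|\varphi_{\varepsilon}\ast u-cu\|_{1,\check\omega}\|g\|_{E'}$, so the claim reduces to showing $\varphi_{\varepsilon}\ast u\to cu$ in $L^{1}_{\check\omega}$, which is the standard approximate-unit statement in the Beurling algebra and is proved by the same dominated convergence argument as Corollary \ref{molifaer}. Finally, if $E$ is reflexive, Proposition \ref{prop3.3} identifies $E'$ as a translation-invariant $(B)$-space of ultradistributions of class $*-\dagger$, so the analogue of Corollary \ref{molifaer} applied to $E'$ yields $\varphi_{\varepsilon}\ast f\to f$ in the norm of $E'$ for every $f\in E'$ (taking $c=1$); since each $\varphi_{\varepsilon}\ast f$ lies in $L^{1}_{\check\omega}\ast E'=E'_{\ast}$ and $E'_{\ast}$ is norm-closed in $E'$, we conclude $E'\subseteq E'_{\ast}$ and therefore $E'=E'_{\ast}$.

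The central difficulty is the strong continuity of translation in $E'_{\ast}$: it is precisely what motivates the passage from $E'$ to $E'_{\ast}$, and the only way I see to establish it is via the factorization trick above, which transfers the continuity question to the Beurling algebra $L^{1}_{\check\omega}$, where it is classical.
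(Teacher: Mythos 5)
Your proposal is correct and follows essentially the same route as the paper, which simply defers to the proof of \cite[Theorem 4.4]{DPPV}: that argument likewise rests on the Cohen--Hewitt factorization $f=u\ast g$ to transfer strong continuity of translations, the module estimate, and the approximation \eqref{eqapprox} to the Beurling algebra $L^{1}_{\check{\omega}}$, and handles the reflexive case via Proposition \ref{prop3.3} and Corollary \ref{molifaer} together with the closedness of $E'_{\ast}$ in $E'$.
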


\begin{proof}
The proof goes in the same lines as that of \cite[Theorem 4.4]{DPPV}
\end{proof}

We point out that (\ref{eqapprox}) implies that $\mathcal{S}^*_{\dagger}(\mathbb{R}^{d})\ast E'\subseteq L^{1}_{\check{\omega}}\ast E'$ is dense in $E'_{\ast}$. In fact, $E'_{\ast}$ is the biggest subspace of $E'$ where the mappings $h\mapsto T_h f$, $\RR^d\rightarrow E'$, are continuous. The proof of this result is essentially the same as that of \cite[Theorem 4.4]{DPPV}, so we omit it.

\begin{proposition}\label{prop3.11}
We have $\ds E'_{\ast}=\left\{f\in E'\bigg|\, \lim_{h\to 0}\|T_{h}f-f\|_{E'}=0\right\}$.
\end{proposition}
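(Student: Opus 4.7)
The plan is to establish the two inclusions separately.

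The inclusion $E'_\ast \subseteq \{f\in E': \lim_{h\to 0}\|T_h f - f\|_{E'}=0\}$ is immediate from Theorem \ref{thgoodspace}: property $(\widetilde{II})$ stated there asserts precisely the norm continuity of the translation map $h\mapsto T_h f$ for every $f\in E'_\ast$.

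For the converse, fix $f\in E'$ with $\|T_h f - f\|_{E'}\to 0$ as $h\to 0$. Choose $\varphi\in\SSS^*_\dagger(\RR^d)$ with $\int\varphi\,dx = 1$ and set $\varphi_\varepsilon(x) = \varepsilon^{-d}\varphi(x/\varepsilon)$. Since (III) bounds $\omega$ by $Ce^{A(\tau|\cdot|)}$ (for a fixed $\tau$ in the Beurling case, for arbitrarily small $\tau$ with a suitable $C=C_\tau$ in the Roumieu case) and $\varphi$ has matching ultrarapid decay, one has $\varphi_\varepsilon\in\SSS^*_\dagger(\RR^d)\subseteq L^1_{\check\omega}$, so $\varphi_\varepsilon\ast f\in L^1_{\check\omega}\ast E' = E'_\ast$. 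Because $E'_\ast$ is closed in $E'$ (Cohen--Hewitt, as noted after Definition \ref{goodspace}), it suffices to show $\varphi_\varepsilon\ast f\to f$ in $E'$-norm. For $g\in E$, the definition (\ref{eq:convolution}) of the convolution, together with the Bochner integral representation $\check{\varphi}_\varepsilon\ast g = \int \check{\varphi}_\varepsilon(y)T_{-y}g\,dy$ in $E$ (which follows from Lemma \ref{bocintl} applied to the continuous inclusion $\SSS^*_\dagger(\RR^d)\hookrightarrow E$) and the change of variables $z=-y$, gives
\[
\langle \varphi_\varepsilon\ast f, g\rangle = \int_{\RR^d}\varphi_\varepsilon(z)\langle T_{-z}f,g\rangle\,dz.
\]
Subtracting the identity $\langle f,g\rangle = \int\varphi_\varepsilon(z)\langle f,g\rangle\,dz$ and rescaling $z=\varepsilon y$, one obtains
\[
|\langle\varphi_\varepsilon\ast f - f, g\rangle| \leq \|g\|_E\int_{\RR^d}|\varphi(y)|\,\|T_{-\varepsilon y}f - f\|_{E'}\,dy.
\]
The integrand on the right vanishes pointwise as $\varepsilon\to 0^+$ by the continuity hypothesis and, for $\varepsilon\in(0,1]$, is dominated by $|\varphi(y)|(1+\omega(-\varepsilon y))\|f\|_{E'}$, which in turn is majorised uniformly in $\varepsilon$ by an integrable function thanks to (III) and the rapid decay of $\varphi\in\SSS^*_\dagger(\RR^d)$. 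Dominated convergence and taking the supremum over $\|g\|_E\leq 1$ yield $\|\varphi_\varepsilon\ast f - f\|_{E'}\to 0$, and hence $f\in E'_\ast$.

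The main technical step is the passage from $\langle\varphi_\varepsilon\ast f, g\rangle = \langle f, \check{\varphi}_\varepsilon\ast g\rangle$ to the scalar integral $\int\varphi_\varepsilon(z)\langle T_{-z}f,g\rangle\,dz$. This requires Lemma \ref{bocintl} to realise $\check{\varphi}_\varepsilon\ast g$ as a Bochner integral in $E$ of the continuous, integrable function $y\mapsto\check{\varphi}_\varepsilon(y)T_{-y}g$, after which the pairing with $f$ commutes with the integral in the standard way. Everything else is a clean mollification argument driven by the strong continuity of $y\mapsto T_{-y}f$ granted by the hypothesis.
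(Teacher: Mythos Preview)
Your argument is correct and is precisely the standard mollification proof that the paper has in mind (it omits the proof, referring to \cite[Theorem 4.4]{DPPV}, where the same strategy is carried out). One small imprecision: Lemma~\ref{bocintl} as stated only yields the Bochner representation $\check{\varphi}_\varepsilon\ast g=\int\check{\varphi}_\varepsilon(y)T_{-y}g\,dy$ for $g\in\SSS^*_\dagger(\RR^d)$; for general $g\in E$ you need either a short density argument (both sides are continuous in $g\in E$ by Proposition~\ref{cor:Beurling Algebra} and the uniform bound $\|\check{\varphi}_\varepsilon(y)T_{-y}g\|_E\le|\check{\varphi}_\varepsilon(y)|\omega(y)\|g\|_E$) or a direct verification using $(\widetilde{II})$ and (III), but this is routine and does not affect the validity of the proof.
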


In view of property $b')$ from Proposition \ref{prop3.2}, we can naturally define a convolution mapping $E'\times \check{E} \to C(\mathbb{R}^{d})$, where $\check{E}=\left\{g\in \mathcal{S}'^*_{\dagger}(\mathbb{R}^{d})\big|\, \check{g}\in E\right\}$ with norm $\|g\|_{\check{E}}:=\|\check{g}\|_{E}$, via
$$(f*g)(x)=\langle f(t),g(x-t)\rangle=\langle f(t),T_{-x}\check{g}(t)\rangle.$$
Observe that if $E$ is a translation invariant $(B)$-space of ultradistributions of class $*-\dagger$, then so is $\check{E}$. Clearly $\|T_h\|_{\mathcal{L}(E)}=\|T_{-h}\|_{\mathcal{L}(\check{E})}$. Hence the convolution can be defined in the same way as a mapping from $\check{E}'\times E$ to $C(\RR^d)$. We end this section with a simple proposition describing the mapping properties of this convolution. As usual, $L^{\infty}_{\omega}$, the dual of the Beurling algebra $L^{1}_{\omega}$, is the $(B)$-space of all measurable functions satisfying
$$
\|u\|_{\infty,\omega}=\operatorname*{ess}\sup_{x\in\mathbb{R}^{d}} \frac{|g(x)|}{\omega(x)}<\infty.
$$
We need the following two closed subspaces of $L^{\infty}_{\omega}$,
\begin{equation}
\label{UC}
 UC_{\omega}:= \left\{u\in L^{\infty}_{\omega}\big|\, \lim_{h\to0}\|T_{h}u-u\|_{\infty,\omega}=0 \right\}
\end{equation}
and
\begin{equation}
\label{C}
C_{\omega}:= \left\{u\in C(\mathbb{R}^{d})\Big|\, \lim_{|x|\to\infty}\frac{u(x)}{\omega(x)}=0 \right\}.
\end{equation}

The proof of the following proposition is simple and we thus omit it (the second part about the reflexive case follows from Proposition \ref{prop3.3}).

\begin{proposition}\label{convolution E E'}
$E'\ast \check{E} \subseteq  UC_{\omega}$ and $E'\times \check{E} \to  UC_{\omega}$ is continuous. If $E$ is reflexive, then $E'\ast \check{E} \subseteq C_{\omega}$. Similarly $\check{E}'\ast E \subseteq  UC_{\check{\omega}}$ and $\check{E}'\times E \to  UC_{\check{\omega}}$ is continuous. When $E$ is reflexive, $E'\ast \check{E} \subseteq C_{\omega}$ and $\check{E}'\ast E \subseteq C_{\check{\omega}}$.
\end{proposition}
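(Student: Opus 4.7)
The plan is to start from the explicit pairing $(f*g)(x)=\langle f,T_{-x}\check g\rangle$ and exploit the boundedness of $T_{-x}$ on $E$ together with the regularity properties of the weight $\omega$. From (III), we know that $\|T_{-x}\psi\|_{E}\le\omega(x)\|\psi\|_{E}$ for every $\psi\in E$, so immediately
\[
|(f*g)(x)|\le\|f\|_{E'}\,\|T_{-x}\check g\|_{E}\le\|f\|_{E'}\,\|g\|_{\check E}\,\omega(x),
\]
which gives both the pointwise bound $\|f*g\|_{\infty,\omega}\le\|f\|_{E'}\|g\|_{\check E}$ and the continuity of the bilinear map once we know the image lands in $UC_{\omega}$. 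Continuity of the function $f*g$ itself follows from $x_n\to x$ implying $T_{-x_n}\check g\to T_{-x}\check g$ in $E$ (Lemma \ref{1111177}), composed with the continuous linear functional $f$.

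The uniform continuity in the weighted sup norm is the core step. Since translations commute,
\[
(f*g)(x+h)-(f*g)(x)=\langle f,T_{-x}(T_{-h}\check g-\check g)\rangle,
\]
so the same estimate as above gives
\[
\|T_{h}(f*g)-(f*g)\|_{\infty,\omega}\le\|f\|_{E'}\,\|T_{-h}\check g-\check g\|_{E},
\]
which tends to $0$ as $h\to 0$ by Lemma \ref{1111177} applied in $\check E$ (which, as noted in the paragraph preceding the proposition, is itself a translation-invariant $(B)$-space of the same class). This lands $f*g$ inside $UC_{\omega}$ and simultaneously proves continuity of the bilinear map $E'\times\check E\to UC_{\omega}$. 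The analogous statement for $\check E'*E\subseteq UC_{\check\omega}$ is obtained by swapping the roles of $E$ and $\check E$, noting that the weight function of $\check E$ is exactly $\check\omega$.

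For the reflexive case, my plan is to invoke density. By Proposition \ref{prop3.3}, when $E$ is reflexive, $E'$ is itself a translation-invariant $(B)$-space of class $*-\dagger$, so $\mathcal S^{*}_{\dagger}(\RR^{d})$ is dense in $E'$ by axiom (I); it is likewise dense in $\check E$. Whenever $\varphi,\psi\in\mathcal S^{*}_{\dagger}(\RR^{d})$, the convolution $\varphi*\psi$ belongs to $\mathcal S^{*}_{\dagger}(\RR^{d})$ and therefore satisfies $|\varphi*\psi(x)|\le C_{N}e^{-A(N|x|)}$ (resp.\ an analogous Roumieu bound) for every $N>0$, which, since $\omega(x)\le Ce^{A(\tau|x|)}$ for some $\tau$ (resp.\ for every $\tau$), forces $\varphi*\psi\in C_{\omega}$. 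The space $C_{\omega}$ is closed in $L^{\infty}_{\omega}$ by a routine $\varepsilon/3$ argument, so continuity of the bilinear map together with density places $f*g$ in the closure of $\mathcal S^{*}_{\dagger}*\mathcal S^{*}_{\dagger}$ inside $L^{\infty}_{\omega}$, and hence in $C_{\omega}$. The statement $\check E'*E\subseteq C_{\check\omega}$ follows by the same symmetry argument as before.

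I do not expect genuine obstacles: every step reduces to Lemma \ref{1111177}, the transpose identity $\|T_{h}\|_{\mathcal L(E')}=\|T_{-h}\|_{\mathcal L(E)}$, and the density afforded by axiom (I). The only mildly delicate point is the reflexive case, where one must remember that reflexivity is used exactly to upgrade $a')$ of Proposition \ref{prop3.2} (a merely continuous embedding $\mathcal S^{*}_{\dagger}\to E'$) to a dense one via Proposition \ref{prop3.3}, without which the density approximation argument that sends the image into $C_{\omega}$ would not be available.
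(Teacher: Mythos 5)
Your proof is correct and follows exactly the route the paper intends: the paper omits the argument as ``simple,'' and its only hint---that the reflexive case follows from Proposition \ref{prop3.3}---is precisely how you handle it, via density of $\mathcal{S}^{*}_{\dagger}(\RR^{d})$ in $E'$ combined with the continuity estimate $\|f*g\|_{\infty,\omega}\le\|f\|_{E'}\|g\|_{\check E}$ and closedness of $C_{\omega}$ in $L^{\infty}_{\omega}$. The direct estimates from $(f*g)(x)=\langle f,T_{-x}\check g\rangle$, Lemma \ref{1111177}, and the identity $\omega_{\check E}=\check\omega$ are the intended ones, so there is nothing to add.
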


We conclude this section with some examples of translation-invariant $(B)$-spaces of quasianalytic ultradistributions.

\begin{example}[Weighted $L^{p}_{\eta}$ spaces]\label{example}
Let $\eta$ be an \emph{ultrapolynomially bounded weight function of class $\dagger$}, that is, a (Borel) measurable function $\eta:\mathbb{R}^d\rightarrow (0,\infty)$ that fulfills the requirement $\eta(x+h)\leq C\eta(x)e^{A(\tau|h|)}$ for some $C,\tau>0$ (for every $\tau>0$ there exists $C>0$). For $1\leq p<\infty$ we denote as $L^p_{\eta}$ the spaces of measurable functions $g$ such that $\|g|\|_{p,\eta}:=\|\eta g\|_{p}<\infty$. Clearly $L^p_{\eta}$ are translation-invariant $(B)$-spaces of ultradistributions of class $*-\dagger$ for $p\in[1,\infty)$ and for any sequence $(M_{p})_{p\in\mathbb{N}}$. On the other hand, we make an exception and define $L^\infty_{\eta}$ via the norm $\|g\|_{\infty,\eta}:=\| g/\eta\|_{\infty}$. We also introduce the closed spaces $ UC_{\eta}$ and $C_{\eta}$ of $L^{\infty}_{\eta}$ as in (\ref{UC}) and (\ref{C}) with $\omega$ replaced by $\eta$. Note that $C_{\eta}$ is a translation-invariant $(B)$-spaces of ultradistributions of class $*-\dagger$ because $\mathcal{S}^*_{\dagger}(\mathbb{R}^{d})$ is dense in it, while $L^{\infty}_{\eta}$ and $UC_{\eta}$ fail to have this property.

As usual, we write $q$ for the conjugate index of $p$. As well known, $(L^{p}_{\eta})'=L^{q}_{\eta^{-1}}$ if $1< p<\infty$ and $(L^{1}_{\eta})'=L^{\infty}_{\eta}$. In view of Proposition \ref{thgoodspace}, the space $E'_{\ast}$ corresponding to $E=L^{p}_{\eta^{-1}}$ is $E'_{\ast}=L^{q}_{\eta}$ whenever $1<p<\infty$. On the other hand, Proposition \ref{prop3.11} gives
that $E'_{\ast}= UC_{\eta}$ for  $E=L^{1}_{\eta}$. The Beurling algebra of $L^{p}_{\eta}$ can be explicitly determined as in \cite[Proposition 10]{DPV}, we state the result for the reader's convenience. Note that when $\log \eta$ is a subadditive function and $\eta(0)=1$, the following proposition yields $\omega_{\eta}=\eta$ (a.e.).

\begin{proposition}\label{weight prop}
Let $\omega_{\eta}(h):= \operatorname*{ess}\sup_{x\in\mathbb{R}^{d}} \eta(x+h)/\eta(x)$. Then
$$\|T_{-h}\|_{\mathcal{L}(L^{p}_{\eta})}=
\begin{cases}
\omega_{\eta}(h) & \mbox{ if } p\in[1,\infty),
\\ \omega_{\eta}(-h)
  & \mbox{ if } p=\infty.
\end{cases}
$$
Consequently, the Beurling algebra of $L_{\eta}^{p}$ is $L_{\omega_{\eta}}^{1}$ if $p=[1,\infty)$ and $L^{1}_{\check{\omega}_{\eta}}$ if $p=\infty$.
\end{proposition}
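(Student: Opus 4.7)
The proof is essentially a direct computation plus a sharpness argument, so there is no deep obstacle; the only care needed is producing valid witnesses for the lower bound.

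\textbf{Upper bound, $p \in [1,\infty)$.} First I would observe that the condition on $\eta$ gives $\omega_\eta(h) \leq C e^{A(\tau|h|)} < \infty$, so the claimed bound is finite. For $g \in L^p_\eta$, the change of variable $y = x - h$ yields
\begin{equation*}
\|T_{-h}g\|_{p,\eta}^{p} = \int_{\RR^d}|g(x-h)|^{p}\eta(x)^{p}\,dx = \int_{\RR^d}|g(y)|^{p}\left(\frac{\eta(y+h)}{\eta(y)}\right)^{p}\eta(y)^{p}\,dy \leq \omega_\eta(h)^{p}\|g\|_{p,\eta}^{p}.
\end{equation*}

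\textbf{Sharpness, $p \in [1,\infty)$.} For each $\varepsilon>0$ the set $A_\varepsilon = \{y\in\RR^d : \eta(y+h)/\eta(y) > \omega_\eta(h)-\varepsilon\}$ has positive Lebesgue measure by definition of the essential supremum. Pick any measurable $B\subseteq A_\varepsilon$ with $0 < |B| < \infty$ and set $g := \chi_B/\eta$, which is measurable (as $\eta>0$) and satisfies $\|g\|_{p,\eta}^{p} = |B|$. Then
\begin{equation*}
\|T_{-h}g\|_{p,\eta}^{p} = \int_B \left(\frac{\eta(y+h)}{\eta(y)}\right)^{p} dy \geq (\omega_\eta(h)-\varepsilon)^{p}|B|,
\end{equation*}
so $\|T_{-h}\|_{\mathcal{L}(L^p_\eta)} \geq \omega_\eta(h)-\varepsilon$ for every $\varepsilon>0$.

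\textbf{The case $p = \infty$.} Recall $\|g\|_{\infty,\eta} = \|g/\eta\|_{\infty}$. Substituting $y = x-h$ in the essential supremum,
\begin{equation*}
\|T_{-h}g\|_{\infty,\eta} = \operatorname*{ess\,sup}_{x} \frac{|g(x-h)|}{\eta(x)} = \operatorname*{ess\,sup}_{y}\frac{|g(y)|}{\eta(y)}\cdot\frac{\eta(y)}{\eta(y+h)} \leq \|g\|_{\infty,\eta}\operatorname*{ess\,sup}_{y}\frac{\eta(y)}{\eta(y+h)}.
\end{equation*}
Setting $z = y+h$ in the last supremum shows it equals $\operatorname*{ess\,sup}_{z}\eta(z-h)/\eta(z) = \omega_\eta(-h)$, yielding the upper bound. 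For sharpness, for $\varepsilon>0$ choose $B$ of positive finite measure on which $\eta(y)/\eta(y+h) > \omega_\eta(-h)-\varepsilon$, and take $g := \chi_B\,\eta$; then $\|g\|_{\infty,\eta}=1$ and
\begin{equation*}
\|T_{-h}g\|_{\infty,\eta} = \operatorname*{ess\,sup}_{x}\frac{\chi_B(x-h)\eta(x-h)}{\eta(x)} \geq \omega_\eta(-h)-\varepsilon.
\end{equation*}

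\textbf{Beurling algebra identification.} By the definition given in \eqref{ravenstvo1}, the weight function of $E = L^p_\eta$ equals $\omega(h) = \|T_{-h}\|_{\mathcal{L}(L^p_\eta)}$, and the associated Beurling algebra is $L^{1}_{\omega}$. The two cases of the formula then immediately give $L^{1}_{\omega_\eta}$ for $p\in[1,\infty)$ and $L^{1}_{\check\omega_\eta}$ for $p=\infty$. The only mild subtlety worth flagging is verifying measurability of the two witness functions $\chi_B/\eta$ and $\chi_B\eta$, which follows because $\eta$ is a positive (Borel) measurable function; otherwise the argument is a routine computation.
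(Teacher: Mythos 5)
Your proof is correct, and it is essentially the standard argument that the paper defers to (it states the result without proof, citing [DPV, Proposition 10]): a change of variables for the upper bound and a characteristic-function witness $\chi_B/\eta$ (resp.\ $\chi_B\,\eta$) supported on the set where the quotient nearly attains its essential supremum for the lower bound, followed by reading off the Beurling algebra from the definition $\omega(h)=\|T_{-h}\|_{\mathcal{L}(E)}$. No gaps; the finiteness of $\omega_\eta$ and the measurability points you flag are exactly the right (minor) things to check.
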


Clearly, the Beurling algebra of $C_{\eta}$ is $L^1_{\check{\omega}_{\eta}}$. We now compute the space $E'_{\ast}$ corresponding to $E=C_{\eta}$. Note that $\eta$ can be assumed to be continuous (the continuous weight $\eta_{1}=\eta\ast \varphi$ defines an equivalent norm if we choose $\varphi\in\mathcal{D}(\mathbb{R}^{d})$ being non-negative with $\int_{\mathbb{R}^{d}}\varphi(x)dx=1$). Thus $E=C_{\eta}$ is isometrically isomorphic to $C_0(\RR^d)$, the isometry being $J_{\eta}:C_{\eta}\rightarrow C_0(\RR^d)$, $J_{\eta}(\psi)=\psi/\eta$. Hence ${}^tJ_{\eta}:\mathcal{M}^1\rightarrow \left(C_{\eta}\right)'$ is isometric isomorphism and thus for each $f\in\left(C_{\eta}\right)'$ there exists a unique finite measure $\nu\in\mathcal{M}^1$ such that $\langle f,\psi\rangle=\ds\int_{\RR^d}\psi(x)/\eta(x) d\mu(x)$ for all $\psi\in C_{\eta}$. We will denote the dual of $C_{\eta}$ by $\mathcal{M}^1_{\eta}$.  Now, one easily verifies that $L^1_{\omega_{\eta}}*\mathcal{M}^1_{\eta}\subseteq L^1_{\eta}$ and since $\SSS^*_{\dagger}(\RR^d)$ is dense in $L^1_{\eta}$ (the proof is analogous to that of Lemma \ref{densitySL}) and $\SSS^*_{\dagger}(\RR^d)*\SSS^*_{\dagger}(\RR^d)$ is dense in $\SSS^*_{\dagger}(\RR^d)$ (cf. Remark \ref{r11}), we obtain that $E'_*=L^1_{\eta}$.

\section{Ultradistribution spaces of class $*-\dagger$ associated to translation-invariant $(B)$-spaces}\label{new spaces}

In this section we construct and study test function and ultradistribution spaces associated to translation-invariant $(B)$-spaces of ultradistributions of class $*-\dagger$. The construction of such spaces is similar to the one given in \cite{DPPV} in the non-quasianalytic case; however, the study of their properties requires new non-trivial arguments. We recall that throughout the rest of the paper $E$ stands for a tempered translation-invariant $(B)$-space of ultradistributions whose growth function of its translation group is $\omega$ (cf. (\ref{ravenstvo1})). The $(B)$-space $E'_{\ast}\subseteq E'$ was introduced in Definition \ref{goodspace}.

\end{example}

\subsection{The test function space $\mathcal{D}^*_{E}$}\label{tspace}

We begin by constructing our test space. Let $$\mathcal{D}^{M_p,m}_{E}=\left\{\varphi\in E\Big|\, D^{\alpha}\varphi\in E, \forall \alpha\in\NN^d, \|\varphi\|_{E,m}=\sup_{\alpha\in\NN^d}\frac{m^\alpha \|D^\alpha\varphi\|_E}{M_\alpha}<\infty\right\}.$$ It is easy to verify that $\mathcal{D}^{M_p,m}_{E}$ is $(B)$-space with the norm $\|\cdot\|_{E,m}$. None of these spaces is trivial. To see this in the Beurling case one only needs to use the continuity of the inclusion $\SSS^{(M_p)}_{(A_p)}(\RR^d)\rightarrow E$ to obtain that $\SSS^{(M_p)}_{(A_p)}(\RR^d)\subseteq \mathcal{D}^{M_p,m}_{E}$ for each $m>0$. In the Roumieu case observe that $\SSS^{(M_p)}_{(A_p)}(\RR^d)$ is continuously injected into $\SSS^{\{M_p\}}_{\{A_p\}}(\RR^d)$, hence we have the continuous inclusions $\SSS^{(M_p)}_{(A_p)}(\RR^d)\rightarrow E$. Now, similarly one proves that $\SSS^{(M_p)}_{(A_p)}(\RR^d)\subseteq \mathcal{D}^{M_p,m}_{E}$ for each $m>0$. Obviously, $\mathcal{D}^{M_p,m_1}_{E}\subseteq\mathcal{D}^{M_p,m_2}_{E}$ for $m_2<m_1$ and the inclusion mapping is continuous. As l.c.s., we define
$$\mathcal{D}_E^{(M_p)}=\lim_{\substack{\longleftarrow\\ m\rightarrow\infty}} \mathcal{D}_E^{M_p,m},\,\,\,\mathcal{D}_E^{\{M_p\}}=\lim_{\substack{\longrightarrow\\ m\rightarrow 0}} \mathcal{D}_E^{M_p,m}.$$
Since $\mathcal{D}_E^{\{M_p\},m}$ is continuously injected into $E$ for each $m>0$, $\mathcal{D}_E^{\{M_p\}}$ is indeed a (Hausdorff) l.c.s.. Moreover $\mathcal{D}_E^{\{M_p\}}$ is a barreled bornological $(DF)$-space, since it is an inductive limit of $(B)$-spaces. Obviously $\mathcal{D}_E^{(M_p)}$ is an $(F)$-space. Of course $\mathcal{D}_E^{(M_p)}$ and $\mathcal{D}_E^{\{M_p\}}$ are continuously injected into $E$.

 Additionally, in the Roumieu case, for each fixed $(r_p)\in\mathfrak{R}$ we define the $(B)$-space
\beqs
\mathcal{D}_E^{\{M_p\},(r_p)}=\left\{\varphi \in E\Bigg|\, D^{\alpha}\varphi\in E, \forall\alpha\in\NN^d, \|\varphi\|_{E,(r_p)}=\sup_{\alpha}\frac{\|D^\alpha\varphi\|_E}{M_\alpha \prod_{j=1}^{|\alpha|}r_j }<\infty\right\},
\eeqs
with norm $\|\cdot\|_{E,(r_p)}$. Since for $k>0$ and $(r_p)\in\mathfrak{R}$, there exists $C>0$ such that $k^{|\alpha|}\geq C/\left(\prod_{j=1}^{|\alpha|}r_j\right)$, $\DD^{\{M_p\},k}_E$ is continuously injected into $\DD^{\{M_p\},(r_p)}_E$. Define as l.c.s. $\ds\tilde{\mathcal{D}}^{\{M_p\}}_E=\lim_{\substack{\longleftarrow\\ (r_p)\in \mathfrak{R}}}\mathcal{D}_E^{\{M_p\},(r_p)}$. Then $\tilde{\DD}^{\{M_p\}}_E$ is a complete l.c.s. and $\DD^{\{M_p\}}_E$ is continuously injected into $\tilde{\DD}^{\{M_p\}}_E$.

\begin{lemma}\label{regular}
The space $\mathcal{D}_E^{\{M_p\}}$ is regular, i.e., every bounded set $B$ in $\mathcal{D}_E^{\{M_p\}}$ is bounded in some $\mathcal{D}_E^{\{M_p\},m}$. In addition $\DD^{\{M_p\}}_E$ is complete.
\end{lemma}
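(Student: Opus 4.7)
The plan is to first prove regularity via a classical Komatsu-type combinatorial lemma, and then to derive completeness from regularity by a standard high-order/low-order splitting. For regularity, let $B$ be bounded in $\DD_E^{\{M_p\}}$. Since $\DD_E^{\{M_p\}}$ is continuously included in $\tilde{\DD}_E^{\{M_p\}}$, $B$ is bounded in every $\DD_E^{\{M_p\},(r_p)}$, so the positive sequence
$$b_p := \sup_{\varphi\in B}\sup_{|\alpha|=p}\frac{\|D^\alpha\varphi\|_E}{M_\alpha}$$
satisfies $b_p \leq C_{(r_p)}\prod_{j=1}^{p}r_j$ for each $(r_p)\in\mathfrak{R}$. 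Invoking the well-known combinatorial fact that a positive sequence dominated by $\prod_{j=1}^{p}r_j$ for every $(r_p)\in\mathfrak{R}$ must be of geometric growth, we extract constants $C,L>0$ with $b_p\leq CL^p$; then any $m\leq 1/L$ makes $B$ bounded in the Banach space $\DD_E^{\{M_p\},m}$.

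For completeness, let $(\varphi_\lambda)_\lambda$ be a Cauchy net in $\DD_E^{\{M_p\}}$. Being Cauchy it is bounded, hence by the regularity just established it sits and is bounded in some $\DD_E^{\{M_p\},m_0}$ with $\|\varphi_\lambda\|_{E,m_0}\leq C$. Via the continuous inclusion $\DD_E^{\{M_p\}}\hookrightarrow E$ the net is Cauchy in the Banach space $E$ and converges to some $\varphi\in E$. Each derivation $D^\alpha:\DD_E^{\{M_p\},m_0}\to E$ is bounded, so $(D^\alpha\varphi_\lambda)_\lambda$ is Cauchy in $E$; uniqueness of limits in $\SSS'^*_\dagger(\RR^d)$ identifies its $E$-limit with $D^\alpha\varphi$, and passing to the limit in $\|D^\alpha\varphi_\lambda\|_E\leq CM_\alpha/m_0^{|\alpha|}$ yields $\varphi\in\DD_E^{\{M_p\},m_0}$.

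To upgrade this to convergence in the inductive limit topology, fix any $m<m_0$ and $\varepsilon>0$ and choose $N$ with $2C(m/m_0)^N<\varepsilon$; then uniformly in $\lambda$,
$$\sup_{|\alpha|>N}\frac{m^{|\alpha|}}{M_\alpha}\|D^\alpha(\varphi_\lambda-\varphi)\|_E\leq 2C(m/m_0)^N<\varepsilon,$$
while for the finitely many $\alpha$ with $|\alpha|\leq N$, $D^\alpha\varphi_\lambda\to D^\alpha\varphi$ in $E$, so by directedness of the net a single $\lambda_0$ serves to bring all of the corresponding terms below $\varepsilon$ simultaneously. Consequently $\|\varphi_\lambda-\varphi\|_{E,m}<\varepsilon$ for $\lambda\geq\lambda_0$, giving convergence in $\DD_E^{\{M_p\},m}$ and \emph{a fortiori} in $\DD_E^{\{M_p\}}$. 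The only truly delicate point is the combinatorial lemma underlying Step 1; after that, the slack between $m_0$ and $m$ reduces completeness to a routine splitting.
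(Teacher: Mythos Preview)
Your proof is correct and follows essentially the approach the paper has in mind: the paper defers to \cite[Proposition 5.1]{DPPV}, whose core ingredients are precisely Komatsu's combinatorial lemma \cite[Lemma 3.4]{Komatsu3} for regularity (cited by the paper in the very next paragraph) and the high-order/low-order splitting for completeness (the same splitting the paper spells out later in the proof of Theorem \ref{theorBB}).

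One small phrasing issue: you write ``$D^\alpha:\DD_E^{\{M_p\},m_0}\to E$ is bounded, so $(D^\alpha\varphi_\lambda)_\lambda$ is Cauchy in $E$'', but you only know the net is \emph{bounded} in $\DD_E^{\{M_p\},m_0}$, not Cauchy there. The correct justification is that $D^\alpha:\DD_E^{\{M_p\}}\to E$ is continuous (being continuous on each step of the inductive limit), and the net is Cauchy in $\DD_E^{\{M_p\}}$. The conclusion is unaffected.
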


\begin{proof}
An adaptation of the proof of \cite[Proposition 5.1]{DPPV} proves the lemma.
\end{proof}

Similarly as in the first part of the proof of \cite[Proposition 5.1]{DPPV} one can prove, by using \cite[Lemma 3.4]{Komatsu3}, that $\DD^{\{M_p\}}_E$ and $\tilde{\DD}^{\{M_p\}}_E$ are equal as sets, i.e., the canonical inclusion $\DD^{\{M_p\}}_E\rightarrow \tilde{\DD}^{\{M_p\}}_E$ is surjective.

 The next proposition gives the relationship between $\SSS^*_{\dagger}(\RR^d)$, $\DD^*_E$ and $E$. The proof is essentially the same as that of \cite[Proposition 5.2]{DPPV}.

\begin{proposition}\label{970}
The following dense inclusions hold $\mathcal{S}^*_{\dagger}(\mathbb{R}^d)\hookrightarrow\mathcal{D}^{*}_E\hookrightarrow E\hookrightarrow \mathcal{S}'^*_{\dagger}(\mathbb{R}^d)$ and $\mathcal{D}^{*}_E$ is a topological module over the Beurling algebra $L^{1}_{\omega}$, i.e., the convolution $*:L^1_{\omega}\times\DD^*_E\rightarrow \DD^*_E$ is continuous. Moreover, in the Beurling case the following estimate
\begin{equation}\label{eq5}
\|u\ast \varphi\|_{E,m}\leq \|u\|_{1,\omega}\| \varphi\|_{E,m},\,\,\, m>0
\end{equation}
holds. In the Roumieu case, for each $m>0$ the convolution is also continuous bilinear mapping $L^1_{\omega}\times\DD^{M_p,m}_E\rightarrow \DD^{M_p,m}_E$ and the inequality (\ref{eq5}) holds.
\end{proposition}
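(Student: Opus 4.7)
The proof splits into three tasks: (a) the chain of continuous dense embeddings $\SSS^*_\dagger\hookrightarrow \DD^*_E\hookrightarrow E\hookrightarrow \SSS'^*_\dagger$, (b) the module inequality (\ref{eq5}), and (c) its upgrade to joint continuity of $*\colon L^1_\omega\times \DD^*_E\to\DD^*_E$. The inclusion $\DD^*_E\hookrightarrow E$ is immediate since $\|\varphi\|_E\leq\|\varphi\|_{E,m}$, and $E\hookrightarrow \SSS'^*_\dagger$ is axiom (I); density of $E$ in $\SSS'^*_\dagger$ follows from the known density of $\SSS^*_\dagger$ in $\SSS'^*_\dagger$ together with $\SSS^*_\dagger\subseteq E$. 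For the embedding $\SSS^*_\dagger\hookrightarrow \DD^*_E$, I would first use $(M.2)$ and the substitution $\gamma=\alpha+\beta$ in the supremum defining $\sigma_h$ to derive the bound $\sigma_h(D^\alpha\varphi)\leq c_0 h^{-|\alpha|}M_\alpha\,\sigma_{hH}(\varphi)$. Combined with the continuity of the embedding $\SSS^*_\dagger\hookrightarrow E$ from (I), this yields $\|\varphi\|_{E,m}\leq C\sigma_{h'}(\varphi)$ for suitable $h'$ (all $h'>0$ in the Beurling case, a small $h'>0$ in the Roumieu case), proving continuity.

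The technical heart is the density of $\SSS^*_\dagger(\RR^d)$ in $\DD^*_E$. Given $\psi\in\DD^{M_p,m}_E$ (resp.\ $\psi\in\DD^{M_p,(r_p)}_E$), I would pick $\chi,\varphi\in\SSS^*_\dagger(\RR^d)$ with $\int_{\RR^d}\chi(x)\,dx=1$ and $\varphi(0)=1$, set $\chi_n(x)=n^d\chi(nx)$, $\varphi_n(x)=\varphi(x/n)$, and approximate by $\psi_n=\chi_n*(\varphi_n\psi)$. To see $\psi_n\in\SSS^*_\dagger$, the hypothesis $\|\psi\|_{E,m}<\infty$ together with $E\hookrightarrow\SSS'^*_\dagger$ must be leveraged to produce pointwise ultradifferentiable control on $\varphi_n\psi$ sufficient to place it in $\SSS^{M_p,r}_{A_p,r}$ (resp.\ $\SSS^{M_p,(r_p)}_{A_p,(r_p)}$) for some $r$, whereupon Lemma \ref{appincl}(i) gives $\psi_n\in\SSS^*_\dagger$. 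To see $\psi_n\to\psi$ in $\DD^{M_p,m'}_E$ with $m'<m$, I would expand $D^\alpha\psi_n=\chi_n*D^\alpha(\varphi_n\psi)$ via Leibniz, use Corollary \ref{molifaer} to handle the pure mollification piece $\chi_n*D^\alpha\psi\to D^\alpha\psi$ in $E$, and exploit the $n^{-|\beta|}$ scaling of $D^\beta\varphi_n$ for $\beta\neq 0$ together with Proposition \ref{cor:Beurling Algebra} for the cross-terms; uniformity over $\alpha$ in the norm $\|\cdot\|_{E,m'}$ is bought by the geometric factor $(m'/m)^{|\alpha|}$ against $\|\psi\|_{E,m}$.

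For (b), the identity $D^\alpha(u*\varphi)=u*D^\alpha\varphi$ holds in $E$ because $D^\alpha\varphi\in E$ and $E$ is an $L^1_\omega$-module (Proposition \ref{cor:Beurling Algebra}). Applying the Beurling-algebra estimate termwise,
\[
\frac{m^{|\alpha|}\|D^\alpha(u*\varphi)\|_E}{M_\alpha}=\frac{m^{|\alpha|}\|u*D^\alpha\varphi\|_E}{M_\alpha}\leq \|u\|_{1,\omega}\,\frac{m^{|\alpha|}\|D^\alpha\varphi\|_E}{M_\alpha},
\]
so taking the supremum over $\alpha$ gives (\ref{eq5}). For (c) in the Beurling case, (\ref{eq5}) is joint continuity at each $m$-level and hence on the projective limit $\DD^{(M_p)}_E$. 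In the Roumieu case (\ref{eq5}) yields joint continuity $L^1_\omega\times \DD^{M_p,m}_E\to\DD^{M_p,m}_E$ for every $m>0$; combined with the regularity of $\DD^{\{M_p\}}_E$ from Lemma \ref{regular} (every bounded set sits in some $\DD^{M_p,m}_E$) and the fact that $\DD^{\{M_p\}}_E$ is a barreled $(DF)$-space, this promotes to joint continuity on $L^1_\omega\times \DD^{\{M_p\}}_E$.

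The main obstacle is the density step: verifying that $\psi_n=\chi_n*(\varphi_n\psi)$ lies in $\SSS^*_\dagger(\RR^d)$ starting from a $\psi\in E$ whose derivatives are only known to be controlled in the Banach norm of $E$, and then tracking the approximation uniformly in $\alpha$ in the norm $\|\cdot\|_{E,m'}$. The Roumieu case is the most delicate, since one cannot work with a single $h$ but must track an arbitrary $(r_p)\in\mathfrak{R}$ throughout, and must shrink $(r_p)$ appropriately when invoking Lemma \ref{appincl}.
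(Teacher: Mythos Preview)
Your treatment of the module estimate (\ref{eq5}) and of joint continuity is correct, and your argument for the continuous embeddings $\SSS^*_\dagger\hookrightarrow\DD^*_E\hookrightarrow E\hookrightarrow\SSS'^*_\dagger$ is fine. The gap is in the density of $\SSS^*_\dagger$ in $\DD^*_E$, and it is a genuine circularity rather than just a missing detail.

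Your scheme $\psi_n=\chi_n*(\varphi_n\psi)$ requires, in order to invoke Lemma~\ref{appincl}$(i)$, that $\varphi_n\psi\in\SSS^{M_p,r}_{A_p,r}$, and your Leibniz expansion of $D^\alpha(\varphi_n\psi)$ needs $\psi$ to be a smooth function so that the terms $D^\beta\varphi_n\cdot D^{\alpha-\beta}\psi$ make sense as elements of $E$. But at this point in the argument $\psi$ is only known as an element of $E\subseteq\SSS'^*_\dagger$ with $D^\alpha\psi\in E$ as \emph{ultradistributional} derivatives; the $E$-norm gives no pointwise control. Smoothness of $\DD^*_E$ is Proposition~\ref{smooth prop}, proved \emph{after} Proposition~\ref{970} and using the density established there. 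Your sentence ``$\|\psi\|_{E,m}<\infty$ together with $E\hookrightarrow\SSS'^*_\dagger$ must be leveraged to produce pointwise ultradifferentiable control'' does not lead anywhere without already assuming what Proposition~\ref{smooth prop} provides.

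The remedy is to avoid the multiplicative cut-off altogether. Use axiom~(I) to pick $\theta_j\in\SSS^*_\dagger(\RR^d)$ with $\theta_j\to\psi$ in $E$, take $\chi\in\SSS^{(M_p)}_{(A_p)}(\RR^d)$ (Beurling regularity, hence $\chi\in\SSS^{M_p,h}_{A_p,h}$ for \emph{every} $h$) with $\int\chi=1$, and approximate by $\chi_\epsilon*\theta_j\in\SSS^*_\dagger$. Writing $D^\alpha(\chi_\epsilon*(\theta_j-\psi))=(D^\alpha\chi_\epsilon)*(\theta_j-\psi)$ and applying Proposition~\ref{cor:Beurling Algebra} gives
\[
\|\chi_\epsilon*(\theta_j-\psi)\|_{E,m'}\le\Big(\sup_\alpha\frac{m'^{|\alpha|}\|D^\alpha\chi_\epsilon\|_{1,\omega}}{M_\alpha}\Big)\|\theta_j-\psi\|_E,
\]
and since $\chi$ has Beurling regularity the supremum is finite for every fixed $\epsilon$ and every $m'>0$ (this is exactly what fails in the Roumieu case if $\chi$ is merely in $\SSS^{\{M_p\}}_{\{A_p\}}$). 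Combining this with $\chi_\epsilon*\psi\to\psi$ in $\DD^{M_p,m'}_E$ for $m'<m$ (your argument via Corollary~\ref{molifaer} plus the $(m'/m)^{|\alpha|}$ tail works unchanged for this step), a diagonal extraction yields density in both the Beurling and Roumieu cases.
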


We will often use the following results on the action of ultradifferential operators on the test space $\DD^*_E$ (see \cite{DPPV} for their proofs).
\begin{lemma}\label{ultradiff}
If $P(D)$ is ultradifferential operator of $*$ type, then $P(D):\mathcal{D}^*_E\rightarrow \mathcal{D}^*_E$ is continuous.
\end{lemma}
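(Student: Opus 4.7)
The plan is to produce a direct termwise estimate: for each seminorm defining the topology of the target, I would find a compatible seminorm on the source that controls the image. Write $P(D) = \sum_{\alpha} c_\alpha D^\alpha$ with the standard ultradifferential coefficient bound $|c_\alpha|\leq CL^{|\alpha|}/M_\alpha$, valid for every $L>0$ in the Beurling case and for some fixed $L>0$ in the Roumieu case. Starting from
\[
\|D^\beta P(D)\varphi\|_E \leq \sum_{\alpha\in\NN^d} |c_\alpha|\, \|D^{\alpha+\beta}\varphi\|_E
\]
and inserting the source estimate $\|D^\gamma \varphi\|_E \leq \|\varphi\|_{E,m'}\,M_\gamma/{m'}^{|\gamma|}$ together with the factorization $M_{\alpha+\beta}\leq c_0 H^{|\alpha|+|\beta|}M_\alpha M_\beta$ from $(M.2)$, the bound collapses to a geometric series in $|\alpha|$ times a factor in $|\beta|$:
\[
\|D^\beta P(D)\varphi\|_E \leq C c_0\, \|\varphi\|_{E,m'}\, (H/m')^{|\beta|}\, M_\beta \sum_\alpha (LH/m')^{|\alpha|}.
\]

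In the Beurling case, given a target $m>0$, I would set $m'=2Hm$ and use the ``for every $L$'' clause to pick $L<m$; the geometric sum is then bounded by $2^d$ and $(mH/m')^{|\beta|}\leq 1$, yielding $\|P(D)\varphi\|_{E,m}\leq C'\|\varphi\|_{E,2Hm}$. This is continuity $\mathcal{D}^{M_p,2Hm}_E\to\mathcal{D}^{M_p,m}_E$, and the universal property of the projective limit promotes it to continuity of $P(D)$ on $\mathcal{D}^{(M_p)}_E$.

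In the Roumieu case $L$ is fixed by $P$, so I would work through the identification $\mathcal{D}^{\{M_p\}}_E = \tilde{\mathcal{D}}^{\{M_p\}}_E$ recorded just above Proposition \ref{970} and verify continuity $\tilde{\mathcal{D}}^{\{M_p\}}_E\to\mathcal{D}^{\{M_p\},(s_p)}_E$ for each target $(s_p)\in\mathfrak{R}$. The analogous computation replaces $\|\varphi\|_{E,m'}$ by $\|\varphi\|_{E,(r_p)}$ and invokes Lemma \ref{nwseq} to assume the source sequence $(r_p)$ satisfies $\prod_{j=1}^{p+q}r_j\leq 2^{p+q}\prod_{j=1}^{p}r_j\prod_{j=1}^{q}r_j$, which permits the split $\prod_{j=1}^{|\alpha+\beta|}r_j\leq 2^{|\alpha+\beta|}\prod_{j=1}^{|\alpha|}r_j\prod_{j=1}^{|\beta|}r_j$ mirroring the role of $(M.2)$. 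Choosing $(r_p)$ subordinate to both $(s_p)$, so that $(2H)^{|\beta|}\prod_{j=1}^{|\beta|}(r_j/s_j)$ is bounded in $\beta$, and to $L,H$, so that the resulting $\alpha$-sum is summable, produces $\|P(D)\varphi\|_{E,(s_p)}\leq C\|\varphi\|_{E,(r_p)}$ and hence continuity on $\tilde{\mathcal{D}}^{\{M_p\}}_E$.

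The main technical obstacle is the Roumieu balancing: in the Beurling case $L$ may be shrunk at will and everything is immediate, but in the Roumieu case all flexibility must come from the sequence $(r_p)\in\mathfrak{R}$ together with Lemma \ref{nwseq}. Reconciling summability of the $\alpha$-sum with boundedness of the $\beta$-factor is the only nontrivial step and is precisely the balancing carried out in the non-quasianalytic setting in \cite{DPPV}; the adaptation to the present quasianalytic framework is a routine transposition once the identification $\mathcal{D}^{\{M_p\}}_E = \tilde{\mathcal{D}}^{\{M_p\}}_E$ is available.
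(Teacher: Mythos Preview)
Your approach is essentially the standard one that the paper defers to \cite{DPPV}, and the Beurling argument is clean and correct.

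In the Roumieu case there is a genuine gap in what you wrote. The identification $\mathcal{D}^{\{M_p\}}_E=\tilde{\mathcal{D}}^{\{M_p\}}_E$ ``recorded just above Proposition \ref{970}'' is only an equality \emph{of sets}; the topological isomorphism is Theorem \ref{1517}, which appears later and whose proof (via \cite{DPPV}) may well depend on the present lemma through Proposition \ref{smooth prop}. Your estimate $\|P(D)\varphi\|_{E,(s_p)}\leq C\|\varphi\|_{E,(r_p)}$ therefore yields continuity of $P(D)$ on $\tilde{\mathcal{D}}^{\{M_p\}}_E$ --- that is, Lemma \ref{udozc} --- but not directly on $\mathcal{D}^{\{M_p\}}_E$ with its (a priori finer) inductive-limit topology.

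The fix is short and does not require Theorem \ref{1517}. Since $\mathcal{D}^{\{M_p\}}_E$ is bornological, it suffices to show $P(D)$ is bounded. By Lemma \ref{regular} a bounded set $B\subseteq\mathcal{D}^{\{M_p\}}_E$ sits in some $\mathcal{D}^{M_p,m}_E$, hence is bounded in every $\mathcal{D}^{M_p,(r_p)}_E$; your estimate then gives that $P(D)(B)$ is bounded in every $\mathcal{D}^{M_p,(s_p)}_E$. Now \cite[Lemma 3.4]{Komatsu3} --- the same lemma used for the set-theoretic identification --- converts ``bounded for every $(s_p)\in\mathfrak{R}$'' into ``bounded in some $\mathcal{D}^{M_p,m'}_E$'', hence bounded in $\mathcal{D}^{\{M_p\}}_E$. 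With this bornological bridge inserted, your argument is complete and coincides with the route taken in \cite{DPPV}.
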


\begin{lemma}\label{udozc}
Every ultradifferential operator $P(D)$ of $\{M_p\}$ class acts continuously on $\tilde{\DD}^{\{M_p\}}_E$.
\end{lemma}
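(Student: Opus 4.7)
The plan is to prove continuity by a direct seminorm estimate: for every $(r_p)\in\mathfrak{R}$, I will construct $(s_p)\in\mathfrak{R}$ and a constant $C>0$ such that
\[
\|P(D)\varphi\|_{E,(r_p)} \leq C\,\|\varphi\|_{E,(s_p)} \qquad \text{for all } \varphi\in\tilde{\mathcal{D}}^{\{M_p\}}_E.
\]
By the universal property of the projective limit defining $\tilde{\mathcal{D}}^{\{M_p\}}_E$, this yields the continuity of $P(D)$ on $\tilde{\mathcal{D}}^{\{M_p\}}_E$.

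Since $P(D)=\sum_\beta c_\beta D^\beta$ is of class $\{M_p\}$, there exist fixed $L,C_0>0$ with $|c_\beta|\leq C_0 L^{|\beta|}/M_\beta$ for all $\beta$. Given $(r_p)\in\mathfrak{R}$, I would first apply Lemma \ref{nwseq} to produce a sequence $(s_p)\in\mathfrak{R}$ with $s_p\leq r_p$ which satisfies the stability property $\prod_{j=1}^{p+q} s_j\leq 2^{p+q}\prod_{j=1}^{p} s_j\prod_{j=1}^{q}s_j$, and tuned so that $2H s_j\leq r_j$ for all $j$ (where $H$ is the constant from $(M.2)$), and so that $(s_p)$ is compatible with $L$ in the sense needed for the $\beta$-sum below. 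Then, for any $\varphi\in\tilde{\mathcal{D}}^{\{M_p\}}_E$ and $\alpha\in\NN^d$, I compute
\begin{align*}
\|D^\alpha P(D)\varphi\|_E
&\leq \sum_\beta |c_\beta|\,\|D^{\alpha+\beta}\varphi\|_E
\leq \sum_\beta \frac{C_0 L^{|\beta|}}{M_\beta}\,\|\varphi\|_{E,(s_p)}\,M_{|\alpha+\beta|}\prod_{j=1}^{|\alpha|+|\beta|} s_j.
\end{align*}
Applying $(M.2)$ in the form $M_{|\alpha+\beta|}\leq c_0 H^{|\alpha|+|\beta|}M_\alpha M_\beta$ and the factorization from Lemma \ref{nwseq} on the product $\prod_{j=1}^{|\alpha|+|\beta|} s_j$, the estimate separates into an $\alpha$-dependent factor and a $\beta$-series, and dividing by $M_\alpha\prod_{j=1}^{|\alpha|} r_j$ leads to a bound of the form
\[
\frac{\|D^\alpha P(D)\varphi\|_E}{M_\alpha\prod_{j=1}^{|\alpha|} r_j} \;\leq\; C_1\,\|\varphi\|_{E,(s_p)}\,\Bigl(\prod_{j=1}^{|\alpha|}\tfrac{2Hs_j}{r_j}\Bigr)\,\sum_\beta (2LH)^{|\beta|}\prod_{j=1}^{|\beta|} s_j,
\]
with the $\alpha$-factor $\leq 1$ by the choice $2Hs_j\leq r_j$.

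The main obstacle is to guarantee the finiteness of the $\beta$-series on the right, since $s_p\to\infty$ would force divergence if one tried to use a single fixed $(s_p)$ naively. The resolution is to exploit the equality $\mathcal{D}^{\{M_p\}}_E=\tilde{\mathcal{D}}^{\{M_p\}}_E$ as sets together with a dedicated Komatsu-type splitting: one chooses $(s_p)$ (again via Lemma \ref{nwseq}, possibly reselecting it so that it dominates only the tail of $(r_p)$) in such a way that the series $\sum_\beta(2LH)^{|\beta|}\prod_{j=1}^{|\beta|} s_j$ converges. Putting these estimates together gives $\|P(D)\varphi\|_{E,(r_p)}\leq C\,\|\varphi\|_{E,(s_p)}$, and thus the continuity of $P(D)$ on $\tilde{\mathcal{D}}^{\{M_p\}}_E$. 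The delicate step is the simultaneous balancing between (i) the upper bound $2Hs_j\leq r_j$ needed to control the $\alpha$-factor, and (ii) the growth condition on $(s_p)$ needed to make the $\beta$-series converge; this balancing is precisely where Lemma \ref{nwseq} does its essential work, in the same spirit as the corresponding proof given in \cite{DPPV} for the non-quasianalytic case.
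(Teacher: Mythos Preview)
Your overall strategy—a direct seminorm estimate on the projective limit $\tilde{\mathcal{D}}^{\{M_p\}}_E$—is exactly the right one, and it is the approach taken in \cite{DPPV} (to which the paper defers). However, there is a genuine gap in the execution.

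The issue is your coefficient bound. You write ``there exist fixed $L,C_0>0$ with $|c_\beta|\leq C_0 L^{|\beta|}/M_\beta$'', but this is the \emph{Beurling} $(M_p)$ condition. For a $\{M_p\}$-class operator the bound holds for \emph{every} $L>0$, and you must use that full strength. With a single fixed $L$, your $\beta$-series
\[
\sum_\beta (2LH)^{|\beta|}\prod_{j=1}^{|\beta|} s_j
\]
cannot converge for any $(s_p)\in\mathfrak{R}$: since $s_p\to\infty$, the product $\prod_{j=1}^{|\beta|} s_j$ eventually dominates any geometric factor, so the general term does not tend to zero. Your proposed ``resolution'' (choosing $(s_p)$ to make this series converge) is therefore impossible as stated, and no application of Lemma~\ref{nwseq} can repair it.

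The fix is to use the equivalent Roumieu characterization (this is precisely what the paper invokes later in the proof of Theorem~\ref{theorBB}, citing ``see the proof of Lemma~\ref{udozc}''): there exist $(t_p)\in\mathfrak{R}$ and $C>0$ with
\[
|c_\beta|\leq \frac{C}{M_\beta\prod_{j=1}^{|\beta|} t_j}.
\]
Running your computation with this bound, the $\beta$-series becomes $\sum_\beta \prod_{j=1}^{|\beta|}(2Hs_j/t_j)$, which \emph{does} converge provided $s_j\leq t_j/(4H)$ (say). So, given $(r_p)$, first set $s'_j=\min\{r_j/(2H),\, t_j/(4H)\}$ (still a sequence in $\mathfrak{R}$), and then apply Lemma~\ref{nwseq} to obtain $(s_p)\leq(s'_p)$ with the factorization property. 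With this choice both the $\alpha$-factor and the $\beta$-series are controlled, and the estimate $\|P(D)\varphi\|_{E,(r_p)}\leq C\|\varphi\|_{E,(s_p)}$ follows.
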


It turns out that all elements of our test function space $\mathcal{D}^*_{E}$ are ultradifferentiable functions of class *. We need the following lemmas in order to establish this fact.

\begin{lemma}\label{lemma4.2}
There exists $l>0$ (there exists $(l_p)\in \mathfrak{R}$) such that $\SSS^{M_p,l}_{A_p,l}\subseteq E\cap E'_{\ast}$ ($\SSS^{M_p,(l_p)}_{A_p,(l_p)}\subseteq E\cap E'_{\ast}$). Moreover, the inclusion mappings $\SSS^{M_p,l}_{A_p,l}\to E$ and $\SSS^{M_p,l}_{A_p,l}\to E'_{*}$ ($\SSS^{M_p,(l_p)}_{A_p,(l_p)}\to E$ and $\SSS^{M_p,(l_p)}_{A_p,(l_p)}\to E'_{\ast}$) are continuous.
\end{lemma}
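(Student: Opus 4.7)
The plan is to promote the continuous embeddings $\SSS^*_{\dagger}(\RR^d)\hookrightarrow E$ (axiom (I)) and $\SSS^*_{\dagger}(\RR^d)\hookrightarrow E'_{\ast}$ (from Remark \ref{r11} combined with $a')$ of Proposition \ref{prop3.2}, using that $E'_{\ast}$ is a closed subspace of $E'$ and hence carries the induced norm) from the (LF)/(F)-type space $\SSS^*_{\dagger}(\RR^d)$ down to a single Banach constituent $\SSS^{M_p,l}_{A_p,l}$ (resp.\ $\SSS^{M_p,(l_p)}_{A_p,(l_p)}$). The principal device is the approximation procedure of Lemma \ref{appincl}.

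Concretely, exploiting the projective-limit description of $\SSS^{(M_p)}_{(A_p)}(\RR^d)$ and the topological isomorphism $\SSS^{\{M_p\}}_{\{A_p\}}(\RR^d)\cong\varprojlim_{(r_p)\in\mathfrak{R}}\SSS^{M_p,(r_p)}_{A_p,(r_p)}$ recalled in the preliminaries, the continuity of the two embeddings above furnishes $l_0,C_0>0$ (resp.\ $(l_p^0)\in\mathfrak{R}$ and $C_0>0$) such that
\beqs
\max\{\|\varphi\|_E,\|\varphi\|_{E'_{\ast}}\}\leq C_0\,\sigma_{l_0}(\varphi) \quad(\text{resp.}\,\,\leq C_0\,\sigma_{(l_p^0)}(\varphi))
\eeqs
for every $\varphi\in\SSS^*_{\dagger}(\RR^d)$. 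I then apply Lemma \ref{appincl}(ii) with $r=l_0$ (resp.\ $(r_p)=(l_p^0)$) to obtain $k\geq 2l_0$ (resp.\ $(k_p)\leq(l_p^0/2)$) and operators $\tilde{Q}_n:\SSS^{M_p,k}_{A_p,k}\to\SSS^{M_p,l_0}_{A_p,l_0}$ (resp.\ the Roumieu analogue) converging to the identity in $\mathcal{L}_b$; in particular there exists $M<\infty$ with $\sup_n\|\tilde{Q}_n\|\leq M$. For $\varphi\in\SSS^{M_p,k}_{A_p,k}$ (resp.\ $\SSS^{M_p,(k_p)}_{A_p,(k_p)}$), part (i) of Lemma \ref{appincl} secures that $\tilde{Q}_n\varphi=\chi_n*(\varphi_n\varphi)$ actually lies in $\SSS^*_{\dagger}(\RR^d)$, so the a priori estimate applies to each $\tilde{Q}_n\varphi$; together with the convergence $\tilde{Q}_n\varphi\to\varphi$ in $\SSS^{M_p,l_0}_{A_p,l_0}$ (resp.\ its Roumieu analogue), this forces $\{\tilde{Q}_n\varphi\}$ to be Cauchy in both $E$ and $E'_{\ast}$. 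The Banach limits must coincide with $\varphi$ upon identifying everything inside $\SSS'^*_{\dagger}(\RR^d)$ via the continuous inclusions $\SSS^{M_p,l_0}_{A_p,l_0}\hookrightarrow\SSS(\RR^d)\hookrightarrow\SSS'^*_{\dagger}(\RR^d)$ and $E,E'_{\ast}\hookrightarrow\SSS'^*_{\dagger}(\RR^d)$. Passing to the limit together with the uniform estimate $\sigma_{l_0}(\tilde{Q}_n\varphi)\leq M\sigma_k(\varphi)$ then yields
\beqs
\max\{\|\varphi\|_E,\|\varphi\|_{E'_{\ast}}\}\leq C_0M\,\sigma_k(\varphi) \quad(\text{resp.}\,\,\leq C_0M\,\sigma_{(k_p)}(\varphi)),
\eeqs
which is precisely the desired continuous inclusion with $l:=k$ (resp.\ $(l_p):=(k_p)$).

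The only delicate point is the requirement that each approximant $\tilde{Q}_n\varphi$ belong to the small space $\SSS^*_{\dagger}(\RR^d)$, and not merely to the Banach step $\SSS^{M_p,l_0}_{A_p,l_0}$, because only on $\SSS^*_{\dagger}(\RR^d)$ is the original continuity of the embeddings into $E$ and $E'_{\ast}$ available a priori. Lemma \ref{appincl}(i) is tailored exactly for this purpose, so once the parameters $l_0$, $k$ (resp.\ $(l_p^0)$, $(k_p)$) have been matched, the rest of the argument is routine.
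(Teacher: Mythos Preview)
Your proof is correct and follows essentially the same approach as the paper's own proof: both extract from the continuity of the embeddings $\SSS^*_{\dagger}(\RR^d)\to E$ and $\SSS^*_{\dagger}(\RR^d)\to E'_{\ast}$ a single seminorm estimate, then use the approximation operators $\tilde{Q}_n$ of Lemma~\ref{appincl} (with part (i) guaranteeing $\tilde{Q}_n\varphi\in\SSS^*_{\dagger}(\RR^d)$) to extend this estimate by density to the larger Banach step, identifying the limit via the common embedding into $\SSS'^*_{\dagger}(\RR^d)$. The only cosmetic difference is that the paper treats $E$ and $E'_{\ast}$ separately and combines the resulting parameters at the end via $l_p=\min\{k_p,k'_p\}$, whereas you merge them at the outset by choosing a single controlling seminorm.
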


\begin{proof} We give the proof in the Roumieu case, the Beurling case is similar. Since the inclusion $\SSS^{\{M_p\}}_{\{A_p\}}(\RR^d)\rightarrow E$ is continuous and $\ds \SSS^{\{M_p\}}_{\{A_p\}}(\RR^d)=\lim_{\substack{\longleftarrow\\ (r_p)\in\mathfrak{R}}}\SSS^{M_p,(r_p)}_{A_p,(r_p)}$ there exist $C>0$ and $(r_p)\in\mathfrak{R}$ such that $\|\varphi\|_E\leq C \sigma_{(r_p)}(\varphi)$, $\forall \varphi\in \SSS^{\{M_p\}}_{\{A_p\}}(\RR^d)$. For this $(r_p)$, by Lemma \ref{appincl}, there exist $(k_p)\in\mathfrak{R}$ and $\chi_n,\varphi_n\in\SSS^{\{M_p\}}_{\{A_p\}}(\RR^d)$, $n\in\ZZ_+$, such that $\chi_n*(\varphi_n\psi)\in\SSS^{\{M_p\}}_{\{A_p\}}(\RR^d)$ for each $n\in\ZZ_+$ and $\chi_n*(\varphi_n\psi)\rightarrow \psi$ when $n\rightarrow\infty$ in $\SSS^{M_p,(r_p)}_{A_p,(r_p)}$ for all $\psi\in \SSS^{M_p,(k_p)}_{A_p,(k_p)}$. We have
\beq\label{1131}
\|\chi_n*(\varphi_n\psi)\|_E\leq C \sigma_{(r_p)}(\chi_n*(\varphi_n\psi)).
\eeq
We obtain that $\chi_n*(\varphi_n\psi)$ is a Cauchy sequence in $E$, hence it converges. Since $\chi_n*(\varphi_n\psi)\rightarrow \psi$ in $\SSS^{M_p,(r_p)}_{A_p,(r_p)}$ the convergence also holds in $\SSS'^{\{M_p\}}_{\{A_p\}}(\RR^d)$. But $E$ is continuously injected into $\SSS'^{\{M_p\}}_{\{A_p\}}(\RR^d)$ thus the limit of $\chi_n*(\varphi_n\psi)$ in $E$ must be $\psi$. If we let $n\rightarrow\infty$ in (\ref{1131}) we have $\|\psi\|_E\leq C\sigma_{(r_p)}(\psi)\leq C\sigma_{(k_p)}(\psi)$, which gives the desired continuity of the inclusion $\SSS^{M_p,(k_p)}_{A_p,(k_p)}\to E$. Similarly, one obtains the continuous inclusion $\SSS^{M_p,(k'_p)}_{A_p,(k'_p)}\to E'_{\ast}$ possibly with another $(k'_p)\in\mathfrak{R}$. The conclusion of the lemma now follows by taking $(l_p)\in\mathfrak{R}$ defined as $l_p=\min\{k_p,k'_p\}$, $p\in\ZZ_+$.
\end{proof}

\begin{lemma}\label{smolemm}
Let $f\in\SSS'^*_{\dagger}(\RR^d)$ be a continuous function such that for each $\beta\in\NN^d$ the ultradistributional derivative $D^{\beta}f$ is a continuous function with ultrapolynomial growth of type $\dagger$. Then $f\in C^{\infty}(\RR^d)$.
\end{lemma}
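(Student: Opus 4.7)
The plan is to establish classical smoothness of $f$ by showing that the standard mollifications $\psi_{\epsilon}*f$, together with all their classical derivatives, converge locally uniformly. Choose $\psi\in\SSS^*_{\dagger}(\RR^d)$ nonnegative with $\int_{\RR^d}\psi\,dx=1$, and set $\psi_{\epsilon}(x)=\epsilon^{-d}\psi(x/\epsilon)$ for $\epsilon>0$. Then $\psi_{\epsilon}*f\in C^{\infty}(\RR^d)$, its classical partial derivatives coincide with its ultradistributional ones, and the identity $D^{\beta}(\psi_{\epsilon}*f)=\psi_{\epsilon}*D^{\beta}f$ holds in $\SSS'^*_{\dagger}(\RR^d)$ for every $\beta\in\NN^d$.

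The first key observation is that, reading the hypothesis also with $\beta=0$, both $f$ and each $D^{\beta}f$ is a continuous function of ultrapolynomial growth of type $\dagger$. Each such function is therefore a regular element of $\SSS'^*_{\dagger}(\RR^d)$, and its ultradistributional convolution with $\psi_{\epsilon}\in\SSS^*_{\dagger}(\RR^d)$ admits the pointwise Lebesgue representation
\[
(\psi_{\epsilon}*D^{\beta}f)(x)=\int_{\RR^d}\psi_{\epsilon}(x-y)\,D^{\beta}f(y)\,dy,
\]
the integrand being absolutely integrable because the decay of $\psi_{\epsilon}$ dominates any prescribed ultrapolynomial growth rate of type $\dagger$ (in the Beurling case $\psi$ decays like $e^{-A(h|\cdot|)}$ for arbitrarily large $h$, while in the Roumieu case $B_{l_{p}}(\rho)$ is essentially slower than every $A(h\rho)$).

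Next I would verify the elementary fact that, for any continuous $h$ of ultrapolynomial growth of type $\dagger$, one has $\psi_{\epsilon}*h\to h$ locally uniformly on $\RR^d$ as $\epsilon\to 0^+$. After the change of variables $y=x-\epsilon z$, one has
\[
(\psi_{\epsilon}*h)(x)-h(x)=\int_{\RR^d}\psi(z)\bigl(h(x-\epsilon z)-h(x)\bigr)\,dz,
\]
so continuity of $h$ furnishes pointwise decay of the integrand while the product of the decay of $\psi$ in $z$ and the ultrapolynomial growth of $h(x-\epsilon z)$ provides an integrable majorant uniform in $x$ on compact sets and in $\epsilon\in(0,1]$; dominated convergence then concludes. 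Applying this to $h=f$ and, separately, to each $h=D^{\beta}f$ gives $\psi_{\epsilon}*f\to f$ and $\partial^{\beta}(\psi_{\epsilon}*f)\to D^{\beta}f$ locally uniformly on $\RR^d$.

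Finally, the standard classical result that locally uniform convergence of a sequence of $C^{\infty}$ functions together with locally uniform convergence of all their classical partial derivatives up to order $k$ forces the limit to be $C^{k}$, with partial derivatives equal to the corresponding limits, applies to the sequence $\psi_{\epsilon_{n}}*f$ for any $\epsilon_{n}\downarrow 0$; taking $k$ arbitrary yields $f\in C^{\infty}(\RR^d)$ and that the classical derivatives agree with the ultradistributional $D^{\beta}f$. The one genuinely delicate step is the first: turning the ultradistributional convolutions into honest integrals under the possibly $\beta$-dependent growth orders of $D^{\beta}f$. Since the argument is performed for each $\beta$ separately, no uniformity in $\beta$ is required, and the fixed fast decay of $\psi_{\epsilon}$ in $\SSS^*_{\dagger}(\RR^d)$ suffices to majorize any single growth order occurring.
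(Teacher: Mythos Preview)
Your argument is correct and takes a genuinely different route from the paper's proof. The paper proceeds in two steps: first it shows that, for every $\beta$, the ultradistributional derivative $D^\beta f$ coincides with the distributional derivative of $f$ in $\DD'(\RR^d)$ (by approximating an arbitrary $\psi\in\DD(\RR^d)$ by $\chi_n*\psi\in\SSS^*_{\dagger}(\RR^d)$, with $\chi_n$ a $\delta$-sequence in $\SSS^*_{\dagger}(\RR^d)$, and passing to the limit in both pairings using dominated convergence); second, having a continuous function all of whose distributional derivatives are continuous, it invokes the Sobolev embedding theorem on balls to conclude $f\in C^\infty$. Your mollification approach is more self-contained: it sidesteps both the reduction to $\DD'$ and the Sobolev embedding by directly exhibiting a family of smooth approximants whose classical derivatives of every order converge locally uniformly to the given $D^\beta f$. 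One minor remark: nonnegativity of $\psi$ is never used in your argument---only $\int\psi=1$ matters---so you may drop that hypothesis. The paper's route has the incidental benefit of recording explicitly that the ultradistributional and distributional derivatives agree, but this also follows a posteriori from your conclusion once the classical derivatives of $f$ are identified with the $D^\beta f$.
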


\begin{proof} Since $f$ is continuous $f\in\DD'(\RR^d)$ (the Schwartz space of distributions). First we prove that the ultradistributional derivatives of $f$ coincide with its distributional derivatives. We give the proof in the Roumieu case. The Beurling case is similar. Let $\beta\in\NN^d$. Denote by $f_{\beta}$ the distributional derivative $D^{\beta}f$ of $f$ and by $\tilde{f}_{\beta}$ the ultradistributional derivative $D^{\beta}f$ of $f$. Since $f$ and $\tilde{f}_{\beta}$ are continuous functions of ultrapolynomial growth $\{A_p\}$, similarly as in the proof of $(c)\Leftrightarrow(\tilde{c})$ in \cite[Theorem 4.2]{DPPV}, one can prove that there exist $(r_p)\in\mathfrak{R}$ and $C>0$ such that $|f(x)|\leq C e^{B_{r_p}(|x|)}$ and $|\tilde{f}_{\beta}(x)|\leq C e^{B_{r_p}(|x|)}$. Pick $(k_p)\in\mathfrak{R}$ such that $(k_p)\leq (r_p)$ and $e^{B_{r_p}(|\cdot|)}e^{-B_{k_p}(|\cdot|)}\in L^1(\RR^d)$. Fix $\psi\in\DD(\RR^d)$. Let $\chi_n\in\SSS^{\{M_p\}}_{\{A_p\}}(\RR^d)$, $n\in\ZZ_+$, be defined as in $ii)$ of Lemma \ref{appincl}. One easily verifies that $\psi_n=\chi_n*\psi\in\SSS^{\{M_p\}}_{\{A_p\}}(\RR^d)$. Let $\alpha\leq \beta$. Observe that
\begin{align} \label{13efd}
& e^{B_{k_p}(|x|)}\left|D^{\alpha}\psi_n(x)-D^{\alpha}\psi(x)\right|\\
&\quad
\leq 2\int_{\RR^d}|\chi(y)|e^{B_{k_p}(2|y|)}\left|D^{\alpha}\psi(x-y/n)-D^{\alpha}\psi(x)\right|e^{B_{k_p}(2|x-y/n|)}dy. \nonumber
\end{align}
Let $\varepsilon>0$. Since $\psi$ is compactly supported,
\beqs
\left|D^{\alpha}\psi(x-y/n)-D^{\alpha}\psi(x)\right|e^{B_{k_p}(2|x-y/n|)}&\leq& C_1+\left|D^{\alpha}\psi(x)\right|e^{B_{k_p}(2|x-y/n|)}\\
&\leq& C_1+C_2e^{B_{k_p}(4|y|)}.
\eeqs
As $|\chi(y)|e^{B_{k_p}(2|y|)}(C_1+C_2e^{B_{k_p}(4|y|)})\in L^1(\RR^d)$, there exists $c_1\geq 1$ such that
\beqs
\int_{|y|\geq c_1}|\chi(y)|e^{B_{k_p}(2|y|)}(C_1+C_2e^{B_{k_p}(4|y|)})dy\leq \varepsilon/4.
\eeqs
Of course, we can assume that $c_1$ is large enough such that $\mathrm{supp}\, \psi\subseteq \{x\in\RR^d|\, |x|\leq c_1\}$. Clearly $D^{\alpha}\psi(x)=0$ and $D^{\alpha}\psi(x-y/n)=0$ for all $n\in\ZZ_+$ when $|x|> 2c_1$ and $|y|\leq c_1$. Hence, for $|x|\leq 2 c_1$, $|y|\leq c_1$ and $n\in\ZZ_+$ there exists $C_2$ such that $e^{B_{k_p}(2|x-y/n|)}\leq C_2$. Since $D^{\alpha}\psi$ is continuous, there exists $n_0\in\ZZ_+$ such that for all $n\geq n_0$, $|x|\leq 2 c_1$ and $|y|\leq c_1$
\beqs
\left|D^{\alpha}\psi(x-y/n)-D^{\alpha}\psi(x)\right|\leq \varepsilon/\left(4C_2 \left\|\chi e^{B_{k_p}(2|\cdot|)}\right\|_{L^1(\RR^d)}\right).
\eeqs
These estimates, together with (\ref{13efd}), imply $\left\|e^{B_{k_p}(|\cdot|)}\left(D^{\alpha}\psi_n-D^{\alpha}\psi\right)\right\|_{L^{\infty}(\RR^d)}\leq \varepsilon$ for all $n\geq n_0$. We obtain that for each $\alpha\leq \beta$, $e^{B_{k_p}(|\cdot|)}D^{\alpha}\psi_n\rightarrow e^{B_{k_p}(|\cdot|)}D^{\alpha}\psi$ in $L^{\infty}(\RR^d)$. Now, dominated convergence implies
\beqs
\lim_{n\rightarrow \infty}\int_{\RR^d}\tilde{f}_{\beta}(x)\psi_n(x)dx&=&\int_{\RR^d}\tilde{f}_{\beta}(x)\psi(x)dx,\\
\lim_{n\rightarrow \infty} \int_{\RR^d}f(x)(-D)^{\beta}\psi_n(x)dx&=&\int_{\RR^d}f(x)(-D)^{\beta}\psi(x)dx.
\eeqs
Hence
\beqs
\langle \tilde{f}_{\beta},\psi\rangle=\lim_{n\rightarrow \infty} \int_{\RR^d}\tilde{f}_{\beta}(x)\psi_n(x)dx=\lim_{n\rightarrow \infty} \int_{\RR^d}f(x)(-D)^{\beta}\psi_n(x)dx=\langle f_{\beta},\psi\rangle.
\eeqs
Since $\psi\in \DD(\RR^d)$ is arbitrary $\tilde{f}_{\beta}= f_{\beta}$. In other words $f$ is a continuous function whose all distributional derivatives are continuous functions. Now the Sobolev imbedding theorem applied on a ball with center at a fixed point $x\in\RR^d$ implies that $f$ is $C^{\infty}$ in that ball. As $x$ is arbitrary, the assertion follows.
\end{proof}

Define for every $m,h>0$ the $(B)$-spaces $$\mathcal{O}^{M_p}_{A_p,m,h}=\left\{\varphi\in C^{\infty}(\RR^d)\bigg|\,\|\varphi\|_{m,h}=\left(\sum_{\alpha\in \mathbb{N}^d}\frac{m^{2|\alpha|}}{M_{\alpha}^2}\left\|D^{\alpha}\varphi e^{-A(h|\cdot|)}\right\|^2_{L^2}\right)^{1/2}<\infty\right\}.$$ Observe that for $m_1\leq m_2$ we have the continuous inclusion $\mathcal{O}^{M_p}_{A_p,m_2,h}\rightarrow\mathcal{O}^{M_p}_{A_p,m_1,h}$ and for $h_1\leq h_2$ the inclusion $\mathcal{O}^{M_p}_{A_p,m,h_1}\rightarrow\mathcal{O}^{M_p}_{A_p,m,h_2}$ is also continuous. As l.c.s. we define
\beqs
\mathcal{O}_{(A_p),h}^{(M_p)}=\lim_{\substack{\longleftarrow\\ m\rightarrow \infty}}\mathcal{O}^{M_p}_{A_p,m,h}&,&\,\, \mathcal{O}_{(A_p),C}^{(M_p)}=\lim_{\substack{\longrightarrow\\ h\rightarrow \infty}}\mathcal{O}^{(M_p)}_{(A_p),h};\\
\mathcal{O}_{\{A_p\},h}^{\{M_p\}}=\lim_{\substack{\longrightarrow\\ m\rightarrow 0}}\mathcal{O}^{M_p}_{A_p,m,h}&,& \,\,\mathcal{O}_{\{A_p\},C}^{\{M_p\}}=\lim_{\substack{\longleftarrow\\ h\rightarrow 0}}\mathcal{O}^{\{M_p\}}_{\{A_p\},h}.
\eeqs
Observe that $\mathcal{O}_{(A_p),h}^{(M_p)}$ is an $(F)$-space and since all inclusions $\mathcal{O}_{(A_p),h}^{(M_p)}\rightarrow C^{\infty}(\RR^d)$ are continuous (by the Sobolev imbedding theorem), $\mathcal{O}_{(A_p),C}^{(M_p)}$ is indeed a (Hausdorff) l.c.s.. Moreover, as an inductive limit of barreled and bornological spaces $\mathcal{O}_{(A_p),C}^{(M_p)}$ is barreled and bornological. Also $\mathcal{O}_{\{A_p\},h}^{\{M_p\}}$ is (Hausdorff) l.c.s., because all inclusions $\mathcal{O}^{M_p}_{A_p,m,h}\rightarrow C^{\infty}(\RR^d)$ are continuous (by the Sobolev imbedding theorem). Hence $\mathcal{O}_{\{A_p\},C}^{\{M_p\}}$ is indeed a (Hausdorff) l.c.s.. Furthermore, $\mathcal{O}_{\{A_p\},h}^{\{M_p\}}$ is a barreled and bornological $(DF)$-space, as inductive limit of $(B)$-spaces. By this considerations it also follows that $\mathcal{O}_{\dagger,C}^*$ is continuously injected into $C^{\infty}(\RR^d)$. One easily verifies that $\SSS^*_{\dagger}(\RR^d)$ is continuously and densely injected into $\mathcal{O}_{\dagger,C}^*$. We mention that $\mathcal{O}_{\dagger,C}^*$ was introduced and studied in \cite{DPPV} in the non-quasianalytic case.

\begin{proposition}\label{smooth prop}
The embedding $\mathcal{D}^*_{E}\hookrightarrow \mathcal{O}^*_{\dagger,C}(\mathbb{R}^{d})$ holds. Furthermore, for $\varphi\in\mathcal{D}^*_{E}$, $D^{\alpha}\varphi\in C_{\check{\omega}}$ for all $\alpha\in\NN^d$ and they satisfy the following growth condition: For every $m>0$ (for some $m>0$)
\begin{equation}\label{eqgrowth}
\sup_{\alpha\in\NN^d}\frac{m^{|\alpha|}}{M_{\alpha}} \left\|D^{\alpha}\varphi\right\|_{L^{\infty}_{\check{\omega}}{(\RR^d)}}<\infty.
\end{equation}
\end{proposition}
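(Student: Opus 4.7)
The plan is to represent each derivative $D^\alpha\varphi$ via the parametrix $\delta = P(D)G$ of Proposition \ref{parametrix} and to extract pointwise estimates from the convolution mapping $\check{E}' * E \subseteq UC_{\check\omega}$ of Proposition \ref{convolution E E'}. Since $\check{E}$ is itself a translation-invariant $(B)$-space of ultradistributions of class $*$-$\dagger$ with weight function $\check\omega$, Lemma \ref{lemma4.2} applied to $\check{E}$ furnishes $t > 0$ (resp.\ $(t_p) \in \mathfrak{R}$) such that $\SSS^{M_p,t}_{A_p,t} \subseteq \check{E}'$ (resp.\ $\SSS^{M_p,(t_p)}_{A_p,(t_p)} \subseteq \check{E}'$); choose $G$ in that Banach space and an ultradifferential operator $P(D)$ of class $*$ with $P(D)G = \delta$. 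For $\varphi \in \DD^*_E$ one has $D^\alpha\varphi \in E$ and thus $P(D)D^\alpha\varphi \in E$ by Lemma \ref{ultradiff}; the identity $D^\alpha\varphi = \delta * D^\alpha\varphi = G * P(D)D^\alpha\varphi$ combined with Proposition \ref{convolution E E'} places $D^\alpha\varphi$ in $UC_{\check\omega}$. In particular each $D^\alpha\varphi$ is continuous, so $\varphi \in C^\infty(\RR^d)$.

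To obtain the quantitative bound (\ref{eqgrowth}), write $P(D) = \sum_\beta a_\beta D^\beta$, apply $(M.2)$ in the form $M_{\alpha+\beta} \leq c_0 H^{|\alpha|+|\beta|} M_\alpha M_\beta$, and use $\|D^{\alpha+\beta}\varphi\|_E \leq \|\varphi\|_{E,m}M_{\alpha+\beta}/m^{|\alpha|+|\beta|}$ valid for $\varphi \in \DD^{M_p,m}_E$, to arrive at
\begin{equation*}
\|D^\alpha\varphi\|_{L^\infty_{\check\omega}} \leq C\|G\|_{\check{E}'}\|P(D)D^\alpha\varphi\|_E \leq C'\|\varphi\|_{E,m}\Bigl(\tfrac{H}{m}\Bigr)^{|\alpha|}M_\alpha \sum_\beta |a_\beta|\Bigl(\tfrac{H}{m}\Bigr)^{|\beta|}M_\beta.
\end{equation*}
In the Beurling case the coefficients satisfy $|a_\beta| \leq C_L L^{|\beta|}/M_\beta$ for every $L > 0$, so choosing $L = m/(2H)$ makes the remaining sum finite for every $m$, yielding (\ref{eqgrowth}) for every $m' > 0$. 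In the Roumieu case $\varphi \in \DD^{M_p,m_0}_E$ for some $m_0 > 0$, and one selects the parametrix (exploiting the freedom in the choice of $(t_p)$ in Proposition \ref{parametrix}) so that its constant $L$ satisfies $2LH < m_0$, producing (\ref{eqgrowth}) with $m = m_0/(2H)$. The containment $D^\alpha\varphi \in C_{\check\omega}$ then follows from the density of $\SSS^*_\dagger(\RR^d)$ in $\DD^*_E$ (Proposition \ref{970}): the just-established continuity of $\varphi \mapsto D^\alpha\varphi$ as a map $\DD^*_E \to L^\infty_{\check\omega}$ exhibits $D^\alpha\varphi$ as an $L^\infty_{\check\omega}$-limit of elements from $\SSS^*_\dagger(\RR^d) \subseteq C_{\check\omega}$, and $C_{\check\omega}$ is closed in $L^\infty_{\check\omega}$.

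For the embedding $\DD^*_E \hookrightarrow \mathcal{O}^*_{\dagger,C}$, observe that property $(III)$ gives $\check\omega(x) \leq Ce^{A(\tau|x|)}$ (for some $\tau$ in the Beurling case, for every $\tau$ in the Roumieu case); combined with standard estimates on the associated function $A$ coming from $(M.2)$ applied to $A_p$, this ensures $\|\check\omega\,e^{-A(h|\cdot|)}\|_{L^2}$ is finite once $h$ is sufficiently large (resp.\ for every $h > 0$). Then
\begin{equation*}
\sum_\alpha\frac{m'^{2|\alpha|}}{M_\alpha^2}\bigl\|D^\alpha\varphi\,e^{-A(h|\cdot|)}\bigr\|_{L^2}^2 \leq \|\check\omega\,e^{-A(h|\cdot|)}\|_{L^2}^2 \sum_\alpha\frac{m'^{2|\alpha|}}{M_\alpha^2}\|D^\alpha\varphi\|^2_{L^\infty_{\check\omega}},
\end{equation*}
and taking $m' < m/2$ (with $m$ as in (\ref{eqgrowth})) makes the right-hand sum converge, placing $\varphi$ in $\mathcal{O}^{M_p}_{A_p,m',h}$. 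Passing to the appropriate limits in $m'$ and $h$ delivers $\varphi \in \mathcal{O}^*_{\dagger,C}$, with continuity of the embedding a consequence of the uniform bounds. The main obstacle is the Roumieu case, where one must verify that Proposition \ref{parametrix} can be applied so that the constant $L$ of the operator $P(D)$ is made arbitrarily small through an appropriate choice of $(t_p)$; this is a property of the construction carried out in \cite{ppv}.
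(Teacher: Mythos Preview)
Your approach follows the paper's strategy --- represent $D^\alpha\varphi$ via the parametrix and exploit the continuity of $\check{E}'*E\to UC_{\check\omega}$ --- but there are two genuine gaps.

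First, the sentence ``each $D^\alpha\varphi$ is continuous, so $\varphi\in C^\infty(\RR^d)$'' hides a nontrivial step specific to the quasianalytic setting. What you have established is that the \emph{ultradistributional} derivatives $D^\alpha\varphi$ (defined by duality with $\SSS^*_\dagger(\RR^d)$) are continuous functions. In the non-quasianalytic case one immediately restricts to $\DD(\RR^d)\subseteq\SSS^*_\dagger(\RR^d)$ and invokes Sobolev embedding. Here, however, $\DD(\RR^d)\not\subseteq\SSS^*_\dagger(\RR^d)$ (a generic smooth compactly supported function is not of ultradifferentiable class $*$), so a priori there is no reason why the ultradistributional derivatives coincide with the distributional ones. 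This is exactly the content of Lemma~\ref{smolemm}, whose proof requires an approximation argument: one approximates an arbitrary $\psi\in\DD(\RR^d)$ by $\chi_n*\psi\in\SSS^*_\dagger(\RR^d)$ and passes to the limit. The paper explicitly invokes that lemma at the corresponding point; you need it too.

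Second, you have the Beurling and Roumieu coefficient estimates for ultradifferential operators interchanged. An operator $P(D)=\sum_\beta a_\beta D^\beta$ of class $(M_p)$ satisfies $|a_\beta|\leq CL^{|\beta|}/M_\beta$ for \emph{some} $L$, while class $\{M_p\}$ means this holds for \emph{every} $L$. Consequently your concern about ``the main obstacle'' in the Roumieu case is misplaced: there the coefficients decay as fast as one wishes, so for any fixed $m_0$ with $\varphi\in\DD^{M_p,m_0}_E$ the sum $\sum_\beta|a_\beta|(H/m_0)^{|\beta|}M_\beta$ converges automatically without tuning the parametrix. Conversely, in the Beurling case $L$ is fixed, but $\varphi\in\DD^{M_p,m}_E$ for every $m$, and taking $m$ large enough handles both the sum and the prefactor $(H/m)^{|\alpha|}$. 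The argument is salvageable once the roles are corrected; no special property of the construction in \cite{ppv} is needed.

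A smaller point: the identity $D^\alpha\varphi = G*P(D)D^\alpha\varphi$ conflates the ultradistributional convolution $\delta*D^\alpha\varphi$ with the pointwise-defined $\check{E}'\times E$ convolution of Proposition~\ref{convolution E E'}, and the associativity $(P(D)G)*f=G*(P(D)f)$ for $f\in E$ is not automatic. The paper justifies this via density of $\SSS^*_\dagger(\RR^d)$ in $\DD^*_E$ (Proposition~\ref{970}), reducing to $f\in\SSS^*_\dagger(\RR^d)$ where the identities are transparent; you should include a similar step.
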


\begin{proof} Let $r>0$ ($(r_p)\in\mathfrak{R}$) be as in Lemma \ref{lemma4.2}, that is, $\SSS^{M_p,r}_{A_p,r}\subseteq E\cap E'_{\ast}$ ($\SSS^{M_p,(r_p)}_{A_p,(r_p)}\subseteq E\cap E'_{\ast}$) and the inclusion mappings $\SSS^{M_p,r}_{A_p,r}\to E$ and $\SSS^{M_p,r}_{A_p,r}\to E'_{*}$ ($\SSS^{M_p,(r_p)}_{A_p,(r_p)}\to E$ and $\SSS^{M_p,(r_p)}_{A_p,(r_p)}\to E'_{\ast}$) are continuous. By Proposition \ref{parametrix}, there exist $u\in\SSS^{M_p,r}_{A_p,r}$ and $P(D)$ of type $(M_p)$ ($u\in\SSS^{M_p,(r_p)}_{A_p,(r_p)}$ and $P(D)$ of type $\{M_p\}$) such that $P(D)u=\delta$. Let $f\in\mathcal{D}^*_{E}$. Then $f=\left(P(D)u\right)*f$. We first prove that
\beq\label{equ11}
f=\left(P(D)u\right)*f=P(D)(u*f)=u*\left(P(D)f\right).
\eeq
Since $\check{u}\in \SSS^{M_p,r}_{A_p,r}\subseteq E'$ ($\check{u}\in \SSS^{M_p,(r_p)}_{A_p,(r_p)}\subseteq E'$) and $f\in\DD^*_E\subseteq E$, Proposition \ref{convolution E E'} implies that $u*f\in  UC_{\check{\omega}}\subseteq \SSS'^*_{\dagger}(\RR^d)$, hence $P(D)(u*f)$ is well a defined element of $\SSS'^*_{\dagger}(\RR^d)$. Similarly, by Lemma \ref{ultradiff}, $P(D)f\in \DD^*_E\subseteq E$, hence Proposition \ref{convolution E E'} implies $u*\left(P(D)f\right)$ is well defined element of $\SSS'^*_{\dagger}(\RR^d)$. By Proposition \ref{970} there exists a net $f_{\nu}\in \SSS^*_{\dagger}(\RR^d)$ which converges to $f$ in $\DD^*_E$. Then
\beq\label{11sqc}
f_{\nu}=\delta*f_{\nu}= \left(P(D)u\right)*f_{\nu}=P(D)(u*f_{\nu})=u*\left(P(D)f_{\nu}\right).
\eeq
Now, since $f_{\nu}\rightarrow f$ in $\DD^*_E$ the convergence also holds in $E$ and thus Proposition \ref{convolution E E'} implies $u*f_{\nu}\rightarrow u*f$ in ${UC}_{\check{\omega}}$ and therefore also in $\SSS'^*_{\dagger}(\RR^d)$. Hence $P(D)(u*f_{\nu})\rightarrow P(D)(u*f)$ in $\SSS'^*_{\dagger}(\RR^d)$. Next $P(D)f_{\nu}\rightarrow P(D)f$ in $\DD^*_E$ (cf. Lemma \ref{ultradiff}) consequently also in $E$. Again, Proposition \ref{convolution E E'} implies $u*\left(P(D)f_{\nu}\right)\rightarrow u*\left(P(D)f\right)$ in ${UC}_{\check{\omega}}$, hence also in $\SSS'^*_{\dagger}(\RR^d)$. Now after taking limit in (\ref{11sqc}), we obtain (\ref{equ11}). For $\beta\in\NN^d$, since $D^{\beta}f\in\DD^*_E$, (\ref{equ11}) implies $D^{\beta}f=u*D^{\beta}P(D)f$. Since $D^{\beta}P(D)f\in\DD^*_E\subseteq E$, Proposition \ref{convolution E E'} and the discussion preceding it imply that $D^{\beta}f$ is continuous function and $D^{\beta}f\in  UC_{\check{\omega}}$ for each $\beta\in\NN^d$. Thus, Lemma \ref{smolemm} implies that $f\in C^{\infty}(\RR^d)$. To prove the inclusion $\mathcal{D}^*_{E}\rightarrow \mathcal{O}^*_{\dagger,C}(\mathbb{R}^{d})$, we consider first the $(M_p)$ case. Let $m>0$ be arbitrary but fixed. Since $P(D)=\sum_{\alpha}c_{\alpha}D^{\alpha}$ is of $(M_p)$ type, there exist $m_1,C'>0$ such that $|c_{\alpha}|\leq C'm_1^{|\alpha|}/M_{\alpha}$. Let $m_2=4\max\{m,m_1\}$. By Lemma \ref{ultradiff} (and its proof), we have
\beqs
|D^{\beta}f(x)|\leq \|u\|_{\check{E}'}\left\|D^{\beta}P(D)f(x)\right\|_E\omega(-x)\leq C_2\omega(-x)\|\check{u}\|_{E'}\|f\|_{E,m_2H}\frac{M_{\beta}}{(2m)^{|\beta|}}.
\eeqs
Hence
\begin{equation}\label{bounds}
\frac{(2m)^{|\beta|}\left|D^{\beta}f(x)\right|}{M_{\beta}w(-x)}\leq C''\|\check{u}\|_{E'}\|f\|_{E,m_2H}.
\end{equation}
Since there exist $\tau,C'''>0$ such that $\omega(x)\leq C''' e^{A(\tau|x|)}$, by using \cite[Proposition 3.6]{Komatsu1}, we obtain $\omega(-x)e^{A(\tau|x|)}\leq C_4 e^{A(\tau H|x|)}$. Hence
\beqs
\left(\sum_{\alpha}\frac{m^{2|\alpha|}}{M_{\alpha}^2}\left\|D^{\alpha}f e^{-A(\tau H|\cdot|)}\right\|^2_{L^2}\right)^{1/2}&\leq& C_5\left(\sum_{\alpha}\frac{m^{2|\alpha|}}{M_{\alpha}^2}\left\|\frac{D^{\alpha}f} {\omega(-\cdot)}\right\|^2_{L^{\infty}}\right)^{1/2}\\
&\leq& C\|\check{u}\|_{E'}\|f\|_{E,m_2H},
\eeqs
which proves the continuity of the inclusion $\mathcal{D}^{(M_p)}_{E}\rightarrow \mathcal{O}_{(A_p),\tau H}^{(M_p)}$ and hence also the continuity of the inclusion $\mathcal{D}^{(M_p)}_{E}\rightarrow \mathcal{O}_{(A_p),C}^{(M_p)}$.

In order to prove that the inclusion $\mathcal{D}^{\{M_p\}}_{E}\rightarrow \mathcal{O}_{\{A_p\},C}^{\{M_p\}}$ is continuous, it is enough to prove that for each $h>0$, $\mathcal{D}^{\{M_p\}}_{E}\rightarrow \mathcal{O}_{\{A_p\},h}^{\{M_p\}}$ is continuous. And in order to prove this it is enough to prove that for every $m>0$ there exists $m'>0$ such that we have the continuous inclusion $\mathcal{D}^{M_p,m}_{E}\rightarrow \mathcal{O}_{A_p,m',h}^{M_p}$. So, let $h,m>0$ be arbitrary but fixed. Take $m'\leq m/(4H)$. For $f\in \mathcal{D}^{M_p,m}_{E}$, keeping notations as above, by Lemma \ref{ultradiff} (and its proof), we have
\beqs
|D^{\beta}f(x)|&\leq& \|\check{u}\|_{E'}\left\|D^{\beta}P(D)f(x)\right\|_E\omega(-x)\leq C_2\omega(-x)\|\check{u}\|_{E'}\|f\|_{E,m}\frac{M_{\beta}}{(2m')^{|\beta|}},\\
\eeqs
namely,
\beq\label{boundss}
\frac{(2m')^{|\beta|}\left|D^{\beta}f(x)\right|}{M_{\beta}w(-x)}\leq C''\|\check{u}\|_{E'}\|f\|_{E,m}.
\eeq
For the fixed $h$ take $\tau>0$ such that $\tau H\leq h$. Then there exists $C'''>0$ such that $\omega(x)\leq C''' e^{A(\tau|x|)}$ and by using \cite[Proposition 3.6]{Komatsu1} we obtain $\omega(x)e^{A(\tau|x|)}\leq C_4 e^{A(\tau H|x|)}$. Similarly as above, we have
\beqs
\left(\sum_{\alpha}\frac{m'^{2|\alpha|}}{M_{\alpha}^2}\left\|D^{\alpha}f e^{-A(h|\cdot|)}\right\|^2_{L^2}\right)^{1/2} \leq C\|\check{u}\|_{E'}\|f\|_{E,m},
\eeqs
which proves the continuity of the inclusion $\mathcal{D}^{\{M_p\},m}_{E}\rightarrow \mathcal{O}_{A_p,m',h}^{M_p}$.

Observe that (\ref{eqgrowth}) follows from (\ref{bounds}) and (\ref{boundss}), respectively. It remains to prove that $D^{\alpha}f\in C_{\check{\omega}}$. We will prove this in the Roumieu case as the Beurling case is similar. By using Lemma \ref{udozc}, with a similar technique as above, one can prove that for every $(k_p)\in\mathfrak{R}$ there exists $(l_p)\in\mathfrak{R}$ such that for $f\in\DD^{\{M_p\}}_E$ we have
\beq\label{15557}
\frac{\left|D^{\beta}f(x)\right|}{w(-x)M_{\beta}\prod_{j=1}^{|\beta|}k_j}\leq C''\|\check{u}\|_{E'}\|f\|_{E,(l_p)}.
\eeq
Let $\varepsilon>0$. Since $\SSS^{\{M_p\}}_{\{A_p\}}(\RR^d)$ is dense in $\DD^{\{M_p\}}_E$ (cf. Proposition \ref{970}), it is dense in $\tilde{\DD}^{\{M_p\}}_E$. Pick $\chi\in\SSS^{\{M_p\}}_{\{A_p\}}(\RR^d)$ such that $\|f-\chi\|_{E,(l_p)}\leq \varepsilon/\left(2C''\|\check{u}\|_{E'}\right)$. Since $1=\omega(0)\leq \omega (-x)\omega(x)$, by $(\widetilde{III})$ there exist $(l'_p)\in\mathfrak{R}$ and $C_0>0$ such that $1/\omega(-x)\leq C_0 e^{B_{l'_p}(|x|)}$. Thus, as $\chi\in \SSS^{\{M_p\}}_{\{A_p\}}(\RR^d)$, there exists $K\subset\subset \RR^d$ such that $\ds\frac{\left|D^{\beta}\chi(x)\right|}{w(-x)M_{\beta}\prod_{j=1}^{|\beta|}k_j}\leq \varepsilon/2$ for all $x\in\RR^d\backslash K$ and $\beta\in\NN^d$. Then, by (\ref{15557}), for $x\in\RR^d\backslash K$ and $\beta\in\NN^d$, we have
\beqs
\frac{\left|D^{\beta}f(x)\right|}{w(-x)M_{\beta}\prod_{j=1}^{|\beta|}k_j}\leq \frac{\left|D^{\beta}\left(f(x)-\chi(x)\right)\right|}{w(-x)M_{\beta}\prod_{j=1}^{|\beta|}k_j}+ \frac{\left|D^{\beta}\chi(x)\right|}{w(-x)M_{\beta}\prod_{j=1}^{|\beta|}k_j}\leq \varepsilon,
\eeqs
which proves that $D^{\beta}f\in C_{\check{\omega}}$.
\end{proof}

\begin{remark} If $f\in\SSS^*_{\dagger}(\RR^d)$, the proof of the previous proposition (combined with Proposition \ref{cor:Beurling Algebra}) yields $\|D^{\beta}f\|_E\leq \|u\|_E\|D^{\beta}P(D)f\|_{1,\omega}$, since $u\in E$. Employing a similar technique as in the proof of Lemma \ref{ultradiff} (Lemma \ref{udozc}), we obtain that for every $m>0$ there exist $\tilde{m}>0$ and $C_1>0$ (for every $(k_p)\in \mathfrak{R}$ there exist $(l_p)\in\mathfrak{R}$ and $C_1>0$) such that
\beq\label{111111177}\quad\quad\quad
\|f\|_{E,m}\leq C_1 \sup_{\alpha}\frac{\tilde{m}^{|\alpha|}\left\|D^{\alpha}f\right\|_{1,\omega}}{M_{\alpha}}\quad \bigg( \|f\|_{E,(k_p)}\leq C_1 \sup_{\alpha}\frac{\left\|D^{\alpha}f\right\|_{1,\omega}}{M_{\alpha}\prod_{j=1}^{|\alpha|}l_j}\bigg).
\eeq
\end{remark}

\subsection{The ultradistribution space $\mathcal{D}'^*_{E'_{\ast}}$}\label{subsection DE}

We can now define our new distribution space. We denote by $\mathcal{D}'^*_{E'_{\ast}}$ the strong dual of $\mathcal{D}^*_{E}$. Then, $\mathcal{D}'^{(M_p)}_{E'_{\ast}}$ is a complete $(DF)$-space because $\mathcal{D}^{(M_p)}_{E}$ is an $(F)$-space. Also, $\mathcal{D}'^{\{M_p\}}_{E'_{\ast}}$ is an $(F)$-space as the strong dual of a $(DF)$-space. When $E$ is reflexive, we write $\mathcal{D}'^*_{E'}=\mathcal{D}'^*_{E'_{\ast}}$ in accordance with the last assertion of Theorem \ref{thgoodspace}. The notation $\mathcal{D}'^*_{E'_{\ast}}=(\mathcal{D}^*_{E})'$ is motivated by the next structural theorem which characterizes the elements of this dual space and bounded sets in two ways, in terms of convolution averages and as the product of ultradifferential operators acting on elements of $E'_{\ast}$.

\begin{theorem}\label{karak}
Let $B\subseteq\SSS'^*_{\dagger}(\mathbb{R}^d)$. The following statements are equivalent:
\begin{itemize}
\item [$(i)$] $B$ is a bounded subset of $\mathcal{D}'^*_{E'_{\ast}}$.
\item [$(ii)$] for each $\psi\in\SSS^*_{\dagger}(\RR^d)$, $\{f*\psi|\, f\in B\}$ is a bounded subset of $E'$.
\item [$(iii)$] for each $\psi\in\SSS^*_{\dagger}(\RR^d)$, $\{f*\psi|\, f\in B\}$ is a bounded subset of $E'_*$.
\item [$(iv)$] there exist a bounded subset $B_1$ of $E'$ and an ultradifferential operator $P(D)$ of class $*$ such that each $f\in B$ can be expressed as $f=P(D)g$ with $g\in B_1$.
\item [$(v)$] there exist $B_2\subseteq E'_*\cap  UC_{\omega}$ which is bounded in $E'_*$ and in $ UC_{\omega}$ and an ultradifferential operator $P(D)$ of class $*$ such that each $f\in B$ can be expressed as $f=P(D)g$ with $g\in B_2$. Moreover, if $E$ is reflexive, we may choose $B_2\subseteq E'\cap C_{\omega}$.
\end{itemize}
\end{theorem}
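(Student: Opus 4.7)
The plan is to prove the cycle of implications $(i)\Rightarrow(ii)\Rightarrow(iii)\Rightarrow(iv)\Rightarrow(v)\Rightarrow(i)$, using the parametrix of Proposition \ref{parametrix} as the main engine for the ``regularizing'' directions and relying on Proposition \ref{convolution E E'} to produce the $UC_{\omega}$/$C_{\omega}$ extra regularity needed for $(v)$.

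For $(i)\Rightarrow(ii)$, I would exploit the transpose interpretation of convolution: given $\psi\in\SSS^*_{\dagger}(\RR^d)$, the map $g\mapsto \check{\psi}\ast g$ sends $E$ continuously into $\DD^*_E$ (because $D^{\alpha}(\check{\psi}\ast g)=(D^{\alpha}\check{\psi})\ast g$ and Proposition \ref{cor:Beurling Algebra} gives $\|D^{\alpha}(\check{\psi}\ast g)\|_E\leq \|D^{\alpha}\check{\psi}\|_{1,\omega}\|g\|_E$, with the $\|D^{\alpha}\check{\psi}\|_{1,\omega}$ admitting the required factorial-type bounds). Then $\langle f\ast\psi,g\rangle=\langle f,\check{\psi}\ast g\rangle$ yields the $E'$-boundedness of $\{f\ast\psi\}$ directly from the strong boundedness of $B$ in $\DD'^*_{E'_{\ast}}$. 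For $(ii)\Rightarrow(iii)$, pick the parametrix $\delta=P(D)G$ from Proposition \ref{parametrix} with $G\in\SSS^*_{\dagger}(\RR^d)$, and factor $\psi=G\ast P(D)\psi$, noting $P(D)\psi\in\SSS^*_{\dagger}(\RR^d)$. Then $f\ast\psi=G\ast(f\ast P(D)\psi)$ with $f\ast P(D)\psi\in E'$ by $(ii)$ and $G\in\SSS^*_{\dagger}(\RR^d)\subseteq L^{1}_{\check{\omega}}$; Corollary \ref{cor3.3} gives the desired uniform bound in $E'_{\ast}=L^{1}_{\check{\omega}}\ast E'$.

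For $(iii)\Rightarrow(iv)$ I would again invoke Proposition \ref{parametrix} to write $\delta=P(D)G$ with $G\in\SSS^*_{\dagger}(\RR^d)$, and then represent each $f\in B$ by $f=P(D)(f\ast G)$. The identity is proved by testing against $\varphi\in\DD^*_E$: using Lemma \ref{ultradiff} and the relation $P(-D)\check G=\delta$ (which follows from $P(D)G=\delta$ by reflection), one has $\langle P(D)(f\ast G),\varphi\rangle=\langle f,\check G\ast P(-D)\varphi\rangle=\langle f,(P(-D)\check G)\ast\varphi\rangle=\langle f,\varphi\rangle$. Setting $B_{1}=\{f\ast G\mid f\in B\}$ gives the required bounded subset of $E'$ by $(iii)$. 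For $(iv)\Rightarrow(v)$, apply Proposition \ref{parametrix} a second time to obtain $\delta=Q(D)G_{1}$ with $G_{1}\in\SSS^*_{\dagger}(\RR^d)$, and write $f=P(D)Q(D)(g\ast G_{1})$. The key point is that $h:=g\ast G_{1}$ lies simultaneously in $E'_{\ast}$ (because $G_{1}\in L^{1}_{\check{\omega}}$, so $h\in L^{1}_{\check{\omega}}\ast E'=E'_{\ast}$) and in $UC_{\omega}$ by Proposition \ref{convolution E E'}; if $E$ is reflexive, the same proposition even places $h$ in $C_{\omega}$. The product $(PQ)(D)$ remains an ultradifferential operator of class $*$, and $B_{2}:=\{g\ast G_{1}\mid g\in B_{1}\}$ inherits uniform boundedness in both $E'_{\ast}$ and $UC_{\omega}$ by the continuity estimates in Corollary \ref{cor3.3} and Proposition \ref{convolution E E'}.

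Finally, $(v)\Rightarrow(i)$ is the short direction: for $\varphi\in\DD^*_E$ and $f=P(D)g\in B$ with $g\in B_{2}\subseteq E'$, transpose gives $\langle f,\varphi\rangle=\langle g,P(-D)\varphi\rangle$; Lemma \ref{ultradiff} applied to the adjoint operator $P(-D)$ sends bounded sets of $\DD^*_E$ to bounded sets of $\DD^*_E$, hence of $E$, and pairing with the $E'$-bounded family $B_{2}$ yields strong boundedness of $B$ in $\DD'^*_{E'_{\ast}}$. I expect the principal obstacle to be the identification step in $(iii)\Rightarrow(iv)$, namely verifying rigorously that $P(D)(f\ast G)=f$ as elements of $\DD'^*_{E'_{\ast}}$ rather than merely in $\SSS'^*_{\dagger}(\RR^d)$; this requires checking that $\check G\ast P(-D)\varphi$ actually equals $\varphi$ when tested against $f$ via the $\DD^*_E$-duality, which hinges on the interplay between Lemma \ref{ultradiff}, Proposition \ref{970}, and the fact that the parametrix identity $P(-D)\check G=\delta$ is valid in $\SSS'^*_{\dagger}(\RR^d)$ and may be transported into $\DD^*_E$ by density. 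The Roumieu case will further require replacing the fixed constants $m,h,\tau$ by sequences from $\mathfrak{R}$ and exploiting Lemma \ref{nwseq} to absorb factors of the form $2^{p+q}$, but no new idea is needed beyond these.
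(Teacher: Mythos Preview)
Your argument is correct, and your cycle $(i)\Rightarrow(ii)\Rightarrow(iii)\Rightarrow(iv)\Rightarrow(v)\Rightarrow(i)$ is cleaner than the paper's route. The paper does not pass through $(iii)$ as an intermediate step; instead it proves $(ii)\Rightarrow(iv)$ and $(ii)\Rightarrow(v)$ simultaneously, and then dismisses $(iii)\Rightarrow(ii)$ and $(v)\Rightarrow(iii)$ as obvious. To go directly from $(ii)$ to $(iv)$, the paper cannot simply write $f=P(D)(f\ast G)$ with $G\in\SSS^*_{\dagger}(\RR^d)$, because $(ii)$ alone does not immediately give $\{f\ast G\mid f\in B\}$ bounded in $E'$ for a \emph{fixed} $G$---one would need to know this for the specific parametrix kernel. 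The paper therefore invokes the heavier Proposition~\ref{parametrix1}: it first shows that $\{f\ast\check{\varphi}\mid f\in B,\ \|\varphi\|_E\le1\}$ is equicontinuous in $\SSS'^*_{\dagger}(\RR^d)$, deduces via Lemma~\ref{boundedsetS} that $B$ is bounded in $\SSS'^*_{\dagger}(\RR^d)$, and then uses the structural description of bounded sets from Proposition~\ref{parametrix1} (with its extensions $\tilde f$ to closures $X_{(k_p)}$, approximation of $u$ by $u_n\in\SSS^*_{\dagger}$, and the continuity of $\ast$ between specific $\SSS^{M_p,(l_p)}_{A_p,(l_p)}$ spaces) to produce $u$ and $P(D)$ with $f=P(D)(u\ast\tilde f)$ and $\{u\ast\tilde f\}$ bounded in $E'$.

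Your approach sidesteps all of this machinery: once you have $(iii)$---which you obtain from $(ii)$ by the elegant factorization $f\ast\psi=G\ast(f\ast P(D)\psi)$---the simple parametrix of Proposition~\ref{parametrix} with $G\in\SSS^*_{\dagger}(\RR^d)$ suffices, because $(iii)$ applied to $\psi=G$ \emph{directly} gives the boundedness of $\{f\ast G\}$ in $E'_{\ast}\subseteq E'$. Your worry about the identification $P(D)(f\ast G)=f$ is unfounded: this identity holds in $\SSS'^*_{\dagger}(\RR^d)$ by the reflection argument you sketch, and that is all statement $(iv)$ asks for; the membership $f\in\DD'^*_{E'_*}$ then follows from $(iv)\Rightarrow(i)$. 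In short, your proof is more elementary (it never needs Proposition~\ref{parametrix1} or Lemma~\ref{boundedsetS}), while the paper's approach has the virtue of exhibiting the parametrix construction uniformly over bounded sets in one stroke, which is closer in spirit to how Proposition~\ref{parametrix1} is used elsewhere in the paper (e.g.\ in Corollary~\ref{cor:sequence}).
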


\begin{proof}
We denote $B_{E}=\{\varphi\in\SSS^*_{\dagger}(\mathbb{R}^d)|\, \|\varphi\|_E\leq1\}$.

$(i)\Rightarrow (ii)$. Fix first $\psi\in\SSS^*_{\dagger}(\mathbb{R}^d)$. By Proposition \ref{cor:Beurling Algebra} the set $\check{\psi}\ast B_{E}=\{\check{\psi}*\varphi|\, \varphi \in B_{E}\}$ is bounded in $\mathcal{D}^*_{E}$. As $\DD^*_E$ is barreled, $B$ is equicontinuous. Hence, $|\langle f*\psi,\varphi\rangle|=|\langle f,\check{\psi}*\varphi\rangle|\leq C_{\psi}$, $\forall\varphi\in B_{E}$, $\forall f\in B$. So, $|\langle f*\psi,\varphi\rangle|\leq C_{\psi}\|\varphi\|_E$, $\forall \varphi\in\SSS^*_{\dagger}(\mathbb{R}^d)$, $\forall f\in B$. Since $\SSS^*_{\dagger}(\mathbb{R}^d)$ is dense in $E$, we obtain $\{f*\psi|\, f\in B\}$ is a bounded subset of $E'$, for each $\psi\in \SSS^*_{\dagger}(\mathbb{R}^d)$.

 We prove $(ii)\Rightarrow(iv)$ and $(ii)\Rightarrow(v)$ simultaneously. Let $(ii)$ hold. For arbitrary but fixed $\psi\in\SSS^*_{\dagger}(\mathbb{R}^d)$ we have $\langle f*\check{\varphi},\check{\psi}\rangle=\langle f*\psi,\varphi\rangle$. We obtain that the set $\{\langle f*\check{\varphi},\check{\psi}\rangle|\, \varphi\in B_{E}, f\in B\}$ is bounded in $\mathbb{C}$, i.e., $\{f*\check{\varphi}|\, \varphi\in B_{E}, f\in B\}$ is weakly bounded in $\SSS'^*_{\dagger}(\mathbb{R}^d)$, hence it is equicontinuous. Moreover, Lemma \ref{boundedsetS} implies that $B$ is bounded in $\SSS'^*_{\dagger}(\RR^d)$. We continue the proof in the Roumieu case. The Beurling case is similar. For $(t_p)\in\mathfrak{R}$, denote by $X_{(t_p)}$ the closure of $\SSS^{\{M_p\}}_{\{A_p\}}(\RR^d)$ in $\SSS^{M_p, (t_p)}_{A_p,(t_p)}$. The equicontinuity of the set $\{f*\check{\varphi}|\, \varphi\in B_{E}, f\in B\}$ implies that there exist $(r_p)\in\mathfrak{R}$ and $C>0$ such that
\beq\label{inq1799}
\left|\langle f*\psi,\varphi\rangle\right|\leq C\sigma_{(r_p)}(\psi),\,\, \forall \psi\in \SSS^{\{M_p\}}_{\{A_p\}}(\mathbb{R}^d),\, \forall\varphi\in B_E,\, \forall f\in B.
\eeq
By Lemma \ref{lemma4.2}, there exists $(r'_p)\in\mathfrak{R}$ such that $\SSS^{M_p,(r'_p)}_{A_p,(r'_p)}\subseteq E\cap E'_{\ast}$ and the inclusion mappings $\SSS^{M_p,(r'_p)}_{A_p,(r'_p)}\to E$ and $\SSS^{M_p,(r'_p)}_{A_p,(r'_p)}\to E'_{\ast}$ are continuous. Of course, we can take $(r'_p)\leq (r_p)$. Since $B$ is bounded in $\SSS'^{\{M_p\}}_{\{A_p\}}(\RR^d)$, Proposition \ref{parametrix1} implies that there exist $(l_p),(k_p)\in\mathfrak{R}$ with $(l_p)\leq (k_p)$ such that $f$ can be extended to $X_{(k_p)}$, $\SSS^{M_p,(l_p)}_{A_p,(l_p)}\subseteq X_{(k_p)}$, the convolution is a continuous bilinear mapping from $\SSS^{M_p,(l_p)}_{A_p,(l_p)}\times\SSS^{M_p,(l_p)}_{A_p,(l_p)}$ into $X_{(k_p)}$ and there exists $u\in X_{(l_p)}$ and $P(D)$ of class $\{M_p\}$ such that $P(D)u=\delta$ and $f=P(D)(u*\tilde{f})$, where $\tilde{f}$ is the extension of $f\in B$ to $X_{(k_p)}$ and $u*\tilde{f}$ is the transpose of the continuous mapping $\psi\mapsto \check{u}*\psi$, $\SSS^{\{M_p\}}_{\{A_p\}}(\RR^d)\rightarrow X_{(k_p)}$. We may assume that $(k_p)\leq (r'_p)$. Let $u_n\in\SSS^{\{M_p\}}_{\{A_p\}}(\RR^d)$, $n\in\ZZ_+$, be such that $u_n\rightarrow u$ in $X_{(l_p)}$. The continuity of the convolution $*:\SSS^{M_p,(l_p)}_{A_p,(l_p)}\times\SSS^{M_p,(l_p)}_{A_p,(l_p)}\rightarrow X_{(k_p)}$, together with (\ref{inq1799}), implies
\beqs
\left|\langle u*\tilde{f},\varphi\rangle\right|\leq C',\,\, \forall\varphi\in B_E,\, \forall f\in B,
\eeqs
i.e., $\{u*\tilde{f}|\, f\in B\}$ is a bounded subset of $E'$. Now, $f=P(D)(u*\tilde{f})$ hence $(iv)$ is proved. Since $\{u*\tilde{f}|\, f\in B\}$ is bounded in $E'$, therefore so is it in $\SSS'^{\{M_p\}}_{\{A_p\}}(\RR^d)$, Proposition \ref{parametrix1} again implies that there exist $(l'_p),(k'_p)\in\mathfrak{R}$ with $(l'_p)\leq (k'_p)$ such that $u*\tilde{f}$ can be extended to $X_{(k'_p)}$, $\SSS^{M_p,(l'_p)}_{A_p,(l'_p)}\subseteq X_{(k'_p)}$, the convolution is continuous bilinear mapping from $\SSS^{M_p,(l'_p)}_{A_p,(l'_p)}\times\SSS^{M_p,(l'_p)}_{A_p,(l'_p)}$ into $X_{(k'_p)}$ and there exists $v\in X_{(l'_p)}$ and $P_1(D)$ of class $\{M_p\}$ such that $P_1(D)v=\delta$ and $u*\tilde{f}=P_1(D)(v*(u*\tilde{f}))$, where $v*(u*\tilde{f})$ is the transpose of the continuous mapping $\psi\mapsto \check{v}*\psi$, $\SSS^{\{M_p\}}_{\{A_p\}}(\RR^d)\rightarrow X_{(k'_p)}$. We can suppose that $(k'_p)\leq (l_p)$. Moreover, by Lemma \ref{pppp}, there exist $(t_p)\in\mathfrak{R}$ and $C>0$ such that $\omega(x)\leq C e^{B_{t_p}(|x|)}$ and by Lemma \ref{nwseq} we can assume that $\prod_{j=1}^{p+q}t_j\leq 2^{p+q} \prod_{j=1}^pt_j\cdot \prod_{j=1}^q t_j$, $\forall p,q\in\ZZ_+$. Hence by choosing $(k'_p)\leq (t_p/2H)$, it follows $v\in L^1_{\omega}\cap L^1_{\check{\omega}}$. Now $f=P(D)(u*\tilde{f})=P(D)(P_1(D)(v*(u*\tilde{f})))$. But the composition of two ultradifferential operators is again an ultradifferential operator, hence $f=P_2(D)(v*(u*\tilde{f}))$, where $P_2(D)=P(D)\circ P_1(D)$. Since $v\in L^1_{\omega}\cap L^1_{\check{\omega}}$ and $\{u*\tilde{f}|\, f\in B\}$ is a bounded subset of $E'$, $v* (u*\tilde{f})\in E'_*$, and Corollary \ref{cor3.3} implies that $\{v* (u*\tilde{f})|f\in B\}$ is bounded in $E'_*$. Furthermore, since $\check{v}\in X_{(l'_p)}\subseteq X_{(l_p)}\subseteq \SSS^{M_p,(r'_p)}_{A_p,(r'_p)}\subseteq E$, Proposition \ref{convolution E E'} implies that $\{v* (u*\tilde{f})|f\in B\}$ is a bounded subset of $ UC_{\omega}$ and if $E$ is reflexive, also in $C_{\omega}$. Thus $(v)$ also holds.

 The implications $(iv)\Rightarrow (i)$, $(v)\Rightarrow (i)$, $(iii)\Rightarrow (ii)$ and $(v)\Rightarrow (iii)$ are obvious.
\end{proof}

\begin{proposition}\label{com}
Let $\mathbf{f}:\mathcal{S}^*_{\dagger}(\RR^d)\rightarrow \mathcal{S}'^*_{\dagger}(\RR^d)$ be continuous. The following statements are equivalent:
\begin{itemize}
\item[$i)$] $\mathbf{f}$ commutes with every translation, i.e., $\left\langle \mathbf{f},T_{-h}\varphi\right\rangle = T_{h} \left\langle \mathbf{f},\varphi\right\rangle$, for all $h\in\mathbb{R}^{d}$ and $\varphi\in \mathcal{S}^*_{\dagger}(\mathbb{R}^{d})$.
\item[$ii)$] $\mathbf{f}$ commutes with every convolution, i.e., $\left\langle\mathbf{f},\psi*\varphi\right\rangle=\check{\psi}*\left\langle \mathbf{f},\varphi\right\rangle$, for all $\psi,\varphi\in \mathcal{S}^*_{\dagger}(\mathbb{R}^{d}).$
\item[$iii)$] There exists $f\in \mathcal{S}'^*_{\dagger}(\RR^d)$ such that $\langle \mathbf{f},\varphi\rangle=f*\check{\varphi}$ for every $\varphi\in \mathcal{S}^*_{\dagger}(\RR^d)$.
\end{itemize}
\end{proposition}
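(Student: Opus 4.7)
I would prove the cycle $(iii)\Rightarrow(i)\Leftrightarrow(ii)\Rightarrow(iii)$. The first three implications are essentially formal; the content lies in $(i)\Rightarrow(iii)$, which I would handle via the Schwartz kernel theorem enabled by the nuclearity of $\mathcal{S}^*_\dagger(\mathbb{R}^d)$ (first proposition of Section \ref{PPV}).

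For the routine implications: $(iii)\Rightarrow(i)$ uses $\check{(T_{-h}\varphi)} = T_h\check\varphi$ and the translation-equivariance of convolution, giving $\langle\mathbf{f},T_{-h}\varphi\rangle = f*T_h\check\varphi = T_h(f*\check\varphi)$. $(iii)\Rightarrow(ii)$ uses $(\psi*\varphi)^{\vee} = \check\psi*\check\varphi$ together with the commutativity/associativity of convolution. For $(i)\Rightarrow(ii)$, the function $(x,y)\mapsto\psi(y)\varphi(x-y)$ lies in $\mathcal{S}^*_\dagger(\mathbb{R}^{2d})$, so Lemma \ref{bocintl} applied within a defining Banach space $\mathcal{S}^{M_p,h}_{A_p,h}$ of $\mathcal{S}^*_\dagger$ yields $\psi*\varphi = \int_{\mathbb{R}^d}\psi(y)T_{-y}\varphi\,dy$ as a Bochner integral in $\mathcal{S}^*_\dagger$; the continuity of $\mathbf{f}$ combined with (i) then gives
$$\langle\mathbf{f},\psi*\varphi\rangle = \int\psi(y)\langle\mathbf{f},T_{-y}\varphi\rangle\,dy = \int\psi(y)T_y\langle\mathbf{f},\varphi\rangle\,dy = \check\psi*\langle\mathbf{f},\varphi\rangle.$$
Conversely, $(ii)\Rightarrow(i)$ follows by density: for a $\delta$-sequence $\chi_n\in\mathcal{S}^*_\dagger$, applying (ii) to $T_{-h}\chi_n*\varphi = T_{-h}(\chi_n*\varphi)$ gives $\mathbf{f}(T_{-h}(\chi_n*\varphi)) = T_h\mathbf{f}(\chi_n*\varphi)$, and passing to the limit $n\to\infty$ in $\mathcal{S}^*_\dagger$ together with the continuity of $\mathbf{f}$ recovers (i).

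For the main direction $(i)\Rightarrow(iii)$, nuclearity of $\mathcal{S}^*_\dagger$ and the kernel theorem associate to $\mathbf{f}$ a unique $K\in\mathcal{S}'^*_\dagger(\mathbb{R}^{2d})$ with $\langle K,\varphi\otimes\psi\rangle = \langle\mathbf{f}(\varphi),\psi\rangle$ for all $\varphi,\psi\in\mathcal{S}^*_\dagger$. Rewriting (i) as $\langle K,(T_h\varphi)\otimes(T_{-h}\psi)\rangle = \langle K,\varphi\otimes\psi\rangle$ for every $h\in\mathbb{R}^d$ and changing variables gives $K(x-h,y+h)=K(x,y)$ in $\mathcal{S}'^*_\dagger(\mathbb{R}^{2d})$. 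Differentiating in $h$ yields $(\partial_{x_j}-\partial_{y_j})K=0$ for every $j=1,\dots,d$; in the coordinates $u=x+y$, $v=x-y$ this reads $\partial_v K=0$, so $K$ is pulled back from $\mathcal{S}'^*_\dagger(\mathbb{R}^d_u)$, i.e.\ there is a unique $L\in\mathcal{S}'^*_\dagger(\mathbb{R}^d)$ with $K(x,y)=L(x+y)$. Setting $f:=L$, the identity
$$\langle\mathbf{f}(\varphi),\psi\rangle = \langle K,\varphi\otimes\psi\rangle = \langle L,\varphi*\psi\rangle = \langle L*\check\varphi,\psi\rangle, \quad \forall\psi\in\mathcal{S}^*_\dagger,$$
concludes $\mathbf{f}(\varphi) = f*\check\varphi$.

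The principal obstacle is the structural step $\partial_v K=0\Longrightarrow K=1\otimes L$ in the quasianalytic tempered ultradistribution setting, i.e.\ that $\mathcal{S}'^*_\dagger(\mathbb{R}^{2d})$-solutions of $\partial_v K=0$ are pulled back from $\mathcal{S}'^*_\dagger(\mathbb{R}^d_u)$. This is analogous to the classical distributional version and can be verified by exhibiting a continuous splitting $\eta\mapsto\eta(x+y)\chi(y)$, with $\chi\in\mathcal{S}^*_\dagger$ and $\int\chi=1$, of the surjection $\Sigma:\mathcal{S}^*_\dagger(\mathbb{R}^{2d})\to\mathcal{S}^*_\dagger(\mathbb{R}^d)$, $\Sigma\varphi(u):=\int\varphi(u-y,y)\,dy$ (which sends $\varphi_1\otimes\varphi_2\mapsto\varphi_1*\varphi_2$); $L$ is then well-defined and continuous by $\partial_v K=0$. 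An alternative route, closer to the techniques already used in the paper, avoids the kernel theorem: using the parametrix of Proposition \ref{parametrix} and approximating $G$ by $\mathcal{S}^*_\dagger$-elements via Lemma \ref{appincl}, one extends (ii) to $\psi=G$ and obtains the representation $\mathbf{f}(\varphi)=\check G*\mathbf{f}(P(D)\varphi)$; Lemma \ref{smolemm} then gives $\mathbf{f}(\varphi)\in C^\infty(\mathbb{R}^d)$, so $\langle f,\varphi\rangle:=\mathbf{f}(\varphi)(0)$ defines $f\in\mathcal{S}'^*_\dagger$, and (iii) follows from (i) via $(f*\check\varphi)(x)=\langle f,T_{-x}\varphi\rangle=\mathbf{f}(T_{-x}\varphi)(0)=T_x\mathbf{f}(\varphi)(0)=\mathbf{f}(\varphi)(x)$.
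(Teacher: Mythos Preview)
Your proof is correct, but your route to $(iii)$ differs substantially from the paper's. The paper argues the cycle $i)\Rightarrow ii)\Rightarrow iii)\Rightarrow i)$ and never invokes the kernel theorem. For the key step $ii)\Rightarrow iii)$ it takes a $\delta$-sequence $\chi_n\in\mathcal{S}^*_\dagger$ (Lemma~\ref{appincl}), observes from $(ii)$ that $\check\psi*\langle\mathbf{f},\chi_n\rangle=\langle\mathbf{f},\psi*\chi_n\rangle\to\langle\mathbf{f},\psi\rangle$, deduces via Lemma~\ref{boundedsetS} that $\{\langle\mathbf{f},\chi_n\rangle\}$ is bounded in $\mathcal{S}'^*_\dagger$, and then uses the Montel property (compactness plus metrizability of equicontinuous sets) to extract a subsequential limit $f$; the convergence in the displayed line then forces $\langle\mathbf{f},\psi\rangle=\check\psi*f$ for all $\psi$.

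Your kernel-theorem approach is more structural and makes the translation invariance visibly responsible for the diagonal form $K(x,y)=L(x+y)$, but it trades the paper's one-line compactness step for the lemma $\partial_vK=0\Rightarrow K=1\otimes L$ in $\mathcal{S}'^*_\dagger(\mathbb{R}^{2d})$, which, as you note, needs a splitting argument (your map $\eta\mapsto\eta(x+y)\chi(y)$ does work, but verifying that $\Sigma\Phi=0$ forces $\langle K,\Phi\rangle=0$ still requires writing $\Phi$ as a sum of $\partial_{v_j}$-derivatives of $\mathcal{S}^*_\dagger$-functions, which is slightly more delicate in the quasianalytic setting than in the Schwartz case). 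The paper's argument is shorter and stays entirely within the toolbox already built (Lemmas~\ref{appincl} and \ref{boundedsetS}); yours is conceptually cleaner and would generalize to other nuclear test spaces without needing the specific characterization of bounded sets in the dual. Your alternative parametrix route is closer in spirit to the paper's methods but is more involved than what the paper actually does.
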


\begin{proof}
$i)\Rightarrow ii)$. Let $\varphi,\psi\in\SSS^*_{\dagger}(\RR^d)$. Then $\tilde{\varphi}(x,y)=\varphi(x-y)\psi(y)\in\SSS^*_{\dagger}(\RR^{2d})$. By carefully examining the first part of the proof of Lemma \ref{bocintl}, one can verify that
\beqs
\SSS^*_{\dagger}(\RR^d)\ni L_{\psi,n}(x)=\sum_{t\in D_n}\tilde{\varphi}(x,t)l(n)^{-d}=\sum_{t\in D_n} \varphi(x-t)\psi(t)l(n)^{-d}\rightarrow \psi*\varphi,
\eeqs
in $\SSS^*_{\dagger}(\RR^d)$, where $l(n)$ can be taken to be equal to $n$ (there, the specific definition of $l(n)$ was only needed for the second part of the proof). The continuity of $\mathbf{f}$ implies
\beqs
\langle\mathbf{f},\psi*\varphi\rangle &=&\lim_{n\rightarrow \infty}\left\langle \mathbf{f},\sum_{t\in D_n} \varphi(x-t)\psi(t)n^{-d}\right\rangle\\
&=&\lim_{n\rightarrow \infty}\sum_{t\in D_n} \psi(t)\langle \mathbf{f},T_{-t} \varphi\rangle n^{-d} =\lim_{n\rightarrow\infty}\sum_{t\in D_n} \psi(t) T_t\langle \mathbf{f},\varphi\rangle n^{-d},
\eeqs
in $\SSS'^*_{\dagger}(\RR^d)$. Let $\chi\in \SSS^*_{\dagger}(\RR^d)$. Then
\beqs
\left\langle\lim_{n\rightarrow\infty}\sum_{t\in D_n} \psi(t) T_t\langle \mathbf{f},\varphi\rangle n^{-d},\chi\right\rangle&=& \left\langle\langle\mathbf{f},\varphi\rangle,\lim_{n\rightarrow\infty}\sum_{t\in D_n} \psi(t) T_{-t} \chi n^{-d}\right\rangle\\
&=&\langle\langle\mathbf{f},\varphi\rangle,\psi*\chi\rangle= \langle\check{\psi}*\langle\mathbf{f},\varphi\rangle,\chi\rangle.
\eeqs

 $ii)\Rightarrow iii)$. Take $\chi_n\in\SSS^*_{\dagger}\left(\RR^d\right)$, $n\in\ZZ_+$, to be as in $ii)$ of Lemma \ref{appincl}. Then, for every $\psi\in \mathcal{S}^*_{\dagger}(\RR^d)$ we have that $\psi*\chi_n\rightarrow \psi$ in $\mathcal{S}^*_{\dagger}(\RR^d)$ as $n\rightarrow\infty$ and hence,
\begin{equation}\label{delta}
\check{\psi}*\langle\mathbf{f},\chi_n\rangle=\langle \mathbf{f},\psi*\chi_n\rangle\rightarrow \langle\mathbf{f},\psi\rangle\mbox{ as }n\rightarrow\infty.
\end{equation}
Thus $\{\check{\psi}*\langle\mathbf{f},\chi_n\rangle|\, n\in\ZZ_+\}$ is bounded in $\SSS'^*_{\dagger}(\RR^d)$. Lemma \ref{boundedsetS} implies that $B=\{\langle\mathbf{f},\chi_n\rangle|\, n\in\ZZ_+\}$ is bounded in $\SSS'^*_{\dagger}(\RR^d)$. As $\SSS'^*_{\dagger}(\RR^d)$ is Montel, its closure $\overline{B}$ is compact and the weak and the strong topologies on $\overline{B}$ coincide. As $\overline{B}$ is equicontinuous and $\SSS^*_{\dagger}(\RR^d)$ is separable, the weak topology on $\overline{B}$ is metrizable (cf. \cite[Theorem 4.7, p. 87]{Sch}) hence also the strong topology. Thus, there exists a subsequence $\langle\mathbf{f},\chi_{n_k}\rangle\in B$, $k\in\ZZ_+$, which converges to $f\in \SSS'^*_{\dagger}(\RR^d)$. Now, (\ref{delta}) implies that $\langle\mathbf{f},\psi\rangle=\check{\psi}*f$.

 The implication $iii)\Rightarrow i)$ is clear.
\end{proof}

If $F$ is a l.c.s., as in \cite{Komatsu3}, we define
\beqs
\mathcal{S}'^*_{\dagger}\left(\mathbb{R}^{d},F\right)= \SSS'^*_{\dagger}(\RR^d)\varepsilon F=\mathcal{L}_{\epsilon}\left(\left(\SSS'^*_{\dagger}(\RR^d)\right)'_c, F\right)=\mathcal{L}_b\left(\SSS^*_{\dagger}(\RR^d), F\right),
\eeqs
where the indices $\epsilon$ and $c$ stand for the topology of equicontinuous convergence and the topology of compact convex circled convergence, respectively; the last equality follows from the fact that $\SSS^*_{\dagger}(\RR^d)$ and $\SSS'^*_{\dagger}(\RR^d)$ are complete Montel spaces. If $F$ is complete, since $\SSS'^*_{\dagger}(\RR^d)$ is nuclear, it satisfies the weak approximation property, hence $\SSS'^*_{\dagger}(\RR^d)\varepsilon F\cong \SSS'^*(\RR^d) \hat{\otimes} F$, cf. \cite[Proposition 1.4]{Komatsu3} (for the definition of the $\varepsilon$ tensor product; for the definition of the weak approximation property and their connection we refer to \cite{SchwartzV} and \cite{Komatsu3}).

\begin{corollary}\label{convolution corollary}
Let $\mathbf{f}\in\mathcal{S}'^*_{\dagger}(\mathbb{R}^{d},E'_{\sigma(E',E)})$. If $\mathbf{f}$ commutes with every translation in sense of Proposition \ref{com}, then there exists $f\in \mathcal{D}_{E'_{\ast}}'^*$ such that $\mathbf{f}$ is of the form
\begin{equation}\label{eq:8}
\left\langle\mathbf{f},\varphi\right\rangle=f\ast \check{\varphi}, \ \ \ \varphi\in\mathcal{S}^*_{\dagger}(\mathbb{R}^{d}).
\end{equation}
\end{corollary}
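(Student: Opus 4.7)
The plan is to combine Proposition \ref{com} with the equivalences of Theorem \ref{karak}. First, I view $\mathbf{f}$ as a continuous linear map $\SSS^*_{\dagger}(\RR^d) \to \SSS'^*_{\dagger}(\RR^d)$ by post-composing with the natural continuous embedding $E'\hookrightarrow\SSS'^*_{\dagger}(\RR^d)$ (restriction of linear functionals to $\SSS^*_{\dagger}(\RR^d)\hookrightarrow E$). To justify this continuity, note that $\mathbf{f}\in\SSS'^*_{\dagger}(\RR^d,E'_{\sigma(E',E)})$ sends bounded subsets of $\SSS^*_{\dagger}(\RR^d)$ to weakly bounded — and hence, by the uniform boundedness principle, norm-bounded — subsets of the Banach space $E'$, which are in turn bounded in $\SSS'^*_{\dagger}(\RR^d)$. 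Since $\SSS^*_{\dagger}(\RR^d)$ is bornological (being an $(FS)$-space in the Beurling case and a $(DFS)$-space in the Roumieu case), this boundedness-preserving linear map is automatically continuous.

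Because translations on $E'$ restrict consistently to translations on $\SSS'^*_{\dagger}(\RR^d)$, the hypothesis that $\mathbf{f}$ commutes with every translation carries over to its reinterpretation. Proposition \ref{com} then produces $f\in\SSS'^*_{\dagger}(\RR^d)$ with
\[
\langle \mathbf{f},\varphi\rangle = f*\check{\varphi},\qquad \varphi\in\SSS^*_{\dagger}(\RR^d).
\]

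To upgrade $f$ to an element of $\DD'^*_{E'_{\ast}}$, I invoke Theorem \ref{karak}. By hypothesis $\langle\mathbf{f},\varphi\rangle\in E'$ for every $\varphi$, so $f*\psi = \langle\mathbf{f},\check{\psi}\rangle\in E'$ for all $\psi\in\SSS^*_{\dagger}(\RR^d)$ — the identification being legitimate because the embedding $E'\hookrightarrow\SSS'^*_{\dagger}(\RR^d)$ is injective, so the two representations of $\langle\mathbf{f},\check{\psi}\rangle$ must agree. Taking the singleton $B=\{f\}$, the set $\{f*\psi\}$ is trivially a bounded subset of $E'$, so condition $(ii)$ of Theorem \ref{karak} is satisfied. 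The implication $(ii)\Rightarrow(i)$ yields $f\in\DD'^*_{E'_{\ast}}$, which establishes the representation \eqref{eq:8}.

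The only genuinely technical point is the preliminary continuity assertion $\SSS^*_{\dagger}(\RR^d)\to\SSS'^*_{\dagger}(\RR^d)$; once it is in place, Proposition \ref{com} and Theorem \ref{karak} combine transparently to yield the result without any further estimates.
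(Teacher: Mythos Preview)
Your proof is correct and follows essentially the approach the paper has in mind (the paper refers to \cite[Corollary 6.4]{DPPV}): compose $\mathbf{f}$ with the embedding $E'\hookrightarrow\SSS'^*_{\dagger}(\RR^d)$, apply Proposition \ref{com} to obtain $f\in\SSS'^*_{\dagger}(\RR^d)$, and then invoke Theorem \ref{karak}$(ii)\Rightarrow(i)$ with $B=\{f\}$ to conclude $f\in\DD'^*_{E'_*}$. Your justification of the continuity of the composite via bornologicity of $\SSS^*_{\dagger}(\RR^d)$ and the uniform boundedness principle on $E'$ is clean and correct.
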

\begin{proof}
The proof is analogous to the proof of \cite[Corollary 6.4]{DPPV}.
\end{proof}

Our results from above implicitly suggest to embed the ultradistribution space $\mathcal{D}'^*_{E'_{\ast}}$ into the space of $E'$-valued ultradistributions as follows. Define first the continuous injection $\iota:\mathcal{S}'^*_{\dagger}(\mathbb{R}^{d})\to\mathcal{S}'^*_{\dagger}\left(\mathbb{R}^{d}, \mathcal{S}'^*_{\dagger}(\mathbb{R}^{d})\right)$, where $\iota(f)=\mathbf{f}$ is given by (\ref{eq:8}). Consider the restriction of $\iota$ to $\mathcal{D}'^*_{E'_{\ast}}$,
\begin{equation}
\label{embedding}
\iota:\mathcal{D}'^*_{E'_{\ast}}\to\mathcal{S}'^*_{\dagger}(\mathbb{R}^{d},E'_*),
\end{equation}
(for $f\in \DD'^*_{E'_*}$, the range of $\iota(f)$ is subset of $E'_*$ by Theorem \ref{karak}). Let $B_1$ be an arbitrary bounded subset of $\mathcal{S}^*_{\dagger}(\mathbb{R}^d)$. The set $B=\{\psi*\varphi|\,\varphi \in B_1,\|\psi\|_E\leq 1\}$ is bounded in $\mathcal{D}^*_{E}$ (by Lemma \ref{lemma:ime2}). For  $f\in \mathcal{D}'^*_{E'_{\ast}}$,
\beqs
\sup_{\varphi\in B_1} \|\langle\mathbf{f},\varphi\rangle\|_{E'}= \sup_{\varphi\in B_1} \|f*\check{\varphi}\|_{E'}=\sup_{\varphi\in B_1}\sup_{\|\psi\|_E\leq1}|\langle f,\psi*\varphi\rangle|=\sup_{\chi\in B}|\langle f,\chi\rangle|.
\eeqs
Hence, $\iota(f)\in\mathcal{S}'^*_{\dagger}(\mathbb{R}^{d},E'_*)$ ($\SSS^*_{\dagger}(\RR^d)$ is bornological) and $\iota$ is continuous. Furthermore, Proposition \ref{com} tells us that $\iota(\mathcal{D}_{E'_{\ast}}'^*)$ is precisely the subspace of $\mathcal{S}'^*_{\dagger}(\mathbb{R}^{d},E'_{\ast})$ consisting of those $\mathbf{f}$ which commute with all translations in the sense of Proposition \ref{com}. Since the translations $T_{h}$ are continuous operators on $E'_{\ast}$, we actually obtain that the range $\iota(\mathcal{D}_{E'_{\ast}}'^*)$ is a closed subspace of $\mathcal{S}'^*_{\dagger}(\mathbb{R}^{d},E'_{\ast})$.

\begin{corollary}\label{karak1}
For $B\subseteq\SSS'^*_{\dagger}(\mathbb{R}^d)$ the equivalent conditions from Theorem \ref{karak} are equivalent to the following:
\begin{itemize}
\item[$(vi)$] $\iota(B)$ is a bounded subset of $\mathcal{S}'^*_{\dagger}(\mathbb{R}^{d},E')$ (or equivalently of $\mathcal{S}'^*_{\dagger}(\mathbb{R}^{d},E'_*)$)
\end{itemize}
\end{corollary}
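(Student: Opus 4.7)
\textbf{Proof plan for Corollary \ref{karak1}.} The plan is to close the chain
\[
(i) \;\Longrightarrow\; (vi)_{E'_{*}} \;\Longrightarrow\; (vi)_{E'} \;\Longrightarrow\; (ii) \;\Longrightarrow\; (i),
\]
where $(vi)_{E'_{*}}$ and $(vi)_{E'}$ denote the two variants of condition $(vi)$ and the final implication is already part of Theorem \ref{karak}. This simultaneously establishes the equivalence of the two variants of $(vi)$ with the chain $(i)$--$(v)$.

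The first implication is immediate from the observations made in the discussion right before the statement of the corollary: it was shown there that the map $\iota\colon \mathcal{D}'^{*}_{E'_{\ast}}\to \mathcal{S}'^{*}_{\dagger}(\mathbb{R}^{d},E'_{\ast})$ is well-defined and continuous (here one uses that $\mathcal{S}^{*}_{\dagger}(\mathbb{R}^{d})$ is bornological, and that for $f\in\mathcal{D}'^{*}_{E'_{\ast}}$ the range of $\iota(f)$ lies in $E'_{\ast}$ by Theorem \ref{karak}). A continuous linear mapping sends bounded sets to bounded sets, which yields $(i)\Rightarrow(vi)_{E'_{*}}$. The second implication $(vi)_{E'_{*}}\Rightarrow (vi)_{E'}$ follows from the continuous inclusion $E'_{\ast}\hookrightarrow E'$ (a closed subspace, cf.\ the remark after Definition \ref{goodspace}), which induces a continuous inclusion $\mathcal{S}'^{*}_{\dagger}(\mathbb{R}^{d},E'_{\ast})\hookrightarrow \mathcal{S}'^{*}_{\dagger}(\mathbb{R}^{d},E')$ as spaces of continuous linear mappings.

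For the third implication, recall that $\mathcal{S}'^{*}_{\dagger}(\mathbb{R}^{d},E')=\mathcal{L}_{b}\bigl(\mathcal{S}^{*}_{\dagger}(\mathbb{R}^{d}),E'\bigr)$ carries the topology of uniform convergence on bounded sets. Thus, if $\iota(B)$ is bounded in this space, then for every bounded subset $B_{1}\subseteq \mathcal{S}^{*}_{\dagger}(\mathbb{R}^{d})$ the set $\{f*\check{\varphi}\,:\, f\in B,\ \varphi\in B_{1}\}$ is bounded in $E'$. Applying this to the singleton $B_{1}=\{\check{\psi}\}$ for any $\psi\in\mathcal{S}^{*}_{\dagger}(\mathbb{R}^{d})$ gives exactly condition $(ii)$ of Theorem \ref{karak}. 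Combined with the already-established equivalence $(ii)\Leftrightarrow(i)$ from Theorem \ref{karak}, the cycle is complete.

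There is no substantive obstacle here: the argument is a straightforward bookkeeping exercise that exploits the continuity of $\iota$ (already established) and the definition of the $\varepsilon$-space topology. The only point worth noting is that one should interpret the assertion ``$\iota(B)\subseteq \mathcal{S}'^{*}_{\dagger}(\mathbb{R}^{d},E')$'' as implicit in condition $(vi)$; this inclusion of ranges is automatic from condition $(ii)$ once it is known, in view of the closed graph theorem applied between $\mathcal{S}^{*}_{\dagger}(\mathbb{R}^{d})$ (an $(F)$- or $(DFS)$-space) and the Banach space $E'$.
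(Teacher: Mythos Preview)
Your proposal is correct and follows essentially the same approach as the paper: the paper's proof simply states ``$(i)\Rightarrow (vi)$ and $(vi)\Rightarrow (ii)$ are trivial,'' and your chain $(i)\Rightarrow(vi)_{E'_*}\Rightarrow(vi)_{E'}\Rightarrow(ii)$ is just an explicit unpacking of those two trivial implications using the already-established continuity of $\iota$ and the definition of the $\mathcal{L}_b$ topology. Your closing remark about the closed graph theorem is extraneous to the logical chain (since $(ii)\Rightarrow(vi)$ is obtained indirectly via $(ii)\Rightarrow(i)\Rightarrow(vi)_{E'_*}$), but harmless as a clarification.
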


\begin{proof} $(i)\Rightarrow (vi)$ and $(vi)\Rightarrow (ii)$ are trivial.
\end{proof}

\begin{corollary}\label{cor:sequence}
Let $\left\{f_{\lambda}\right\}_{\lambda\in\Lambda}\subseteq\mathcal{D}_{E'_{\ast}}'^*$ be a bounded net (or similarly, a sequence). The following statements are equivalent:
\begin{itemize}
\item [$(i)$] $\left\{f_{\lambda}\right\}_{\lambda\in\Lambda}$ is convergent in $\mathcal{D}_{E'_{\ast}}'^*$.
\item [$(ii)$] $\left\{\iota(f_{\lambda})\right\}_{\lambda\in\Lambda}$ is convergent in $\mathcal{S}'^*_{\dagger}(\mathbb{R}^{d},E')$ (or equivalently in $\mathcal{S}'^*_{\dagger}(\mathbb{R}^{d},E'_{\ast})$).
\item [$(iii)$] There exists a convergent bounded net $\{g_{\lambda}\}_{\lambda\in\Lambda}$ in $E'$ and an ultradifferential operator $P(D)$ of class $*$ such that each $f_{\lambda}=P(D) g_{\lambda}$.
\item [$(iv)$] There exists a net $\{g_{\lambda}\}_{\lambda\in\Lambda}\subseteq E'_*\cap  UC_{\omega}$ which is convergent and bounded in $E'_*$ and in $ UC_{\omega}$ and an ultradifferential operator $P(D)$ of class $*$ such that $f_{\lambda}=P(D) g_{\lambda}$; if $E$ is reflexive one may choose $\{g_{\lambda}\}_{\lambda\in\Lambda}\subseteq E'\cap C_{\omega}$.
\end{itemize}
\end{corollary}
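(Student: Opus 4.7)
My plan is to prove the cycle $(i) \Rightarrow (ii) \Rightarrow (iv) \Rightarrow (iii) \Rightarrow (i)$, adapting the strategy of \cite[Corollary 6.6]{DPPV} to our quasianalytic framework by means of the parametrix machinery of Section \ref{PPV} together with the uniform representation supplied by Theorem \ref{karak}.

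The implication $(i) \Rightarrow (ii)$ is immediate from the continuity of $\iota:\mathcal{D}'^*_{E'_{\ast}}\to \mathcal{S}'^*_{\dagger}(\mathbb{R}^d,E'_{\ast})$ proved just before the statement, composed with the continuous inclusion $\mathcal{S}'^*_{\dagger}(\mathbb{R}^d,E'_{\ast})\hookrightarrow \mathcal{S}'^*_{\dagger}(\mathbb{R}^d,E')$ coming from $E'_{\ast}\hookrightarrow E'$. The implication $(iv) \Rightarrow (iii)$ is trivial. For $(iii) \Rightarrow (i)$, Lemma \ref{ultradiff} gives the continuity of $P(D):\mathcal{D}^*_E\to \mathcal{D}^*_E$, so its transpose acts continuously on $\mathcal{D}'^*_{E'_{\ast}}$; combining this with the continuous embedding $E'\hookrightarrow \mathcal{D}'^*_{E'_{\ast}}$ obtained by dualising the dense inclusion $\mathcal{D}^*_E\hookrightarrow E$ of Proposition \ref{970}, convergence $g_\lambda\to g$ in $E'$ transfers to convergence $f_\lambda=P(D)g_\lambda\to P(D)g$ in $\mathcal{D}'^*_{E'_{\ast}}$.

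The substantive step is $(ii) \Rightarrow (iv)$. Since $\{f_\lambda\}$ is bounded in $\mathcal{D}'^*_{E'_{\ast}}$, I run the implication $(ii)\Rightarrow(v)$ of Theorem \ref{karak} uniformly on the whole net to obtain, by means of Proposition \ref{parametrix1} (applied twice), elements $u$ and $v$ in appropriate closures of $\mathcal{S}^*_{\dagger}(\mathbb{R}^d)$, with $v\in L^1_{\omega}\cap L^1_{\check{\omega}}$, and an ultradifferential operator $P_2(D)$ of class $*$ such that $f_\lambda=P_2(D) g_\lambda$, where $g_\lambda:=v\ast(u\ast \tilde{f}_\lambda)$ is bounded in $E'_{\ast}\cap UC_{\omega}$ (in $E'\cap C_{\omega}$ when $E$ is reflexive). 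Set $h_\lambda:=u\ast \tilde{f}_\lambda$, so that $\{h_\lambda\}$ is bounded in $E'$ and $g_\lambda=v\ast h_\lambda$. Because $\check{v}\in E$ by the embeddings used in the proof of Theorem \ref{karak}, Corollary \ref{cor3.3} together with Proposition \ref{convolution E E'} show that $h\mapsto v\ast h$ is continuous from $E'$ into both $E'_{\ast}$ and $UC_{\omega}$ (respectively, into $C_{\omega}$ in the reflexive case). Hence it suffices to prove that $h_\lambda$ converges in $E'$; granted this, $P_2(D):E'_{\ast}\to \mathcal{D}'^*_{E'_{\ast}}$ (continuous via transposition of Lemma \ref{ultradiff}) completes the proof.

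To produce the $E'$-limit of $h_\lambda$, I would approximate $u$ by a sequence $u_n\in\mathcal{S}^*_{\dagger}(\mathbb{R}^d)$ in the relevant Banach space appearing in Proposition \ref{parametrix1}. For each fixed $n$, $u_n\ast f_\lambda=\iota(f_\lambda)(\check{u}_n)$, so hypothesis $(ii)$ yields $u_n\ast f_\lambda\to \mathbf{f}(\check{u}_n)$ in $E'$ as $\lambda$ varies along the net. On the other hand, the bilinear continuity statements of Proposition \ref{parametrix1}, combined with the uniform boundedness of $\{\tilde{f}_\lambda\}$ as functionals on the appropriate closure space, deliver $\|u_n\ast\tilde{f}_\lambda-h_\lambda\|_{E'}\to0$ as $n\to\infty$, \emph{uniformly in $\lambda$}. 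A standard $3\varepsilon$ diagonal argument then identifies $\{h_\lambda\}$ as a Cauchy net in $E'$, hence convergent. The main obstacle of the proof lies precisely in this last step: securing the uniform-in-$\lambda$ rate of approximation $u_n\ast\tilde{f}_\lambda\to h_\lambda$ in the $E'$-norm, which requires exploiting the constructive nature of the parametrix of Proposition \ref{parametrix1} together with the boundedness of $\{u\ast\tilde{f}_\lambda\}$ in $L^\infty_{e^{A(l|\cdot|)}}$ (resp.\ $L^\infty_{e^{B_{l_p}(|\cdot|)}}$) guaranteed by the same proposition.
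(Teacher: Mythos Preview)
Your handling of the easy implications $(i)\Rightarrow(ii)$, $(iv)\Rightarrow(iii)$, $(iii)\Rightarrow(i)$ is correct, and your overall plan for $(ii)\Rightarrow(iv)$ --- run the parametrix construction of Theorem~\ref{karak} uniformly and then prove that $h_\lambda=u\ast\tilde{f}_\lambda$ converges in $E'$ via a $3\varepsilon$ argument --- is sound in outline. However, the justification you give for the crucial uniform estimate $\|u_n\ast\tilde{f}_\lambda-h_\lambda\|_{E'}\to 0$ (uniformly in $\lambda$) does \emph{not} work as stated. You invoke the bilinear continuity of $\ast:\SSS^{M_p,(l_p)}_{A_p,(l_p)}\times\SSS^{M_p,(l_p)}_{A_p,(l_p)}\to X_{(k_p)}$ from Proposition~\ref{parametrix1} together with the uniform bound of $\tilde{f}_\lambda$ on $X_{(k_p)}$; this would give $\bigl|\langle\tilde{f}_\lambda,(\check u-\check u_n)\ast\varphi\rangle\bigr|\leq C\,\sigma_{(l_p)}(\check u-\check u_n)\,\sigma_{(l_p)}(\varphi)$, and you would then need $\sup_{\varphi\in B_E}\sigma_{(l_p)}(\varphi)<\infty$ with $B_E=\{\varphi\in\SSS^*_{\dagger}:\|\varphi\|_E\leq1\}$. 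But $B_E$ is not bounded in $\SSS^{M_p,(l_p)}_{A_p,(l_p)}$, so this chain collapses. The $L^\infty_{e^{A(l|\cdot|)}}$-boundedness you also mention is simply the wrong norm for controlling $\|\,\cdot\,\|_{E'}$. The fix is straightforward: the uniform rate actually comes from the estimate (\ref{inq1799}) established inside the proof of Theorem~\ref{karak}, namely $\|f_\lambda\ast\psi\|_{E'}\leq C\,\sigma_{(r_p)}(\psi)$ for all $\psi\in\SSS^*_{\dagger}$ and all $\lambda$, which uses the $\DD'^*_{E'_*}$-boundedness (not merely $\SSS'^*_{\dagger}$-boundedness). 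Since $u_n\to u$ in $\sigma_{(l_p)}$ with $(l_p)\leq(r_p)$, this yields the desired uniform rate directly.

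For comparison, the paper bypasses the $3\varepsilon$ device altogether. It first observes that $\iota(\DD'^*_{E'_*})$ is closed in $\SSS'^*_{\dagger}(\RR^d,E'_*)$, so the limit in $(ii)$ is $\iota(f)$ for some $f$. The equicontinuous family $\{\iota(f_\lambda),\iota(f)\}$ extends to a bounded set in $\mathcal{L}_b(X_{(r_p)},E'_*)$; pointwise convergence on the dense subspace $\SSS^*_{\dagger}\subset X_{(r_p)}$ plus Banach--Steinhaus then gives convergence in $\mathcal{L}_p(X_{(r_p)},E'_*)$, and choosing $(r'_p)\leq(r_p)$ with $X_{(r'_p)}\hookrightarrow X_{(r_p)}$ compact upgrades this to convergence in $\mathcal{L}_b(X_{(r'_p)},E'_*)$. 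At that point the parametrix construction of Theorem~\ref{karak} is applied once, and the convergence of $g_\lambda$ in $E'_*\cap UC_\omega$ is automatic. This route is cleaner because it never needs to test against $\varphi\in B_E$ by hand.
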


\begin{proof} We consider the Roumieu case as the Beurling case is similar. Let $(ii)$ hold. Since the image of $\DD'^{\{M_p\}}_{E'_*}$ under $\iota$ is closed subspace of $\mathcal{S}'^{\{M_p\}}_{\{A_p\}}(\mathbb{R}^{d},E'_{\ast})$, $\iota(f_{\lambda})\rightarrow \iota(f)$, for $f\in \DD'^{\{M_p\}}_{E'_*}$. As $B=\{\iota(f)\}\cup\{\iota(f_{\lambda})|\,\lambda\in \Lambda\}$ is bounded in $\mathcal{L}_b\left(\SSS^{\{M_p\}}_{\{A_p\}}(\RR^d), E'_*\right)$, it is equicontinuous ($\SSS^{\{M_p\}}_{\{A_p\}}(\RR^d)$ is barreled) and thus, there exists $(r_p)\in\mathfrak{R}$ such that the elements of $B$ can be extended to a bounded subset $\tilde{B}=\{\widetilde{\iota(f)}\}\cup\{\widetilde{\iota(f_{\lambda})}|\,\lambda\in \Lambda\}$ of $\mathcal{L}_b\left(X_{(r_p)}, E'_*\right)$. Moreover, $\widetilde{\iota(f_{\lambda})}(\varphi)\rightarrow \widetilde{\iota(f)}(\varphi)$ for each $\varphi$ in the dense subset $\SSS^{\{M_p\}}_{\{A_p\}}(\RR^d)$ of $X_{(r_p)}$. Since $\tilde{B}$ is bounded in $\mathcal{L}_b\left(X_{(r_p)}, E'_*\right)$, $\widetilde{\iota(f_{\lambda})}\rightarrow \widetilde{\iota(f)}$ in $\mathcal{L}_{\sigma}\left(X_{(r_p)}, E'_*\right)$, the Banach-Steinhaus theorem implies that it is also bounded in $\mathcal{L}_p\left(X_{(r_p)}, E'_*\right)$. Pick now, $(r'_p)\in\mathfrak{R}$ with $(r'_p)\leq (r_p)$ such that the inclusion $X_{(r'_p)}\rightarrow X_{(r_p)}$ is compact. Then the inclusion $\mathcal{L}_p\left(X_{(r_p)}, E'_*\right)\rightarrow \mathcal{L}_b\left(X_{(r'_p)}, E'_*\right)$ is continuous. Thus $\widetilde{\iota(f_{\lambda})}\rightarrow \widetilde{\iota(f)}$ in $\mathcal{L}_b\left(X_{(r'_p)}, E'_*\right)$. Now one can use a similar technique as in the proof of $(ii)\Rightarrow (iv)$ of Theorem \ref{karak} to conclude $(iii)$ and similar technique as in the proof of $(ii)\Rightarrow (v)$ of Theorem \ref{karak} to conclude $(iv)$. The implications $(i)\Rightarrow (ii)$, $(iii)\Rightarrow (i)$ and $(iv)\Rightarrow (i)$ are obvious.
\end{proof}

This corollary implies that the restriction of $\iota$ on each bounded subset $B$ of $\DD'^*_{E'_*}$ is topological homeomorphism between $B$ and $\iota(B)$.

For the proof of the following two results we refer to \cite[Proposition 6.7, Proposition 6.8]{DPPV}
\begin{theorem}\label{1517}
The spaces $\DD^{\{M_p\}}_E$ and $\tilde{\DD}^{\{M_p\}}_E$ are isomorphic as l.c.s..
\end{theorem}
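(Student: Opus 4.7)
The plan is to follow the strategy of \cite[Proposition 6.7]{DPPV}, adapted to the present quasianalytic setting. Two facts are already at hand: (a) by the observation made immediately after Lemma \ref{regular}, the underlying sets of $\DD^{\{M_p\}}_E$ and $\tilde{\DD}^{\{M_p\}}_E$ coincide (this is an $E$-valued adaptation of \cite[Proposition 5.1]{DPPV} combined with \cite[Lemma 3.4]{Komatsu3}); and (b) the canonical identity map $\DD^{\{M_p\}}_E\to\tilde{\DD}^{\{M_p\}}_E$ is continuous, since for any fixed $m>0$ and $(r_p)\in\mathfrak{R}$ the quantity $\prod_{j=1}^{|\alpha|}(mr_j)$ is bounded below by a positive constant uniformly in $\alpha\in\NN^d$ (eventually $mr_j>1$), yielding $\|\cdot\|_{E,(r_p)}\leq C_{m,(r_p)}\|\cdot\|_{E,m}$ on each $\DD^{\{M_p\},m}_E$. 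It therefore remains to prove continuity of the inverse.

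The decisive step is to establish that every bounded subset $B$ of $\tilde{\DD}^{\{M_p\}}_E$ is already bounded in some Banach step $\DD^{\{M_p\},m}_E$. I would argue by contradiction in the spirit of \cite[Lemma 3.4]{Komatsu3}: if $B$ were unbounded in each $\DD^{\{M_p\},m}_E$, then for every $k\in\ZZ_+$ one could find $\varphi_k\in B$ and $\alpha_k\in\NN^d$ with $k^{-|\alpha_k|}\|D^{\alpha_k}\varphi_k\|_E/M_{\alpha_k}\geq k$. Boundedness of $B$ in $E$ (through the continuous embedding $\tilde{\DD}^{\{M_p\}}_E\hookrightarrow E$ together with the estimate from Proposition \ref{970}) forces $|\alpha_k|\to\infty$, so after passing to a subsequence we may assume that $|\alpha_k|$ is strictly increasing. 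A diagonal construction then produces $(r_p)\in\mathfrak{R}$ such that $\prod_{j=1}^{|\alpha_k|}r_j\leq k^{|\alpha_k|-1}$ for every $k$, which yields $\|\varphi_k\|_{E,(r_p)}\geq k$, contradicting boundedness of $B$ in $\DD^{\{M_p\},(r_p)}_E$.

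Once bounded-set coincidence is in place, the topological equality follows by a closed-graph argument. The identity map $T:\tilde{\DD}^{\{M_p\}}_E\to\DD^{\{M_p\}}_E$ has closed graph since both spaces inject continuously into $E$; by the bounded-set coincidence, $T$ sends bounded sets to bounded sets; and $\DD^{\{M_p\}}_E$ is a complete, regular $(DF)$-space (by Lemma \ref{regular}), hence webbed. A Pt\'ak or De Wilde closed-graph theorem, or equivalently the fact that $\DD^{\{M_p\}}_E$ is ultrabornological so that its topology is uniquely determined by its bornology, yields the continuity of $T$.

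The main obstacle is the diagonal construction of $(r_p)$ in the second paragraph: this is the technical core of Komatsu-style arguments, and one must carefully interleave the blocks $(|\alpha_k|,|\alpha_{k+1}|]$ so that the resulting sequence genuinely lies in $\mathfrak{R}$ (monotone and tending to infinity) while producing the contradiction at every $k$ simultaneously. Once this is handled, the remaining steps are essentially formal.
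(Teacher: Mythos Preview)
Your bounded-set coincidence argument (the Komatsu-style diagonal construction) is sound, and indeed this is the natural first step. The genuine gap is in your final paragraph: none of the closed-graph or bornological arguments you invoke actually applies in the direction you need. De Wilde's closed graph theorem requires the \emph{domain} to be ultrabornological, but $\tilde{\DD}^{\{M_p\}}_E$ is defined as a projective limit of Banach spaces over the uncountable index set $\mathfrak{R}$, and nothing so far guarantees it is ultrabornological, or even bornological. Your alternative remark---that $\DD^{\{M_p\}}_E$ is ultrabornological and hence has topology determined by its bornology---only tells you that $\DD^{\{M_p\}}_E$ carries the \emph{finest} locally convex topology with those bounded sets; since $\tilde{\DD}^{\{M_p\}}_E$ has the \emph{same} bounded sets and a \emph{coarser} topology, this is consistent with the two topologies being different, so the argument is circular. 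Likewise, ``bounded linear maps are continuous'' again needs the domain to be bornological.

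The route taken in \cite[Proposition 6.7]{DPPV}, and mirrored in the proof of Theorem \ref{theorBB} for the $L^{\infty}_{\eta}$ case, avoids this by first establishing that $\tilde{\DD}^{\{M_p\}}_E$ is a $(DF)$-space---this is where the placement of the theorem after Theorem \ref{karak} and Corollary \ref{cor:sequence} becomes relevant, since one identifies $\tilde{\DD}^{\{M_p\}}_E$ with the strong dual of the $(F)$-space $\DD'^{\{M_p\}}_{E'_*}$. Once $\tilde{\DD}^{\{M_p\}}_E$ is known to be $(DF)$, the corollary to \cite[Theorem 6.7, p.~154]{Sch} reduces continuity of the inverse to continuity on bounded sets, and one then shows by a direct seminorm estimate (exactly as in the displayed $\varepsilon$--$p_0$ argument in the proof of Theorem \ref{theorBB}) that a bounded net converging in $\tilde{\DD}^{\{M_p\}}_E$ already converges in some fixed $\DD^{\{M_p\},h_1}_E$. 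Your bounded-set coincidence is a necessary ingredient here, but it is not sufficient on its own.
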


\begin{proposition}\label{prop:reflexive}
If $E$ is reflexive, then $\mathcal{D}^{(M_p)}_{E}$ and $\mathcal{D}'^{\{M_p\}}_{E'}$ are $(FS^*)$-spaces, $\mathcal{D}^{\{M_p\}}_{E}$ and $\mathcal{D}'^{(M_p)}_{E}$ are $(DFS^*)$-spaces. Consequently, they are reflexive. In addition, $\mathcal{S}^*_{\dagger}(\mathbb{R}^{d})$ is dense in $\mathcal{D}'^*_{E'}$.
\end{proposition}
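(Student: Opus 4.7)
The natural approach is to adapt the proof of \cite[Proposition 6.8]{DPPV} to our quasianalytic setting, where the non-quasianalytic parametrix is replaced by the one provided by Proposition~\ref{parametrix}. The crux is the $(FS^*)$ assertion for $\mathcal{D}^{(M_p)}_E$ and the $(DFS^*)$ assertion for $\mathcal{D}^{\{M_p\}}_E$; once these are established, the dual statements follow from the standard fact that the strong dual of an $(FS^*)$-space is $(DFS^*)$ and vice versa, so reflexivity of all four spaces is automatic.

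For the Beurling case I would show that, for each $m_1>0$, there exists $m_2>m_1$ (e.g. $m_2 > 2Hm_1$ with $H$ as in $(M.2)$) such that the canonical inclusion $\mathcal{D}^{M_p,m_2}_E\hookrightarrow\mathcal{D}^{M_p,m_1}_E$ is compact. Let $(\varphi_n)$ be bounded in $\mathcal{D}^{M_p,m_2}_E$. Apply Proposition~\ref{parametrix} to obtain $G\in\SSS^{M_p,k}_{A_p,k}$ (for $k$ large) and an ultradifferential operator $P(D)$ of class $(M_p)$ with $P(D)G=\delta$, so that $\varphi_n=G*P(D)\varphi_n$ and $D^{\alpha}\varphi_n=G*D^{\alpha}P(D)\varphi_n$. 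By Lemma~\ref{ultradiff} the sequence $(P(D)\varphi_n)$ is bounded in $\mathcal{D}^{M_p,m_2'}_E$ for a suitable $m_2'$, and hence $(D^{\alpha}P(D)\varphi_n)$ is bounded in $E$ at a controlled rate in $\alpha$. Reflexivity of $E$ now gives weak relative compactness of bounded subsets of $E$; a diagonal argument across $\alpha\in\NN^d$ extracts a subsequence along which $D^{\alpha}P(D)\varphi_n$ converges weakly in $E$ for every $\alpha$, which by Proposition~\ref{cor:Beurling Algebra} transfers via convolution with $G$ (or its derivatives) to strong convergence of $D^{\alpha}\varphi_n$ in $E$. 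Combined with the Komatsu-type loss from $m_2$ down to $m_1$ provided by $(M.2)$, this yields strong convergence in the $\|\cdot\|_{E,m_1}$-norm. For the Roumieu case, the same parametrix argument together with Lemma~\ref{regular} and Theorem~\ref{1517} lets me realize $\mathcal{D}^{\{M_p\}}_E$ as an inductive limit of the $\mathcal{D}^{M_p,m}_E$ with compact transitions, giving $(DFS^*)$.

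For the density assertion I would argue by Hahn--Banach using reflexivity of $\mathcal{D}^*_E$ established above: any continuous linear functional on $\mathcal{D}'^*_{E'}$ is of the form $\mathbf{L}_\varphi:f\mapsto\langle f,\varphi\rangle$ with $\varphi\in\mathcal{D}^*_E$. The space $\SSS^*_\dagger(\RR^d)$ embeds continuously into $\mathcal{D}'^*_{E'}$ through the chain $\SSS^*_\dagger(\RR^d)\hookrightarrow E'\hookrightarrow\mathcal{D}'^*_{E'}$, where the last inclusion comes from Theorem~\ref{karak}. If $\langle\psi,\varphi\rangle=0$ for every $\psi\in\SSS^*_\dagger(\RR^d)$, then, since $\SSS^*_\dagger(\RR^d)$ is dense in $E$ by axiom (I) (cf.\ Proposition~\ref{970}), $\varphi$ is the zero element of $E$, and the continuous injection $\mathcal{D}^*_E\hookrightarrow E$ forces $\varphi=0$ in $\mathcal{D}^*_E$; by Hahn--Banach this gives the density.

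The main obstacle is the compactness of the transition maps at the Banach-space level. In the non-quasianalytic setting of \cite{DPPV} one exploits that a suitable parametrix can be chosen with compact support, so the map $f\mapsto G*f$ is essentially a smoothing operator with compactly supported kernel and Arzel\`a--Ascoli applies directly. In our quasianalytic situation, the parametrix $G$ of Proposition~\ref{parametrix} is only rapidly decaying of type $\dagger$, not compactly supported, so one cannot rely on a cut-off argument. The route I outlined above relies critically on the reflexivity of $E$ to upgrade the boundedness of the $D^{\alpha}P(D)\varphi_n$ in $E$ into weak (and then strong, after convolving with $G$) convergence; without reflexivity this whole strategy breaks down, which is exactly why the proposition is conditional on $E$ being reflexive.
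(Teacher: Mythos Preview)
Your density argument via Hahn--Banach and reflexivity of $\mathcal{D}^*_E$ is essentially correct. The real difficulty is with the $(FS^*)$/$(DFS^*)$ part, and there your plan has a genuine gap.

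First, a terminological point that matters: $(FS^*)$ means a projective limit of Banach spaces whose linking maps are \emph{weakly} compact (and $(DFS^*)$ analogously for inductive limits). You set out to prove norm compactness of $\mathcal{D}^{M_p,m_2}_E\hookrightarrow\mathcal{D}^{M_p,m_1}_E$, which is stronger than what is needed and in general false already for $E=L^2(\RR^d)$.

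Second, and more seriously, your step ``weak convergence of $D^{\alpha}P(D)\varphi_n$ in $E$ upgrades, after convolving with $G$, to strong convergence of $D^{\alpha}\varphi_n$'' is not justified. Proposition~\ref{cor:Beurling Algebra} only says that $g\mapsto G*g$ is a bounded operator on $E$; bounded operators preserve weak convergence but do not convert it to norm convergence. For $E=L^2(\RR^d)$, convolution with any nonzero $G\in\SSS^*_{\dagger}(\RR^d)$ is a Fourier multiplier with continuous spectrum, hence not compact, so your step fails in the simplest reflexive example. Your belief that in \cite{DPPV} a compactly supported parametrix makes this work is a red herring: even compactly supported $G$ gives a non-compact convolution operator on $L^2$.

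In fact the proof in \cite{DPPV} of this proposition does not use a parametrix at all, which is precisely why the paper can cite it verbatim in the quasianalytic setting. The argument is purely Banach-space theoretic: for $m_1<m<m_2$ the inclusion factors as
\[
\mathcal{D}^{M_p,m_2}_E\;\longrightarrow\;\Bigl\{\varphi:\ \sum_{\alpha}\frac{m^{2|\alpha|}}{M_\alpha^{2}}\,\|D^\alpha\varphi\|_E^2<\infty\Bigr\}\;\longrightarrow\;\mathcal{D}^{M_p,m_1}_E,
\]
the first arrow because $\sum_\alpha(m/m_2)^{2|\alpha|}<\infty$ and the second because each term in the $\ell^2$-sum dominates the corresponding $\ell^\infty$-term with the smaller weight $m_1$. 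The middle space is isometric to a closed subspace of $\ell^2(\NN^d;E)$, which is reflexive when $E$ is; an operator factoring through a reflexive space is weakly compact. This yields $(FS^*)$ for $\mathcal{D}^{(M_p)}_E$ and, by the same computation, $(DFS^*)$ for $\mathcal{D}^{\{M_p\}}_E$; the dual statements and reflexivity then follow from the standard $(FS^*)$--$(DFS^*)$ duality.
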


\subsection{Weighted  $\mathcal{D}'^{\ast}_{L^{p}_{\eta}}$ spaces}
\label{examples}
In this subsection we discuss some important examples of the spaces $\mathcal{D}^*_{E}$ and $\mathcal{D}'^*_{E'_{\ast}}$, where $E$ is taken as a weighted $L^{p}_{\eta}$ spaces. In the next considerations we retain the notation exactly as in Example \ref{example}. In particular, $\eta$ is ultrapolynomially bounded weight of class $\dagger$ and the number $q$ always stands for $p^{-1}+q^{-1}=1$ ($p\in[1,\infty]$). It should be mentioned that in the case $\eta=1$ and $M_p=A_p$ satisfying $(M.3)$ the spaces we study below were considered in \cite{Pilipovic} (see also \cite{PilipovicK}). The non-quasianalytic case with $A_{p}=M_{p}$ for general weights $\eta$ was studied in detail in \cite{DPPV}.

Consider now the spaces $\mathcal{D}^*_{L^{p}_{\eta}}$ for $p\in[1,\infty]$ and $\tilde{\DD}^{\{M_p\}}_{L^{\infty}_{\eta}}$ defined as in Section \ref{new spaces} by taking $E=L^{p}_{\eta}$. We also treat $\mathcal{D}_{C_{\eta}}$ defined via $E=C_{\eta}$. Once again, the case $p=\infty$ is an exception since $\mathcal{S}^*_{\dagger}(\mathbb{R}^{d})$ is not dense in $\mathcal{D}^*_{L^{\infty}_{\eta}}$ nor in $\tilde{\DD}^{\{M_p\}}_{L^{\infty}_{\eta}}$. Nonetheless, we can repeat the proof of Lemma \ref{regular} to prove that $\DD^{\{M_p\}}_{L^{\infty}_{\eta}}$ is regular and complete. Also, similarly as in Lemmas \ref{ultradiff} and \ref{udozc}, one can obtain that each ultradifferential operator of $*$ class acts continuously on $\DD^*_{L^{\infty}_{\eta}}$ and each ultradifferential operator of $\{M_p\}$ class acts continuously on $\tilde{\DD}^{\{M_p\}}_{L^{\infty}_{\eta}}$. Obviously $\DD^{\{M_p\}}_{L^{\infty}_{\eta}}$ is continuously injected into $\tilde{\DD}^{\{M_p\}}_{L^{\infty}_{\eta}}$ and by using \cite[Lemma 3.4]{Komatsu3} and employing a similar technique as in the proof of Lemma \ref{regular}, one can prove that this inclusion is in fact surjective. We denote by $\mathcal{B}^*_{\eta}$ the space $\DD^*_{L^{\infty}_{\eta}}$ and by $\dot{\mathcal{B}}^*_{\eta}$ the closure of $\SSS^*_{\dagger}(\RR^d)$ in $\mathcal{B}^*_{\eta}$. We denote by $\dot{\tilde{\mathcal{B}}}^{\{M_p\}}_{\eta}$ the closure of $\SSS^{\{M_p\}}_{\{A_p\}}(\RR^d)$ in $\tilde{\DD}^{\{M_p\}}_{L^{\infty}_\eta}$. We immediately see that $\dot{\mathcal{B}}^{(M_p)}_{\eta}=\mathcal{D}^{(M_p)}_{C_{\eta}}$.
In the Roumieu case this is result is given by the following theorem. Its proof is analogous to that of \cite[Theorem 7.2]{DPPV} and we omit it.

\begin{theorem}\label{edn}
The spaces $\mathcal{D}^{\{M_p\}}_{C_{\eta}}$, $\dot{\mathcal{B}}^{\{M_p\}}_{\eta}$ and $\dot{\tilde{\mathcal{B}}}^{\{M_p\}}_{\eta}$ are isomorphic one to another as l.c.s..
\end{theorem}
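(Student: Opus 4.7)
The plan is to establish the chain of continuous inclusions
\[
\mathcal{D}^{\{M_p\}}_{C_\eta}\hookrightarrow \dot{\mathcal{B}}^{\{M_p\}}_\eta\hookrightarrow \dot{\tilde{\mathcal{B}}}^{\{M_p\}}_\eta\hookrightarrow \mathcal{D}^{\{M_p\}}_{C_\eta},
\]
from which the three spaces will coincide both as sets and as l.c.s.

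For the first two inclusions I would observe that the embedding $C_\eta\hookrightarrow L^\infty_\eta$ is isometric, so $\|\psi\|_{C_\eta,m}=\|\psi\|_{L^\infty_\eta,m}$ whenever $\psi$ and all its derivatives lie in $C_\eta$. This identifies $\mathcal{D}^{\{M_p\}}_{C_\eta}$ with a topological subspace of $\mathcal{B}^{\{M_p\}}_\eta=\mathcal{D}^{\{M_p\}}_{L^\infty_\eta}$. Since $C_\eta$ is itself a translation-invariant $(B)$-space of ultradistributions of class $*-\dagger$ (see Example \ref{example}), Proposition \ref{970} applied with $E=C_\eta$ gives that $\mathcal{S}^{\{M_p\}}_{\{A_p\}}(\mathbb{R}^d)$ is dense in $\mathcal{D}^{\{M_p\}}_{C_\eta}$, whence $\mathcal{D}^{\{M_p\}}_{C_\eta}\subseteq\dot{\mathcal{B}}^{\{M_p\}}_\eta$ continuously. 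The second inclusion is immediate from the continuity of the natural map $\mathcal{D}^{\{M_p\}}_{L^\infty_\eta}\to\tilde{\mathcal{D}}^{\{M_p\}}_{L^\infty_\eta}$.

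The crux is to close the chain by proving $\dot{\tilde{\mathcal{B}}}^{\{M_p\}}_\eta\subseteq\mathcal{D}^{\{M_p\}}_{C_\eta}$. Fix $\varphi\in\dot{\tilde{\mathcal{B}}}^{\{M_p\}}_\eta$; by the set-theoretic equality $\mathcal{D}^{\{M_p\}}_{L^\infty_\eta}=\tilde{\mathcal{D}}^{\{M_p\}}_{L^\infty_\eta}$ noted before the theorem and the regularity of the inductive limit, $\varphi$ actually lives in some Banach step $\mathcal{D}^{M_p,m_0}_{L^\infty_\eta}$. Next I would construct an explicit Schwartz approximation in the strong topology: choose $\chi,\rho\in\mathcal{S}^{\{M_p\}}_{\{A_p\}}(\mathbb{R}^d)$ with $\int\chi=1$ and $\rho(0)=1$, and set $\chi_n(x)=n^d\chi(nx)$, $\rho_n(x)=\rho(x/n)$, $\varphi_n:=\chi_n*(\rho_n\varphi)$. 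Then $\varphi_n\in\mathcal{S}^{\{M_p\}}_{\{A_p\}}(\mathbb{R}^d)$, and a weighted Leibniz-plus-convolution estimate in the spirit of Lemma \ref{appincl}, combining the Beurling-algebra bound from Proposition \ref{cor:Beurling Algebra} with the rapid decay of $\rho_n$ against $\eta$, yields $\varphi_n\to\varphi$ in $\mathcal{D}^{M_p,m}_{L^\infty_\eta}$ for some $m>0$. Since each $D^\alpha\varphi_n$ lies in $C_\eta$ and $C_\eta$ is closed in $L^\infty_\eta$, this upgraded convergence forces $D^\alpha\varphi\in C_\eta$ for every $\alpha$, giving $\varphi\in\mathcal{D}^{\{M_p\}}_{C_\eta}$. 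Equality of the l.c.s.\ topologies then follows by invoking the open mapping theorem in the class of webbed spaces for the resulting continuous bijections between the three spaces.

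The principal obstacle is the weighted mollifier estimate in the previous paragraph: one must upgrade convergence from the a priori weak projective topology on $\tilde{\mathcal{D}}^{\{M_p\}}_{L^\infty_\eta}$ (indexed by $(r_p)\in\mathfrak{R}$) to convergence in a single Banach space $\mathcal{D}^{M_p,m}_{L^\infty_\eta}$ of the inductive scale. The computation must reproduce, in the weighted $L^\infty_\eta$ setting, the delicate balance between the Roumieu parameter $m$, the ultradifferentiable decay of $\chi$ and $\rho$, the growth of the derivatives of $\rho_n$, and the ultrapolynomial behaviour of $\eta$ as encoded through $\omega_\eta$. In essence this is the Roumieu adaptation to weighted $L^\infty$ of the argument carried out in the non-quasianalytic case in \cite[Theorem 7.2]{DPPV}.
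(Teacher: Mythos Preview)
Your overall strategy (the cycle of inclusions, followed by an identification of topologies) is the same one the paper has in mind via \cite[Theorem 7.2]{DPPV}, and the first two arrows are handled correctly.

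There is, however, a genuine gap in the third step. Your mollifier scheme $\varphi_n=\chi_n\ast(\rho_n\varphi)$ is applied using only that $\varphi\in\DD^{M_p,m_0}_{L^\infty_\eta}$ for some $m_0$; the membership $\varphi\in\dot{\tilde{\mathcal{B}}}^{\{M_p\}}_\eta$ never enters the estimate you sketch. As written, the argument would therefore show $\varphi_n\to\varphi$ in some $\DD^{M_p,m}_{L^\infty_\eta}$ for \emph{every} $\varphi\in\mathcal{B}^{\{M_p\}}_\eta$, which is false (take $\eta\equiv1$ and $\varphi\equiv1$: then $\rho_n\varphi=\rho_n\not\to 1$ in $L^\infty$). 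The point is that cut-off by $\rho_n$ cannot converge in $L^\infty_\eta$ unless $D^\alpha\varphi/\eta$ already vanishes at infinity, which is precisely the property you want to conclude.

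The fix is to use the closure hypothesis directly instead of a mollifier. If $\varphi\in\dot{\tilde{\mathcal{B}}}^{\{M_p\}}_\eta$ there is a net $\psi_\lambda\in\SSS^{\{M_p\}}_{\{A_p\}}(\RR^d)$ with $\|\psi_\lambda-\varphi\|_{L^\infty_\eta,(r_p)}\to0$ for every $(r_p)\in\mathfrak{R}$; in particular $\|D^\alpha\psi_\lambda-D^\alpha\varphi\|_{L^\infty_\eta}\to0$ for each fixed $\alpha$, so $D^\alpha\varphi\in C_\eta$ since $C_\eta$ is closed in $L^\infty_\eta$. Combined with the set equality $\tilde{\DD}^{\{M_p\}}_{L^\infty_\eta}=\DD^{\{M_p\}}_{L^\infty_\eta}$ this gives $\varphi\in\DD^{\{M_p\}}_{C_\eta}$. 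For the topologies, no open mapping theorem is needed: by Theorem \ref{1517} with $E=C_\eta$ one has $\DD^{\{M_p\}}_{C_\eta}=\tilde{\DD}^{\{M_p\}}_{C_\eta}$ as l.c.s., and since the seminorms $\|\cdot\|_{C_\eta,(r_p)}$ and $\|\cdot\|_{L^\infty_\eta,(r_p)}$ coincide on this set, $\DD^{\{M_p\}}_{C_\eta}$ carries exactly the subspace topology from $\tilde{\DD}^{\{M_p\}}_{L^\infty_\eta}$. The chain of continuous bijections then collapses to topological identities.
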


Proposition \ref{smooth prop} together with the estimate (\ref{bounds}) (resp. Proposition \ref{smooth prop} together with (\ref{boundss})) imply $\mathcal{D}^*_{L^{p}_{\eta}}\hookrightarrow\dot{\mathcal{B}}^*_{\check{\omega}_{\eta}}$ for every $p\in[1,\infty)$. It follows from Proposition \ref{prop:reflexive} that $\mathcal{D}^{*}_{L^p_\eta}$ is reflexive when $p\in(1,\infty)$.

 In accordance to Subsection \ref{subsection DE}, the weighted spaces $\mathcal{D}'^*_{L_{\eta}^p}$ are defined as
$\mathcal{D}'^*_{L_{\eta}^p}= (\mathcal{D}^*_{L_{\eta^{-1}}^q})'$ where $p^{-1}+q^{-1}=1$ if $p\in (1,\infty)$; if $p=1$,
$\mathcal{D}'^*_{L_{\eta}^1}=(\mathcal{D}^*_{C_{\eta}})'=(\dot{\mathcal{B}}^*_\eta)'$ and for $p=\infty$ we define $\mathcal{D}'^*_{L^{\infty}_{\eta}}:=\mathcal{D}'^*_{UC_{\eta}}=(\DD^*_{L^1_{\eta}})'$. We write
$\mathcal{B}'^*_{\eta}=\mathcal{D}'^*_{L^{\infty}_{\eta}}$ and $\dot{\mathcal{B}}'^*_{\eta}$ for the closure of
$\mathcal{S}^*_{\dagger}(\mathbb{R}^{d})$ in $\mathcal{B}'^*_{\eta}$.

For $f\in\DD'^*_{L^1_{\eta}}$, by Theorem \ref{karak}, there exist an ultradifferential operator $P(D)$ of class $*$ and $g\in L^1_{\eta}$ such that $f=P(D)g$. But, since $\SSS^*_{\dagger}(\RR^d)$ is dense in $L^1_{\eta}$, there exists a sequence $g_n\in\SSS^*_{\dagger}(\RR^d)$, $n\in\ZZ_+$, such that $g_n\rightarrow g$ in $L^1_{\eta}$. Hence $\SSS^*_{\dagger}(\RR^d)\ni f_n=P(D)g_n\rightarrow f$ in $\DD'^*_{L^1_{\eta}}$, i.e., $\SSS^*_{\dagger}(\RR^d)$ is sequentially dense in $\DD'^*_{L^1_{\eta}}$. Moreover, as an easy consequence of the Sobolev imbedding theorem, we obtain that $\DD^*_{L^p_{\eta}}$ is continuously injected into $C^{\infty}(\RR^d)$ for each $p\in[1,\infty)$. Since $\SSS^*_{\dagger}(\RR^d)\hookrightarrow\SSS(\RR^d)\hookrightarrow C^{\infty}(\RR^d)$, $\DD^*_{L^p_{\eta}}$ is dense in $C^{\infty}(\RR^d)$, hence $\EE'(\RR^d)$ is continuously injected into $\DD'^*_{L^q_{1/\eta}}$. In particular the delta (ultra)distribution belongs to $\DD'^*_{L^q_{1/\eta}}$.

\begin{theorem}\label{theorBB}
The strong bidual of $\dot{\mathcal{B}}^*_{\eta}$ is isomorphic to $\DD^*_{L^{\infty}_{\eta}}$ as l.c.s.. In the Roumieu case $\DD^{\{M_p\}}_{L^{\infty}_{\eta}}$ and $\tilde{\DD}^{\{M_p\}}_{L^{\infty}_{\eta}}$ are isomorphic l.c.s.. Moreover $\dot{\mathcal{B}}^{(M_p)}_{\eta}$ is a distinguished $(F)$-space and consequently $\DD'^{(M_p)}_{L^1_{\eta}}$ is barreled and bornological.
\end{theorem}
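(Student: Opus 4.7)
The plan is to prove the three claims in the order (Roumieu isomorphism) $\to$ (bidual) $\to$ (distinguishedness), recycling the characterization of bounded sets in $\DD'^*_{L^1_\eta}$ from Theorem \ref{karak}. First I would dispose of the Roumieu isomorphism $\DD^{\{M_p\}}_{L^\infty_\eta}\cong\tilde{\DD}^{\{M_p\}}_{L^\infty_\eta}$. The preamble to the theorem already establishes that the canonical inclusion is continuous and bijective as a map of sets, via Komatsu's \cite[Lemma 3.4]{Komatsu3} plus the regularity argument of Lemma \ref{regular}. Topological equivalence follows by adapting verbatim the proof of \cite[Proposition 6.7]{DPPV}, which is exactly what was done in Theorem \ref{1517}: for each $m>0$ one produces $(r_p)\in\mathfrak{R}$ such that the $\DD^{M_p,m}_{L^\infty_\eta}$-norm is dominated by the $\DD^{\{M_p\},(r_p)}_{L^\infty_\eta}$-norm; the argument is purely a sequence-manipulation.

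For the bidual identification, recall that $(\dot{\mathcal{B}}^*_\eta)'_\beta=\DD'^*_{L^1_\eta}$ by definition, so I must show $(\DD'^*_{L^1_\eta})'_\beta\cong \DD^*_{L^\infty_\eta}$. I define the canonical map $J:\DD^*_{L^\infty_\eta}\to(\DD'^*_{L^1_\eta})'$ by $\langle J\varphi,f\rangle:=\langle f,\varphi\rangle$; the pairing is well defined because Theorem \ref{karak} applied to $E=C_\eta$ (so $E'_\ast=L^1_\eta$) writes every $f\in\DD'^*_{L^1_\eta}$ as $f=P(D)g$ with $g\in L^1_\eta$ and $P(D)$ an ultradifferential operator of class $*$, and then the pairing reduces to $\int g\cdot P(-D)\varphi\,dx$, which converges absolutely since $P(-D)\varphi\in L^\infty_\eta$. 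Injectivity of $J$ uses the sequential density of $\SSS^*_\dagger(\RR^d)$ in $\DD'^*_{L^1_\eta}$ proved in Subsection \ref{examples}. Conversely, given $\Phi\in(\DD'^*_{L^1_\eta})'$, the restriction along the continuous embedding $L^1_\eta\hookrightarrow\DD'^*_{L^1_\eta}$ gives an element $\varphi\in(L^1_\eta)'=L^\infty_\eta$; applying $\Phi$ to $P(D)L^1_\eta$ for each admissible $P(D)$ yields $P(-D)\varphi\in L^\infty_\eta$ with uniform control, which is precisely the condition $\varphi\in\DD^*_{L^\infty_\eta}$.

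The topological part of (a) is then a matter of matching seminorms. Theorem \ref{karak} parameterizes the bounded sets of $\DD'^*_{L^1_\eta}$ by $P(D)B$ where $B$ runs over the bounded sets of $L^1_\eta$ and $P(D)$ over the ultradifferential operators of class $*$. The strong seminorm $\sup_{f\in P(D)B}|\langle J\varphi,f\rangle|$ becomes $\sup_{g\in B}\left|\int g\cdot P(-D)\varphi\,dx\right|$, and taking $B$ to be the unit ball of $L^1_\eta$ this is exactly $\|P(-D)\varphi\|_{L^\infty_\eta}$. Running over all admissible families of coefficients of $P(D)$ — polynomial growth $(m^{|\alpha|}/M_\alpha)^{-1}$ in the Beurling case, growth controlled by some $(r_p)\in\mathfrak{R}$ in the Roumieu case — matches exactly the seminorm families defining $\DD^{(M_p)}_{L^\infty_\eta}$ (respectively $\tilde{\DD}^{\{M_p\}}_{L^\infty_\eta}$, which by part (b) is $\DD^{\{M_p\}}_{L^\infty_\eta}$). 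Hence $J$ is a topological isomorphism.

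Finally, (c) is a formal consequence of (a) via the classical criterion that an $(F)$-space $F$ is distinguished if and only if each bounded subset of $F''_\beta$ lies in the $\beta(F'',F')$-closure of a bounded subset of $F$. For a bounded $B\subseteq\DD^{(M_p)}_{L^\infty_\eta}$, a double regularization $\varphi\mapsto \chi_\epsilon*(\rho_n\varphi)$ with $\chi_\epsilon\in\SSS^{(M_p)}_{(A_p)}$ a mollifier and $\rho_n\in\SSS^{(M_p)}_{(A_p)}$ a cutoff produces approximating elements in $\dot{\mathcal{B}}^{(M_p)}_\eta=\DD^{(M_p)}_{C_\eta}$; the Lemma \ref{appincl}/Lemma \ref{lemma:ime2}-type estimates keep them in a fixed bounded set of $\dot{\mathcal{B}}^{(M_p)}_\eta$, while dominated convergence against the representation of an arbitrary $f\in\DD'^{(M_p)}_{L^1_\eta}$ as $P(D)g$, $g\in L^1_\eta$, yields convergence in the topology $\beta(F'',F')$. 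That $\DD'^{(M_p)}_{L^1_\eta}$ is then barreled and bornological is the standard consequence of distinguishedness of the predual $(F)$-space (cf.\ \cite[Sect.\ IV.3]{Sch}). The main obstacle will be the topological step of (a): carefully translating the parametric description of bounded sets in $\DD'^*_{L^1_\eta}$ (in terms of ultradifferential operators and bounded sets of $L^1_\eta$) into the seminorms of $\DD^*_{L^\infty_\eta}$ symmetrically in the Beurling and Roumieu cases, and in particular ensuring that inverting the direction between the $(m,h)$-parameters (or the $(r_p)$-sequences) does not lose a log-type factor that would spoil the identification.
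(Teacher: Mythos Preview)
Your overall strategy is sound and close to the paper's, but there are two points where your proposal diverges in ways that matter.

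\textbf{Smoothness of elements of $(\mathcal{D}'^*_{L^1_\eta})'$.} In your surjectivity argument you restrict $\Phi$ along $L^1_\eta\hookrightarrow\mathcal{D}'^*_{L^1_\eta}$ to get $\varphi\in L^\infty_\eta$, then say ``applying $\Phi$ to $P(D)L^1_\eta$ yields $P(-D)\varphi\in L^\infty_\eta$.'' But at this stage $\varphi$ is only an $L^\infty_\eta$ function, so $P(-D)\varphi$ is a priori an ultradistribution, and concluding $\varphi\in\mathcal{D}^*_{L^\infty_\eta}$ requires that $\varphi$ be genuinely $C^\infty$ with pointwise derivatives in $L^\infty_\eta$. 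You have not supplied this bridge. The paper handles it differently and first: it introduces the auxiliary weight $\tilde\eta(x)=1/(\eta(x)\langle x\rangle^{d+1})$, observes $\dot{\mathcal{B}}^*_\eta\hookrightarrow\mathcal{D}^*_{L^2_{\tilde\eta}}$, and uses the reflexivity of the latter (Proposition~\ref{prop:reflexive}) to embed $(\mathcal{D}'^*_{L^1_\eta})'_b\hookrightarrow\mathcal{D}^*_{L^2_{\tilde\eta}}\subset C^\infty(\mathbb{R}^d)$. Only then does it test against the bounded family of scaled Dirac masses $B_r=\{(\eta(a))^{-1}r^{|\alpha|}D^\alpha\delta_a/M_\alpha\}$ (resp.\ $B_{(r_p)}$) to read off the $\mathcal{D}^*_{L^\infty_\eta}$-norms directly. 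This Dirac-mass trick is both cleaner than your seminorm matching and, in the Beurling case, lets the paper finish with the open mapping theorem rather than an explicit two-sided seminorm comparison.

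\textbf{Order of the Roumieu isomorphism.} You propose to prove $\mathcal{D}^{\{M_p\}}_{L^\infty_\eta}\cong\tilde{\mathcal{D}}^{\{M_p\}}_{L^\infty_\eta}$ first, by adapting Theorem~\ref{1517} ``verbatim.'' But Theorem~\ref{1517} is stated for translation-invariant $(B)$-spaces $E$ with $\mathcal{S}^*_\dagger$ dense, which $L^\infty_\eta$ is not, so the adaptation is not automatic. The paper takes the reverse order: it first identifies $(\mathcal{D}'^{\{M_p\}}_{L^1_\eta})'_b$ with $\tilde{\mathcal{D}}^{\{M_p\}}_{L^\infty_\eta}$ (by an explicit polar-neighborhood argument), deduces that $\tilde{\mathcal{D}}^{\{M_p\}}_{L^\infty_\eta}$ is a complete $(DF)$-space, and only then uses the $(DF)$-property (continuity checkable on bounded sets) together with \cite[Lemma~3.4]{Komatsu3} to show the identity $\tilde{\mathcal{D}}^{\{M_p\}}_{L^\infty_\eta}\to\mathcal{D}^{\{M_p\}}_{L^\infty_\eta}$ is continuous on bounded nets. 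So the $(DF)$-structure is the engine of the Roumieu isomorphism here, not a byproduct.

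For the distinguishedness, the paper uses only multiplication by the cutoffs $\varphi_n$ from Lemma~\ref{appincl} (no convolution is needed), but your double regularization would also work.
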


\begin{proof} We may assume that $\eta$ is continuous (cf. Example \ref{example}). Let $\tilde{\eta}(x)=1/\left(\eta(x)\langle x\rangle^{d+1}\right)$. Then, clearly $\tilde{\eta}(x)$ is a continuous ultrapolynomially bounded weight of class $\dagger$ and $\dot{\mathcal{B}}^*_{\eta}\hookrightarrow \DD^*_{L^2_{\tilde{\eta}}}$. Since $\SSS^*_{\dagger}(\RR^d)$ is dense in $\DD'^*_{L^1_{\eta}}$, we have $\DD'^*_{L^2_{1/\tilde{\eta}}}\hookrightarrow\DD'^*_{L^1_{\eta}}$. This, together with Proposition \ref{prop:reflexive}, implies that $\left(\DD'^*_{L^1_{\eta}}\right)'_b$ (where $b$ stands for the strong topology) is continuously injected into $\DD^*_{L^2_{\tilde{\eta}}}$, i.e., the elements of $\left(\DD'^*_{L^1_{\eta}}\right)'_b$ are smooth functions. In the Roumieu case, we already saw that $\DD^{\{M_p\}}_{L^{\infty}_{\eta}}$ and $\tilde{\DD}^{\{M_p\}}_{L^{\infty}_{\eta}}$ are equal as sets. First we prove that the bidual of $\dot{\mathcal{B}}^*_{\eta}$ is isomorphic to $\DD^{(M_p)}_{L^{\infty}_{\eta}}$, and to $\tilde{\DD}^{\{M_p\}}_{L^{\infty}_{\eta}}$ respectively. Let $r>0$ (let $(r_p)\in\mathfrak{R}$ and set $R_{\alpha}=\prod_{j=1}^{|\alpha|}r_j$). Consider the set
\beqs
&{}&B_r=\left\{\frac{(\eta(a))^{-1}r^{|\alpha|}D^{\alpha}\delta_a}{M_{\alpha}}\Big|\, a\in\RR^d,\, \alpha\in\NN^d\right\}
\\
&{}&
\left(\mbox{resp.}\,\,B_{(r_p)}=\left\{\frac{(\eta(a))^{-1}D^{\alpha}\delta_a}{M_{\alpha}R_{\alpha}}\Big|\, a\in\RR^d,\, \alpha\in\NN^d\right\}\right).
\eeqs
One easily proves that $B_r$ is a bounded subset of $\DD'^{(M_p)}_{L^1_{\eta}}$ ($B_{(r_p)}$ is a bounded subset of $\DD'^{\{M_p\}}_{L^1_{\eta}}$). Hence if $\psi\in \left(\DD'^*_{L^1_{\eta}}\right)'_{b}$, $\psi(B_r)$ (resp. $\psi(B_{(r_p)})$) is bounded in $\CC$ and thus
\beqs
&{}&\sup_{a,\alpha}\frac{\left|(\eta(a))^{-1}r^{|\alpha|}D^{\alpha}\psi(a)\right|}{M_{\alpha}}=\sup_{f\in B_r}\left|\langle \psi, f\rangle\right|<\infty\\
&{}&\left(\mbox{resp.}\,\,\sup_{a,\alpha}\frac{\left|(\eta(a))^{-1}D^{\alpha}\psi(a)\right|}{M_{\alpha}R_{\alpha}}=\sup_{f\in B_{(r_p)}}\left|\langle \psi, f\rangle\right|<\infty\right).
\eeqs
We obtain that $\left(\DD'^*_{L^1_{\eta}}\right)'\subseteq \DD^*_{L^{\infty}_{\eta}}$ and the inclusion $\left(\DD'^{(M_p)}_{L^1_{\eta}}\right)'_{b}\rightarrow \DD^{(M_p)}_{L^{\infty}_{\eta}}$,  ($\left(\DD'^{\{M_p\}}_{L^1_{\eta}}\right)'_{b}\rightarrow \tilde{\DD}^{\{M_p\}}_{L^{\infty}_{\eta}}$) is continuous.

 Let $\psi\in \DD^*_{L^{\infty}_{\eta}}$. If $f\in \DD'^*_{L^1_{\eta}}$, by Theorem \ref{karak} there exist an ultradifferential operator $P(D)$ of class $*$ and $g\in L^1_{\eta}$ such that $f=P(D)g$. Define $S_{\psi}$ by
\beqs
S_{\psi}(f)=\int_{\RR^d}g(x)P(-D)\psi(x)dx.
\eeqs
Obviously, the integral on the right hand side is absolutely convergent. We will prove that $S_{\psi}$ is well defined element of $\left(\DD'^*_{L^1_{\eta}}\right)'$. Let $\tilde{P}(D)$, $\tilde{g}\in L^1_{\eta}$ be such that $f=\tilde{P}(D)\tilde{g}$. Let $\varphi_n\in\SSS^*_{\dagger}(\RR^d)$, $n\in\ZZ_+$, be as in $ii)$ of Lemma \ref{appincl}. Then it is easy to verify that
\beqs
\int_{\RR^d}P(-D)\left(\varphi_n(x)\psi(x)\right)g(x)dx&\rightarrow& \int_{\RR^d}P(-D)\psi(x)g(x)dx,\\
\int_{\RR^d}\tilde{P}(-D)\left(\varphi_n(x)\psi(x)\right)\tilde{g}(x)dx&\rightarrow& \int_{\RR^d}\tilde{P}(-D)\psi(x)\tilde{g}(x)dx,
\eeqs
as $n\rightarrow\infty$. Also, observe that for each $n\in\ZZ_+$, $\varphi_n\psi\in\SSS^*_{\dagger}(\RR^d)$ and thus
\beqs
\int_{\RR^d}P(-D)\left(\varphi_n(x)\psi(x)\right)g(x)dx&=&{}_{\SSS'^*_{\dagger}}\langle f,\varphi_n\psi\rangle_{\SSS^*_{\dagger}}\\
&=& \int_{\RR^d}\tilde{P}(-D)\left(\varphi_n(x)\psi(x)\right)\tilde{g}(x)dx.
\eeqs
Hence, $S_{\psi}$ is a well defined mapping $\DD'^{\{M_p\}}_{L^1_{\eta}}\rightarrow \CC$, since it does not depend on the representation of $f$. To prove that it is continuous we consider first the Beurling case. The space $\DD'^{(M_p)}_{L^1_{\eta}}$ is a complete $(DF)$-space. Thus it is enough to prove that the restriction of $S_{\psi}$ on each bounded subset of $\DD'^{(M_p)}_{L^1_{\eta}}$ is continuous (see the corollary to \cite[Theorem 6.7, p. 154]{Sch}), i.e., we have to prove that if $\{f_{\lambda}\}_{\lambda\in\Lambda}$ is bounded net which converges to $f$ in $\DD'^{(M_p)}_{L^1_{\eta}}$, then $S_{\psi}(f_{\lambda})\rightarrow S_{\psi}(f)$. If $\{f_{\lambda}\}_{\lambda\in\Lambda}$ is such net, Corollary \ref{cor:sequence} implies that there exists a net $\{g_{\lambda}\}_{\lambda\in\Lambda}\subseteq L^1_{\eta}$ which is bounded and convergent in $L^1_{\eta}$ and an ultradifferential operator $P(D)$ of class $(M_p)$ such that $f_{\lambda}=P(D)g_{\lambda}$ and $f=P(D)g$ where $g\in L^1_{\eta}$ is the limit of $\{g_{\lambda}\}_{\lambda\in\Lambda}$. But then one easily verifies that $g_{\lambda}P(-D)\psi\rightarrow gP(-D)\psi$ in $L^1$, hence $S_{\psi}(f_{\lambda})\rightarrow S_{\psi}(f)$. Thus $S_{\psi}\in \left(\DD'^{(M_p)}_{L^1_{\eta}}\right)'$. In the Roumieu case, as $\DD'^{\{M_p\}}_{L^1_{\eta}}$ is an $(F)$-space one can similarly prove that $S_{\psi}\in \left(\DD'^{\{M_p\}}_{L^1_{\eta}}\right)'$. We obtain that $\left(\DD'^{(M_p)}_{L^1_{\eta}}\right)'=\DD^{(M_p)}_{L^{\infty}_{\eta}}$ (resp. $\left(\DD'^{\{M_p\}}_{L^1_{\eta}}\right)'=\tilde{\DD}^{\{M_p\}}_{L^{\infty}_{\eta}}$) as sets and $\left(\DD'^*_{L^1_{\eta}}\right)'_{b}$ has stronger topology than the latter. In the Beurling case, $\left(\DD'^{(M_p)}_{L^1_{\eta}}\right)'_b$ is an $(F)$-space as the strong dual of the $(DF)$-space $\DD'^{(M_p)}_{L^1_{\eta}}$. Hence the open mapping theorem proves that $\left(\DD'^{(M_p)}_{L^1_{\eta}}\right)'_b=\DD^{(M_p)}_{L^{\infty}_{\eta}}$ as l.c.s.. In the Roumieu case, let $V=B^{\circ}$ be a neighborhood of zero $\left(\DD'^{\{M_p\}}_{L^1_{\eta}}\right)'_{b}$ for $B$ a bounded subset of $\DD'^{\{M_p\}}_{L^1_{\eta}}$. By Theorem \ref{karak}, there exist an ultradifferential operator $P(D)$ of class $\{M_p\}$ and bounded subset $B_1$ of $L^1_{\eta}$ such that each $f\in B$ can be represented by $f=P(D)g$ for some $g\in B_1$. There exists $C_1\geq 1$ such that $\|g\|_{L^1_{\eta}}\leq C_1$ for all $g\in B_1$. Also, since $P(D)=\sum_{\alpha}c_{\alpha}D^{\alpha}$ is of $\{M_p\}$ class, there exist $(r_p)\in\mathfrak{R}$ and $C_2\geq 1$ such that $\left|c_{\alpha}\right|\leq C_2/(M_{\alpha}R_{\alpha})$ (see the proof of Lemma \ref{udozc}). Observe the neighborhood of zero $\ds W=\left\{\psi\in \tilde{\DD}^{\{M_p\}}_{L^{\infty}_{\eta}}\Big|\, \sup_{x,\alpha}\frac{(\eta(x))^{-1}\left|D^{\alpha}\psi(x)\right|}{M_{\alpha}\prod_{j=1}^{|\alpha|}(r_j/2)}\leq \frac{1}{2C_1C_2C_3}\right\}$ in $\tilde{\DD}^{\{M_p\}}_{L^{\infty}_{\eta}}$, where we put $C_3=\sum_{\alpha}2^{-|\alpha|}$. One easily verifies that $W\subseteq V$. We obtain that $\left(\DD'^{\{M_p\}}_{L^1_{\eta}}\right)'_{b}$ and $\tilde{\DD}^{\{M_p\}}_{L^{\infty}_{\eta}}$ are isomorphic l.c.s.. Hence $\tilde{\DD}^{\{M_p\}}_{L^{\infty}_{\eta}}$ is a complete $(DF)$-space (since $\DD'^{\{M_p\}}_{L^1_{\eta}}$ is an $(F)$-space). As the identity mapping $\DD^{\{M_p\}}_{L^{\infty}_{\eta}}\rightarrow\tilde{\DD}^{\{M_p\}}_{L^{\infty}_{\eta}}$ is continuous and bijective, it remains to prove that the inverse is continuous. Since $\tilde{\DD}^{\{M_p\}}_{L^{\infty}_{\eta}}$ is a $(DF)$-space, to prove the continuity of the inverse mapping it is enough to prove that its restriction to every bounded subset of $\tilde{\DD}^{\{M_p\}}_{L^{\infty}_{\eta}}$ is continuous (see the corollary to \cite[Theorem 6.7, p. 154]{Sch}). If $B$ is a bounded subset of $\tilde{\DD}^{\{M_p\}}_{L^{\infty}_{\eta}}$ then for every $(r_p)\in\mathfrak{R}$, $\ds \sup_{\psi\in B} \sup_{\alpha}\frac{\left\|D^{\alpha}\psi\right\|_{L^{\infty}_{\eta}(\RR^d)}}{M_{\alpha}R_{\alpha}}<\infty$. Hence, by \cite[Lemma 3.4]{Komatsu3}, there exists $h>0$ such that $\ds \sup_{\psi\in B} \sup_{\alpha}\frac{h^{|\alpha|}\left\|D^{\alpha}\psi\right\|_{L^{\infty}_{\eta}(\RR^d)}}{M_{\alpha}}<\infty$, i.e., $B$ is bounded in $\DD^{\{M_p\}}_{L^{\infty}_{\eta}}$. Since every bounded subset of $\DD^{\{M_p\}}_{L^{\infty}_{\eta}}$ is obviously bounded in $\tilde{\DD}^{\{M_p\}}_{L^{\infty}_{\eta}}$, $\DD^{\{M_p\}}_{L^{\infty}_{\eta}}$ and $\tilde{\DD}^{\{M_p\}}_{L^{\infty}_{\eta}}$ have the same bounded sets. Let $\psi_{\lambda}$ be a bounded net in $\tilde{\DD}^{\{M_p\}}_{L^{\infty}_{\eta}}$ which converges to $\psi$ in $\tilde{\DD}^{\{M_p\}}_{L^{\infty}_{\eta}}$. Then there exist $0<h\leq 1$ and $C>0$ such that
\beqs
\sup_{\lambda} \sup_{\alpha}\frac{h^{|\alpha|}\left\|D^{\alpha}\psi_{\lambda}\right\|_{L^{\infty}_{\eta}}}{M_{\alpha}}\leq C \mbox{ and } \sup_{\alpha}\frac{h^{|\alpha|}\left\|D^{\alpha}\psi\right\|_{L^{\infty}_{\eta}}}{M_{\alpha}}\leq C.
\eeqs
Choose $0<h_1<h$. Let $\varepsilon>0$ be arbitrary but fixed. Take $p_0\in\ZZ_+$ such that $(h_1/h)^{|\alpha|}\leq \varepsilon/(2C)$ for all $|\alpha|\geq p_0$. Since $\psi_{\lambda}\rightarrow \psi$ in $\tilde{\DD}^{\{M_p\}}_{L^{\infty}_{\eta}}$, for the sequence $r_p=p$, $p\in\ZZ_+$, there exists $\lambda_0$ such that for all $\lambda\geq \lambda_0$ we have $\ds \sup_{\alpha}\frac{\left\|D^{\alpha}\left(\psi_{\lambda}-\psi\right)\right\|_{L^{\infty}_{\eta}}}{M_{\alpha}R_{\alpha}}\leq \frac{\varepsilon}{p_0!}$. Then for $|\alpha|<p_0$, we have $\ds \frac{h_1^{|\alpha|}\left\|D^{\alpha}\left(\psi_{\lambda}-\psi\right)\right\|_{L^{\infty}_{\eta}}}{M_{\alpha}}\leq \varepsilon$. For $|\alpha|\geq p_0$, we have
\beqs
\frac{h_1^{|\alpha|}\left\| D^{\alpha}\left(\psi_{\lambda}-\psi\right)\right\|_{L^{\infty}_{\eta}}}{M_{\alpha}}\leq 2C\left(\frac{h_1}{h}\right)^{|\alpha|}\leq \varepsilon.
\eeqs
It follows that $\psi_{\lambda}\rightarrow \psi$ in $\DD^{\{M_p\},h_1}_{L^{\infty}_{\eta}}$ and hence in $\DD^{\{M_p\}}_{L^{\infty}_{\eta}}$. We obtain that the induced topology by $\tilde{\DD}^{\{M_p\}}_{L^{\infty}_{\eta}}$ on every bounded subset of $\tilde{\DD}^{\{M_p\}}_{L^{\infty}_{\eta}}$ is stronger than the induced topology by $\DD^{\{M_p\}}_{L^{\infty}_{\eta}}$. Hence the identity mapping $\tilde{\DD}^{\{M_p\}}_{L^{\infty}_{\eta}}\rightarrow \DD^{\{M_p\}}_{L^{\infty}_{\eta}}$ is continuous.

It remains to prove that $\dot{\mathcal{B}}^{(M_p)}_{\eta}$ is distinguished. Denote by $\DD^{(M_p)}_{L^{\infty}_{\eta},\sigma}$ the space $\DD^{(M_p)}_{L^{\infty}_{\eta}}$ equipped with the weak topology from the duality $\left\langle \DD'^{(M_p)}_{L^1_{\eta}}, \DD^{(M_p)}_{L^{\infty}_{\eta}}\right\rangle$. We have to prove that each bounded subset of $\DD^{(M_p)}_{L^{\infty}_{\eta}}$ (the strong bidual of $\dot{\mathcal{B}}^{(M_p)}_{\eta}$) is contained in the closure of a bounded subset of $\dot{\mathcal{B}}^{(M_p)}_{\eta}$ in $\DD^{(M_p)}_{L^{\infty}_{\eta},\sigma}$. Let $B$ be a bounded subset of $\DD^{(M_p)}_{L^{\infty}_{\eta}}$. Let $\varphi_n\in\SSS^{(M_p)}_{(A_p)}(\RR^d)$, $n\in \ZZ_+$, be the sequence from $ii)$ of Lemma \ref{appincl}. Then, $\varphi_n\psi\in\SSS^{(M_p)}_{(A_p)}(\RR^d)$ for each $n\in\ZZ_+$, $\psi\in B$. For $r>0$ one easily verifies that $\|\varphi_n\psi\|_{L^{\infty}_{\eta},r}\leq \|\varphi\|_{L^{\infty},2r}\|\psi\|_{L^{\infty}_{\eta},2r}$. Hence the set $\tilde{B}=\{\varphi_n\psi|\, n\in\ZZ_+,\psi\in B\}$ is bounded subset of $\dot{\mathcal{B}}^{(M_p)}_{\eta}$. Let $\psi\in B$ and $f\in \DD'^{(M_p)}_{L^1_{\eta}}$. By Theorem \ref{karak}, there exist an ultradifferential operator $P(D)$ of class $(M_p)$ and $g\in L^1_{\eta}$ such that $f=P(D)g$. Then one easily verifies that $gP(-D)(\varphi_n\psi)\rightarrow gP(-D)\psi$ in $L^1$, thus $\langle \varphi_n\psi, f\rangle \rightarrow \langle \psi,f\rangle$, i.e., $\varphi_n\psi\rightarrow \psi$ in $\DD^{(M_p)}_{L^{\infty}_{\eta},\sigma}$, which proves that $B$ belongs in the closure of $\tilde{B}$ in $\DD^{(M_p)}_{L^{\infty}_{\eta},\sigma}$.
\end{proof}

\subsection{Convolution and multiplication} Our previous work allows us to extend all results on convolution and multiplicative products on $\mathcal{D}'^*_{E'_{\ast}}$ from \cite{DPV,DPPV} to our spaces. We omit the proofs of the following propositions because they go in the same lines as those of \cite[Theorem 4 and Proposition 11]{DPV} (adapting them with the aid of our results from the previous subsections).

\begin{proposition}\label{prop4.13}
We have the (continuous) inclusions
$\mathcal{D}^*_{L^1_\omega}\hookrightarrow\mathcal{D}^*_E\hookrightarrow\dot{\mathcal{B}}^*_{\check{\omega}}$ and $\mathcal{D}'^*_{L^1_{\check{\omega}}}\rightarrow\mathcal{D}'^*_{E'_{\ast}}\rightarrow\mathcal{B}'^*_{\omega}$. If $E$ is reflexive, one has $\mathcal{D}'^*_{L^{1}_{\check{\omega}}}\hookrightarrow\mathcal{D}'^*_{E'}\hookrightarrow\dot{\mathcal{B}}'^*_{\omega}$.
\end{proposition}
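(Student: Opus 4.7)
The plan is to establish the two test function inclusions first, then derive the dual chain by transposition, and finally upgrade the dual inclusions to dense embeddings under reflexivity. For $\mathcal{D}^*_E \hookrightarrow \dot{\mathcal{B}}^*_{\check{\omega}}$, I would invoke Proposition \ref{smooth prop} directly: the growth bound (\ref{eqgrowth}), together with the statement that $D^\alpha \varphi\in C_{\check{\omega}}$ for every $\alpha$, places each $\varphi\in\mathcal{D}^*_E$ inside $\mathcal{D}^*_{C_{\check{\omega}}}$, which is $\dot{\mathcal{B}}^{(M_p)}_{\check{\omega}}$ in the Beurling case and coincides with $\dot{\mathcal{B}}^{\{M_p\}}_{\check{\omega}}$ in the Roumieu case by Theorem \ref{edn}. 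The quantitative estimates (\ref{bounds}) and (\ref{boundss}) embedded in the proof of Proposition \ref{smooth prop} then furnish the continuity, and density is immediate from Proposition \ref{970} applied to both $E$ and (via Example \ref{example}) $C_{\check{\omega}}$, since $\SSS^*_\dagger(\RR^d)$ is dense in each endpoint.

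For the other inclusion $\mathcal{D}^*_{L^1_\omega}\hookrightarrow\mathcal{D}^*_E$, my tool will be the estimate (\ref{111111177}) in the remark following Proposition \ref{smooth prop}: it bounds $\|f\|_{E,m}$ (respectively $\|f\|_{E,(k_p)}$ in the Roumieu case) by a suitable norm of $f$ in $\mathcal{D}^*_{L^1_\omega}$, but a priori only for $f\in\SSS^*_\dagger(\RR^d)$. Because $\SSS^*_\dagger$ is dense in $\mathcal{D}^*_{L^1_\omega}$ by Proposition \ref{970} applied to $E=L^1_\omega$, the inequality extends by continuity to all of $\mathcal{D}^*_{L^1_\omega}$, yielding the desired continuous embedding; density of the image in $\mathcal{D}^*_E$ follows again from Proposition \ref{970}.

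Passing to the dual chain, I would use the identifications $\mathcal{D}'^*_{L^1_{\check{\omega}}}=(\dot{\mathcal{B}}^*_{\check{\omega}})'$ and $\mathcal{B}'^*_\omega=(\mathcal{D}^*_{L^1_\omega})'$ from Subsection \ref{examples}; since the two test function inclusions are continuous with dense range, their transposes are continuous and injective, producing $\mathcal{D}'^*_{L^1_{\check{\omega}}}\to\mathcal{D}'^*_{E'_\ast}\to\mathcal{B}'^*_\omega$. In the reflexive setting, Proposition \ref{prop:reflexive} guarantees that $\SSS^*_\dagger(\RR^d)$ is dense in $\mathcal{D}'^*_{E'}$; combined with the fact that $\SSS^*_\dagger$ embeds continuously into $\mathcal{D}'^*_{L^1_{\check{\omega}}}$ and into $\dot{\mathcal{B}}'^*_\omega$, this at once upgrades both dual inclusions to dense embeddings.

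I do not expect any serious obstacle: the whole argument is a bookkeeping exercise built on Proposition \ref{smooth prop} and its attached remark, together with the density assertions already in hand. The only slightly delicate point will be, in the reflexive case, to verify that the image of $\mathcal{D}'^*_{E'}$ inside $\mathcal{B}'^*_\omega$ actually lies in the proper subspace $\dot{\mathcal{B}}'^*_\omega$; this follows from continuity of the transpose map together with the density of $\SSS^*_\dagger\subseteq\dot{\mathcal{B}}'^*_\omega$ in $\mathcal{D}'^*_{E'}$, which forces every element of the image to be a $\mathcal{B}'^*_\omega$-limit of elements of $\dot{\mathcal{B}}'^*_\omega$, hence itself an element of this closed subspace.
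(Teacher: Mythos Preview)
Your proposal is correct and is precisely the argument the paper has in mind: the paper omits the proof of this proposition, merely indicating that it ``goes in the same lines as those of \cite[Theorem 4 and Proposition 11]{DPV} (adapting them with the aid of our results from the previous subsections),'' and your write-up is exactly such an adaptation, drawing on Proposition \ref{smooth prop} and its attached remark (\ref{111111177}), the identifications $\dot{\mathcal{B}}^*_{\check\omega}=\mathcal{D}^*_{C_{\check\omega}}$ (Theorem \ref{edn} in the Roumieu case), Proposition \ref{970} for density, and Proposition \ref{prop:reflexive} for the reflexive refinement. One small point worth making explicit in the Roumieu case of $\mathcal{D}^{\{M_p\}}_{L^1_\omega}\hookrightarrow\mathcal{D}^{\{M_p\}}_E$: the estimate (\ref{111111177}) is phrased in terms of the $(k_p)$-seminorms, so to conclude continuity you should invoke Theorem \ref{1517} (the identification $\mathcal{D}^{\{M_p\}}_E=\tilde{\mathcal{D}}^{\{M_p\}}_E$) or, equivalently, argue through boundedness since the source is bornological.
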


In particular, we have $\mathcal{D}^*_{L^{1}_{\omega_{\eta}}}\hookrightarrow\mathcal{D}^*_{L^{p}_{\eta}}\hookrightarrow \dot{\mathcal{B}}^*_{\check{\omega}_{\eta}}$ and $\mathcal{D}'^*_{L^{1}_{\omega_{\eta}}}\hookrightarrow\mathcal{D}'^*_{L^{p}_{\eta}}\hookrightarrow \dot{\mathcal{B}}'^*_{\check{\omega}_{\eta}}$ for $1\leq p <\infty$ (for $p=1$ in the latter dense inclusion we have used the fact $\SSS^*_{\dagger}(\RR^d)\hookrightarrow \mathcal{D}'^*_{L^{1}_{\eta}}$). In addition, $\dot{\mathcal{B}}^*_{{\eta}}\hookrightarrow
\dot{\mathcal{B}}^*_{\omega_{\eta}}$ and $\dot{\mathcal{B}}'^*_{{\eta}}\hookrightarrow \dot{\mathcal{B}}'^*_{\omega_{\eta}}$.  We can now define multiplicative and convolution operations on $\mathcal{D}'^*_{E'_{\ast}}$. In the next proposition we denote by $\mathcal{O}'^*_{\dagger,C,b}$ the space $\mathcal{O}'^*_{\dagger,C}$ equipped with the strong topology from the duality $\left\langle \mathcal{O}^*_{\dagger,C},\mathcal{O}'^*_{\dagger,C}\right\rangle$.

\begin{proposition}\label{conv-prod}
The convolution mappings $\ast:\mathcal{D}'^*_{E'_{\ast}}\times\mathcal{D}'^*_{L^{1}_{\check{\omega}}}\to \mathcal{D}'^*_{E'_{\ast}}$ and  $
\ast:\mathcal{D}'^*_{E'_{\ast}}\times\mathcal{O}'^*_{\dagger,C,b}(\RR^d)\to \mathcal{D}'^*_{E'_{\ast}}
$ are continuous.
The convolution and multiplicative products are hypocontinuous in the following cases:
$
\cdot:\mathcal{D}'^*_{E'_{\ast}}\times\mathcal{D}^*_{L^{1}_{\omega}}\to \mathcal{D}'^*_{L^{1}},
$
$
\cdot:\mathcal{D}'^*_{L^{1}_{\check{\omega}}}\times\mathcal{D}^*_{E}\to \mathcal{D}'^*_{L^{1}}.
$ and
$
\ast:\mathcal{D}'^*_{E'_{\ast}}\times\mathcal{D}^*_{\check{E}}\to \mathcal{B}^*_{\omega}
$.
When $E$ is reflexive, we have $
\ast:\mathcal{D}'^*_{E'}\times\mathcal{D}^*_{\check{E}}\to \dot{\mathcal{B}^*_{\omega}}
$.
\end{proposition}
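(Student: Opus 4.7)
All six assertions reduce, via the structural characterisation of $\mathcal{D}'^*_{E'_{\ast}}$ in Theorem \ref{karak}, to Banach-space bilinear estimates in $E$, $E'_{\ast}$ and their weighted $L^{1}$ and $L^{\infty}$ relatives. The ingredients I intend to combine are: Corollary \ref{cor3.3} (Banach module structure of $E'_{\ast}$ over $L^{1}_{\check{\omega}}$); the embedding chain $\mathcal{D}^*_{L^{1}_{\omega}}\hookrightarrow\mathcal{D}^*_{E}\hookrightarrow\dot{\mathcal{B}}^*_{\check{\omega}}$ of Proposition \ref{prop4.13}; the embedding $\mathcal{D}^*_{E}\hookrightarrow \mathcal{O}^*_{\dagger,C}$ of Proposition \ref{smooth prop}; the inclusion $E'\ast\check{E}\subseteq UC_{\omega}$ (and $\subseteq C_{\omega}$ when $E$ is reflexive) from Proposition \ref{convolution E E'}; and the continuity of ultradifferential operators on the test spaces, Lemmas \ref{ultradiff} and \ref{udozc}. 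The plan is to adapt verbatim the distributional arguments of \cite[Theorem~4 and Proposition~11]{DPV}, substituting at each step the ultradistributional tools recalled above.

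\textbf{Convolutions into $\mathcal{D}'^*_{E'_{\ast}}$.} For $f\in\mathcal{D}'^*_{E'_{\ast}}$ and $u\in\mathcal{D}'^*_{L^{1}_{\check{\omega}}}$ I write, by Theorem \ref{karak}, $f=P(D)g$ with $g\in E'_{\ast}$ and $u=Q(D)v$ with $v\in L^{1}_{\check{\omega}}$, and set $f\ast u:=(P\!\circ\! Q)(D)(g\ast v)$. Independence of representatives is obtained by testing against a net in $\mathcal{S}^*_{\dagger}(\mathbb{R}^d)\ast\mathcal{S}^*_{\dagger}(\mathbb{R}^d)$ (dense in $E'_{\ast}$, cf.\ Remark \ref{r11}); $g\ast v\in E'_{\ast}$ by Corollary \ref{cor3.3}, so the image lies in $\mathcal{D}'^*_{E'_{\ast}}$. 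For joint continuity I invoke Corollary \ref{cor:sequence}: a bounded net on either side admits a common ultradifferential operator and uniformly bounded Banach factors, whence the inequality $\|g\ast v\|_{E'}\le\|v\|_{1,\check{\omega}}\|g\|_{E'}$ gives the required bilinear bound. The $\mathcal{O}'^*_{\dagger,C,b}$ case is handled likewise: Proposition \ref{smooth prop} dualises to a continuous $\mathcal{O}'^*_{\dagger,C,b}\to\mathcal{D}'^*_{E'_{\ast}}$, and combining with the previous construction after a parametrix decomposition on the $\mathcal{O}'^*_{\dagger,C,b}$ factor yields a continuous convolution.

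\textbf{Multiplicative products and the convolution into $\mathcal{B}^*_{\omega}$.} The multiplications are defined by transposition, $\langle f\cdot\varphi,\psi\rangle:=\langle f,\varphi\psi\rangle$, with $\psi$ in the predual $\mathcal{D}^*_{C_{0}}$ of $\mathcal{D}'^*_{L^{1}}$. For $\varphi\in\mathcal{D}^*_{L^{1}_{\omega}}$, the Leibniz rule together with the pointwise estimate $|D^{\beta}\varphi|\,|D^{\alpha-\beta}\psi|\le|D^{\beta}\varphi|\,\|D^{\alpha-\beta}\psi\|_{\infty}$ places $\varphi\psi$ in $\mathcal{D}^*_{L^{1}_{\omega}}$, and Proposition \ref{prop4.13} further embeds it into $\mathcal{D}^*_{E}$, which makes $\langle f,\varphi\psi\rangle$ legitimate; the dual estimate on bounded subsets of $\mathcal{D}^*_{C_{0}}$ delivers the stated hypocontinuity. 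For the companion product with $f\in\mathcal{D}'^*_{L^{1}_{\check{\omega}}}$ and $\varphi\in\mathcal{D}^*_{E}$, Proposition \ref{prop4.13} gives $\varphi\in\dot{\mathcal{B}}^*_{\check{\omega}}$, so $\varphi\psi\in\mathcal{D}^*_{C_{\check{\omega}}}$ (the predual of $\mathcal{D}'^*_{L^{1}_{\check{\omega}}}$), and the same bilinear estimate applies. Finally, for $\ast:\mathcal{D}'^*_{E'_{\ast}}\times\mathcal{D}^*_{\check{E}}\to\mathcal{B}^*_{\omega}$ I write $f=P(D)g$ and set $f\ast\varphi:=P(D)(g\ast\varphi)$; Proposition \ref{convolution E E'} applied to every derivative yields $g\ast\varphi\in\mathcal{B}^*_{\omega}$, and $P(D)$ preserves $\mathcal{B}^*_{\omega}$ by the verbatim analogue of Lemma \ref{ultradiff} with $E=L^{\infty}_{\omega}$. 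In the reflexive case, Proposition \ref{convolution E E'} upgrades $UC_{\omega}$ to $C_{\omega}$ and the density of $\mathcal{S}^*_{\dagger}(\mathbb{R}^d)$ in $\dot{\mathcal{B}}^*_{\omega}$ places the output in $\dot{\mathcal{B}}^*_{\omega}$.

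\textbf{Main obstacle.} The algebraic definitions above are straightforward; the delicate point is to promote the Banach-level bilinear estimates to genuine joint continuity for the first two convolutions. In the Beurling case this is easy since both factors are $(DF)$-spaces with fundamental sequences of bounded sets, so hypocontinuity together with Corollary \ref{cor:sequence} gives continuity; in the Roumieu case both factors are $(F)$-spaces, where separate continuity (furnished by Theorem \ref{karak}) implies continuity. The need to handle $(r_p)\in\mathfrak{R}$ uniformly in the Roumieu case is managed exactly as in Sections \ref{TIBU}--\ref{new spaces}, using Lemma \ref{nwseq} and the submultiplicativity $\prod_{j=1}^{p+q}r'_j\le 2^{p+q}\prod_{j=1}^{p}r'_j\prod_{j=1}^{q}r'_j$ to absorb compositions of ultradifferential operators and Leibniz expansions into new sequences in $\mathfrak{R}$.
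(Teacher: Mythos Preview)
Your proposal is essentially correct and follows the same approach the paper adopts: the paper explicitly omits the proof, stating only that it ``goes in the same lines as those of \cite[Theorem~4 and Proposition~11]{DPV}, adapting them with the aid of our results from the previous subsections,'' and you carry out precisely that adaptation, invoking the structural Theorem~\ref{karak}, the module properties (Proposition~\ref{970}, Corollary~\ref{cor3.3}), the embedding chain of Proposition~\ref{prop4.13}, and Proposition~\ref{convolution E E'} at the expected places.

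One stylistic remark: for the two convolutions landing in $\mathcal{D}'^*_{E'_{\ast}}$ you choose to \emph{define} $f\ast u$ via a structural representation, $f\ast u:=(P\!\circ\!Q)(D)(g\ast v)$, and then argue independence of representatives. The more economical route---and the one implicit in \cite{DPV}---is to define the convolution by transposition, $\langle f\ast u,\varphi\rangle:=\langle f,\check{u}\ast\varphi\rangle$ for $\varphi\in\mathcal{D}^*_E$, after first showing $\check{u}\ast\varphi\in\mathcal{D}^*_E$: write $\check{u}=P(D)v$ with $v\in L^1_{\omega}$ by Theorem~\ref{karak}, then $\check{u}\ast\varphi=P(D)(v\ast\varphi)\in\mathcal{D}^*_E$ by Proposition~\ref{970} and Lemma~\ref{ultradiff}. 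This avoids the well-definedness check entirely and gives hypocontinuity immediately; the $(DF)\times(DF)$ (Beurling) or $(F)\times(F)$ (Roumieu) upgrade to joint continuity is then exactly as you describe. The same transposition device disposes of the $\mathcal{O}'^*_{\dagger,C,b}$ case cleanly, once one notes that $\check{S}\ast\varphi\in\mathcal{D}^*_E$ for $S\in\mathcal{O}'^*_{\dagger,C}$ via the dense embedding $\mathcal{D}^*_E\hookrightarrow\mathcal{O}^*_{\dagger,C}$ (Proposition~\ref{smooth prop}) and a parametrix. Your route works too, but the transposition definition is what makes your somewhat vague ``combining with the previous construction after a parametrix decomposition'' precise in one line.
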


\end{document}